\documentclass[reqno]{amsart}%
\usepackage{palatino, mathpazo}
\usepackage{amsfonts}
\usepackage{tikz,amsmath}
\usepackage{amssymb,latexsym,xcolor}
\usepackage{graphicx}
\usepackage{caption}
\usepackage{harpoon}
\usepackage[mathscr]{eucal}
\usepackage{amssymb}%
\usepackage[
linktocpage=true,colorlinks,citecolor=magenta,linkcolor=blue,urlcolor=magenta]{hyperref}
\providecommand{\U}[1]{\protect \rule{.1in}{.1in}}

\newtheorem{theorem}{Theorem}[section]

\newtheorem{corollary}[theorem]{Corollary}

\newtheorem{lemma}[theorem]{Lemma}
\newtheorem{proposition}[theorem]{Proposition}
\newtheorem{Theorem}{Theorem}

\theoremstyle{remark}
\newtheorem{remark}[theorem]{Remark}

\numberwithin{equation}{section}

\begin{document}
\title[Geometric analysis on rhombus torus]{Geometric analysis on rhombus torus: Green function with two singularities}
\author{Zhijie Chen}
\address{Department of Mathematical Sciences, Yau Mathematical Sciences Center,
Tsinghua University, Beijing, 100084, China }
\email{zjchen2016@tsinghua.edu.cn}
\author{Erjuan Fu}
\address{Beijing Institute of Mathematical Sciences and Applications, Beijing, 101408, China}
\email{fej.2010@tsinghua.org.cn, ejfu@bimsa.cn}
\author{Chang-Shou Lin}
\address{Department of Mathematics, National Taiwan University, Taipei 10617, Taiwan }
\email{cslin@math.ntu.edu.tw}
\author{Zhen Song}
\address{Department of Mathematics, Northeastern University, Shenyang 110004, China}
\email{songzhen@mail.neu.edu.cn}

\begin{abstract}
Let $G(z)$ be the Green function on the flat torus $E_{\tau}=\mathbb{C}/(\mathbb{Z}+\mathbb{Z}\tau)$ with the singularity at $0$. Lin and Wang (Ann. Math. 2010) proved that $G(z)$ has at most one pair of nontrivial critical points. 

This is the third of a series of papers to study the sum of two Green functions which can be reduced to $G_p(z):=\frac12(G(z+p)+G(z-p))$. We study how the geometry of the torus and the location of singularities $\pm p$ affect the structure of critical points of $G_p(z)$. In Part I \cite{CFL}, we proved that $G_p(z)$ has at most three pairs of nontrivial critical points for all tori. In Part II \cite{CFL-II} (Proc. Lond. Math. Soc. 2026), we studied the important case that $E_{\tau}$ is a rectangular torus.  
In this paper, first we prove that if $G_p(z)$ has three pairs of nontrivial critical points, then critical points are all non-degenerate. Secondly, we study the other important but more challenging case that $E_{\tau}$ is a rhombus torus, by developing different approaches from \cite{CFL, CFL-II}. As applications, we show that the curvature equation $\Delta u+e^{u}=4\pi(\delta_p+\delta_{-p})$ on $E_{\tau}$ has exactly either $0$, $1$ or $2$ even axisymmetric solutions and each number really occurs.
\end{abstract}


\maketitle

\section{Introduction}

Let $\tau \in \mathbb{H}=\left \{  \tau\in\mathbb C|\operatorname{Im}\tau>0\right \}$, $\Lambda_{\tau}=\mathbb{Z}+\mathbb{Z}\tau$, and denote
$$\omega_{0}=0,\quad\omega_{1}=1,\quad\omega_{2}=\tau,\quad\omega_{3}=1+\tau.$$Let $E_{\tau}:=\mathbb{C}/\Lambda_{\tau}$ be a flat torus in the
plane and $E_{\tau}[2]:=\{ \frac{\omega_{k}}{2}|k=0,1,2,3\}+\Lambda
_{\tau}$ be the set consisting of the lattice points and half periods
in $E_{\tau}$.  
The Green function $G(z,w)=G(z,w;\tau)$ of the flat torus $E_{\tau}$ is the unique solution of
\[
-\Delta_z G(z, w)=\delta_{w}-\frac{1}{\left \vert E_{\tau}\right \vert }\text{
\ on }E_{\tau},\quad
\int_{E_{\tau}}G(z,w)dxdy=0,
\]
where we use the complex variable $z=x+iy$, $\Delta_z=\frac{\partial^2}{\partial x^2}+\frac{\partial^2}{\partial y^2}=4\partial^2_{\bar z z}$ is the Laplace operator, $\delta_{w}$ is the Dirac measure at $w$ and $\vert E_{\tau}\vert$ is
the area of the torus $E_{\tau}$. By the translation invariance, we have $G(z,w)=G(z-w,0)$ and it is enough to consider the Green function
$$G(z;\tau)=G(z):=G(z,0).$$
Clearly $G(z)$ is an even function on $E_{\tau}$ with the
singularity at $0$, so $\frac{\omega_k}{2}$, $k\in \{1,2,3\}$, are always critical points of $G(z)$. 

\medskip

\noindent{\bf Definition.} {\it A critical point $a\in E_{\tau}$ of $G$ (resp. $G_p$; see below) is called trivial if $a=-a$ in $E_{\tau}$, i.e., $a\in E_{\tau}[2]$.  A critical point $a\in E_{\tau}$ is called nontrivial if $a\neq-a$ in $E_{\tau}$, i.e., $a\notin E_{\tau}[2]$.}
\medskip

Nontrivial critical points must appear in pairs if exist.
 Lin and Wang \cite{LW} proved the following remarkable result about critical points of $G(z)$.
 
\begin{Theorem} \cite{LW} \label{thm-0LW} $G(z)$ has at most one pair of nontrivial critical points, or equivalently, $G(z)$ has either $3$ or $5$ critical points (depends on the choice of $\tau$).
 For example, 
\begin{itemize}
\item[(1)] When $\tau=ib$ with $b>0$, i.e., $E_{\tau}$ is a rectangular torus,  $G(z)$ has exactly $3$ critical points  $\frac{\omega_k}{2}$, $k\in \{1,2,3\}$. Furthermore, $\frac{\omega_1}{2}$ and $\frac{\omega_2}{2}$ are both non-degenerate saddle points, while $\frac{\omega_3}{2}$ is a non-degenerate minimal point.
\item[(2)] When $\tau=\frac{1}{2}+ib$ with $b>0$, i.e., $E_{\tau}$ is a rhombus torus, there are $0<b_0<\frac12<b_1<\sqrt{3}/2$ such that $G(z)$ has exactly $5$ critical points if and only if $b\in (0, b_0)\cup (b_1,+\infty)$. 
\end{itemize}
 \end{Theorem}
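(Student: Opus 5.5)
The plan follows the Lin--Wang strategy. The first step is to reduce critical points of $G$ to zeros of a meromorphic function via the Weierstrass $\zeta$-function. Write $z=r+s\tau$ with $r,s\in\mathbb{R}$. The classical formula
\[
-4\pi\,\partial_z G(z)=\zeta(z)-r\eta_1-s\eta_2
\]
holds, where $\eta_1,\eta_2$ are the quasi-periods. Hence $z$ is a critical point exactly when $\zeta(z)=r\eta_1+s\eta_2$. Using the Legendre relation, this becomes $Z(r,s;\tau):=\zeta(r+s\tau)-r\eta_1-s\eta_2=0$.

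For fixed $\tau$ I will not count zeros of $Z$ directly. Instead I would use the monodromy/Lamé correspondence. A nontrivial critical point $a=r+s\tau$ with $(r,s)\notin\frac12\mathbb{Z}^2$ corresponds to a solution of the mean field equation $\Delta u+e^u=8\pi\delta_0$ on $E_\tau$. Equivalently, it corresponds to a Lamé equation $y''=(2\wp(z)+B)y$ with unitary monodromy, where $B$ is determined by $\wp(a)$. The hypergeometric/Hermite–Halphen analysis then shows that this mean field equation has at most one solution up to the natural symmetry. This gives at most one pair $\pm a$. This uniqueness step is the main obstacle. I would attack it by studying the degeneracy of the linearized equation at a solution and proving non-degeneracy. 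Non-degeneracy lets one continue in $\tau$ and count solutions by a Leray–Schauder degree argument. The degree for $\rho=8\pi$ is computed via a deformation $\rho\nearrow 8\pi$, since the degree jumps from $1$ to $0$, combined with the blow-up analysis. The conclusion is that the number of solutions is $0$ or $1$. Since critical points are counted via Morse index, $G$ has $3$ or $5$ critical points.

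For part (1), $\tau=ib$, I would use the symmetries $G(\bar z)=G(z)$ and $G(-z)=G(z)$. They make $G$ even in both $x$ and $y$. I would then show that $\partial_x G$ has a sign on the interior of the quarter rectangle $(0,\frac12)\times(0,\frac b2)$, by the maximum principle applied to $\partial_xG$. This works because $\partial_x G$ is harmonic away from $0$, vanishes on the lines $x=0,\frac12$, and is negative near the singularity. So there are no nontrivial critical points. The nature of the half periods then follows from the signs of $\partial_x^2G$ and $\partial_y^2G$. These signs are read off from the monotonicity along the sides of the rectangle, with non-degeneracy from the sign of $\wp(\frac{\omega_k}{2})+\eta_1$-type expressions.

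For part (2), $\tau=\frac12+ib$, the extra critical points lie on the symmetry axes. The test is the sign of the Hessian at a half period, determined by $e_k+\eta_1$-type quantities. A pair bifurcates from $\frac{\omega_k}{2}$ exactly when that quantity changes sign. I would analyze the resulting explicit function of $b$ via $q$-expansions to locate its zeros $b_0<\frac12<b_1<\frac{\sqrt3}{2}$. This uses that at $b=\frac{\sqrt3}{2}$ (the hexagonal torus) there are $5$ critical points, and that at $b=\frac12$ (the square torus, up to equivalence) there are $3$.
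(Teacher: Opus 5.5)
The paper gives no proof of this statement; it is quoted from \cite{LW}, with \cite{BE} cited for a second proof of the bound. So your argument must stand on its own, and its central step does not.

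Saying that the Lam\'e/Hermite--Halphen analysis yields at most one solution of $\Delta u+e^u=8\pi\delta_0$ ``up to the natural symmetry'' is not an argument. The values $B=\wp(a)$ with unitary monodromy are in bijection with pairs $\pm a$ of nontrivial critical points, so this is the theorem restated.

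The degree scheme you propose in its place breaks down. At $\rho=8\pi$ every solution comes with its scaling family $u_\lambda$ (developing map $e^{\lambda}f$), which blows up at $\pm a$ as $\lambda\to\pm\infty$. Hence there is no a priori bound and the Leray--Schauder degree at $8\pi$ is undefined. Moreover $\partial_\lambda u_\lambda$ always lies in the kernel of the linearized operator, so the non-degeneracy you want to prove cannot hold. The jump is also misquoted: for this equation on a torus, Chen--Lin's formula gives degree $1$ on $(0,8\pi)$ and $2$ on $(8\pi,16\pi)$, not $1$ then $0$.

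More fundamentally, a degree counts solutions with signs. It bounds their number only if all of them are known to have the same local index, and nothing in your sketch provides that. On the level of $G$, index counting is just Poincar\'e--Hopf: the indices of the critical points in $E_\tau\setminus\{0\}$ sum to $-1$. In the non-degenerate case this reads $\#\{\text{minima}\}-\#\{\text{saddles}\}=-1$, since there are no maxima ($\Delta G>0$ off $0$). This is compatible with $3+2k$ critical points for every $k$, so ``counting via Morse index'' cannot give $3$ or $5$.

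What is missing is an independent upper bound on one of the two counts. Bergweiler--Eremenko obtain it dynamically. For $p=0$ the critical points of $G$ are the fixed points of the anti-meromorphic map $g$ of Section~\ref{sec02}, with minima attracting ($J_\phi>0$) and saddles repelling. Each attracting fixed point must attract a critical point of $g$, and $g$ has only the two critical points $\pm z_*$ on $E_\tau$, where $\wp(z_*)=\frac{\pi}{\operatorname{Im}\tau}-\eta_1$. Hence there are at most two minima, at most three saddles, and at most five critical points; degenerate cases need extra care.

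The remaining parts need repair rather than new ideas.

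For $\tau=ib$, the maximum principle on the quarter rectangle is incomplete. $\partial_xG$ does not vanish on the sides $y=0$ and $y=\frac b2$, and you give no sign there; on $y=0$ it equals $-\frac1{2\pi}(\zeta(x)-x\eta_1)$, whose sign needs the extra input $e_1+\eta_1\ge 0$. Use instead the annulus $\{0<x<\frac12\}\subset E_\tau$, whose boundary is exactly the circles $x=0$ and $x=\frac12$, where $\partial_xG=0$ by symmetry, and note $\partial_xG\le 0$ near the singularity. Then $\partial_xG<0$ inside, and likewise $\partial_yG<0$ on $\{0<y<\frac b2\}$, so only half periods are critical. Hopf's lemma gives $\partial_x^2G>0$ on $x=\frac12$ and $\partial_x^2G<0$ on $x=0$, and the same in $y$. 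This yields non-degeneracy and the saddle/saddle/minimum types at once.

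For $\tau=\frac12+ib$, a local bifurcation at $\frac12$ only produces a pair for $b$ near $b_0$ and $b_1$. The global statement needs four further ingredients:
\begin{itemize}
\item[(i)] the bound discussed above;
\item[(ii)] the fact that $\frac{\omega_2}2$ and $\frac{\omega_3}2$ are non-degenerate saddles for every $b$ (Theorem~\ref{thm-LW-16}(2)), which your sketch omits;
\item[(iii)] the index count, which forces a pair when $\frac12$ is a non-degenerate saddle, but by itself neither excludes an index-zero degenerate pair when $\frac12$ is a minimum nor settles $b\in\{b_0,b_1\}$;
\item[(iv)] the monotonicity of $e_1+\eta_1$ in $b$, which together with $\det D^2G(\frac12)=-\frac1{4\pi^2}(e_1+\eta_1)(e_1+\eta_1-\frac{2\pi}b)$ makes each factor vanish exactly once, with $b_1=\frac1{4b_0}$ coming from $\tau\mapsto\frac{\tau-1}{2\tau-1}$.
\end{itemize}
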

 
See also Bergweiler-Eremenko \cite{BE} for a new proof of the first statement of Theorem \ref{thm-0LW}. Later, Lin-Wang \cite{LW4} proved that nontrivial critical points of $G(z)$ must be minimal points,
and we \cite{CFL} proved that nontrivial critical points of $G(z)$ are always non-degenerate. 
It is known that critical points of Green functions have many important applications, such as in constructing minimal surfaces (see \cite{CW-TAMS,CT-SIAM}), in constructing bubbling solutions via the reduction method and in proving the local uniqueness and non-degeneracy of bubbling solutions for elliptic PDEs; see e.g. \cite{BKLY, BYZ, CLW, CL-2, GGLY, LW4, LY} and the references therein.

Our original goal of this series of papers is to generalize Theorem \ref{thm-0LW} to the sum of two Green functions $G(z-p_1)+G(z-p_2)$. By changing variable $z\mapsto z+\frac{p_1+p_2}{2}$, we can always assume $p_2=-p_1$, so it is enough to study the Green function
 $G_p(z)=G_{-p}(z)$ defined by
\begin{equation}G_p(z;\tau)=G_p(z):=\frac12\big(G(z-p)+G(z+p)\big).\end{equation}
We want to study how the geometry of the flat torus and the location of singularities $\pm p$ affect the structure of critical points of $G_p(z)$. 
Remark that critical points of $G_p(z)$ have interesting applications to Painlev\'{e} VI equations and curvature equations; see \cite{CFL, CFL-II} for details.

\subsection{Known results}

If $p\in E_{\tau}[2]$, i.e., $p=\frac{\omega_k}{2}$ for some $k$, then $G_p(z)=G(z-p)$ and so Theorem \ref{thm-0LW} implies that $G_p(z)$ has either $3$ or $5$ critical points. 
Thus, we only consider the case $p\in E_{\tau}\setminus E_{\tau}[2]$. Clearly $G_p(z)$ is also even, so $\frac{\omega_k}{2}$, $k=0,1,2,3$, are all trivial critical points of $G_p(z)$, namely the number of critical points of $G_p(z)$ is an even number of at least $4$. 
We generalized the first statement of Theorem \ref{thm-0LW} to $G_p(z)$ in Part I \cite{CFL}.

\begin{Theorem}\cite{CFL}\label{main-thm-1} For any $p\in E_{\tau}\setminus E_{\tau}[2]$, $G_p(z)$ has at most $3$ pairs of nontrivial critical points, or equivalently, the number of critical points of $G_p(z)$ belongs to $\{4,6,8,10\}$, and each number in $\{4,6,8,10\}$ really occurs for different $(\tau, p)$'s. 

Moreover,  fix any $\tau$, then for almost all $p\in E_{\tau}\setminus E_{\tau}[2]$, all critical points of $G_p(z)$ are non-degenerate.
\end{Theorem}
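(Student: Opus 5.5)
The plan is to turn the critical point problem for $G_p$ into an algebraic problem on the Weierstrass side, and then count solutions with fixed $\tau$ and $p$ by a degree argument.

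\textbf{Step 1: the critical point equation.} Recall the classical formula
\[
-4\pi\,\partial_z G(z)=\zeta(z)-r\eta_1-s\eta_2,\qquad z=r+s\tau,\ r,s\in\mathbb{R}.
\]
Applying it to $z\pm p$ shows that $z$ is a critical point of $G_p$ exactly when
\[
\zeta(z+p)+\zeta(z-p)-2\zeta(z)=2\big(r\eta_1+s\eta_2-\zeta(z)\big).
\]
By the addition formula, the left-hand side equals $\wp'(z)/(\wp(z)-\wp(p))$. This is an elliptic function, even in $z$ after we account for the odd factor; in any case it is a rational function of $(\wp(z),\wp'(z))$. The right-hand side is the non-holomorphic term coming from a single Green function.

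\textbf{Step 2: the Hitchin-type parametrization.} Following Lin--Wang and Part I \cite{CFL}, write $z=r+s\tau$ and regard $(r,s)\in\mathbb{R}^2\setminus\frac12\mathbb{Z}^2$ as unknowns. Treat the system as one equation in which $Z(z):=\zeta(z)-r\eta_1-s\eta_2$ enters algebraically, so that
\[
2Z(z)=-\frac{\wp'(z)}{\wp(z)-\wp(p)}.
\]
Trivial critical points correspond to $z=-z$. For nontrivial ones, pass to the pair $(x,y)=(\wp(z),\wp'(z))$ on the elliptic curve, together with the real-analytic constraint from the Green function. Here one uses the monodromy interpretation: nontrivial critical points correspond to the generalized Lam\'e equation with singularities at $\pm p$ having unitary monodromy. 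The unitary monodromy data are counted by a polynomial equation of bounded degree, for instance the spectral polynomial of the associated Treibich--Verdier or Darboux--Treibich--Verdier potential, whose degree gives the bound $3$ on the number of pairs.

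\textbf{Step 3: the upper bound.} I will invoke the Part I count directly: the number of pairs of nontrivial critical points is at most the number of relevant roots of this algebraic equation, which is at most $3$. Adding the four trivial critical points gives $\#\{\text{critical points}\}\in\{4,6,8,10\}$. This step is the main obstacle. A naive degree count yields more than $3$ solutions, and spurious solutions must be excluded, namely those for which the corresponding $(r,s)$ is not real. I would try to exclude them by a deformation argument in $p$, starting from $p\to$ a half period, where Theorem \ref{thm-0LW} applies. Along the deformation, nontrivial critical points can only be created or annihilated at trivial critical points or at the singularities $\pm p$. A local analysis near $\pm p$, where $G_p\sim -\frac1{2\pi}\log|z\mp p|$ has no critical points nearby, rules out the second possibility.

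\textbf{Step 4: realization and genericity.} To show that each of the numbers $4,6,8,10$ occurs, I will use explicit limiting configurations. For rectangular tori with suitable $p$, symmetry gives small counts. Near degenerate tori ($\operatorname{Im}\tau\to\infty$), asymptotic expansions of $G$ produce $3$ pairs. Finally, the statement that for fixed $\tau$ almost every $p$ gives only non-degenerate critical points follows from Sard's theorem applied to the map $z\mapsto$ (critical point equation). Concretely, the map $(z,p)\mapsto\nabla G_p(z)$ is real-analytic. The set where it vanishes with degenerate Hessian projects to a real-analytic subset of $p$-space of positive codimension, since it is not all of $E_\tau$ by the rectangular examples and analyticity. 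Hence that set has measure zero.
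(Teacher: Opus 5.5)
\textbf{There is a genuine gap.} The bound of three nontrivial pairs, which is the heart of the statement, is not actually proved.

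In Step 2 you assert that unitary monodromy is ``counted by a polynomial equation of bounded degree'' whose degree gives $3$, but no such polynomial exists. By \eqref{eq-data5} and Theorem \ref{rmk2-8}, unitarity means $(r(A),s(A))\in\mathbb{R}^2\setminus\frac12\mathbb{Z}^2$, i.e.\ $A\in\sigma_1\cap\sigma_2\setminus\{A_0,A_1,A_2,A_3\}$. That is a transcendental condition whose solutions cannot be counted by a degree. The only polynomial in sight, $Q(A)=\prod_{k=0}^3(A-A_k)$, vanishes exactly at the values attached to the \emph{trivial} critical points $\frac{\omega_k}{2}$. Step 3 then ``invokes the Part I count'', which is the statement being proved.

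The fallback deformation argument rests on a false claim. As $p$ moves, nontrivial critical points can also be created or annihilated away from $E_\tau[2]\cup\{\pm p\}$, when two nontrivial pairs collide at a degenerate nontrivial critical point. This paper exhibits exactly that: for $\tau=\frac12+ib$ with $b>b_2$, two pairs in $(\frac12,z_0(\tau))$ merge into the degenerate critical point $\tilde a_{1,1}$ of $G_{\tilde p_{1,1}}$ (Lemma \ref{lemma-aup-2} (2-2) and Theorem \ref{III-thm4}). Even if bifurcations were confined to half periods, nothing bounds how often $\wp(p)$ crosses the curves $\partial\mathcal{B}_k$ of Theorem \ref{main-thm-01} along your path, so no uniform bound would follow.

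The missing idea is the fixed-point count recorded in \eqref{abss}, in the spirit of \cite{BE}. Critical points of $G_p$ are the fixed points of the anti-meromorphic map $g$ in \eqref{00a+2p}, i.e.\ the zeros of the harmonic map $\phi=z-g$ on $E_\tau\setminus\{\pm p\}$. At a fixed point $J_\phi=1-|g_{\bar z}|^2$, so sense-preserving zeros are the attracting fixed points (the local minima of $G_p$). The periodicity $g(z+\omega)=g(z)+\omega$ kills the boundary term of a fundamental domain, and each simple pole $\pm p$ contributes winding $+1$, whence $N_+-N_-=-2$. A Fatou-type argument shows each attracting fixed point attracts a critical point of $g$. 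These are zeros of the order-four elliptic function $\wp(z+p)+\wp(z-p)-2(\frac{\pi}{\operatorname{Im}\tau}-\eta_1)$, hence $N_+\le 4$ and $N=2N_++2\le 10$.

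Step 4 also needs repair. That degenerating tori produce three pairs is only asserted, so $8$ and $10$ are not realized. For genericity, ``not all of $E_\tau$'' does not give measure zero for a projected analytic set; use Sard instead. Since $\partial_p[\zeta(z+p)+\zeta(z-p)]=\wp(z-p)-\wp(z+p)\neq0$ at a nontrivial critical point, the nontrivial critical locus is locally the graph of Hitchin's map $\wp(p)=f(a)$ from \eqref{513-1-0}. A nontrivial critical point $a$ is then degenerate exactly when it is a critical point of $f$. Trivial critical points degenerate only for $\wp(p)\in\bigcup_k\partial\mathcal{B}_k$, a union of circles or lines.
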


Recall that $\wp(z)=\wp( z;\tau)$ is the Weierstrass $\wp$-function with periods
$\Lambda_{\tau}$, defined by%
\[\wp(z):=\frac{1}{z^{2}}+\sum_{\omega \in \Lambda_{\tau
}\backslash\{0\}  }\left(  \frac{1}{(z-\omega)^{2}}-\frac
{1}{\omega^{2}}\right) ,
\]
which satisfies the well-known cubic equation
\[\wp^{\prime}(z)^{2}=4\wp(z)^{3}-g_{2}\wp
(z)-g_{3}=4\prod_{k=1}^3(\wp(z)-e_k),
\]
where $g_2=g_2(\tau), g_3=g_3(\tau)$ are known as invariants of the elliptic curve, and $$e_k(\tau)=e_k:=\wp\Big(\frac{\omega_k}{2}\Big),\qquad k=1,2,3.$$ It is well known that $\wp(\cdot): E_{\tau}\to \mathbb{C}\cup\{\infty\}$ is a double cover with branch points at $\{\frac{\omega_k}{2}\}_{k=0}^3$, i.e., for any $c\in \mathbb{C}\cup\{\infty\}$, there is a unique pair $\pm z_c\in E_{\tau}$ such that $\wp(\pm z_{c})=c$. Thus $$p\in E_{\tau}\setminus E_{\tau}[2]\;\text{ is equivalent to }\;\wp(p)\in\mathbb C\setminus\{e_1, e_2, e_3\}.$$ 

In Part II \cite{CFL-II}, we studied the important case $\tau=ib$ with $b>0$, that is, $E_{\tau}$ is a rectangular torus.
The main result of Part II \cite{CFL-II} is as follows, which gives a complete answer for $\wp(p)\in\mathbb{R}$.

\begin{Theorem}\cite{CFL-II}\label{II-thm}
Let $\tau=ib$ with $b>0$. Then there are $8$ real values, denoted by
$$d_1<d_2<\cdots<d_7<d_8,$$
such that the following statements hold.
\begin{itemize}
\item[(1)] $G_p(z)$ has no nontrivial critical points for
$$\wp(p)\in (-\infty, d_1]\cup [d_2, d_3]\cup [d_4, d_5]\cup [d_6, d_7]\cup [d_8,+\infty).$$
\item[(2)] $G_p(z)$ has a unique pair of nontrivial critical points $\pm a$ that are always non-degenerate for
$$\wp(p)\in (d_1, d_2)\cup (d_3, d_4)\cup (d_5, d_6)\cup (d_7, d_8).$$
Furthermore, $a=\pm\bar{a}$ in $E_{\tau}$.
\end{itemize}
\end{Theorem}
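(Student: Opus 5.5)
This statement is quoted from Part II \cite{CFL-II}, so I would rebuild its proof in that framework, specialised to $\tau=ib$ with $\wp(p)\in\mathbb R$.

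\emph{Step 1: an algebraic criterion.} I would use the standard complex form of the critical point equation. With $\zeta$ the Weierstrass zeta function and $\eta_1,\eta_2$ its quasi-periods, one has
\[
-4\pi\,\partial_z G(z)=\zeta(z)-r\eta_1-s\eta_2, \qquad z=r+s\tau,\ r,s\in\mathbb R .
\]
Hence $a=r+s\tau$ is a critical point of $G_p$ if and only if
\[
\zeta(a+p)+\zeta(a-p)=2(r\eta_1+s\eta_2).
\]
The addition formula gives $\zeta(a+p)+\zeta(a-p)=2\zeta(a)+\frac{\wp'(a)}{\wp(a)-\wp(p)}$. The question therefore becomes which $a$ satisfy
\[
Z(a):=\zeta(a)-r\eta_1-s\eta_2=-\frac{\wp'(a)}{2(\wp(a)-\wp(p))}.
\]
Here $Z$ is the Hecke form, and $4\pi\,\mathrm{Im}\,Z=\ldots$ is essentially $\partial_y G$. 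Part I \cite{CFL} bounds the number of pairs by $3$ through a correspondence with a Painlev\'e VI or Lam\'e-type monodromy problem. In that picture, nontrivial pairs correspond to solutions of a polynomial system in $\wp(a)$ whose coefficients depend on $\wp(p)$.

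\emph{Step 2: use the symmetry.} For $\tau=ib$ the lattice is invariant under $z\mapsto\bar z$. Since $\wp(p)$ is real, $\overline{\wp(p)}=\wp(\bar p)=\wp(p)$, so $\bar p=\pm p$ and $G_p(\bar z)=G_p(z)$. Complex conjugation therefore permutes the nontrivial critical points. I would first show that every nontrivial critical point lies on the fixed lines, i.e.\ $a=\pm\bar a$. Equivalently, $a$ lies on one of the four real lines $\mathrm{Im}\,a\in\{0,b/2\}$ or $\mathrm{Re}\,a\in\{0,1/2\}$, where $\wp$ is real.

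On each such line the critical point condition reduces to a one-variable real equation. For example, on $a=x$ real, $\partial_y G_p=0$ holds automatically, so only $f(x):=\partial_x G_p(x)=0$ remains. The same holds on the other three segments.

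\emph{Step 3: count zeros on the segments.} On each segment the function $f$ vanishes at the half-period endpoints, since these are trivial critical points, and it has poles exactly where $\wp(a)=\wp(p)$. I would count its interior zeros via $f'$ at the endpoints, which is the Hessian of $G_p$ at $\omega_k/2$ and is an explicit function of $\wp(p)$, monotone in $\wp(p)$. The claim is that across the four segments there is at most one interior zero pair in total, and that one exists exactly when some endpoint Hessian has the sign that forces a crossing.

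The values $d_j$ are then the $\wp(p)$-values at which one of the trivial critical points $\omega_k/2$, $k=0,\dots,3$, becomes degenerate along a segment direction. Since there are two directions at each of four points, this gives eight thresholds. Between consecutive thresholds a nontrivial pair bifurcates from a trivial point and later reabsorbs into another one, which produces the alternating pattern in (1) and (2). Non-degeneracy of $a$ follows from simplicity of the zero of $f$ together with a transversal computation, using $\Delta G_p=1/|E_\tau|>0$ away from $\pm p$.

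\emph{Main obstacle.} Two claims carry the real difficulty. The first is excluding critical points off the symmetry lines. The second is proving uniqueness, meaning at most one pair, rather than just parity. For the first, I would argue by contradiction. An off-line pair $a$ gives two further pairs, $\bar a$ and $-\bar a$, so off-line points come in quadruples $\{a,-a,\bar a,-\bar a\}$, i.e.\ two nontrivial pairs at once. Combined with the bound of $3$ from Theorem \ref{main-thm-1} and a degree count (the total index is $\chi=0$), this should force a contradiction with the monotonicity in Step 3. The monotonicity of the endpoint Hessians in $\wp(p)$ is where I expect the computation to be hardest.
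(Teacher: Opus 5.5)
\textbf{There is a genuine gap.} It sits exactly at the step you flag as the main obstacle, and the tools you propose cannot close it.

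Tracking the signs of $f'$ at the half-periods along the four symmetry segments gives only parity (hence existence) information about interior zeros. Nothing in Steps 1--3 bounds their number, so three zeros on one segment, or zeros on two segments at once, are not excluded.

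Your exclusion of off-line critical points fails too. Since $G_p$ is invariant under $z\mapsto -z$ and $z\mapsto\bar z$, an off-line quadruple $\{\pm a,\pm\bar a\}$ consists of four critical points of equal index. Consider the configuration ``four trivial saddles, an off-line quadruple of minima, one further pair of saddles on the lines''. Its index sum is $-4+4-2=-2$, which is exactly what $\chi(E_\tau)=0$ and the two index-$(+1)$ singularities at $\pm p$ require, and it has only three nontrivial pairs. So neither the degree count nor Theorem \ref{main-thm-1} rules it out. There is also no polynomial count to fall back on: the critical-point equation is not algebraic in $\wp(a)$, because $r\eta_1+s\eta_2$ is not a function of $\wp(a)$ (this is why $f$ in \eqref{00513-1-0} is not meromorphic).

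What is missing is a device that counts \emph{all} nontrivial pairs, on and off the symmetry lines, simultaneously. In the framework the paper recalls from Part II, Theorem \ref{rmk2-8} identifies the pairs with $\sigma_1\cap\sigma_2\setminus\{A_0,A_1,A_2,A_3\}$. For $\tau=ib$ and $\wp(p)\in\mathbb R$, both conditional stability sets are symmetric about an axis that contains all four endpoints $A_k$. The endpoint and unbounded-complement properties (Lemma \ref{lemma2-7}) and the asymptotics (Lemma \ref{lemma2-8}), in their $\sigma_1,\sigma_2$ versions, then force, e.g.\ for $p\in(0,\frac12)$, $\sigma_2=(-\infty,A_1]\cup[A_3,A_2]\cup[A_0,+\infty)$ and $\sigma_1=\sigma_{1,\infty}\cup[A_1,A_3]\cup[A_2,A_0]$, with $\sigma_{1,\infty}$ meeting $\mathbb R$ in one point. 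This gives at most one pair. Once uniqueness is known, $a=\pm\bar a$ is automatic, since conjugation permutes the nontrivial pairs. It should be a corollary, not a first step.

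\textbf{The non-degeneracy step is also incomplete.} At a nontrivial critical point on, say, the real axis, the Hessian is diagonal with $G_{xx}=\frac{\alpha}{2\pi}$ and $G_{yy}=\frac1b-\frac{\alpha}{2\pi}$, where $\alpha=\frac12(\wp(a+p)+\wp(a-p))+\eta_1\in\mathbb R$; this is \eqref{eqf0c-15}. A simple zero of $f$ gives only $G_{xx}\neq 0$, and $\Delta G_p=\frac1b>0$ does not exclude $G_{yy}=0$, i.e.\ $\alpha=\frac{2\pi}{b}$. Nor is simplicity automatic: Theorem \ref{III-thm4} exhibits degenerate nontrivial critical points with $a=\pm\bar a$ on the rhombus torus.

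In the stability-set picture, $\alpha=0$ (resp.\ $\alpha=\frac{2\pi}{b}$) means the corresponding $A$ is a branch point of $\sigma_1$ (resp.\ $\sigma_2$), by Lemma \ref{lemma2-10-3} and its $\sigma_2$-analogue. The explicit shapes above show that the unique intersection point is a branch point of neither.

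Your identification of the eight thresholds is consistent with Theorem \ref{main-thm-01}. For $\tau=ib$ every $\mathcal B_k$ is a disk centred on $\mathbb R$, so $\partial\mathcal B_k\cap\mathbb R$ supplies two values for each $k$. But that these eight values are distinct, interlace as stated, and that the count changes by exactly one across each of them, again rests on the missing at-most-one bound.
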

Here, for $E_{\tau}$ being a rectangular torus and $\wp(p)\in\mathbb{R}$, it was proved in \cite{CFL-II} that if $a$ is a critical point of $G_p(z)$, then so is the complex conjugate $\bar{a}$.

\subsection{New results} Clearly Theorem \ref{main-thm-1} shows that critical points of $G_p(z)$ would be degenerate for some $(\tau,p)$'s.
In this paper, first we prove the following non-degeneracy result, which solves \cite[Conjecture A]{CFL}.

\begin{theorem}\label{thm-nondegenerate} Let $p\in E_{\tau}\setminus E_{\tau}[2]$.
If $G_p(z)$ has $10$ critical points, then these critical points are all non-degenerate.
\end{theorem}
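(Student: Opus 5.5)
We will combine a topological count with the algebraic description of critical points of $G_p$ from Part I \cite{CFL}. Recall the standard formula: writing $z=r+s\tau$ with $r,s\in\mathbb R$,
\[
-4\pi\,\partial_z G(z)=\zeta(z)-r\eta_1-s\eta_2 ,
\]
so critical points of $G_p$ solve $\zeta(z-p)+\zeta(z+p)-2(r\eta_1+s\eta_2)=0$. In Part I this system was reduced, via the addition formula $\zeta(z+p)+\zeta(z-p)=2\zeta(z)+\frac{\wp'(z)}{\wp(z)-\wp(p)}$ and the elimination of $(r,s)$ through the relation between $\zeta$ and the real coordinates, to the counting of solutions of an algebraic equation in $\wp(z)$. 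Nontrivial critical points correspond to roots of a polynomial $P_{\tau,p}$ of degree controlled by the proof of Theorem \ref{main-thm-1}; the bound $3$ pairs came from a degree count of this type. The plan is as follows.

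First, the Hopf index theorem on $E_\tau$ applied to $\nabla G_p$ gives $\sum \operatorname{ind}=\chi(E_\tau)-(\text{contributions of the two poles})=0-2\cdot 1=-2$, since near each singularity $\pm p$ the gradient has index $+1$ (the function tends to $+\infty$). Thus (sum of indices over the critical points) $=-2$. Each non-degenerate critical point has index $\pm1$, and at a degenerate isolated critical point of a function whose Hessian has rank $\ge1$ (the Hessian of $G_p$ is never zero, since $G_p$ is harmonic away from singularities up to the constant $1/|E_\tau|$, so trace $=1/|E_\tau|>0$ is impossible to vanish), the index lies in $\{-1,0,1\}$. Moreover, a degenerate critical point is a root of multiplicity $\ge2$ of the underlying algebraic equation. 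We use this: if there are $10$ distinct critical points (four trivial, three nontrivial pairs) and one of them is degenerate, then counting with multiplicity gives strictly more than the maximal total allowed by the algebraic degree count established in Part I, which would be a contradiction.

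So the key step is to show that the Part I count is an \emph{algebraic multiplicity} bound: the total number of nontrivial critical points, counted with the multiplicity of the Hessian-degeneracy (equivalently with the multiplicity as zeros of the relevant holomorphic/algebraic function), is at most $6$, and that a trivial critical point $\frac{\omega_k}{2}$ being degenerate also forces an extra multiplicity absorbing a root of the same equation (e.g.\ a nontrivial pair collapsing into the half period). Concretely, I would revisit the Part I reduction: the map $z\mapsto \nabla G_p(z)$ expressed locally as $f(z,\bar z)$ with $f$ a real-analytic function that becomes, after substituting $\zeta(z)-r\eta_1-s\eta_2$ through the Legendre relation, a function whose zeros are parametrized by a one-parameter family (the Lin--Wang $\tau$-deformation / monodromy approach), and show the Jacobian of $\nabla G_p$ vanishes exactly when the corresponding root is multiple. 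Degeneracy of $\det D^2G_p$ at $a$ is equivalent to $|\partial_z^2 G_p(a)|=\frac{1}{4|E_\tau|}$-type condition, which I would translate into the vanishing of the derivative of the defining polynomial.

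The main obstacle I expect is exactly this translation: Hessian degeneracy is a real condition, whereas the root multiplicity is complex-algebraic, and the counting in Part I may have been done over a parameter family rather than as a fixed-degree polynomial. If the direct multiplicity identification fails, the fallback is a continuity/perturbation argument: by Theorem \ref{main-thm-1}, for almost all nearby $p'$ all critical points of $G_{p'}$ are non-degenerate; a degenerate critical point of index $0$ splits under generic perturbation into two non-degenerate ones of indices $+1,-1$ (or disappears), while the other nine persist, yielding at least $11$ critical points for $G_{p'}$ (evenness forces splitting in pairs, giving $12$), contradicting the bound $10$; a degenerate point of index $\pm1$ similarly splits into at least three, again exceeding $10$. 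Making the ``splits into more points'' step rigorous (using real-analyticity in $(z,p)$ and the choice of perturbation direction so that splitting actually occurs) is where I would invest most effort.
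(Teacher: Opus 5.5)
Your proposal has a genuine gap, located exactly at the step you flag as the main obstacle, and neither of your routes closes it.

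There is no polynomial or holomorphic equation whose roots are the critical points of $G_p$. The critical-point equation $\zeta(z+p)+\zeta(z-p)+2(az+b\bar z)=0$ involves $\bar z$. Moreover, the bound from Part I is not an algebraic degree count. It is the statement that the harmonic map $\phi(z)=z-g(z)$ satisfies $\#\phi^{-1}(w)\le 10$ (distinct preimages) for \emph{every} $w\in\mathbb{C}$. So ``degenerate $\Rightarrow$ multiple root'' has no content as it stands.

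The right substitute for multiplicity is local non-injectivity of $\phi$. At zeros of $\phi$ one has $\det D^2G_p=\frac{1}{4(\operatorname{Im}\tau)^2}J_\phi$, so a degenerate critical point is a zero with $J_\phi=0$. You then need Lewy's theorem (Theorem~\ref{thm-Lewy}) to conclude that $\phi$ is not injective on any neighbourhood of that zero. This fact is specific to harmonic maps: $(x,y)\mapsto(x^3,y)$ is injective with vanishing Jacobian. So no argument via the trace of the Hessian or the Hopf index can replace it.

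The second missing idea is to perturb the \emph{target} $w$ rather than $p$. Non-injectivity gives a value $w'$ arbitrarily close to $0$ with two preimages near the degenerate point. You also need a preimage of this same $w'$ near each of the other nine zeros. Some of those could also be degenerate, so the inverse function theorem is not enough; you need local surjectivity of $\phi$ at every zero. Bergweiler--Eremenko's Proposition~\ref{prop003} supplies this, precisely because the maximal count $10$ is attained at $w=0$. Then $\#\phi^{-1}(w')\ge 11$, a contradiction. This is the paper's proof.

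Your fallback of perturbing $p$ within the family $G_{p'}$ does not work either. A degenerate critical point of index $\pm1$ need not split into three: $x^4+y^2+\epsilon x$ has a single non-degenerate critical point near $0$ for $\epsilon\neq0$. An index-$0$ point can simply disappear. With only the two real parameters of $p'$, you have no control over which of these happens. The other nine points are also only guaranteed to persist if they are non-degenerate, or of non-zero index, which you cannot assume.

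Solving $\phi(z)=w$ amounts to finding critical points of $G_p$ plus a real-linear function of $z$. That leaves the family $\{G_{p'}\}$ altogether. This extra freedom, together with the bound $\#\phi^{-1}(w)\le 10$ holding for all $w$, is what makes the paper's argument work. Your Hopf index computation (index sum $-2$) is correct but plays no role.
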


Secondly,
in this paper we study the other important case $\tau=\frac12+ib$ with $b>0$, that is, $E_{\tau}$ is a rhombus torus. One can see from Theorems \ref{III-thm1}-\ref{III-thm4} below that this case is much more complicated than the case of rectangular torus.

\begin{remark}
For $E_{\tau}$ being a rhombus torus and $\wp(p)\in\mathbb{R}$, we will prove in Lemma \ref{lemma-s5-13} that if $a$ is a critical point of $G_p(z)$, then so is the complex conjugate $\bar{a}$. Furthermore, $a=\pm \bar{a}$ in $E_{\tau}$ if and only if $\wp(a)\in\mathbb{R}$.
In other words, if $\wp(a)\notin\mathbb{R}$, then $\pm a$ and $\pm\bar a$ are two different pairs of critical points of $G_p(z)$. In this paper, we are interested in the existence of critical points satisfying $\{\pm a\}=\{\pm \bar a\}$ in $E_{\tau}$. We will see below that such critical points have applications to even axisymmetric solutions for curvation equations.

It is well-known that the two tori $E_{\tau}$ and $E_{\tilde\tau}$ are conformally equivalent as long as  $\tilde\tau=\frac{a\tau+\hat b}{c\tau+d}$ for some $\begin{pmatrix} a&\hat b
\\c&d \end{pmatrix}\in SL_2(\mathbb Z)$. Letting $\begin{pmatrix} a&\hat b\\c&d \end{pmatrix}=\begin{pmatrix} 1&-1\\2&-1 \end{pmatrix}$, we obtain $\tilde{\tau}=\frac{\tau-1}{2\tau-1}=\frac12+\frac{i}{4b}$ for $\tau=\frac12+ib$, namely $E_{\frac12+ib}$ and $E_{\frac12+\frac{i}{4b}}$ are conformally equivalent. Therefore, for simplicity, we only need to consider $\tau=\frac12+ib$ with $b\geq \frac12$ in the sequel.
\end{remark}

Let $\zeta(z)=\zeta(z;\tau):=-\int^{z}\wp(\xi)d\xi$ be the Weierstrass
zeta function with two quasi-periods
\[
\eta_{k}(\tau)=\eta_k:=2\zeta\Big(\frac{\omega_{k}}{2}\Big)=\zeta(z+\omega_{k}%
)-\zeta(z),\quad k=1,2.
\]
This $\zeta(z)$ is an odd meromorphic function with simple poles at $\Lambda_{\tau}$. We will prove in Lemma \ref{lem04-4} that there is a unique $b_2\in (\sqrt{3}/2, 6/5)$ such that
\begin{equation}\label{def-b2}
12\Big(\eta_1\Big(\frac12+ib\Big)-\frac{2\pi}{b}\Big)^2-g_2\Big(\frac12+ib\Big)=0\quad\text{if and only if}\;b=b_2.
\end{equation}
Recall $b_1\in (1/2, \sqrt{3}/2)$ in Theorem \ref{thm-0LW}-(2).
Our main results are as follows.

\begin{theorem}\label{III-thm1}
Let $\tau=\frac12+ib$ with $\frac12\leq b<b_1$. Then there exist four real values, denoted by
$$d_1<d_2<d_3<d_4,$$
which satisfies $d_2<e_1=\wp(\frac12)<d_3$,
 such that the following statements hold.
\begin{itemize}
\item[(1)] $G_p(z)$ has no nontrivial critical points satisfying $a=\pm \bar a$ for
$$\wp(p)\in (-\infty, d_1]\cup [d_2, d_3]\cup [d_4,+\infty).$$
\item[(2)] $G_p(z)$ has a unique pair of nontrivial critical points $\pm a$ satisfying $a=\pm \bar a$ for
$$\wp(p)\in (d_1, d_2)\cup (d_3, d_4).$$
\end{itemize}
\end{theorem}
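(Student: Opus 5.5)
We will work with the standard complex-analytic reformulation of critical points of $G_p$ used in Parts I and II. Write $\tau=\frac12+ib$ and $p=r+s\tau$, and recall $-4\pi\partial_z G(z)=\zeta(z)-rz\eta_1-\ldots$, i.e.
\[
-4\pi\,\partial_zG(z)=\zeta(z)-z\eta_1+\frac{2\pi i}{\operatorname{Im}\tau}\,\operatorname{Im} z .
\]
A point $a=r+s\tau$ is then a critical point of $G_p$ exactly when
\[
\zeta(a+p)+\zeta(a-p)-2(r+s\tau)\eta_1+2s\cdot 2\pi i=0 .
\]
Using the addition formula $\zeta(a+p)+\zeta(a-p)=2\zeta(a)+\frac{\wp'(a)}{\wp(a)-\wp(p)}$, this becomes one transcendental equation in $a$ with parameter $\wp(p)$.

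The first step (Lemma \ref{lemma-s5-13}) is the rhombus symmetry. Conjugation maps $\Lambda_\tau$ to itself, so when $\wp(p)\in\mathbb R$ we get $\wp(\bar z)=\overline{\wp(z)}$ and $G_p(\bar z)=G_p(z)$. Hence critical points with $a=\pm\bar a$ are exactly those with $\wp(a)\in\mathbb R$. For the rhombus lattice these lie on the real line $\mathbb R/\mathbb Z$ or on the line $\frac{\tau}{2}+\ldots$, more precisely the fixed curves of $z\mapsto\pm\bar z$, namely $\operatorname{Re} z\in\{0,\frac12\}$ and $\operatorname{Im} z=0$ modulo the lattice. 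On each such curve we parametrize by $x=\wp(a)\in\mathbb R$. Using evenness, the critical point equation reduces to a real equation $F(x;\wp(p))=0$. Since $\wp(a)\in\mathbb R$ forces $a$ onto a symmetry curve along which the transverse derivative of $G_p$ vanishes automatically, only one real component survives. That component has the form
\[
\frac{\wp'(a)}{\wp(a)-\wp(p)}=\phi(a),
\]
where $\phi(a):=-2\zeta(a)+2\eta_1 a-\frac{4\pi}{b}\operatorname{Im}a$ is real along the curve. Equivalently,
\[
\wp(p)=x-\frac{\wp'(a)}{\phi(a)}=:\Psi(a),
\]
so for each real value $\wp(p)$ the critical points with $a=\pm\bar a$ correspond bijectively to solutions $a$ (up to sign) on the union $\Gamma$ of real curves of $\Psi(a)=\wp(p)$. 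This is the key reduction: the problem becomes counting preimages of a real function $\Psi$ on a one-dimensional set.

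The second step is to analyse $\Psi$ on $\Gamma$. For the rhombus torus the real locus $\Gamma=\{a:\wp(a)\in\mathbb R\}$ consists of a segment through $\frac12$, where $e_1$ is the only real $e_k$, together with the line through $\frac{\tau}{2},\frac{1+\tau}{2}$ on which $\wp(a)$ ranges over real values. The points $0$ and the half periods must be removed, since they are trivial critical points. We will compute the limits of $\Psi$ at the endpoints, i.e. at the half periods and at $0$, where $\Psi\to\pm\infty$ or to $e_1$. The hypothesis $b<b_1$ enters here: by Theorem \ref{thm-0LW}, $G$ itself has no nontrivial critical point. This means $\phi$ has no interior zero on the relevant arcs, so $\Psi$ is finite away from the endpoints and $\wp(a)=e_1$ gives $\Psi=e_1$.

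We will then show that $\Psi$ has exactly one interior critical point on each of two arcs: a local max $d_2$ or min on one arc, and one on the other. Together with the endpoint values this produces $d_1<d_2<e_1<d_3<d_4$. Each level in $(d_1,d_2)\cup(d_3,d_4)$ then has exactly one preimage pair, and the other levels have none. The values $d_1$ and $d_4$ will be the endpoint limits at the half periods $\frac{\tau}{2}$ and $\frac{1+\tau}{2}$; these are finite because $\phi$ is nonzero there when $b<b_1$.

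The main obstacle is proving monotonicity, i.e. a unique turning point of $\Psi$ on each arc. Our first attempt will be to differentiate $\Psi$ along the arc using $\wp''=6\wp^2-g_2/2$ and $\phi'=2\wp-2\eta_1+\frac{4\pi}{b}\cdot(\text{const})$. The goal is to reduce $\Psi'=0$ to an equation of the form $Q(x)\phi^2+\ldots=0$ and then to show that a suitable auxiliary function is strictly monotone in $x$. The quantity $12(\eta_1-\frac{2\pi}{b})^2-g_2$ from \eqref{def-b2} is exactly the discriminant that appears in this step. We expect its sign for $b<b_1<b_2$ to give the needed convexity, so that $\Psi'$ changes sign only once on each arc. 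If direct monotonicity fails, the fallback is a degree count: the non-degeneracy Theorem \ref{thm-nondegenerate} and the bound of at most $3$ pairs from Theorem \ref{main-thm-1} limit the number of solutions. We would combine that bound with a continuity argument in $\wp(p)$ from the rectangular-like endpoint $b=\frac12$, i.e. $\tau=\frac{1+i}{2}$, which is equivalent to the square torus, where Theorem \ref{II-thm} applies.
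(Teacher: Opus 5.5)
\textbf{There is a genuine gap: the second step uses a wrong picture of the real locus, the shape you aim to prove contradicts the theorem, and the injectivity argument is missing.} Your reduction is sound. Your $\Psi$ is the paper's map $f$ in \eqref{513-1-0}, up to a slip: the last term of $\phi$ should be $-\frac{4\pi i}{b}\operatorname{Im}a$, and on $II\cup III$ only the ratio $\wp'(a)/\phi(a)$ is real.

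\textbf{The real locus.} Since $\wp(\frac\tau2)=e_2=\overline{e_3}\notin\mathbb R$ (Lemma \ref{Lemma53-1}), no real curve of $\wp$ passes through $\frac\tau2$ or $\frac{1+\tau}2$. Modulo $\pm$, the set $\{\wp(a)\in\mathbb R\}\setminus E_\tau[2]$ is exactly two arcs: $I=(0,\frac12)$, and $II\cup III$, which runs from $\frac12$ to $0$ through the non-half-period $\frac14+\frac\tau2\equiv\frac14-\frac\tau2$. The only endpoints are $0$ and $\frac12$. At these, $f$ has finite limits, neither $\pm\infty$ nor $e_1$.
\begin{itemize}
\item Along $I$: $f\to-\eta_1=d_2$ at $0$, and $f\to e_1-\frac{3e_1^2-g_2/4}{e_1+\eta_1}=d_1$ at $\frac12$.
\item Along $II\cup III$: $f\to\frac{2\pi}{b}-\eta_1=d_3$ at $0$, and $f\to e_1+\frac{3e_1^2-g_2/4}{\frac{2\pi}{b}-e_1-\eta_1}=d_4$ at $\frac12$.
\end{itemize}
The values $e_1$ and $\infty$ are never attained, because $G$ and $G(\cdot-\frac12)$ have no nontrivial critical points for $\frac12\le b<b_1$.

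\textbf{The shape you aim for is wrong.} ``Exactly one interior critical point on each arc'' contradicts the conclusion. An interior extremum on an arc gives two preimages on that arc for all nearby levels, i.e.\ two pairs. What must be proved is that $f$ has no turning point at all on either arc. The quantity $12(\eta_1-\frac{2\pi}{b})^2-g_2$ plays no role here. In the paper it is only the coefficient of $(a-\frac12)^2$ in the expansion of $f$ at $\frac12$ along $II$, and it matters only for $b>b_1$, where a fold appears once $b>b_2$.

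\textbf{The missing idea is how to obtain that injectivity.} Your proposal leaves it open, and the fallback fails for three reasons.
\begin{itemize}
\item Theorem \ref{thm-nondegenerate} concerns only the case of $10$ critical points, and the bound of $3$ pairs does not force one pair on the real locus.
\item Although $E_{(1+i)/2}\cong E_i$, real $\wp(p)$ on this rhombus corresponds to $p$ on a diagonal of the square torus, where $\wp(p;i)\in i\mathbb R$. That case is not covered by Theorem \ref{II-thm}.
\item Continuing in $b$ would require excluding exactly the bifurcations in question.
\end{itemize}

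\textbf{How the paper gets injectivity: the conditional stability sets.}
\begin{itemize}
\item For $a=r\in I$, one shows $p\in II\cup III$, and the associated $A(r)$ stays in $(A_1,A_0)\subset i\mathbb R$ (pinned by $f(\frac14)=\wp(\frac14+\frac\tau2)$ and continuity). Since $f'(r)$ is a nonzero multiple of $\wp(r+p)+\wp(r-p)+2\eta_1$, a zero of $f'$ would make $A(r)$ a branch point of $\sigma_1$. This is impossible: by Lemma \ref{Lemma53-7}, $\sigma_1\cap(A_1,A_0)$ lies on the simple symmetric arc $\sigma_{1,A_2A_3}$.
\item For $a\in II$, one shows $p\in I$ and $A(a)\in(A_1,A_0)\subset\mathbb R$. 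Lemma \ref{Lemma53-5} gives $\#(\sigma_1\cap\sigma_3\cap(A_1,A_0))\le 1$, so by Theorem \ref{rmk2-8} each level has at most one preimage in $II$.
\item $III$ follows from $II$ via $a\mapsto\frac12-a$, $p\mapsto\frac12-p$.
\end{itemize}
Together with $f(\frac14+\frac\tau2)=\wp(\frac14)$ and the endpoint limits above, this gives bijections $I\to(d_1,d_2)$ and $II\cup III\to(d_3,d_4)$, which is the theorem.
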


\begin{theorem}\label{III-thm2}
Let $\tau=\frac12+ib_1$. Then there exist two real values $d_1<d_2$, which satisfies $d_2<e_1=\wp(\frac12)$,
such that the following statements hold.
\begin{itemize}
\item[(1)] $G_p(z)$ has no nontrivial critical points satisfying $a=\pm \bar a$ for
$$\wp(p)\in (-\infty, d_1]\cup [d_2, e_1].$$
\item[(2)] $G_p(z)$ has a unique pair of nontrivial critical points $\pm a$ satisfying $a=\pm \bar a$ for
$$\wp(p)\in (d_1, d_2)\cup (e_1, +\infty).$$
\end{itemize}
\end{theorem}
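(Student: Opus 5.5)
The strategy is to reduce the search for critical points $a$ of $G_p$ with $a=\pm\bar a$ to a real one-variable problem and count its roots as $\wp(p)$ runs over $\mathbb R$, treating $\tau=\frac12+ib_1$ as the degenerate endpoint of the family in Theorem \ref{III-thm1}.

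\textbf{Critical point equation.} Write $z=r+s\tau$ with $(r,s)\in\mathbb R^2$. The standard formula
$$-4\pi\,\partial_z G(z)=\zeta(z)-r\eta_1-s\eta_2$$
gives the equation for $a$:
$$\zeta(a+p)+\zeta(a-p)-2(r\eta_1+s\eta_2)=0,$$
where $a=r+s\tau$. By the addition formula this becomes
$$2\zeta(a)+\frac{\wp'(a)}{\wp(a)-\wp(p)}-2(r\eta_1+s\eta_2)=0.$$
By Lemma \ref{lemma-s5-13}, the condition $a=\pm\bar a$ is equivalent to $\wp(a)\in\mathbb R$. For the rhombus torus, the locus $\{\wp(a)\in\mathbb R\}$ consists of two real curves:
\begin{itemize}
\item the segment $a\in(0,\frac12)$, or the corresponding $\bar a=-a$ branch, along which $\wp$ decreases from $+\infty$ to $e_1$;
\item the curve $a\in\frac{\tau}{2}+\mathbb R$ type segments (the line $\operatorname{Re}$-direction through $\frac{\omega_2}{2},\frac{\omega_3}{2}$), along which $\wp$ runs through $(-\infty,e_1]$.
\end{itemize}
On each curve, the imaginary part of the equation holds automatically by symmetry. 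The problem therefore reduces to a single real equation
$$\wp(p)=\Phi(x),\qquad x=\wp(a)\in\mathbb R,$$
obtained by solving the real part for $\wp(p)$. Here $\Phi$ is an explicit expression in $\wp(a)$, $\wp'(a)$, $\zeta(a)$ and the real parameter $r$ or $s$.

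\textbf{Counting solutions.} The number of pairs $\pm a$ with $a=\pm\bar a$ equals the number of preimages of the value $\wp(p)$ under $\Phi$ restricted to the two curves. The plan is to show the following.
\begin{enumerate}
\item On the segment $(0,\frac12)$, the map $\Phi$ is strictly monotone, with range $(e_1,+\infty)$.
\item On the other curve, $\Phi$ has exactly one interior critical point (a maximum). Its values start and end at the same level, and its range is $(d_1,d_2]$ with $d_2<e_1$.
\end{enumerate}

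In the general case $b<b_1$ of Theorem \ref{III-thm1}, the first branch only covers $(d_3,d_4)$. This is because $G$ itself has nontrivial critical points exactly when $b>b_1$, and the boundary behaviour of $\Phi$ at $a=\frac12$ is governed by the sign of $G_{xx}$ at $\frac{\omega_1}{2}$. At $b=b_1$ that Hessian degenerates: the finite endpoint $d_3$ collapses to $e_1$, and the endpoint $d_4$ escapes to $+\infty$. Taken together, these facts give exactly the stated intervals:
\begin{itemize}
\item no solution for $\wp(p)\in(-\infty,d_1]\cup[d_2,e_1]$;
\item exactly one pair of solutions for $\wp(p)\in(d_1,d_2)\cup(e_1,+\infty)$.
\end{itemize}
The value $\wp(p)=e_1$ itself is excluded, since then $p\in E_\tau[2]$.

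\textbf{Main obstacle.} The key step is the monotonicity and unimodality of $\Phi$. I would differentiate $\Phi$ in the real parameter and express the sign of $\Phi'$ through the Hessian-type quantity
$$\big(\eta_1-\tfrac{2\pi}{b}\big)\wp(a)-\ldots,$$
using the Legendre relation $\eta_1\tau-\eta_2=2\pi i$ to eliminate $\eta_2$. I would then use the known degeneracy identity at $b_1$, namely that $e_1+\eta_1-\frac{2\pi}{b}$ vanishes or changes sign as in Lin--Wang's characterization, to control the signs at the endpoint. If a direct sign argument fails, the fallback is a continuity argument from $b<b_1$: apply Theorem \ref{III-thm1} and pass to the limit $b\uparrow b_1$, tracking $d_3\to e_1$ and $d_4\to+\infty$. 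Uniqueness of solutions is preserved in the limit because of the non-degeneracy established via Theorem \ref{thm-nondegenerate}-type arguments.
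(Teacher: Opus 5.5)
\textbf{There is a genuine gap.} Your framework is the paper's: restrict Hitchin's map $f(a)=\wp(a)+\frac{\wp'(a)}{2(\zeta(a)-r\eta_1-s\eta_2)}$ to the locus $\wp(a)\in\mathbb{R}$ and count preimages of $\wp(p)$. However, the concrete picture you set out to prove is wrong, and it contradicts the statement itself.
\begin{itemize}
\item \emph{Wrong real locus.} On the rhombus torus $\wp(\frac{\tau}{2})=e_2\notin\mathbb{R}$, so the real locus of $\wp$ does not pass through $\frac{\omega_2}{2},\frac{\omega_3}{2}$. Modulo $\pm$ it is $I=(0,\frac12)$ together with the vertical segments $II=(\frac12,\frac14+\frac{\tau}{2}]\subset\frac12+i\mathbb{R}$ and $III=[\frac14-\frac{\tau}{2},0)\subset i\mathbb{R}$, which are joined at $\frac14+\frac{\tau}{2}\equiv\frac14-\frac{\tau}{2}$.
\item \emph{Branches reversed.} For $a=r\in I$ one has $f(r)\to-\eta_1=d_2$ as $r\to0$. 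Using $e_1+\eta_1=\frac{2\pi}{b_1}$, one has $f(r)\to e_1-\frac{3e_1^2-g_2/4}{e_1+\eta_1}=d_1$ as $r\to\frac12$. So it is $I$ that produces $(d_1,d_2)\subset(-\infty,e_1)$. The vertical branch $II\cup III$ is mapped monotonically onto $(e_1,+\infty)$: along $II$, $f(a)=-\frac{3}{(a-\frac12)^2}+O(1)\to+\infty$ as $a\to\frac12$, precisely because $e_1+\eta_1-\frac{2\pi}{b_1}=0$; then $f(\frac14+\frac{\tau}{2})=\wp(\frac14)$; and $f\to\frac{2\pi}{b_1}-\eta_1=e_1$ as $a\to0$ along $III$. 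The same reversal affects your heuristic for $b<b_1$: there it is $II\cup III$ that covers $(d_3,d_4)$ and whose endpoints degenerate as $b\uparrow b_1$.
\item \emph{Unimodality contradicts the statement.} Since $\wp(a)$ determines the pair $\pm a$, a unimodal branch with range $(d_1,d_2]$ would give two pairs for each $\wp(p)\in(d_1,d_2)$ and one pair at $d_2$. This contradicts both (1) and (2). In fact neither branch has an interior critical point of $f$.
\item \emph{The value $e_1$.} $\wp(p)=e_1$ is not ``excluded''. The statement asserts there are no nontrivial critical points there, which holds because at $b=b_1$ the Green function $G$ has only its three trivial critical points.
\end{itemize}

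The deeper gap is that injectivity of $f$ on each branch is never argued. ``Differentiate and control the sign'' is not carried out, and the fallback does not work. Passing to the limit $b\uparrow b_1$ in Theorem~\ref{III-thm1} preserves neither existence nor uniqueness unless you already know $f'\neq0$ on the branches. You would also need to control the half periods, which is exactly where $\det D^2G(\frac12)$ vanishes at $b_1$. Theorem~\ref{thm-nondegenerate} concerns only the case of $10$ critical points, so it gives nothing here.

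The paper obtains injectivity from the conditional stability sets:
\begin{itemize}
\item \emph{On $I$.} Here $f'(r)=\wp'(p)\frac{\wp(r+p)+\wp(r-p)+2\eta_1}{\wp(r-p)-\wp(r+p)}$, and the numerator vanishes iff the associated $A(r)$ is a branch point of $\sigma_1$ (Lemma~\ref{lemma2-10-3}). This is impossible: $A(r)\in(A_1,A_0)$, and for $p\in II\cup III$, $\sigma_1$ meets $(A_1,A_0)$ only at the single crossing of the simple arc $\sigma_{1,A_2A_3}$ with $i\mathbb{R}$ (Lemma~\ref{Lemma53-7}).
\item \emph{On $II$.} The corresponding $p$ lie in $I$, the associated $A$ lies in $(A_1,A_0)$, and $\#(\sigma_1\cap\sigma_3\cap(A_1,A_0))\le1$ by Lemma~\ref{Lemma53-5}. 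Hence each value has at most one preimage.
\item \emph{On $III$.} This follows from $II$ via the symmetry $a\mapsto\frac12-a$, $p\mapsto\frac12-p$.
\end{itemize}
These facts are then combined with connectedness of the images, the anchor values $f(\frac14)=\wp(\frac14+\frac{\tau}{2})$ and $f(\frac14+\frac{\tau}{2})=\wp(\frac14)$, and the fact that $e_1$ and $\infty$ are not attained. The latter holds because $G$ has no nontrivial critical points at $b=b_1$. Some argument of this kind is the missing core of your proof.
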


\begin{theorem}\label{III-thm3}
Let $\tau=\frac12+ib$ with $b_1< b\leq b_2$. Then there exist four real values, denoted by
$$d_1<d_2<d_3<d_4,$$
which satisfies $d_4<e_1=\wp(\frac12)$,
 such that the following statements hold.
\begin{itemize}
\item[(1)] $G_p(z)$ has no nontrivial critical points satisfying $a=\pm \bar a$ for
$$\wp(p)\in [d_1, d_2]\cup [d_3, d_4].$$
\item[(2)] $G_p(z)$ has a unique pair of nontrivial critical points $\pm a$ satisfying $a=\pm \bar a$ for
$$\wp(p)\in (-\infty, d_1)\cup (d_2, d_3)\cup (d_4,+\infty).$$
\end{itemize}
\end{theorem}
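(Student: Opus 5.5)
We will prove Theorem \ref{III-thm3} by reducing the existence of critical points with $a=\pm\bar a$ to the zero set of an explicit real function of two real variables, and then counting zeros along the real line $\wp(p)\in\mathbb{R}$.

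\textbf{Reduction to an algebraic condition.} The starting point is the standard formula $-4\pi\partial_z G(z)=\zeta(z)-r\eta_1-s\eta_2$, where $z=r+s\tau$ with $r,s\in\mathbb{R}$. It gives
\[
\nabla G_p(a)=0\iff \zeta(a+p)+\zeta(a-p)-2(r\eta_1+s\eta_2)=0,\qquad a=r+s\tau.
\]
The addition formula turns this into
\[
2\zeta(a)+\frac{\wp'(a)}{\wp(a)-\wp(p)}=2(r\eta_1+s\eta_2).
\]
For the rhombus torus $\tau=\frac12+ib$ with $\wp(p)\in\mathbb{R}$, Lemma \ref{lemma-s5-13} says that $a=\pm\bar a$ exactly when $\wp(a)\in\mathbb{R}$. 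Such $a$ lie on finitely many explicit real one-dimensional curves in $E_\tau$ (the fixed loci of $z\mapsto\pm\bar z$). Relative to $\Lambda_\tau$ these are: the line $\operatorname{Im}z=0$, the line $\operatorname{Im}z=\frac{b}{2}\cdot$(appropriate shift), and the vertical-type segments through $\frac{\omega_k}{2}$.

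\textbf{Parametrizing each curve.} On each curve I parametrize $a$ by a real parameter $t$, or equivalently by $x=\wp(a)\in\mathbb{R}$. The critical point equation then becomes a single real equation that can be solved for $\wp(p)$:
\[
\wp(p)=x-\frac{\wp'(a)}{2\bigl(r\eta_1+s\eta_2-\zeta(a)\bigr)}=:F(x).
\]
Here the Legendre relation is used to make the right-hand side real on these loci. So counting pairs $\pm a$ with $a=\pm\bar a$ for a given $\wp(p)$ is the same as counting preimages of $\wp(p)$ under $F$ on the real branches.

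\textbf{Monotonicity of $F$.} The key analytic step is to determine the monotonicity and the limits of $F$ on each branch. Differentiating, $F'(x)$ vanishes exactly where a certain function $\Phi(x)$ vanishes. $\Phi$ involves $\zeta(a)-r\eta_1-s\eta_2$ (which is essentially $\partial_z G(a)$), $\wp(a)$, and $g_2$.

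I expect the degeneracy of critical points to be governed by a quadratic expression like
\[
12\Bigl(\eta_1-\frac{2\pi}{b}\Bigr)^2-g_2,
\]
which is exactly the quantity defining $b_2$ in \eqref{def-b2}. The sign of this quantity for $b_1<b\le b_2$, obtained from Lemma \ref{lem04-4}, together with the fact that for $b>b_1$ the function $G$ has a pair of nontrivial critical points on the real-fixed locus (Theorem \ref{thm-0LW}), should show the following. On each branch $F$ has exactly one interior turning point, or is monotone. The turning values give $d_1<d_2<d_3<d_4<e_1$.

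\textbf{Counting.} The endpoints of the branches are where $a$ tends to a half period or to $0$. There $F(x)\to e_k$ or $\pm\infty$, and the limits are computed from the Laurent and Taylor expansions at $\frac{\omega_k}{2}$. For $b>b_1$, the half period $\frac12$ is no longer a saddle of $G$ in the same way, which is why $e_1$ no longer separates intervals. Assembling the ranges of $F$ on the branches shows that each $\wp(p)$ in $(-\infty,d_1)\cup(d_2,d_3)\cup(d_4,\infty)$ has exactly one preimage, and each $\wp(p)$ in the complementary closed intervals has none. Preimages at the boundary values $d_j$ correspond to $a$ at half periods, which are trivial critical points.

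\textbf{Main obstacle.} The hardest step is proving the unique sign change of $\Phi$ on each branch for every $b\in(b_1,b_2]$, uniformly in $b$. I would first try to show that $\Phi$ is monotone in $x$ using the cubic relation for $\wp'^2$ and the sign of $\eta_1-\frac{2\pi}{b}$. If that fails, I would use a continuity argument in $b$: zeros of $\Phi$ can only collide at the value $b=b_2$ singled out by \eqref{def-b2}.
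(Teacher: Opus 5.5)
There is a genuine gap, and it sits exactly at the step you defer as the ``main obstacle''. Nothing in the proposal shows that $F$ is injective on the pieces of the real loci, and the suggested mechanism (monotonicity of an explicit $\Phi$ via the cubic relation and the sign of $\eta_1-\frac{2\pi}{b}$) has no visible route. $F'=0$ is a transcendental condition: it is quadratic in $\zeta(a)-r\eta_1-s\eta_2$ with coefficients built from $\wp(a),\wp'(a),\wp''(a)$. By \eqref{deri-r}, on $I$ it amounts to $\wp(a+p)+\wp(a-p)+2\eta_1=0$ with $\wp(p)=f(a)$. That is, $a$ is a degenerate critical point of $G_p$ whose $A$ is a branch point of $\sigma_1$ (Lemma~\ref{lemma2-10-3}). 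Excluding this is essentially the content of the theorem, and your fallback ``zeros of $\Phi$ can only collide at $b=b_2$'' presupposes it.

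The paper never analyses $F'$ by calculus; it uses the conditional stability sets instead.
\begin{itemize}
\item On $I$ it observes $p\in II\cup III$ and $A(a)\in(A_1,A_0)$. By Lemma~\ref{Lemma53-7} only the simple arc $\sigma_{1,A_2A_3}$ meets $i\mathbb R$ there, so $A(a)$ is not a branch point and $f'\neq 0$.
\item On $II$ it replaces derivatives by counting. The pieces $(z_0(\tau),\frac14+\frac\tau2)$ and $(\frac12,z_0(\tau))$ correspond to $A\in(A_1,A_0)$ with $p\in I$, resp.\ $A\in(-i\infty,A_1)$ with $p\in III$. Lemmas~\ref{Lemma53-5} and~\ref{Lemma53-7} allow at most one, resp.\ two, points of $\sigma_1\cap\sigma_3$ on these sets.
\item This is combined with the expansion $f(a)=d_1+\hat\beta(a-\frac12)^2+\cdots$, where $\hat\beta\propto\beta(b)>0$ for $b<b_2$, so $f\uparrow d_1$ as $a\to\frac12$. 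Then a value $\ge d_1$ in $f((\frac12,z_0(\tau)))$ would have three preimages, a contradiction.
\item At $b=b_2$ the expansion is inconclusive. There the order-$5$ cusp of $\sigma_3$ at $A_1$ for $p=p_{1,1}$ (Corollary~\ref{coro2-11-3}(3)) gives $\sigma_3\cap(-i\infty,A_1)=\emptyset$, which excludes $d_1$ from the image.
\item The piece $III$ then follows from the symmetry $G_p(\frac12-z)=G_{\frac12-p}(z)$.
\end{itemize}

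Your predicted shape of $F$ is also wrong, so the assembly step would not go through as written. For $b_1<b\le b_2$ there are no interior turning points at all. They appear only for $b>b_2$, and that is the actual role of $\beta(b)$: it is the sign of the second-order coefficient of $f$ at $a=\frac12$ along $II$ and at $a=0$ along $III$. The $d_j$ are not turning values, and the endpoint limits are not $e_k$ or $\pm\infty$. Instead:
\begin{itemize}
\item $d_1=\lim_{II\ni a\to\frac12}f$ and $d_2=\lim_{I\ni a\to\frac12}f$;
\item $d_3=-\eta_1=\lim_{I\ni a\to 0}f$ and $d_4=\frac{2\pi}{b}-\eta_1=\lim_{III\ni a\to 0}f$.
\end{itemize}
The value $\infty$ is taken at the interior point $z_0(\tau)\in II$. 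This splits $II$ into two pieces, with images $(-\infty,d_1)$ and $(\wp(\frac14),+\infty)$.

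The value $e_1$ is taken at $\frac12-z_0(\tau)\in III$, because $G(z-\frac12)$ has that nontrivial pair. This is why $e_1$ does not separate intervals. It is not a change in the nature of $\frac12$, which is a non-degenerate saddle of $G$ for every $b>b_1$.

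Finally, ``unique pair'' requires the images of the five open pieces to be pairwise disjoint. The paper gets this from the special values above together with $f(\frac14)=\wp(\frac14+\frac\tau2)$ and $f(\frac14+\frac\tau2)=\wp(\frac14)$. Your plan does not address this.
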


\begin{theorem}\label{III-thm4}
Let $\tau=\frac12+ib$ with $b> b_2$. Then there exist six real values, denoted by
$$d_1<d_2<d_3<d_4<d_5<d_6,$$
which satisfies $d_6<e_1=\wp(\frac12)$,
such that the following statements hold.
\begin{itemize}
\item[(1)] $G_p(z)$ has no nontrivial critical points satisfying $a=\pm \bar a$ for
$$\wp(p)\in (d_2, d_3]\cup [d_4, d_5).$$
\item[(2)] $G_p(z)$ has a unique pair of nontrivial critical points $\pm a$ satisfying $a=\pm \bar a$ for
$$\wp(p)\in (-\infty, d_1]\cup\{d_2\}\cup(d_3, d_4)\cup\{d_5\}\cup [d_6, +\infty).$$
In particular, these nontrivial critical points $\pm a$ are degenerate when $\wp(p)\in \{d_2, d_5\}$.
\item[(3)] $G_p(z)$ has exactly two pairs of nontrivial critical points satisfying $a=\pm \bar a$ for
$$\wp(p)\in (d_1, d_2)\cup(d_5, d_6).$$
\end{itemize}
\end{theorem}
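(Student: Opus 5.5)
We reduce critical points of $G_p$ to zeros of an explicit real-analytic function of one real variable, and then count those zeros as $\wp(p)$ runs over $\mathbb{R}$. Recall the standard formula
\[
-4\pi\,\partial_z G(z)=\zeta(z)-r\eta_1-s\eta_2,\qquad z=r+s\tau,\quad r,s\in\mathbb{R}.
\]
Hence $a=r+s\tau$ is a critical point of $G_p$ if and only if
\[
\zeta(a+p)+\zeta(a-p)-2(r\eta_1+s\eta_2)=0 .
\]
Using the addition formula $\zeta(a+p)+\zeta(a-p)=2\zeta(a)+\frac{\wp'(a)}{\wp(a)-\wp(p)}$, this becomes
\[
Z(a):=\zeta(a)-r\eta_1-s\eta_2=-\frac{\wp'(a)}{2(\wp(a)-\wp(p))}.
\]
For $\tau=\frac12+ib$ the lattice is invariant under conjugation, so $g_2,g_3,e_1,\eta_1\in\mathbb{R}$. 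Legendre's relation then gives $\eta_2=\frac{\eta_1}{2}-\frac{2\pi i}{2b}\cdot$(appropriate factor).

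The condition $a=\pm\bar a$ in $E_\tau$ means that $\wp(a)$ is real. The set $\{\wp(a)\in\mathbb{R}\}$ consists of real curves: the segment $a=t\in(0,\frac12]$, the line $a=t+\frac{\tau}{2}$ type curves, and the segment through $\frac{\omega_2}{2},\frac{\omega_3}{2}$, where $e_2=\bar e_3$ are complex. On the rhombus torus these curves are exactly $\mathbb{R}/\mathbb{Z}$, the line $\operatorname{Re}a=\frac12$ modulo the lattice, and so on. On each such curve the critical-point equation becomes a single real equation, because the imaginary part is automatically zero by symmetry. Solving it for $\wp(p)$ gives
\[
\wp(p)=\wp(a)+\frac{\wp'(a)}{2Z(a)}=:f(a).
\]
So for each real value $\wp(p)$ the number of pairs $\pm a$ with $a=\pm\bar a$ equals the number of preimages of $\wp(p)$ under $f$ restricted to these real curves. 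Points with $Z(a)=0$ are nontrivial critical points of $G$ itself; they correspond to $\wp(p)=\infty$, which is consistent with Theorem \ref{thm-0LW}-(2).

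Next we analyse $f$ on each curve. Parametrize the curve by $x=\wp(a)$, which ranges over $(-\infty,e_1]$ on one curve and $[e_1,+\infty)$ on the other. Compute $f'$, and show that the equation $f'=0$ reduces, via the Lin--Wang type identity $Z'=-\wp-\eta_1+\dots$ and the ODE for $\wp$, to a polynomial-type condition in $(x,Z)$. The key claim is that on each curve $f$ has at most two critical points. Their number is governed by the signs of $Z$ at the endpoints $\frac{\omega_k}{2}$ and by the discriminant-type quantity $12(\eta_1-\frac{2\pi}{b})^2-g_2$ from \eqref{def-b2}. Indeed, this quantity decides whether $f$ is monotone near the end where $\wp(a)\to\infty$, i.e.\ whether a degenerate critical point is created there. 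Meanwhile $b_1$ is exactly where $Z$ changes sign at $\frac12$, which is where the pole of $f$ appears or disappears (Theorem \ref{thm-0LW}-(2)). The limits of $f$ at the endpoints follow from the expansions $Z(a)\sim 1/a$ near $0$ and $Z(\frac{\omega_k}{2})=0$ for $k=1,2,3$ (the trivial critical points). Together these determine $f$'s monotonicity intervals, and the values $d_j$ are its critical values. From this we read off the counts $0$, $1$ or $2$ in the regime $b>b_2$. The degenerate cases $\wp(p)\in\{d_2,d_5\}$ occur at critical values of $f$, where the critical point of $G_p$ is degenerate, since degeneracy corresponds to $f'(a)=0$.

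The main obstacle is proving that $f'$ has exactly the claimed number of zeros on each curve for every $b>b_2$. To handle this, we would rewrite $f'(a)=0$ as $Z^2$ times a quadratic in $Z$ with coefficients in $\wp,\wp'$. We would then use the sign and monotonicity of $Z$ along the curve (established in Lin--Wang style by the maximum principle for the harmonic function $\operatorname{Im}$ of the relevant quantity) to show that the quadratic has at most two roots along the curve. Finally, we would count these roots by continuity in $b$ starting from large $b$, where $q$-expansions make everything explicit.
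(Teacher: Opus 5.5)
Your reduction matches the paper's: pairs with $a=\pm\bar a$ are points of the real locus $I\cup II\cup III$, and for real $\wp(p)$ you count them as preimages of the real-valued Hitchin map $f$. Here $II\subset\frac12+i\mathbb{R}$ and $III\subset i\mathbb{R}$; there are no curves of the form $t+\frac{\tau}{2}$. Your derivative identity is also correct. Along $I$,
\[
f'(a)=\frac{2\wp'(a)Z^2+\wp''(a)Z+\wp'(a)\big(\wp(a)+\eta_1\big)}{2Z^2},
\]
and along $II\cup III$ the same holds for $df/da$ with $\eta_1$ replaced by $\eta_1-\frac{2\pi}{b}$, because there $dZ/da=-(\wp(a)+\eta_1-\frac{2\pi}{b})$.

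\textbf{Gap 1: the shape of $f$ on each piece.} The decisive claim is that $f$ is strictly monotone on $I$, on $(z_0(\tau),\frac14+\frac{\tau}{2})$ and on $(\frac14-\frac{\tau}{2},\frac12-z_0(\tau))$, and has exactly one turning point on each of $(\frac12,z_0(\tau))$ and $(\frac12-z_0(\tau),0)$. You only assert this, and your suggested route cannot give it. That the numerator is quadratic in $Z$ for each fixed $a$ says nothing about how many $a$ on the curve have $Z(a)$ on one of the two algebraic root branches. $Z$ is transcendental, vanishes at $z_0(\tau)$, and is not monotone on $II$: its derivative changes sign at $\wp(a)=\frac{2\pi}{b}-\eta_1$, i.e.\ at $a=p_{0,2}\in(\frac12,z_0(\tau))$. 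Continuing from $b=+\infty$ via $q$-expansions would require ruling out the birth of new critical points for every $b>b_2$, which is essentially the theorem itself.

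The paper bounds preimages by a different mechanism.
\begin{itemize}
\item By Lemma~\ref{lemma-s5-13}, the GLE parameter $A(a)$ lies on the symmetry axis of $\sigma_1$ and $\sigma_3$.
\item The symmetric structure in Lemmas~\ref{Lemma53-5} and \ref{Lemma53-7} gives $\#(\sigma_1\cap\sigma_3\cap(A_1,A_0))\le 1$ for $p\in I$ and $\#(\sigma_1\cap\sigma_3\cap(-i\infty,A_1))\le 2$ for $p\in III$.
\item $f'(a)=0$ exactly when $A(a)$ is a branch point of $\sigma_1$ (on $I$) or of $\sigma_3$ (on $II\cup III$). On $I$ this is excluded because $\sigma_1\cap(A_1,A_0)$ is a single point through which only the arc $\sigma_{1,A_2A_3}$ passes.
\end{itemize}

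\textbf{Gap 2: the cross-curve ordering.} Even knowing the shape of $f$ on each piece, you lack a comparison between pieces. For $b>b_2$ one has $\beta(b)<0$, so $f$ approaches $\wp(p_{1,1})$ from above as $a\to\frac12$ along $II$, and it has an interior maximum $\wp(\tilde p_{1,1})$ on $(\frac12,z_0(\tau))$. The theorem requires $d_2=\wp(\tilde p_{1,1})<d_3=\wp(p_{1,2})=\inf f(I)$, and symmetrically $d_4<d_5$. If this failed, every value in $(\wp(p_{1,2}),\wp(\tilde p_{1,1}))$ would carry three pairs with $a=\pm\bar a$, which Theorem~\ref{main-thm-1} does not forbid, and statements (2)--(3) would be false. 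No local expansion yields this inequality. The paper proves it in Lemma~\ref{lem04-9}:
\begin{itemize}
\item it deforms $b$ from $b_2$, where $\tilde p_{1,1}=p_{1,1}$;
\item it tracks the branch point of $\sigma_1$ on $(-i\infty,A_1)$ and the branch point of $\sigma_3$ created at the fold;
\item it uses that $\sigma_1$ and $\sigma_3$ cannot share a branch point (Lemma~\ref{lemma2-10-3}).
\end{itemize}
You also need global anchors that endpoint expansions do not supply: $f(\frac14)=\wp(\frac14+\frac{\tau}{2})$, $f(\frac14+\frac{\tau}{2})=\wp(\frac14)$, and $e_1\notin f(I)$.

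\textbf{Smaller inaccuracies.}
\begin{itemize}
\item $Z(\frac12)=0$ for every $b$. What changes sign at $b_1$ is $e_1+\eta_1-\frac{2\pi}{b}$, the derivative of $Z$ along $II$ at $\frac12$.
\item Only $d_2$ and $d_5$ are interior critical values of $f$. The values $d_1,d_3,d_4,d_6$ are limits of $f$ at $\frac12$ and $0$.
\end{itemize}
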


In view of Theorem \ref{main-thm-1},
the above results together indicate that $G_p(z)$ can not have three pairs of nontrivial critical points satisfying $a=\pm \bar a$. 

\begin{remark}
Except $d_2, d_5$ in Theorem \ref{III-thm4}, we can write down the explicit expressions of all other $d_j$'s in Theorems \ref{III-thm1}-\ref{III-thm4}. Moreover, if $\wp(p)=d_j$ (except $d_2, d_5$ in Theorem \ref{III-thm4}), then either $0$ or $\frac12$ is a degenerate critical point of $G_p(z)$. More precisely,
\begin{itemize}
\item[(1)]
In Theorem \ref{III-thm1}, since $\frac12\leq b<b_1$, we have
$$d_1=e_1-\frac{3e_1^2-\frac{g_2}{4}}{e_1+\eta_1},\quad d_2=-\eta_1, \quad d_3=\frac{2\pi }{b}-\eta_1,\quad d_4=e_1+\frac{3e_1^2-\frac{g_2}{4}}{\frac{2\pi}{b}-e_1-\eta_1},$$
and when $\wp(p)\in \{d_1, d_4\}$ (resp. $\wp(p)\in \{d_2, d_3\}$), $\frac12$ (resp. $0$) is a degenerate critical point of $G_p(z)$. Furthermore,
$$\lim_{b\uparrow b_1}d_3=e_1, \quad \lim_{b\uparrow b_1}d_4=+\infty.$$
\item[(2)] In Theorem \ref{III-thm2}, since $e_1+\eta_1=\frac{2\pi}{b_1}$ for $b=b_1$, we have
$$d_1=e_1-\frac{3e_1^2-\frac{g_2}{4}}{\frac{2\pi}{b_1}}, \quad d_2=-\eta_1,$$
and when $\wp(p)=d_1$ (resp. $\wp(p)=d_2$), $\frac12$ (resp. $0$) is a degenerate critical point of $G_p(z)$.
\item[(3)] In Theorem \ref{III-thm3}, since $b_1<b\leq b_2$, we have
$$d_1=e_1+\frac{3e_1^2-\frac{g_2}{4}}{\frac{2\pi}{b}-e_1-\eta_1}, \quad d_2=e_1-\frac{3e_1^2-\frac{g_2}{4}}{e_1+\eta_1},\quad d_3=-\eta_1,\quad d_4=\frac{2\pi}{b}-\eta_1,$$
and when $\wp(p)\in \{d_1, d_2\}$ (resp. $\wp(p)\in \{d_3, d_4\}$), $\frac12$ (resp. $0$) is a degenerate critical point of $G_p(z)$. Furthermore,
$$\lim_{b\downarrow b_1}d_1=-\infty, \quad \lim_{b\downarrow b_1}d_4=e_1.$$
\item[(4)] In Theorem \ref{III-thm4}, since $b>b_2$, we have
$$d_1=e_1+\frac{3e_1^2-\frac{g_2}{4}}{\frac{2\pi}{b}-e_1-\eta_1}, \quad d_3=e_1-\frac{3e_1^2-\frac{g_2}{4}}{e_1+\eta_1},\quad d_4=-\eta_1,\quad d_6=\frac{2\pi}{b}-\eta_1,$$
and when $\wp(p)\in \{d_1, d_3\}$ (resp. $\wp(p)\in \{d_4, d_6\}$), $\frac12$ (resp. $0$) is a degenerate critical point of $G_p(z)$. Furthermore,
$$\lim_{b\downarrow b_2}d_2=e_1+\frac{3e_1^2-\frac{g_2}{4}}{\frac{2\pi}{b}-e_1-\eta_1}, \quad \lim_{b\downarrow b_2}d_5=\frac{2\pi}{b}-\eta_1.$$
\end{itemize}
See Sections \ref{sec-4}-\ref{sec-6} for the proofs.
\end{remark}

A consequence of Theorems \ref{III-thm3}-\ref{III-thm4} is

\begin{corollary}\label{III-coro}
Let $\tau=\frac12+ib$ with $b>b_1$ and $p\in (0, \frac12)$. Then  $G_p(z)$ has a unique pair of nontrivial critical points $\pm a$ satisfying $a=\pm \bar a$. Furthermore, $\pm a$ are non-degenerate except for at most finite $p$'s.
\end{corollary}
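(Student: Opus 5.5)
The corollary should follow by reading off Theorems \ref{III-thm3} and \ref{III-thm4} once we know which interval $\wp(p)$ fills as $p$ runs over $(0,\frac12)$. For $\tau=\frac12+ib$ we have $\bar\tau=1-\tau\in\tau\Lambda$-coset, so $\Lambda_\tau$ is invariant under complex conjugation and $\wp(\bar z)=\overline{\wp(z)}$. Hence $\wp$ is real on the real axis. On $(0,\frac12)$ it decreases strictly, from $+\infty$ at $0^+$ to $e_1=\wp(\frac12)$, since $\wp'$ has no zeros there: its zeros are only the half periods, and $\frac12$ is the only one on this segment. Therefore $p\in(0,\frac12)$ is equivalent to $\wp(p)\in(e_1,+\infty)$, and $p\notin E_\tau[2]$ automatically.

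Next I use the positions of the $d_j$. When $b_1<b\le b_2$, Theorem \ref{III-thm3} gives $d_4<e_1$, so $(e_1,+\infty)\subset(d_4,+\infty)$. Part (2) of that theorem then yields exactly one pair $\pm a$ with $a=\pm\bar a$. When $b>b_2$, Theorem \ref{III-thm4} gives $d_6<e_1$, so $(e_1,+\infty)\subset[d_6,+\infty)$, and part (2) again gives uniqueness. This proves the first assertion.

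For non-degeneracy, the points $\wp(p)\in\{d_2,d_5\}$ in Theorem \ref{III-thm4} are excluded, since they are all $<e_1$. So degeneracy for $p\in(0,\frac12)$ has to be handled by a separate finiteness argument, which I expect to be the main obstacle. The plan is to express the critical point condition as the vanishing of a real-analytic function $F(\sigma, p)$. Here $\sigma$ parametrizes the admissible $a$: these lie on the real axis or on lines like $\frac{\tau}{2}+\mathbb R$, equivalently $\wp(a)\in\mathbb R$. I would take $F$ from the standard Part I/II expression of $\nabla G_p$ through $\zeta$, $\eta_1$, $\wp(p)$, $\wp'(p)$, restricted to real $\wp(a)$.

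Degeneracy of $a$ means that $\partial_\sigma F$ vanishes together with $F$, or that the transversal Hessian direction degenerates. The latter is also given by a real-analytic function along the curve. Uniqueness shows that the solution set $\{F=0\}$ over $p\in(0,\frac12)$ is the graph of a single continuous branch $a(p)$. The degeneracy condition restricted to this branch is real-analytic in $p$. It is not identically zero: Theorem \ref{main-thm-1} gives non-degeneracy for almost every $p$. Hence its zeros are isolated in $(0,\frac12)$.

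To upgrade isolated zeros to finitely many, I examine the endpoints. As $p\to0^+$, $G_p\to G$, whose nontrivial critical points for $b>b_1$ are non-degenerate by \cite{CFL}. This gives non-degeneracy near $p=0$, via an asymptotic expansion of $F$ after rescaling if the singularities merge. As $p\to\frac12^-$, $G_p\to G(\cdot-\frac12)$, and I would use the analogous non-degeneracy of Lin–Wang nontrivial critical points. Alternatively, I would use analyticity of the branch extended across $\wp(p)=e_1$, i.e.\ a neighborhood of $\wp(p)=e_1$ inside $(d_6,\infty)$ or $(d_4,\infty)$, where uniqueness still holds. With both endpoints controlled, the zeros cannot accumulate and are finite. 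The delicate part is controlling the behavior as $p\to0$, where the two singularities collide.
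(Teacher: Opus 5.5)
Your proof of the first assertion is the paper's: $p\in(0,\frac12)$ iff $\wp(p)\in(e_1,+\infty)$, which lies in $(d_4,+\infty)$, resp. $[d_6,+\infty)$.

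For non-degeneracy you take a different route. The paper writes $\det D^2G_p(a_0)=\frac{1}{4\pi^2}\alpha(\frac{2\pi}{b}-\alpha)$ with $\alpha=\frac12(\wp(a_0+p)+\wp(a_0-p))+\eta_1\in\mathbb R$. By Lemma \ref{lemma2-10-3}, the two factors vanish exactly when $A_4$ is a branch point of $\sigma_1$, resp. $\sigma_3$. Section \ref{sec-7} then shows two things: $\sigma_3$ has no branch points for any $p\in(0,\frac12)$ (Lemma \ref{lem-s5-20}), and $\sigma_1$ has none for $p\in(p_c,\frac12-p_c)$ (Lemma \ref{lem2-s5-13}, via the discriminant $\Delta(p)$). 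So an analyticity argument anchored by Theorem \ref{thm-5c} is needed only on $(0,p_c]$ and its mirror image (Lemma \ref{lem75}). You instead run the analyticity argument along the entire branch. This skips Section \ref{sec-7} altogether but yields only finiteness. The paper's route additionally gives non-degeneracy outright on $(p_c,\frac12-p_c)$ and shows that the factor $\frac{2\pi}{b}-\alpha$ never vanishes.

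Two steps need fixing.

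First, Theorem \ref{main-thm-1} does not show that your degeneracy function is not identically zero. ``Almost all $p\in E_\tau\setminus E_\tau[2]$'' refers to the two-dimensional torus, and the segment $(0,\frac12)$ is a null set. Degeneracies do occur for real $\wp(p)$, e.g.\ $\wp(p)\in\{d_2,d_5\}$ in Theorem \ref{III-thm4}. Non-vanishing has to come from the endpoints, and that part is not delicate. Since $G(z;\tau)$ has five critical points for $b>b_1$, Theorem \ref{thm-5c} with $k=0,1$ gives non-degeneracy of all critical points of $G_p$ for $p\in(0,\varepsilon)\cup(\frac12-\varepsilon,\frac12)$. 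Moreover, the critical point tends to $z_0(\tau)$, resp.\ $\frac12-z_0(\tau)$, away from the colliding singularities, so no rescaling is needed.

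Second, the branch $p\mapsto a(p)$ is real-analytic in $p$ only where $\frac{d}{dt}f(a(t))\neq0$ along the symmetry line. Differentiating the critical-point equation along that line shows this derivative vanishes exactly when $\alpha=\frac{2\pi}{b}$. So it may fail at the very points you are counting; this does occur on the same line, at $\tilde a_{1,1}$ in Lemma \ref{lemma-aup-2}. Parametrize instead by the position $t$ of $a$ on the arc which, mod $\Lambda_\tau$, runs from $z_0(\tau)$ through $\frac14+\frac{\tau}{2}\equiv\frac14-\frac{\tau}{2}$ to $\frac12-z_0(\tau)$.
\begin{itemize}
\item On this arc, $\wp(p)=f(a(t))$ is real-analytic, and $\alpha$ is rational in $\wp(a),\wp(p)$ by \eqref{eqfc-add10}.
\item Hence $\alpha(\frac{2\pi}{b}-\alpha)$ is real-analytic in $t$ and nonzero near both ends, so it has finitely many zeros.
\item Since $t\mapsto p$ is a homeomorphism onto $(0,\frac12)$ (Lemmas \ref{lemma-aup-2} and \ref{lemma-aup-3}), only finitely many $p$ are exceptional.
\end{itemize}
With these repairs your argument is complete.
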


The approach of proving Theorems \ref{III-thm1}-\ref{III-thm4} are quite different from that of proving Theorem \ref{II-thm} in Part II \cite{CFL-II}. As introduced in \cite{CFL-II}, there is a deep connection between conditional stability sets $\sigma_j$'s of a second order linear ODE (see \eqref{GLE} below)
and nontrivial critical points of $G_p(z)$. It was proved in \cite{CFL-II} that $\sigma_j$ consists of at most $5$ analytic curves with common endpoints $\{A_0, A_1, A_2, A_3\}$, and the number of pairs of nontrivial critical points of $G_p(z)$ equals to $$\#(\sigma_1\cap\sigma_j\setminus\{A_0, A_1, A_2, A_3\}),\quad j=2,3.$$
We will briefly review this theory in Section \ref{sec-2}.

When $\tau=ib$ with $b>0$ and $\wp(p)\in\mathbb{R}$, the conditional stability sets are easy to study because they admit certain symmetries and all the endpoints $A_0, A_1, A_2, A_3$ locate on the axis of symmetry. For example, if $p\in (0,\frac12)$, then $A_0, A_1, A_2, A_3\in\mathbb R$ and both $\sigma_1, \sigma_2$ are symmetric with respect to the real axis, from which we can get
$$\sigma_2=(-\infty, A_1]\cup [A_3, A_2]\cup [A_0, +\infty),$$
$$\sigma_1=\sigma_{1,\infty}\cup[A_1, A_3]\cup [A_2, A_0],$$ where $\sigma_{1,\infty}$ is an unbounded simple curve with $\sigma_{1,\infty}\cap\mathbb{R}$ containing a single point. From here we immediately obtain 
 $$\#(\sigma_1\cap\sigma_2\setminus\{A_0, A_1, A_2, A_3\})\leq 1$$
and so $G_p(z)$ has at most one pair of nontrivial critical points. See \cite{CFL-II} for details.

For the rhombus torus case studied in this paper, the conditional stability sets $\sigma_1$ and $\sigma_3$ still admit certain symmetries. However, compared with the rectangular torus case studied in \cite{CFL-II}, an additional difficulty is that only two of $\{A_0, A_1, A_2, A_3\}$ locate on the axis of symmetry, so the graphs of $\sigma_j$'s might have multiple possibilities, namely we can not determine the graphs of $\sigma_j$'s precisely; see Section \ref{sec-3} for details. Thus, it is too difficult to compute $\#(\sigma_1\cap\sigma_3\setminus\{A_0, A_1, A_2, A_3\})$.

The new approach of this paper is to apply the conditional stability sets to study the map $f: E_{\tau}\setminus E_{\tau}[2]\to \mathbb{C}\cup\{\infty\}$, defined by
\begin{align}\label{00513-1-0} 
f(a):=\wp (a)+\frac{\wp ^{\prime }(a)}{%
2(\zeta(a)-r\eta_1-s\eta_2)},
\end{align}
where we write $a=r+s\tau$ with $r,s\in\mathbb{R}^2\setminus\frac12\mathbb{Z}^2$. Here $\frac12\mathbb{Z}^2=\{(r,s)\,:\, 2r,2s\in\mathbb Z\}$. The connection is that $\pm a$ is a pair of nontrivial critical points of $G_p(z)$ if $\wp(p)=f(a)$.
This map is related to Hitchin's formula \eqref{513-1-0} that was introduced by Hitchin \cite{Hit1} in his study of Einstein metrics.
Note that $f(a)$ is not a meromorphic function of $a$. We will prove Theorems \ref{III-thm1}-\ref{III-thm4} by analysing the image of $f(a)$ for $\wp(a)\in\mathbb{R}$, with the help of the conditional stability sets.  See Sections \ref{sec-4}-\ref{sec-6}.

\subsection{Applications to the curvature equation}
As applications, we study the following curvature equation 
\begin{equation}\label{mfe}
\Delta u+e^{u}=4\pi(\delta_{p}+\delta_{-p})\quad\text{ on
}\; E_{\tau}. 
\end{equation}
Equation \eqref{mfe} arises from conformal geometry and mathematical physics. Geometrically, a solution $u$ of \eqref{mfe} leads to a spherical metric
$ds^{2}=\frac{1}{2}e^{u}dz^2$ with constant Gaussian curvature $+1$
acquiring conic singularities at $\pm p$. It also
appears in statistical physics as the equation for the mean field
limit of the Euler flow in Onsager's vortex model, hence
also called a mean field equation. 
We refer the readers to \cite{BKLY,BT,BYZ,CLW,CL-2,FSX,LW,LY,MR,WZ,WWX} and the references therein for recent developments on mean field equations and related topics.

Let $\tau=ib$ or $\tau=\frac12+ib$ with $b>0$. Then it is easy to prove that $\wp(p)\in\mathbb{R}$ if and only if $\bar p=\pm p$ in $E_{\tau}$; see Section \ref{sec-3}. In this case, we  see that if $u(z)$ is a solution of \eqref{mfe}, then so are $u(-z)$ and $u(\bar z)$ (here we use complex variable $z=x+iy$). 
A solution $u(z)$ is called \emph{even axisymmetric} if it is symmetric with respect to both the origin and the $x,y$-axis, i.e.,  $$u(z)=u(-z)=u(\bar{z}).$$
In Part II \cite{CFL-II}, we showed that \eqref{mfe} on the rectangular torus has at most one even axisymmetric solution.
Here we can apply Theorems \ref{III-thm1}-\ref{III-thm4} to obtain the following results, which say that  \eqref{mfe} on the rhombus torus could have two even axisymmetric solutions. 

\begin{theorem}\label{III-thm1-1}
Let $\tau=\frac12+ib$ with $\frac12\leq b<b_1$, and 
$d_1<d_2<d_3<d_4$ be given in Theorem \ref{III-thm1}.
Then the following statements hold.
\begin{itemize}
\item[(1)] Equation \eqref{mfe} has no even axisymmetric solutions for
$$\wp(p)\in (-\infty, d_1]\cup [d_2, d_3]\cup [d_4,+\infty).$$
\item[(2)] Equation \eqref{mfe} has a unique even axisymmetric solution for
$$\wp(p)\in (d_1, d_2)\cup (d_3, d_4).$$
\end{itemize}
\end{theorem}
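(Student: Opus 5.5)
Theorem \ref{III-thm1-1} follows from Theorem \ref{III-thm1} once we have a bijection between even axisymmetric solutions of \eqref{mfe} and pairs of nontrivial critical points $\pm a$ of $G_p(z)$ with $a=\pm\bar a$. We will construct that bijection through the developing map of the spherical metric.

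\textbf{Step 1: from solutions to critical points.} Any solution $u$ of \eqref{mfe} with $\rho=8\pi$ and singular data $\pm p$ of weight $1$ has a developing map $h$ with $u=\log\frac{8|h'|^2}{(1+|h|^2)^2}$. Since the total angle is $8\pi$ and the cone angles are $4\pi$, the Lin--Wang classification applies (as used in Parts I and II \cite{CFL, CFL-II}). It shows that $\log h$ (or $h$ after normalization) is built from the Hermite--Halphen ansatz
\[
h(z)=e^{c z}\frac{\sigma(z-a)\sigma(z+a)}{\sigma(z-p)\sigma(z+p)},
\]
with monodromy condition $c=\eta$-combination. Solutions then fall into two classes:
\begin{itemize}
\item a one-parameter scaling family $e^{\lambda}h$, occurring exactly when $\pm a$ is a pair of nontrivial critical points of $G_p(z)$ (via Hitchin's formula $\wp(p)=f(a)$ in \eqref{00513-1-0});
\item the degenerate class, where $a\in E_\tau[2]$ or no solution exists.
\end{itemize}
We will quote this correspondence from \cite{CFL, CFL-II}. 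It says that the solution set of \eqref{mfe} is the union over pairs $\pm a$ of nontrivial critical points of the one-parameter families $u_{a,\lambda}=\log\frac{8e^{2\lambda}|h_a'|^2}{(1+e^{2\lambda}|h_a|^2)^2}$, $\lambda\in\mathbb{R}$.

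\textbf{Step 2: symmetry.} Take $\wp(p)\in\mathbb{R}$, so that $\bar p=\pm p$, and suppose $u_{a,\lambda}$ is even axisymmetric. Then $u(\bar z)$ is again a solution, namely the one whose developing map is $\overline{h_a(\bar z)}$. This developing map has zeros and poles at $\pm\bar a$ and $\pm\bar p$, so $u(\bar z)$ belongs to the family of $\pm\bar a$. By uniqueness of the pair in Step 1, $u=u(\bar z)$ forces $\{\pm a\}=\{\pm\bar a\}$. The evenness $u(z)=u(-z)$ is automatic for the $\sigma$-form.

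Conversely, suppose $a=\pm\bar a$. Then $\overline{h_a(\bar z)}=\kappa h_a(z)$ for some constant $\kappa$, and in fact $|\kappa|=1$; this uses the reality of $\eta_1$ and the reality properties of $\sigma$ on the rhombus lattice. Hence $u_{a,\lambda}(\bar z)=u_{a,\lambda}(z)$ for every $\lambda$. It remains to check evenness. Note that $h_a(-z)=\pm e^{-2cz}\dots$ need not equal $h_a(z)$; the correct statement is that $u(-z)=u_{a,\lambda'}$ with $\lambda'$ obtained by the inversion $h\mapsto 1/h$ combined with a scaling. Thus the involution $z\mapsto -z$ acts on the family $\{u_{a,\lambda}\}_{\lambda\in\mathbb{R}}\cong\mathbb{R}$ by an orientation-reversing affine map $\lambda\mapsto -\lambda+\lambda_0$. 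Such a map has exactly one fixed point. So each admissible pair $\pm a$ contributes exactly one even axisymmetric solution.

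\textbf{Step 3: conclusion and main obstacle.} By Steps 1--2, the number of even axisymmetric solutions equals the number of pairs of nontrivial critical points with $a=\pm\bar a$. Theorem \ref{III-thm1} then gives $0$ or $1$ on the stated intervals. The main obstacle is Step 2: we must verify precisely how $z\mapsto-z$ and $z\mapsto\bar z$ act on the scaling parameter, and we must show that exactly one $\lambda$ is fixed by both symmetries simultaneously. For this I would compute $h_a(-z)h_a(z)$ explicitly via the Legendre relation and the monodromy condition on $c$, to show it is a constant of modulus $\ne1$ in general. The unique fixed $\lambda$ then has $e^{2\lambda}$ equal to that constant's reciprocal modulus, and by the computation in Step 2 the conjugation symmetry holds for every $\lambda$, hence at this one.
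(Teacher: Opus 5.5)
Your reduction is the paper's: show that pairs $\pm a$ of nontrivial critical points with $a=\pm\bar a$ correspond bijectively to even axisymmetric solutions, then read off the counts from Theorem \ref{III-thm1}. The paper obtains this bijection by quoting \cite[Theorem 1.13]{CFL} (pairs of nontrivial critical points $\leftrightarrow$ even solutions) together with the fact that $\pm\bar a$ corresponds to $u(\bar z)$. Your attempt to re-derive it from developing maps goes wrong in two places.

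First, the developing map is the ratio $h_a=y_a/y_{-a}=e^{2c(a)z}\sigma(z-a)/\sigma(z+a)$ with $c(a)=\frac12[\zeta(a+p)+\zeta(a-p)]$; the factors $\sqrt{\sigma(z-p)\sigma(z+p)}$ cancel. Its zeros and poles are $\pm a$, this choice of $c(a)$ makes $\pm p$ its ramification points, and unitarity of its monodromy is exactly the reality of $(r,s)$ in \eqref{61-37-22}. Your $h$ is instead a multiple of the product $y_ay_{-a}$. Unitarity forces $c=0$, and then $h=C(\wp(z)-\wp(a))/(\wp(z)-\wp(p))$ is an even elliptic function of degree two. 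Its metric is smooth at $\pm p$ and has cone points at the four points of $E_\tau[2]$, so it does not solve \eqref{mfe}. This is also the source of your two contradictory statements. You say evenness is ``automatic'', which would make every $u_{a,\lambda}$ even and give each pair a continuum of even axisymmetric solutions. You also say exactly one $\lambda$ is fixed. With the correct $h_a$ one has $h_a(-z)h_a(z)\equiv 1$, so $u_{a,\lambda}(-z)=u_{a,-\lambda}(z)$ and the unique even member is $\lambda=0$.

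Second, you claim that $a=\pm\bar a$ implies $\overline{h_a(\bar z)}=\kappa h_a(z)$ with $|\kappa|=1$, so that every $u_{a,\lambda}$ is axisymmetric. This is false when $\bar a\equiv -a$, i.e.\ for $a\in II\cup III$ --- precisely the critical points that produce the interval $(d_3,d_4)$ in Theorem \ref{III-thm1}. On the rhombus lattice $\overline{\sigma(\bar z)}=\sigma(z)$, and $\overline{c(a)}=c(\bar a)$ since $\bar p=\pm p$ in $E_\tau$. Hence there $\overline{h_a(\bar z)}=h_{\bar a}(z)=\pm 1/h_a(z)$, so $u_{a,\lambda}(\bar z)=u_{a,-\lambda}(z)$, and your step ``holds for every $\lambda$, hence at this one'' is unavailable.

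The fix, which is the paper's argument, needs no bookkeeping in $\lambda$. The map $z\mapsto\bar z$ commutes with $z\mapsto -z$ and sends the family of $\pm a$ to that of $\pm\bar a$, since the parameter $A$ goes to $\bar A$ or $-\bar A$ by \eqref{eqfc-s5-7}. So it sends the unique even solution attached to $\pm a$ to the unique even solution attached to $\pm\bar a$. Thus that even solution is axisymmetric if and only if $\{\pm a\}=\{\pm\bar a\}$, which is the bijection you need.
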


\begin{theorem}\label{III-thm2-1}
Let $\tau=\frac12+ib_1$ and $d_1<d_2$ be given in Theorem \ref{III-thm2}.
Then the following statements hold.
\begin{itemize}
\item[(1)] Equation \eqref{mfe} has no even axisymmetric solutions for
$$\wp(p)\in (-\infty, d_1]\cup [d_2, e_1].$$
\item[(2)] Equation \eqref{mfe} has a unique even axisymmetric solution for
$$\wp(p)\in (d_1, d_2)\cup (e_1, +\infty).$$
\end{itemize}
\end{theorem}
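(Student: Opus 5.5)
The plan is to deduce Theorem \ref{III-thm2-1} from Theorem \ref{III-thm2}, using the standard correspondence between solutions of \eqref{mfe} and nontrivial critical points of $G_p$. We take this correspondence from \cite{CFL, CFL-II} as reviewed in Section \ref{sec-2}, together with its symmetric refinement. Recall that since the total singular strength is $8\pi$ (that is, $\rho=8\pi$ with two conic points of angle $4\pi$), \eqref{mfe} is integrable. Every solution is $u=\log\frac{8|f'|^2}{(1+|f|^2)^2}$ for a developing map $f$ whose monodromy is unitary, and the monodromy is either of type I (with a $\mathbb{C}^*$-scaling family $e^{\lambda}f$) or of type II (scaling orbits collapse). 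In the type II case, solutions come in one-parameter families $f_\beta=e^{\beta}f$, and a family exists if and only if $G_p$ has a nontrivial critical point $\pm a$ with $\wp(p)=f(a)$, where $f(a)$ is the map \eqref{00513-1-0}. Moreover, the correspondence between pairs $\pm a$ and families is one-to-one.

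The first step is to rule out type I solutions for $\tau=\frac12+ib_1$ and $\wp(p)$ real. My reading is that for two singular points of weight $1$ on a torus the monodromy is generically of type II, and type I only happens when $\pm p$ are half periods; this is excluded here. If the paper's Section \ref{sec-2} states that all solutions are type II and parametrized by $(a,\beta)$, I will just invoke that.

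The second step is to identify the even axisymmetric solutions within each family. Evenness $u(z)=u(-z)$ holds for all members, because the pair $\pm a$ is even. So the condition is $u(\bar z)=u(z)$. Conjugation maps the family attached to $\pm a$ to the family attached to $\pm\bar a$; this follows from Lemma \ref{lemma-s5-13}. Hence if $\{\pm\bar a\}\neq\{\pm a\}$, no member of the family is axisymmetric. If instead $a=\pm\bar a$, conjugation acts on the family parameter $\beta\in\mathbb{R}$ (the family is $e^{\beta}$-scaling up to the normalization) as an involution of the form $\beta\mapsto c-\beta$. This involution has exactly one fixed point, which gives exactly one axisymmetric solution per such pair.

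Combining the two steps, the number of even axisymmetric solutions equals the number of pairs $\pm a$ with $a=\pm\bar a$. Statements (1) and (2) then follow directly from Theorem \ref{III-thm2}. The main obstacle is the second step: we must verify that the conjugation involution on the family is an affine reflection of $\beta$ with a unique fixed point, and not a translation with none. I would try an explicit formula first. Writing $f(z)=e^{\beta}\exp(\int \text{stuff})$ via $\zeta(z-a)$-type expressions, I would compute $\overline{f(\bar z)}$ using $\overline{\wp(\bar z)}=\wp(z)$ for the rhombus lattice, which is closed under conjugation. I expect this to give $\overline{f(\bar z)}=\kappa/f(z)$ or $\kappa f(z)$. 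In the inversion case, $u$ is invariant exactly at one $\beta$, namely when $|e^{2\beta}\kappa|=1$.
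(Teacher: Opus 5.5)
Your overall reduction is right, and it is the paper's: count the pairs $\pm a$ with $\{\pm a\}=\{\pm\bar a\}$ and read off Theorem~\ref{III-thm2}. The gap is in your second step, which rests on a false claim: not every member of the scaling family is even. Take the developing map $F_a(z)=y_a(z)/y_{-a}(z)=e^{2c(a)z}\sigma(z-a)/\sigma(z+a)$. Then $F_a(-z)=1/F_a(z)$, so the family $u_\lambda=\log\frac{8e^{2\lambda}|F_a'|^2}{(1+e^{2\lambda}|F_a|^2)^2}$, $\lambda\in\mathbb{R}$, satisfies $u_\lambda(-z)=u_{-\lambda}(z)$. Only $\lambda=0$ gives an even solution. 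This is exactly why \cite[Theorem 1.13]{CFL} is a bijection between pairs of nontrivial critical points and \emph{even} solutions rather than families. Uniqueness therefore comes from evenness, not from a fixed point of conjugation.

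Because of this, your conjugation mechanism fails on one of the two ranges in statement (2). For $\wp(p)\in(d_1,d_2)$ the relevant critical point satisfies $a\in I=(0,\frac12)$. This follows from Lemma~\ref{l6emma-aup-1}, since $f(I)=(d_1,d_2)$ is disjoint from $f(II)\cup f(III)=(e_1,+\infty)$. Using $\bar p=\pm p$ and the quasi-periodicity of $\zeta$, one then gets $c(a)\in\mathbb{R}$ and $\overline{F_a(\bar z)}=F_a(z)$. This is the case $\overline{F(\bar z)}=\kappa F(z)$ that you left open, with $\kappa=1$. Here conjugation fixes \emph{every} $u_\lambda$, so there is no unique fixed point. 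Combined with your claim that all $u_\lambda$ are even, your argument would give infinitely many even axisymmetric solutions.

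For $\wp(p)>e_1$ one has $a\in II\cup III$ and $\overline{F_a(\bar z)}=1/F_a(z)$, so conjugation acts as $\lambda\mapsto-\lambda$. Your count is right there only because the fixed point happens to be the even member.

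The repair is the paper's argument:
\begin{itemize}
\item[(i)] Each pair $\pm a$ carries exactly one even solution.
\item[(ii)] The map $u\mapsto u(\bar z)$ sends even solutions to even solutions, and the family of $\pm a$ to that of $\pm\bar a$.
\item[(iii)] Hence it sends the even solution of $\pm a$ to the even solution of $\pm\bar a$, so that solution is axisymmetric if and only if $\{\pm a\}=\{\pm\bar a\}$.
\end{itemize}
Theorem~\ref{III-thm2} then concludes. Your first step, excluding type~I, is fine in conclusion: the $SL(2,\mathbb{C})$ monodromy of \eqref{GLE} is abelian, which rules out the Klein four-group. The paper avoids that discussion altogether by citing \cite[Theorem 1.13]{CFL}.
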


\begin{theorem}\label{III-thm3-1}
Let $\tau=\frac12+ib$ with $b_1< b\leq b_2$, and $d_1<d_2<d_3<d_4$ be given in Theorem \ref{III-thm3}.
Then the following statements hold.
\begin{itemize}
\item[(1)] Equation \eqref{mfe} has no even axisymmetric solutions for
$$\wp(p)\in [d_1, d_2]\cup [d_3, d_4].$$
\item[(2)] Equation \eqref{mfe} has a unique even axisymmetric solution for
$$\wp(p)\in (-\infty, d_1)\cup (d_2, d_3)\cup (d_4,+\infty).$$
\end{itemize}
\end{theorem}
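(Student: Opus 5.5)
The plan is to reduce Theorem \ref{III-thm3-1} to Theorem \ref{III-thm3} through the correspondence between even axisymmetric solutions of \eqref{mfe} and pairs of nontrivial critical points $\pm a$ of $G_p$ with $a=\pm\bar a$. This is the same dictionary used in \cite{CFL-II} for the rectangular torus. The key tool is the known structure of solutions of \eqref{mfe}. Since both singularities carry weight $4\pi$ (that is, $\rho=8\pi$ with two conic points of angle $4\pi$), every solution comes from a developing map $f$. If the monodromy is not unitary-reducible, the solutions form a one-parameter scaling family $u_\lambda$, $\lambda>0$. The standard characterization (Lin--Wang type, extended to $G_p$ in \cite{CFL}) says that \eqref{mfe} has a solution if and only if $G_p$ has a nontrivial critical point. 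More precisely, the family of solutions is in bijection with the pairs $\pm a$ of nontrivial critical points via $\wp(p)=f(a)$ in \eqref{00513-1-0}: the Hitchin-type formula expresses $e^{u}$ through the Lamé-type ODE \eqref{GLE}, whose monodromy is determined by $a=r+s\tau$.

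\textbf{Step 1.} Since $\wp(p)\in\mathbb R$, we have $\bar p=\pm p$, so $u(-z)$ and $u(\bar z)$ are again solutions. I would show that each family of solutions attached to a pair $\pm a$ is invariant under $z\mapsto -z$, because $G_p$ is even and the family is parametrized by $\pm a$. Under $z\mapsto\bar z$ the family is sent to the one attached to $\pm\bar a$; this uses Lemma \ref{lemma-s5-13}, that $\bar a$ is again a critical point.

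\textbf{Step 2.} It follows that a family can contain an axisymmetric member only if $\{\pm a\}=\{\pm\bar a\}$. Conversely, if $a=\pm\bar a$, the reflection acts on the family $\{u_\lambda\}$ as an involution of the scaling parameter, of the form $\lambda\mapsto c/\lambda$. Such an involution has exactly one fixed point, so each such pair $\pm a$ yields exactly one even axisymmetric solution. We must also rule out solutions whose monodromy is not of this scaling type. Those would force $G_p$ to have no nontrivial critical points, so they would not contradict the count; in fact, at weight $8\pi$ the non-family case does not occur.

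\textbf{Step 3.} Counting via Theorem \ref{III-thm3} then gives the conclusion. For $\wp(p)\in[d_1,d_2]\cup[d_3,d_4]$ there is no pair with $a=\pm\bar a$, hence no even axisymmetric solution. On the complementary set there is exactly one such pair, hence exactly one even axisymmetric solution.

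The main obstacle is Step 2: establishing that the reflection acts on the scaling family with exactly one fixed point. I would handle it by writing $e^{u_\lambda}=8|f'|^2/(1+\lambda^2|f|^2)^2$ with the developing map normalized so that $f(\bar z)$ equals $c/\overline{f(z)}$ or $c\,\overline{f(z)}$. Which of the two forms occurs is decided by whether $a=\bar a$ or $a=-\bar a$. In either case, matching $u_\lambda(\bar z)=u_{\lambda'}(z)$ gives an explicit relation between $\lambda$ and $\lambda'$, and I would solve it for its unique fixed point.
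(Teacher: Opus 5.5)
Your reduction to Theorem \ref{III-thm3} is the right one. However, Step 2 has a genuine gap. You count axisymmetric members of a scaling family as fixed points of $z\mapsto\bar z$ alone, and you never impose evenness on the individual member; Step 1 only shows the family is invariant as a set.

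This fails when $a=\bar a$. Normalize the developing map of the family attached to $\pm a$ as $f_a(z)=e^{2c(a)z}\sigma(z-a)/\sigma(z+a)$, so the family is $\{\lambda f_a\}_{\lambda>0}$.
\begin{itemize}
\item For $a\in I=(0,\frac12)$ we have $(r,s)=(a,0)$, so $c(a)=a\eta_1\in\mathbb R$.
\item Since the rhombus lattice is invariant under conjugation, $\overline{\sigma(\bar z)}=\sigma(z)$.
\item Hence $\overline{f_a(\bar z)}=f_a(z)$, and $u_\lambda(\bar z)=u_\lambda(z)$ for \emph{every} $\lambda>0$.
\end{itemize}
So the reflection acts as the identity on this family, not as $\lambda\mapsto c/\lambda$. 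Your relation in the case $f(\bar z)=c\,\overline{f(z)}$ only gives $\lambda'=|c|\lambda$ with $|c|=1$ forced, and there is no unique fixed point. This case is not vacuous: for $\wp(p)\in(d_2,d_3)$ the relevant critical point lies in $I$ by Lemma \ref{lemma-aup-1}.

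What pins down a unique member is evenness. We have $f_a(-z)=1/f_a(z)$, and $f\mapsto 1/f$ leaves $e^{u}$ unchanged, so $u_\lambda(-z)=u_{1/\lambda}(z)$. Hence each family contains exactly one even solution.

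This is the bijection between pairs $\pm a$ and \emph{even} solutions of \eqref{mfe} in \cite[Theorem 1.13]{CFL}, which the paper invokes. The paper then argues as follows. The map $z\mapsto\bar z$ sends even solutions to even solutions, and the family of $\pm a$ to that of $\pm\bar a$. So it sends the even solution attached to $\pm a$ to the one attached to $\pm\bar a$. That solution is therefore axisymmetric if and only if $\{\pm a\}=\{\pm\bar a\}$, and Theorem \ref{III-thm3} gives the count. No computation of the action on $\lambda$ is needed.

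For $a=-\bar a$ your involution is $\lambda\mapsto 1/\lambda$, the same one induced by $z\mapsto -z$. So your count is correct there, but only because its fixed point happens to be the even member, which you did not verify.

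Finally, your sentence about non-family solutions is muddled: extra solutions would add to the count, not be harmless. The correct reason there are none is this. The local monodromy at $\pm p$ is $-I_2$, so $N_1N_2=N_2N_1$. Commuting unitary monodromy is simultaneously diagonalizable, so every solution lies in one of the scaling families.
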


\begin{theorem}\label{III-thm4-1}
Let $\tau=\frac12+ib$ with $b> b_2$, and
$d_1<d_2<d_3<d_4<d_5<d_6$ be given in Theorem \ref{III-thm4}.
Then the following statements hold.
\begin{itemize}
\item[(1)] Equation \eqref{mfe} has no even axisymmetric solutions for
$$\wp(p)\in (d_2, d_3]\cup [d_4, d_5).$$
\item[(2)] Equation \eqref{mfe} has a unique even axisymmetric solution for
$$\wp(p)\in (-\infty, d_1]\cup\{d_2\}\cup(d_3, d_4)\cup\{d_5\}\cup [d_6, +\infty).$$
\item[(3)] Non-uniqueness: Equation \eqref{mfe} has exactly two even axisymmetric solutions for
$$\wp(p)\in (d_1, d_2)\cup(d_5, d_6).$$
\end{itemize}
\end{theorem}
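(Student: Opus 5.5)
The plan is to reduce Theorem \ref{III-thm4-1} to Theorem \ref{III-thm4} through the standard correspondence between solutions of \eqref{mfe} and nontrivial critical points of $G_p$. Since the local mass $4\pi$ at each of $\pm p$ is $8\pi\cdot\frac12$, each singularity carries integer angle data $n=1$. The equation is therefore integrable, and every solution arises from a developing map $h$ through the Liouville formula $u=\log\frac{8|h'|^2}{(1+|h|^2)^2}$.

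First I will recall (from \cite{CFL, CFL-II}) the classification of solutions. Either the monodromy of $h$ is unitary of ``type II'', meaning $h$ is multiplied by characters $e^{i\theta_j}$, or solutions come in one-parameter scaling families $e^{\lambda}h$.

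In the type II case, $h=e^{\lambda}\exp\big(\int \text{(second kind differential)}\big)$. The solution is then determined, up to the scaling $\lambda$, by a pair $\pm a$ with $\wp(p)=f(a)$ as in \eqref{00513-1-0}; equivalently $\pm a$ is a pair of nontrivial critical points of $G_p$. Conversely, each such pair gives a one-parameter family $u_\lambda$, $\lambda\in\mathbb R$, of solutions. Evenness $u(z)=u(-z)$ is automatic after fixing the normalization. I then need to show that exactly one member of the family is axisymmetric, and only when $\{\pm a\}=\{\pm\bar a\}$. The reflection $z\mapsto\bar z$ maps the family attached to $\pm a$ to the family attached to $\pm\bar a$, using that $\wp(p)\in\mathbb R$ gives $\bar p=\pm p$. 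So axisymmetry forces $\{\pm a\}=\{\pm\bar a\}$, i.e.\ $a=\pm\bar a$. Conversely, if $a=\pm\bar a$, the reflected family equals the original one, so $\bar{\ }$ acts on the parameter $\lambda$ by $\lambda\mapsto c-\lambda$ for some constant $c$. This involution has a unique fixed point, which gives exactly one even axisymmetric solution per such pair.

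For the remaining case (non-type-II monodromy, where $h$ has a nontrivial projective monodromy), I will argue as in \cite{CFL-II}. Such solutions are blown-up limits or are excluded for $n=1$ at two points: for $\rho=8\pi\cdot 1$ per singularity, the known result is that all solutions are of type II (the unitary monodromy being abelian). I will cite this from Part I/II rather than reprove it.

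Counting then gives the following. The number of even axisymmetric solutions equals the number of pairs $\pm a$ of nontrivial critical points with $a=\pm\bar a$, and Theorem \ref{III-thm4} yields items (1)--(3) directly, including the degenerate cases $\wp(p)\in\{d_2,d_5\}$, where one pair still gives one solution.

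The main obstacle is the uniqueness of the axisymmetric member within each scaling family. This requires checking that complex conjugation really acts as an orientation-reversing involution on $\lambda$ rather than trivially, which I will verify from $\overline{h(\bar z)}$ being $e^{\bar\lambda'}h$ up to inversion $h\mapsto 1/h$ (which corresponds to $\lambda\mapsto-\lambda$).
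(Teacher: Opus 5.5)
Your overall reduction is the one the paper uses. It invokes the one-to-one correspondence of \cite[Theorem 1.13]{CFL} between pairs of nontrivial critical points of $G_p$ and \emph{even} solutions of \eqref{mfe}, observes that $\pm\bar a$ corresponds to $u(\bar z)$, and reads the count off Theorem~\ref{III-thm4}.

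The gap is in the step you flag as the main obstacle, the analysis inside a scaling family. Both claims you make there are false in general. Normalize the developing map as $h_a(z)=e^{2c(a)z}\sigma(z-a)/\sigma(z+a)$ (the ratio $y_a/y_{-a}$), and let $u_\lambda$, $\lambda\in\mathbb R$, be the solution with developing map $e^{\lambda}h_a$.
\begin{itemize}
\item Evenness. Since $\sigma$ is odd, $h_a(-z)=1/h_a(z)$, hence $u_\lambda(-z)=u_{-\lambda}(z)$. So evenness is \emph{not} automatic; it singles out $\lambda=0$.
\item Conjugation. Using $\bar p=\pm p$ and the conjugation invariance of the rhombic lattice, $\overline{h_a(\bar z)}=h_{\bar a}(z)$. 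For $a\in I=(0,\frac12)$ one has $\bar a=a$, so $u_\lambda(\bar z)=u_\lambda(z)$ for \emph{every} $\lambda$. Conjugation therefore acts trivially on the family: there is no involution $\lambda\mapsto c-\lambda$, and ``exactly one member of the family is axisymmetric'' fails.
\end{itemize}
This case genuinely occurs for $b>b_2$: by Lemma~\ref{lemma-aup-1}, when $\wp(p)\in(d_3,d_4)$ the relevant critical point lies in $I$. Only for $a\in II\cup III$, where $\bar a\equiv -a$, does conjugation act as $\lambda\mapsto-\lambda$. There your fixed-point count gives the right answer only because that fixed point happens to be the even member $\lambda=0$, which your argument never establishes.

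The repair is to let evenness, not axisymmetry, select the member. Each family contains exactly one even solution; this is the content of \cite[Theorem 1.13]{CFL}. Because $z\mapsto\bar z$ commutes with $z\mapsto -z$, the reflection of the even solution attached to $\pm a$ is the even solution attached to $\pm\bar a$. Hence the even solution is axisymmetric if and only if $\{\pm a\}=\{\pm\bar a\}$. This gives exactly one even axisymmetric solution per such pair, whether $a\in I$ or $a\in II\cup III$, and Theorem~\ref{III-thm4} then finishes the proof.

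Your paragraph on non-type-II solutions is also muddled, since the type II solutions are precisely the ones that form scaling families. That input can simply be quoted from \cite{CFL}, as the paper does.
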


\begin{proof}
We proved in Part I \cite[Theorem 1.13]{CFL} that there is a one-to-one correspondence between pairs of nontrivial critical points of $G_p(z)$ and even solutions of \eqref{mfe}. For $\tau=\frac12+ib$ and $\wp(p)\in\mathbb R$, we mentioned before that if $a$ is a critical point of $G_p(z)$, then so is $\bar a$; if $u(z)$ is an even solution of \eqref{mfe}, then so is $u(\bar z)$.
Furthermore, if the pair of nontrivial critical points $\pm a$ corresponds to the even solution $u(z)$, then it follows from the proof of \cite[Theorem 1.13]{CFL} that $\pm \bar a$ corresponds to $u(\bar z)$. Thus, $\{\pm a\}=\{\pm \bar a\}$ in $E_{\tau}$ if and only if $u(z)=u(\bar z)$, i.e., $u(z)$ is even axisymmetric. In other words, there is a one-to-one correspondence between pairs of nontrivial critical points of $G_p(z)$ satisfying $a=\pm\bar a$ in $E_{\tau}$ and even axisymmetric solutions of \eqref{mfe}.
Therefore, these results follows directly from Theorems \ref{III-thm1}-\ref{III-thm4}. 
\end{proof}

The rest of this paper is organized as follows. 
We prove Theorem \ref{thm-nondegenerate} in Section \ref{sec02}.
In Section \ref{sec-2}, we briefly review the basic theory of the second order linear ODE \eqref{GLE}, including the conditional stability sets and their connection with nontrivial critical points of $G_p(z)$. We consider $\tau=\frac12+ib$ with $b>0$ and $\wp(p)\in\mathbb R$ from Section \ref{sec-3}. In Section \ref{sec-3}, first we give a more precise characterization of the conditional stability sets, and then we study the basic property of the map $f(a)$. In Sections \ref{sec-4}-\ref{sec-6}, we use the conditional stability sets to study the image of $f(a)$ for $\wp(a)\in\mathbb {R}$ and prove our main results Theorems \ref{III-thm1}-\ref{III-thm4}.  In Section \ref{sec-7}, we give a complete characterization of the conditional stability sets for $p\in (0, \frac12)$ by using Theorems \ref{III-thm1}-\ref{III-thm4}, which can be used to prove Corollary \ref{III-coro} concerning the non-degeneracy of the nontrivial critical points.

\section{Proof of Theorem \ref{thm-nondegenerate}}
\label{sec02}

This section is devoted to the proof of Theorem \ref{thm-nondegenerate}. First, we recall the basic function $\phi$ from Part I \cite[Section 2]{CFL}.
By using the complex variable $z\in\mathbb C$ and write $z=r+s\tau$ with $r,s\in \mathbb{R}$, it was proved in \cite{LW} that
\begin{equation}
\label{G_z}-4\pi \frac{\partial G}{\partial z}(z)=\zeta(z)-r\eta_{1}
-s\eta_{2}.
\end{equation}
Consequently, $z=r+s\tau$ with $r,s\in\mathbb R$ is a critical point of $G_p$ if and only if 
\begin{equation}\label{00a+1p}\zeta(z+p)+\zeta(z-p)-2(r\eta_1+s\eta_2)=0.\end{equation}
By the Legendre relation $\tau\eta_1-\eta_2=2\pi i$, we easily obtain that $$\label{rsab}-r\eta_1-s\eta_2=az+b\bar{z},\quad\text{where }a=\frac{\pi}{\operatorname{Im}\tau}-\eta_1,\quad b=-\frac{\pi}{\operatorname{Im}\tau}<0.$$
Inserting this into \eqref{00a+1p}, we obtain
$$\zeta(z+p)+\zeta(z-p)+2(az+b\bar{z})=0,$$
or equivalently,
\begin{equation}\label{00a+2p}
z=-\frac{1}{2b}\left(\overline{\zeta(z+p)}+\overline{\zeta(z-p)}+2\bar{a}\bar{z}\right)=:g(z).
\end{equation}
In conclusion, $z$ is a critical point of $G_p(z)$ if and only if $g(z)=z$, i.e., $z$ is a fixed point of $g$.
Clearly $g(z)$ is an anti-meromorphic function in the plane, and a direct computation shows that
\begin{align}\label{g-w}
g(z+\omega)=g(z)+\omega,\quad\forall\, \omega\in \Lambda_{\tau}=\mathbb Z+\mathbb Z\tau.
\end{align}
Therefore, the map $\phi: E_{\tau}\setminus\{\pm p\}\to \mathbb C$, defined by $$z\mapsto \phi(z):=z-g(z),$$ is a well-defined harmonic map, and $\phi^{-1}(0)$ is precisely the set of critical points of $G_p(z)$. It follows from the proof of \cite[Theorem 1.3]{CFL} that
\begin{equation}\label{abss}
\#\phi^{-1}(w)\leq 10,\quad\forall\, w\in\mathbb C.
\end{equation}
We recall
the following result from \cite{BE-2010}.

\begin{proposition}\cite[Proposition 3]{BE-2010}\label{prop003}
Let $f: D\to \mathbb C$ be a harmonic map defined in a region $D$ in $\mathbb C$. Suppose that there exists $m\in\mathbb{N}$ such that every $w\in\mathbb C$ has at most $m$ preimages. Then 
\begin{itemize}
\item[(1)]
The set of points which have $m$ preimages is open. 
\item[(2)]
If $w_0\in \mathbb C$ satisfies $\#f^{-1}(w_0)=m$, then for any $z_j\in f^{-1}(w_0)$, $f$ is locally surjective at $z_j$ (namely, the image of every neighborhood of $z_j$ contains a neighborhood of $w_0$).
\end{itemize}
\end{proposition}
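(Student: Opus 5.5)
Since (1) follows from (2), I will prove (2) first and then deduce (1). If $\#f^{-1}(w_0)=m$ and $f$ is locally surjective at every $z_j\in f^{-1}(w_0)$, take pairwise disjoint small disks $D_j\ni z_j$. Each $f(D_j)$ contains a common neighborhood $U$ of $w_0$. Then every $w\in U$ has at least one preimage in each $D_j$, so it has at least $m$ preimages, hence exactly $m$. This shows that $U$ lies in the set of points with $m$ preimages, which gives (1). So the real content is (2).

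For (2), I first note that the hypothesis $\#f^{-1}(w)\le m$ forces $f$ to be light, i.e. preimages are discrete. Near each $z_j$, I write $f-w_0=h+\bar g$ with $h,g$ holomorphic and $h(z_j)=0$, and I let $h(z)=\alpha(z-z_j)^k+\cdots$ and $g(z)=\beta(z-z_j)^l+\cdots$ be the leading terms. I split $f^{-1}(w_0)$ into two kinds of points.

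\emph{Nondegenerate points} are those with $k\ne l$ or $|\alpha|\ne|\beta|$. There one term dominates on small circles, so the winding number of $f-w_0$ around $z_j$ is $\pm\min(k,l)$, or $\pm k$ when $k=l$. Since this is nonzero, degree theory gives local surjectivity at $z_j$.

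\emph{Degenerate points} are those with $k=l$ and $|\alpha|=|\beta|$. Here the leading part $\alpha z^k+\bar\beta\bar z^k$ maps into a real line $L_j$ through the origin. I will argue by contradiction, assuming (2) fails at such a degenerate $z_1$.

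For a degenerate $z_1$, the goal is a fold picture. The index of $z_1$ is $0$, and $f$ is not locally injective there, since the sense-preserving and sense-reversing regions $\{|h'|>|g'|\}$ and $\{|h'|<|g'|\}$ both accumulate at $z_1$. By Sard's theorem, I pick a regular value $w\ne w_0$ of $f$ in $f(D_1)$, arbitrarily close to $w_0$. The signed count of its preimages in $D_1$ equals the index, which is $0$, and there is at least one preimage. So it has at least one sense-preserving and one sense-reversing preimage, i.e. at least $2$ preimages in $D_1$. Using the next-order term of $h+\bar g$, I would show that such $w$ fill an open set $V_1$ accumulating at $w_0$, roughly a one-sided cusp region along $L_1$. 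The nondegenerate $z_j$ contribute at least one preimage for every $w$ near $w_0$. If every other degenerate $z_j$ also has a preimage of each $w\in V_1$ near $w_0$, then $w$ has at least $m+1$ preimages, contradicting the bound $m$.

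The main obstacle is controlling the other degenerate points simultaneously. Their images $f(D_j)$ are thin regions around possibly different lines $L_j$, and they need not contain $V_1$. My first attempt is a finer local normal form: a neighborhood of $w_0$ minus a cusp around $L_j$ is covered with multiplicity $0$ or $2$ by $D_j$, with the side determined by the sign of the next-order coefficient. I would then choose $w$ in an angular sector where every degenerate $z_j$ contributes $0$ or at least $2$, and count again. Points of multiplicity $0$ still need handling, and if this local bookkeeping fails I would fall back on the argument in \cite{BE-2010}.
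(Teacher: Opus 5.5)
Your reduction of (1) to (2), the lightness remark, the winding-number argument at preimages with $k\neq l$ or $|\alpha|\neq|\beta|$, and the observation that a non-surjective preimage has index $0$ are all fine. The last point means regular values of $f(D_1)$ near $w_0$ have at least two preimages in $D_1$. But the whole content of (2) is the step you leave open. You must produce one $w$ near $w_0$ that is covered twice by $D_1$ and at least once by every other $D_j$, when several preimages are non-surjective.

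Your proposed local picture is not accurate. The map $f(x+iy)=x+i(x^2-y^2)$ is harmonic and degenerate at $0$, with $k=l=1$, $|\alpha|=|\beta|$ and $L=\mathbb R$. Yet near $0$ the image of any small disk about $0$ is $\{u+iv:\ v\le u^2\}$, covered twice below the parabola. That is not a thin region around $L$.

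More importantly, degree, parity and a choice of angular sector cannot close the count. Suppose two preimages $z_1,z_2$ were folds covering twice the two complementary sides of one common caustic arc through $w_0$. Locally they would look like $(x,y)\mapsto(x,x^2-y^2)$ and $(x,y)\mapsto(x,x^2+y^2)$. The pair would contribute exactly two preimages to every $w$ near $w_0$, their doubly covered sets would be disjoint, and the total would never need to exceed $m$. No bookkeeping of indices rules this configuration out.

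What rules it out is harmonicity, and this is the missing idea. For every $c\neq0$, the function $\operatorname{Re}(\bar c(f-w_0))$ is harmonic, vanishes at $z_j$, and is not identically zero by lightness. By the maximum principle, $f(V)$ therefore meets both open half-planes bounded by the line $\{\operatorname{Re}(\bar c(w-w_0))=0\}$, for every neighbourhood $V$ of $z_j$. This kills the second fold above, whose image lies in $\{v\ge0\}$.

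Two folds sharing a caustic with an inflection at $w_0$ would pass this half-plane test, so second-order information is also needed. On the critical curve $|g'/h'|=1$, write $g'/h'=e^{2i\psi}$. Then $df$ has image $e^{-i\psi}\mathbb R$, so the caustic's tangent turns monotonically wherever $(g'/h')'\neq0$. Consequently:
\begin{itemize}
\item[(a)] fold caustics are strictly convex, and each fold covers the non-convex side;
\item[(b)] each fold's doubly covered set fills more than half of every small circle about $w_0$;
\item[(c)] averaging over such a circle gives a $w$ covered twice by more than half of the non-surjective $D_j$, hence with at least $m+1$ preimages.
\end{itemize}
This settles the case where all non-surjective preimages are folds. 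A proof must supply information of this kind at every type of non-surjective preimage, including $k\ge2$ and singular points of the critical set.

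The paper does not prove the proposition itself; it takes it, including (2), from the proof in \cite[p.~321]{BE-2010}. Your proposal defers exactly this part to the same source, so as it stands it does not establish (2).
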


Remark that the original version of \cite[Proposition 3]{BE-2010} contains only the statement (1), and the statement (2) is contained in the proof of the statement (1); see \cite[p.321]{BE-2010}. 
We also need the following Lewy's non-vanishing theorem \cite{Lewy}.

\begin{theorem}\label{thm-Lewy}\cite{Lewy}
If $u(x,y)$ and $v(x,y)$ are harmonic, $u(0,0)=v(0,0)=0$, and if there exists a neighborhood $N_1$ of the origin of the $xy$ plane and a neighborhood $N_2$ of the origin of the $uv$ plane such that $(u, v): N_1\to N_2$ is bijective. Then the Jacobian $\partial (u,v)/\partial (x,y)$ does not vanish at the origin. 
\end{theorem}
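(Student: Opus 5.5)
We plan to argue by contradiction: assume $(u,v)$ is a local bijection near the origin but the Jacobian vanishes at $0$, and show that then some value near $0$ has at least two preimages, or that the map is not open. First we put the map in complex form. In the simply connected neighborhood $N_1$ we write $F=u+iv=h+\bar g$ with $h,g$ holomorphic and $h(0)=g(0)=0$. The Jacobian is then
\[
J_F=|h'|^2-|g'|^2 .
\]
We also note a useful invariance. For $|\varepsilon|<1$ the $\mathbb R$-linear map $w\mapsto w+\varepsilon\bar w$ is a bijection of $\mathbb C$. Hence $F_\varepsilon:=F+\varepsilon\bar F$ is again locally bijective and harmonic, with analytic part $h+\varepsilon g$, antianalytic part $g+\varepsilon h$, and Jacobian $(1-|\varepsilon|^2)J_F$. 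The same holds after replacing $F$ by $\bar F$, and after precomposing with rotations. Since $F$ is injective, the local degree of $F$ at $0$ is $\pm1$. We will contradict this by computing the degree through the argument principle applied to $F(z)-w$ for small generic $w$: zeros where $|h'|>|g'|$ count $+1$, and zeros where $|h'|<|g'|$ count $-1$.

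\textbf{Step 1 (degenerate linear part).} Suppose $J_F(0)=0$, so $|h'(0)|=|g'(0)|$. If this common value is nonzero, the differential has rank one. After replacing $F$ by $\alpha F$ or $\alpha\bar F$ and rotating $z$, we may assume $h(z)=z+O(z^2)$ and $g(z)=z+O(z^2)$, so $u=2x+O(|z|^2)$ while $v$ has no linear term. Subtracting multiples of $u$ from $v$ and adding higher-order multiples does not affect injectivity, by the $\mathbb R$-linear trick above. We then look at the lowest nonvanishing homogeneous harmonic part of $v$, which is $\operatorname{Re}(cz^k)$ with $k\ge2$; it cannot vanish identically, since otherwise $v$ would be a function of $u$ and $F$ would not be injective. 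On each level curve $\{u=t\}$, a smooth arc nearly parallel to the $y$-axis, injectivity forces $v$ to be strictly monotone. Expanding $v$ along $\{u=t\}$, the term $\operatorname{Re}(c\,(iy)^k)$ together with the corrections coming from $x\approx t/2$ produces, for a suitable sign of $t$, an interior critical point of $v|_{\{u=t\}}$. This is a local fold and contradicts injectivity.

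\textbf{Step 2 (vanishing linear part).} If $h'(0)=g'(0)=0$, write $h=az^n+\cdots$ and $g=bz^m+\cdots$ with $n,m\ge2$. If $|az^n|$ strictly dominates $|bz^m|$ on small circles (for example $n<m$, or $n=m$ with $|a|>|b|$), then the argument principle on $|z|=r$ gives $\deg(F,0)=n\ge2$. In the reverse situation it gives $-m\le-2$. Either way this contradicts $|\deg|=1$. The remaining case $n=m$, $|a|=|b|$ is reduced to the previous cases by using $F_\varepsilon$ with $\varepsilon$ chosen so that the leading coefficients of $h+\varepsilon g$ and $g+\varepsilon h$ have different moduli, which only requires $\varepsilon\neq0$ small with a suitable argument. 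If cancellation persists, we pass to the next-order coefficients and repeat.

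\textbf{Main obstacle.} I expect the main obstacle to be the balanced situations: the equal-modulus case $n=m$, $|a|=|b|$ in Step 2, and the monotonicity analysis in Step 1 when the leading term along the level curve is odd in $y$. In these cases the degree or monotonicity is decided by higher-order terms, and one must track them carefully, possibly by iterating the normalization. I would try first to handle all these cases uniformly with the counting argument: choose $w$ on a ray where the leading homogeneous map has at least two transversal preimages, and then check by Rouché-type stability that these preimages persist for the full map $F$.
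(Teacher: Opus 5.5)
The paper does not prove this statement. It quotes it from \cite{Lewy} and uses it as a black box in the proof of Theorem \ref{thm-nondegenerate}, so your argument has to stand on its own. As written it has a genuine gap.

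\textbf{Step~2 fails in the balanced case $n=m$, $|a|=|b|$.} The antianalytic part of $F_\varepsilon=F+\varepsilon\bar F$ is $\overline{g+\bar\varepsilon h}$, not $\overline{g+\varepsilon h}$. The same computation that gives $J_{F_\varepsilon}=(1-|\varepsilon|^2)J_F$ also gives
\[
|a+\varepsilon b|^2-|b+\bar\varepsilon a|^2=(1-|\varepsilon|^2)\big(|a|^2-|b|^2\big).
\]
More generally, $w\mapsto\alpha w+\beta\bar w$ multiplies $|a|^2-|b|^2$ by $|\alpha|^2-|\beta|^2$. So no $\varepsilon$, and no real-linear change of the target, breaks the balance: the leading homogeneous part maps into a line and keeps doing so. Your fallback (two transversal preimages of the leading homogeneous map) is also unavailable, since that map has identically vanishing Jacobian.

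Worse, degree cannot decide this case at all. Take
\[
F(z)=2\operatorname{Re}(z^2)+i\big(\operatorname{Re}(e^{-i\pi/4}z^3)+\operatorname{Re}(z^4)\big),
\]
so that $h=z^2+\cdots$ and $g=z^2+\cdots$. For small $r$, the curve $F(re^{i\theta})$ meets the imaginary axis only at $\theta=\frac{\pi}{4},\frac{3\pi}{4},\frac{5\pi}{4},\frac{7\pi}{4}$, with imaginary parts $-r^4$, $r^3-r^4$, $-r^4$, $-r^3-r^4$. These are all nonzero, so $0$ is an isolated preimage of $0$. The only crossing of the positive imaginary axis is clockwise, so $\deg(F,0)=-1$. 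Yet $F(re^{i\pi/4})=F(re^{5i\pi/4})=-ir^4$, so $F$ is not injective. Hence no argument that merely contradicts $|\deg(F,0)|=1$ can work in this case.

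\textbf{Step~1 is also incomplete.} If the leading part of $v$ is $\gamma xy$ (your case $k=2$, $c\in i\mathbb R$), then $\partial_y v$ along $\{u=t\}$ is $\approx\gamma t$ at leading order. No critical point appears, and you give no mechanism ensuring that the iteration through higher-order terms terminates.

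\textbf{The missing idea} is to use injectivity on the zero set of the degenerate combination, rather than on level curves of $u$. Since $J_F(0)=0$, there is $(\lambda,\mu)\in\mathbb R^2\setminus\{0\}$ with $\nabla(\lambda u+\mu v)(0)=0$; after your Step~1 normalization this is just $v$.
\begin{itemize}
\item The harmonic function $\phi=\lambda u+\mu v$ is not identically zero near $0$, since otherwise an open set would be mapped injectively into a line.
\item Hence $\phi=\operatorname{Re}(cz^k)+O(|z|^{k+1})$ with $c\neq0$ and $k\ge2$, and $\{\phi=0\}$ near $0$ consists of $2k\ge4$ arcs emanating from $0$.
\item $F$ maps each arc injectively into the line $\{\lambda u+\mu v=0\}$, with common endpoint $F(0)$. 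So each image is an interval on one side of $F(0)$.
\item Two of these images lie on the same side and therefore overlap, contradicting injectivity.
\end{itemize}
This classical argument handles your Steps~1 and~2 at once, and needs neither degree theory nor any higher-order bookkeeping.
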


Now we are ready to prove Theorem \ref{thm-nondegenerate}.

\begin{proof}[Proof of Theorem \ref{thm-nondegenerate}]
Suppose $G_p(z)$ has $10$ critical points $q_1, \cdots, q_{10}$. Then $\phi(q_j)=0$.
Take pairwise disjoint neighborhoods
$U_j\subset E_{\tau}\setminus\{\pm p\}$ of $q_j$.
It follows from \eqref{abss} and Proposition \ref{prop003} that $\phi$ is locally surjective at $q_j$ for each $j$, so there is $\varepsilon>0$ such that
$$
  B_\varepsilon(0):=\{z\in\mathbb{C} \;:\;|z|<\varepsilon\}\subset \phi(U_j),
  \qquad j=1,\cdots,10.
$$
By the continuity at $q_j$, we get that
\begin{equation}\label{eqeee}
 V_j:=\phi^{-1}(B_\varepsilon(0))\cap U_j
\end{equation}
is a neighborhood of $q_j$.
We claim that  $\phi|_{V_j}: V_j\to B_\varepsilon(0)$ is injective for all $j$. Suppose for some $j$, say $j=1$ for example, that  $\phi|_{V_1}: V_1\to B_\varepsilon(0)$ is not injective, then there are 
$z_1\neq z_2\in V_1$ and $w'\in B_\varepsilon(0)$ such that $\phi(z_1)=\phi(z_2)=w'$. Note from
\eqref{eqeee} that $V_j\subset U_j$ contains at least one preimage of $w'$
for $2\leq j\leq 10$, and since $U_j$ are disjoint, we get
\[
  \#\phi^{-1}(w')\geq 2+(10-1)=11,
\]
a contradiction with \eqref{abss}.  

Therefore, $\phi|_{V_j}: V_j\to B_\varepsilon(0)$ is bijective for all $j$.  Then by Lewy's Theorem \ref{thm-Lewy},  we get the Jacobian
$J_\phi(q_j)\neq0$ for every $j$. Finally, it follows from \cite[(3.1)]{CFL} that the Hessian $\det D^2G_p(q_j)$ satisfies
$$\det D^2G_p(q_j)
  =\frac{1}{4(\operatorname{Im}\tau)^2}\,J_\phi(q_j)\neq 0,\quad \forall j=1,\cdots, 10,$$
namely all critical points of $G_p(z)$ are non-degenerate.
\end{proof}

\section{Preliminaries}
\label{sec-2}

\subsection{Generalized Lam\'{e} equation}
In this subsection, we briefly review the connection between the Green function $G_p(z)$ and
 the following second order generalized Lam\'{e} equation (GLE for short)
\begin{align}\label{GLE}
y''(z)=I(z; p, A, \tau)y(z),\quad z\in\mathbb{C},
\end{align}
with the potential given by
\begin{align}\label{potential-I}
I(z; p, A, \tau):=&\frac{3}{4}
\big(\wp(z+p)+\wp(z-p)-\wp(2p)\big)\nonumber\\
&+A\big(\zeta(z+p)-\zeta(z-p)-\zeta(2p)\big)+A^{2},
\end{align}
where $p\in E_{\tau}\setminus E_{\tau}[2]$ and $A\in \mathbb{C}$. See Part II \cite[Section 2]{CFL-II} for more details.


GLE \eqref{GLE}  is of Fuchsian type with regular singularities at $\pm p$.
The local exponents of GLE (\ref{GLE}) at $\pm p$ are $-\frac12$, $\frac32$, so solutions might have logarithmic singularities at $\pm p$.
It was proved in \cite{CKL-JMPA2016} that $\pm p$ are always apparent singularities, namely all solutions of \eqref{GLE} are free of
logarithmic singularities at $\pm p$. 
Consequently, the local monodromy matrix at $\pm p$ is $-I_2$, where $I_2$ is the identity matrix. Denote by $L_p\subset\{t_1+t_2\tau : t_1,t_2\in[-\frac12,\frac12]\}$ the straight segment crossing $0$ and connecting $\pm p$. Then by analytic continuation, any solution $y(z)$ of GLE (\ref{GLE}) can be viewed as a single-valued
meromorphic function in $\mathbb{C}\backslash(L_p+\Lambda_{\tau})$, and in this
region $y(-z)$ and $y(z+\omega_j)$ are well-defined. Here
$L_p+\Lambda_{\tau}=\cup_{\omega\in\Lambda_{\tau}} (\omega+L_p)$ is the preimage of $L_p$ under the projection $\pi: \mathbb{C}\to E_{\tau}$.

Fix any base point $q_{0}\in E_{\tau}\backslash\{ \pm 
p\}$ such that any of $q_0+\mathbb{R}, q_0+\tau\mathbb{R}, q_0+(2\tau-1)\mathbb{R}$ has no intersection with $\pm p+\Lambda_{\tau}$. The monodromy representation of GLE \eqref{GLE} is a group homomorphism $\rho:\pi_{1}(  E_{\tau}\backslash\{\pm p\},q_{0})  \rightarrow
SL(2,\mathbb{C})$ defined as follows. Take any basis of solutions $(y_1(z), y_2(z))$ of GLE  \eqref{GLE}. For any loop $\gamma\in \pi_{1}(  E_{\tau}\backslash
\{\pm p\},q_{0})$, let $\gamma^*y(z)$ denote the analytic continuation of $y(z)$ along $\gamma$. Then $\gamma^*(y_1(z), y_2(z))$ is also a basis of solutions, so there is a matrix $\rho(\gamma)\in SL(2,\mathbb{C})$ such that
$$\gamma^*\begin{pmatrix}
y_{1}(z)\\
y_{2}(z)
\end{pmatrix}
=\rho(\gamma)
\begin{pmatrix}
y_{1}(z)\\
y_{2}(z)
\end{pmatrix}.$$
Here $\rho(\gamma)\in SL(2,\mathbb{C})$ (i.e., $\det \rho(\gamma)=1$) follows from the fact that the Wronskian $y_{1}(z)y_2'(z)-y_1'(z)y_2(z)$ is a nonzero constant. The image of $\rho:\pi_{1}(  E_{\tau}\backslash\{\pm p\},q_{0})  \rightarrow
SL(2,\mathbb{C})$ is called the monodromy group of \eqref{GLE}, which is a subgroup of $SL(2,\mathbb{C})$.

Let
$\gamma_{\pm}\in \pi_{1}(  E_{\tau}%
\backslash\{  \pm p\}  ,q_{0})$ be a
simple loop encircling $\pm p$ counterclockwise respectively, and $\ell_{j}%
\in \pi_{1}(  E_{\tau}%
\backslash\{\pm p\}  ,q_{0})  $, $j=1,2$, be two
fundamental cycles of $E_{\tau}$ connecting $q_{0}$ with $q_{0}+\omega_{j}$
such that $\ell_{j}\cap (L_p+\Lambda_{\tau})=\emptyset$ and satisfies%
\begin{equation}
\gamma_{-}\gamma_{+}=\ell_{1}\ell_{2}\ell_{1}^{-1}\ell_{2}^{-1}\text{ \  \ in
}\pi_{1}\left(  E_{\tau}\backslash \left \{\pm p\right \}
,q_{0}\right)  . \label{II-iv1}%
\end{equation}
Denote by $$N_j=N_j(A):=\rho(\ell_j),\quad j=1,2.$$
Since
$
\rho(\gamma_{\pm})=-I_{2}$, 
it follows from \eqref{II-iv1} that $N_1N_2=N_2N_1$, 
and the monodromy group of (\ref{GLE}) is generated by $\{-I_{2},N_1,N_2\}$ and is abelian. So there is a common eigenfunction $y_{1}(z)$ of all
monodromy matrices. Let
$\varepsilon_{j}$ be the eigenvalue of $N_j$: $$\ell_{j}^{\ast}y_{1}(z)=\varepsilon
_{j}y_{1}(z),\quad j=1,2.$$As mentioned before, $y_{1}(z)$ can be viewed as a single-valued
meromorphic function in $\mathbb{C}\backslash(L_p+\Lambda_{\tau})$, and in this
region, $y_{1}(-z)$ and $y_1(z+\omega_j)$ are well-defined. Then it follows from $\ell_{j}\cap (L_p+\Lambda_{\tau})=\emptyset$ that
\begin{equation}
y_{1}(z+\omega_{j})=\ell_{j}^{\ast}y_{1}(z)=\varepsilon_{j}y_{1}(z),\quad j=1,2. \label{304-111}%
\end{equation}

Let $y_{2}(z)=y_{1}(-z)$ in $\mathbb{C}\backslash(L_p+\Lambda_{\tau})$. Clearly
$y_{2}(z)$ is also a solution of (\ref{GLE}) and (\ref{304-111}) implies
\begin{equation}
y_{2}(z+\omega_{j})=\ell_{j}^{\ast}y_{2}(z)=\varepsilon_{j}^{-1}%
y_{2}(z),\quad j=1,2, \label{304-12}%
\end{equation}
i.e. $y_{2}(z)$ is also a common eigenfunction with eigenvalue $\varepsilon_{j}^{-1}$.

Let $\sigma(z;\tau)=\sigma(z):=\exp(\int^{z}\zeta(\xi)d\xi)$ be the Weierstrass sigma function, which is an odd
entire function with simple zeros at $\Lambda_{\tau}$, and satisfies the following transformation law
\begin{equation}  \label{123123}
\sigma(z+\omega_j)=-e^{(z+\frac{1}{2}\omega_j)\eta_j}\sigma(z),\quad j=1,2.
\end{equation}
Then
the explicit expression of the common eigenfunction $y_1(z)$ can be written down by the theory of elliptic functions, which has been studied in \cite{CKL1}.

\begin{lemma}\cite{CKL1}\label{lemma2-3} Given $A\in\mathbb C$, let $y_{1}(z)$ be
the common eigenfunction mentioned above. Then there is $a=a(A)\neq \pm p$ in $E_{\tau}$ such that up to a nonzero
constant, 
\begin{equation}\label{exp1}
y_{1}(z)=y_{a}(z):=\frac{e^{c(a)z}\sigma(z-a)}%
{\sqrt{\sigma(z-p)\sigma(z+p)}},
\end{equation}
with
\begin{equation}
c(a)=\frac{1}{2}[\zeta
(a+p)+\zeta(a-p)],\label{61-38}
\end{equation}
\begin{equation}
A=\frac{1}{2}\left[  \zeta(p+a)+\zeta(p-a)-\zeta(2p)\right]  . \label{Aap}%
\end{equation}
\end{lemma}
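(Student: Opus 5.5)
We will show the lemma directly: we write down the candidate $y_a$, show it is a common eigenfunction solving GLE \eqref{GLE}, and then invert the relation between $a$ and $A$.

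\textbf{Step 1: eigenfunction property.} Fix $a\notin\{\pm p\}$ and let $y_a$ be given by \eqref{exp1}. By the transformation law \eqref{123123}, the ratio $\sigma(z+\omega_j-a)/\sqrt{\sigma(z+\omega_j-p)\sigma(z+\omega_j+p)}$ equals the original ratio times
$$\pm e^{(z+\frac12\omega_j-a)\eta_j-\frac12[(z+\frac12\omega_j-p)+(z+\frac12\omega_j+p)]\eta_j}=\pm e^{-a\eta_j}.$$
Hence $y_a(z+\omega_j)=\varepsilon_j y_a(z)$ with $\varepsilon_j=\pm e^{c(a)\omega_j-a\eta_j}$, and $y_a$ is a common eigenfunction of $N_1,N_2$ wherever it solves \eqref{GLE}.

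\textbf{Step 2: $y_a$ solves \eqref{GLE} for suitable $c$ and $A$.} Compute
$$Q:=\frac{y_a''}{y_a}=\Big(\frac{y_a'}{y_a}\Big)'+\Big(\frac{y_a'}{y_a}\Big)^2,\qquad \frac{y_a'}{y_a}=c+\zeta(z-a)-\tfrac12\zeta(z-p)-\tfrac12\zeta(z+p).$$

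1. \emph{Behaviour at $z=a$.} The ratio $y_a'/y_a$ has a simple pole at $z=a$ with residue $1$. The terms of order $(z-a)^{-2}$ in $Q$ cancel automatically, since $-\wp+\zeta^2$ is regular there up to the cross terms. The coefficient of $(z-a)^{-1}$ in $Q$ is $2\big(c+\zeta$-regular part at $a\big)$, namely
$$2\Big(c-\tfrac12\zeta(a-p)-\tfrac12\zeta(a+p)\Big).$$
This vanishes exactly when $c=c(a)$ as in \eqref{61-38}. With this choice, $Q$ is regular at $z=a$.

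2. \emph{Behaviour at $z=\pm p$.} Near $z=\pm p$, the leading term of $Q$ is $\frac34(z\mp p)^{-2}$, which matches the $\wp$ terms of $I$. The residue at $z=p$ is $-\big(c+\zeta(p-a)-\tfrac12\zeta(2p)\big)$. This must equal the coefficient $A$ of $\zeta(z+p)$ in $I$, after accounting for $\zeta(z+p)$ versus $\zeta(z-p)$ and the sign conventions; checking this gives \eqref{Aap}.

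3. \emph{Identification with $I$.} $Q$ is elliptic, since the factors $\varepsilon_j$ cancel in the ratio $y_a''/y_a$. It has poles only at $\pm p$, and these match those of $I$. Therefore $Q-I$ is a constant, and the constant vanishes by comparing constant terms. This comparison is an addition-formula computation using $\zeta(u+v)-\zeta(u)-\zeta(v)=\frac12\frac{\wp'(u)-\wp'(v)}{\wp(u)-\wp(v)}$.

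I expect this constant check to be the main computational obstacle.

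\textbf{Step 3: surjectivity in $A$.} The map $a\mapsto A(a)$ in \eqref{Aap} is an elliptic function of $a$ of degree $2$, with simple poles at $a=\pm p$. Hence it is surjective onto $\mathbb C$, so for each $A$ there is some $a\neq\pm p$ with $A(a)=A$. By Step 2, $y_a$ is a solution of \eqref{GLE} that is a common eigenfunction.

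It remains to check that $y_a$ is \emph{the} common eigenfunction $y_1$ up to a constant.

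1. If $y_a(-z)=y_{-a}(z)$ is not proportional to $y_a$, then $\{y_a,y_{-a}\}$ is a basis of eigenfunctions. In that case $y_1$ is one of them, and replacing $a$ by $-a$ if necessary gives $y_1=y_a$. Note that $-a$ also solves \eqref{Aap}, because $A$ is even in $a$.

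2. If instead $y_a$ and $y_{-a}$ are proportional, the common eigenspace is one-dimensional. In that case it is spanned by $y_a$, and again $y_1$ is a multiple of $y_a$.
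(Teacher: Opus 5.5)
\textbf{The gap is in the final identification step; Steps 1--2 and the surjectivity of $a\mapsto A(a)$ are correct.} The constant check you defer is immediate. With $w=z-p$ one has $y_a'/y_a=-\frac{1}{2w}+A+Sw+O(w^2)$, where $S=\frac12\wp(2p)-\wp(p-a)$. The constant term $S$ of $(y_a'/y_a)'$ is cancelled by the cross term $-S$ in $(y_a'/y_a)^2$. So $y_a''/y_a$ has constant term $A^2$ at $z=p$, exactly as $I$ does.

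In your second case you assert that when $y_a\propto y_{-a}$ the common eigenspace is one-dimensional. Nothing you have shown excludes $N_1(A),N_2(A)\in\{\pm I_2\}$ simultaneously, in which case every solution is a common eigenfunction. This matters: if it happened, a common eigenfunction not proportional to $y_a$ could never equal some $y_{a'}$, because $A(a')=A$ forces $a'=\pm a=a$ in $E_\tau$, and the lemma would be false. Your sentence implicitly assumes that every common eigenfunction has the form $y_{a'}$, which is what is being proved. Indeed, the paper deduces the one-dimensionality in Case 2 \emph{from} this lemma via the uniqueness of $\pm a$.

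Your first case has the same issue in milder form: ``$y_1$ is one of them'' needs the common eigenspaces to be lines. There it is easily repaired. If both $N_j$ were scalar, the multipliers $\pm e^{c(a)\omega_j-a\eta_j}$ from your Step 1 would be $\pm1$. The Legendre relation would then give $a\in\frac12\Lambda_\tau$, contradicting the independence of $y_{\pm a}$.

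\textbf{Closing the gap needs a global input.} One option is Riemann--Hurwitz. Suppose $N_1,N_2\in\{\pm I_2\}$, and let $W$ be the Wronskian of two independent solutions $y_1,y_2$. Then $h=y_2/y_1$ is elliptic with $h'=W/y_1^2$. Since the exponents at $\pm p$ are $-\frac12,\frac32$, $h$ is ramified (index $2$) only at $\pm p$, and Riemann--Hurwitz gives $0=-2\deg h+2$, i.e.\ $\deg h=1$, which is impossible. In Case 2 each $N_j$ has double eigenvalue $\pm1$, so a non-scalar $N_j$ is a Jordan block and the common eigenline is unique.

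A cleaner route runs your argument backwards, starting from $y_1$:
\begin{itemize}
\item The function $F(z):=y_1(z)\sqrt{\sigma(z-p)\sigma(z+p)}$ has trivial local monodromy and is bounded near $\pm p+\Lambda_\tau$, hence is entire.
\item It satisfies $F(z+\omega_j)=\kappa_je^{\eta_jz}F(z)$ for constants $\kappa_j$.
\item The argument principle together with $\tau\eta_1-\eta_2=2\pi i$ shows $F$ has exactly one zero $a$ per period parallelogram, so $F=Ce^{cz}\sigma(z-a)$.
\item The exponents at $\pm p$ rule out $a=\pm p$.
\item Your Step 2 computations (regularity at $z=a$, residue at $z=p$) then force $c=c(a)$ and $A=A(a)$.
\end{itemize}
This handles an arbitrary common eigenfunction at once and makes the surjectivity step unnecessary. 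It also yields the one-dimensionality in the degenerate case as a corollary rather than an assumption.
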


Observe that in $\mathbb{C}\backslash(L_p+\Lambda_{\tau})$, $y_2(z)=y_{{a}}(-z)=-y_{-{a}}(z)$, so
$y_{-a}(z)$ is also a solution of the same GLE \eqref{GLE}. It was proved in \cite[Section 2]{CFL-II} that
 $\pm a
\operatorname{mod}\Lambda_{\tau}$ is uniquely determined for given GLE \eqref{GLE} (i.e. uniquely determined by $A$ for fixed $p, \tau$),
and for different representatives $a, \tilde{a}\in\mathbb{C}$ of the same $a_1\in E_{\tau}$ (i.e., $a-\tilde{a}\in \Lambda_{\tau}$), it follows from the transformation law \eqref{123123} that
\begin{equation}
y_{a}(z)=y_{\tilde{a}}(z)\text{ \ up to a nonzero
constant}. \label{aunique11}%
\end{equation}

Define
\[Q(A):=\prod_{k=0}^3(A-A_k),\]
where
\begin{align}\label{Ak}
A_k:=&\frac{1}{2}\left[  \zeta\Big(p+\frac{\omega_k}{2}\Big)+\zeta\Big(p-\frac{\omega_k}{2}\Big)-\zeta(2p)\right] \nonumber\\
=&\begin{cases}
-\frac{\wp''(p)}{4\wp'(p)} & k=0,\\
A_0+\frac{\wp'(p)}{2(\wp(p)-e_k)}&k=1,2,3,
\end{cases}
\end{align}
where we have used the following additional formulas of elliptic functions to obtain the equality:
$$2\zeta(z)-\zeta(2z)=-\frac{\wp''(z)}{2\wp'(z)},$$
\begin{equation}\label{eqfc-add}\zeta(z+w)+\zeta(z-w)-2\zeta(z)=\frac{\wp'(z)}{\wp(z)-\wp(w)}.\end{equation}
Then it was proved in \cite[Section 2]{CFL-II} that $\sum_{k=0}^{3}A_k=0$, and $y_a(z)$, $y_{-a}(z)$ are linearly independent, if and only if $a\neq -a$ in $E_{\tau}$, or equivalently, $a\neq \frac{\omega_k}{2}$ for any $k$, if and only if $A\neq A_k$ for any $k$, i.e., $Q(A)\neq 0$.

To avoid the branch points $\pm p$ of $y_{\pm a}(z)$, as in \cite{CKL1}, we define
\begin{equation}
\Psi_{p}(z):=\frac{\sigma(z) }{\sqrt{\sigma(z-p) \sigma (z+p)}}.
\label{61-3}
\end{equation}
By using the transformation law \eqref{123123},
we see that $\Psi_p(z)^2$ is an {elliptic function}. Since $\ell_{j}, j=1,2,$ are the two fixed fundamental
cycles of $E_{\tau}$ which do not intersect with $L_p+\Lambda_{\tau}$, it was proved in  \cite{CKL1} that
 the analytic continuation of $\Psi_p(z)$ along $\ell_{j}$ satisfies%
\begin{equation}
\ell_{j}^{\ast}\Psi_{p}(z)=\Psi_{p}(z),\quad j=1,2.   \label{304-1}
\end{equation}
Write
\begin{equation}
y_{\pm a}(z)=\tilde{y}_{\pm a}(z)\Psi_{p}(z),   \label{y-}
\end{equation}
where
$$\tilde{y}_{\pm a}(z)=e^{\pm c(a)z}\frac{\sigma(z\mp a)}%
{\sigma(z)}=e^{\frac{\pm1}{2}[\zeta
(a+p)+\zeta(a-p)]z}\frac{\sigma(z\mp a)}%
{\sigma(z)}$$
is meromorphic.
Define $(r,s)=(r(A), s(A))\in \mathbb{C}^2$ by
\begin{equation}\label{61-37-1}-2\pi i s:=c(a)-\eta_1a,\qquad 2\pi i r:=c(a)\tau-\eta_2a,
\end{equation}
or equivalently,  by using the Legendre relation $\tau\eta_1-\eta_2=2\pi i$,
\begin{equation}\label{61-37-2} \begin{cases}r+s\tau =a,\\
 r\eta_1+s\eta_2=c(a)=\frac{1}{2}[\zeta
(a+p)+\zeta(a-p)].\end{cases}
\end{equation}
Clearly we can rewrite \eqref{61-37-2} as
\begin{equation}\label{61-37-22}
\zeta
(a+p)+\zeta(a-p)-2(r\eta_1+s\eta_2)=0,\quad\text{with }\;a=r+s\tau.
\end{equation}
Then
by applying the
transformation law \eqref{123123}
to $\tilde{y}_{\pm a}(z)$, we obtain
$$\tilde{y}_{\pm a}(z+1)=e^{\mp 2\pi is}\tilde{y}_{\pm a}(z),\qquad \tilde{y}_{\pm a}(z+\tau)=e^{\pm 2\pi ir}\tilde{y}_{\pm a}(z),$$
and so it follows from \eqref{304-1} and \eqref{y-} that
\begin{equation}\label{eqfc-1}{y}_{\pm a}(z+1)=\ell_1^*{y}_{\pm a}(z)=e^{\mp 2\pi is}{y}_{\pm a}(z),\end{equation}
\begin{equation}\label{eqfc-2} {y}_{\pm a}(z+\tau)=\ell_2^*{y}_{\pm a}(z)=e^{\pm 2\pi ir}{y}_{\pm a}(z).\end{equation}

{\bf Case 1.} $Q(A)\neq 0$. Then $y_a(z)$ and $y_{-a}(z)$ are linearly independent, and it follows from \eqref{eqfc-1}-\eqref{eqfc-2} that the monodromy matrices with respect to $(y_a(z), y_{-a}(z))$ are given by
\begin{equation}
N_1=N_1(A)=%
\begin{pmatrix}
e^{-2\pi is} & 0\\
0 & e^{2\pi is}%
\end{pmatrix}
,\text{ \  \  \ }N_2=N_2(A)=%
\begin{pmatrix}
e^{2\pi ir} & 0\\
0 & e^{-2\pi ir}%
\end{pmatrix}.
\label{Mono-001}%
\end{equation}
Furthermore, $(r,s)\in \mathbb{C}^2\setminus\frac12\mathbb{Z}^2$ (see \cite{CKL1} for a proof), so
$$
(\text{tr}N_1(A),\text{tr}N_2(A))=(2\cos2\pi s,2\cos2\pi
r)\not \in \{ \pm(2,2),\pm(2,-2)\}. 
$$
Clearly,
\begin{equation} \label{eq-data5}
  \parbox{\dimexpr\linewidth-5em}{$N_1(A), N_2(A)$ are unitary matrices, i.e., the monodromy group of \eqref{GLE} is a subgroup of $SU(2)$, if and only if the corresponding $(r,s)=(r(A), s(A))\in\mathbb{R}^2\setminus\frac12\mathbb Z^2$.
  }
\end{equation}

{\bf Case 2.} $Q(A)= 0$, i.e., $A=A_k$ and $a=\frac{\omega_k}{2}$ for $k\in\{0,1,2,3\}$. Then we see from \eqref{61-37-2} that $(r,s)\in\frac12\mathbb{Z}^2$ and more precisely,
$$(r,s)\equiv\begin{cases}(0,0)& k=0\\
(\frac12,0)&k=1\\
(0,\frac12)&k=2\\
(\frac12,\frac12)&k=3\end{cases}\quad\operatorname{mod}\;\mathbb{Z}^2.$$
Denote
\begin{equation}\label{var-jjkk}\varepsilon_{j,k}=\begin{cases}1& (j,k)=(1,0),(1,1),(2,0),(2,2),\\
-1&(j,k)=(1,2),(1,3),(2,1),(2,3).\end{cases}\end{equation}
Then  \eqref{eqfc-1}-\eqref{eqfc-2} imply that the eigenvalues of $N_j(A_k)$ are $\{\varepsilon_{j,k},\varepsilon_{j,k}\}$. From here and noting that $y_a(z)$ and $y_{-a}(z)$ are linearly dependent, it follows from the uniqueness of $\pm a$ that \eqref{GLE} has no another eigenfunction that is linearly independent with $y_a(z)$, thus, up to a common conjugation,
\begin{equation}
N_1=N_1(A_k)=\varepsilon_{1,k}%
\begin{pmatrix}
1 & 0\\
1 & 1
\end{pmatrix}
,\text{ \  \  \ }N_2=N_2(A_k)=\varepsilon_{2,k}%
\begin{pmatrix}
1 & 0\\
\mathcal{C}_k & 1
\end{pmatrix}
, \label{Mono-21}%
\end{equation}
where $\mathcal{C}_k%
\in \mathbb{C}\cup \{ \infty \}$. In particular,
\begin{equation}
(\text{tr}N_1(A_k),\text{tr}N_2(A_k))=(2\varepsilon_{1,k}%
,2\varepsilon_{2,k})\in \{ \pm(2,2),\pm(2,-2)\}. \label{notcompleteC}%
\end{equation}
Here if $\mathcal{C}_k=\infty$, then (\ref{Mono-21}) should be understood
as%
\[
N_1=N_1(A_k)=\varepsilon_{1,k}%
\begin{pmatrix}
1 & 0\\
0 & 1
\end{pmatrix}
,\text{ \  \  \ }N_2=N_2(A_k)=\varepsilon_{2,k}%
\begin{pmatrix}
1 & 0\\
1 & 1
\end{pmatrix}
. \]
See \cite{CKL1} for a proof. In particular, at least one of $N_1(A_k), N_2(A_k)$ are not unitary matrices for all $k=0,1,2,3$. 

Clearly there is a one-to-one correspondence between $\{A_k\}_{k=0}^3$ and the trivial critical points $\{\frac{\omega_k}{2}\}_{k=0}^3$ of $G_p(z)$.
We recall the following result from Part II \cite{CFL-II}.

\begin{theorem}\cite[Theorem 2.5]{CFL-II}\label{rmk2-5}
There is a one-to-one correspondence between those $A$'s such that $(r,s)=(r(A), s(A))\in\mathbb{R}^2\setminus\frac12\mathbb Z^2$ and pairs of nontrivial critical points $\pm a=\pm (r+s\tau)$ of $G_p(z)$. 
\end{theorem}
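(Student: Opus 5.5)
The plan is to build the correspondence explicitly through Lemma \ref{lemma2-3} and the definition \eqref{61-37-2} of $(r,s)$, proving injectivity and surjectivity separately.

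\emph{From $A$ to a critical point.} Fix $A$ with $(r,s)=(r(A),s(A))\in\mathbb{R}^2\setminus\frac12\mathbb{Z}^2$. By Lemma \ref{lemma2-3} there is $a=a(A)\in E_\tau\setminus\{\pm p\}$, and $\pm a$ is uniquely determined modulo $\Lambda_\tau$. The representative $a\in\mathbb{C}$ can be chosen so that \eqref{61-37-2} holds, i.e. $a=r+s\tau$ with $r,s$ real. Then \eqref{61-37-22} is exactly \eqref{00a+1p} evaluated at $z=a$, and by \eqref{G_z} this means $a$ is a critical point of $G_p$. Since $(r,s)\notin\frac12\mathbb{Z}^2$, we have $a\notin E_\tau[2]$, so $\pm a$ is a pair of nontrivial critical points.

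Two points need checking here. First, changing the representative $a\mapsto a+m+n\tau$ shifts $(r,s)$ by $(m,n)$, because of the transformation law \eqref{123123} and the Legendre relation. So reality of $(r,s)$ does not depend on the representative. Second, replacing $a$ by $-a$ sends $(r,s)$ to $(-r,-s)$. Hence the map $A\mapsto\{\pm a\}$ is well defined.

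\emph{Injectivity.} The formula \eqref{Aap} determines $A$ from $a$, and it is invariant under $a\mapsto -a$ because $\zeta$ is odd. It is also invariant under $a\mapsto a+\omega$, since the quasi-period terms cancel between $\zeta(p+a)$ and $\zeta(p-a)$. Therefore distinct $A$'s give distinct pairs $\pm a$.

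\emph{Surjectivity.} Let $\pm a$ be a pair of nontrivial critical points, with $a=r+s\tau$ and $r,s$ real. Define $A$ by \eqref{Aap}. Lemma \ref{lemma2-3} and the uniqueness of $\pm a$ given $A$ show that the GLE with this $A$ has common eigenfunction $y_a$. The critical point equation \eqref{00a+1p} says $c(a)=r\eta_1+s\eta_2$, so \eqref{61-37-2} holds with these real $r,s$, and $(r(A),s(A))=(r,s)\in\mathbb{R}^2\setminus\frac12\mathbb{Z}^2$. A point to verify is that $y_a$ is a genuine solution for the $A$ in \eqref{Aap}; this is just the converse direction of the computation in \cite{CKL1}, namely that $y_a$ solves \eqref{GLE} with $A$ given by \eqref{Aap}.

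\emph{Main obstacle.} The delicate step is the well-definedness of $(r,s)$ in the first direction, that is, showing that $a(A)$ can be represented by a lattice lift satisfying \eqref{61-37-2} with real $r,s$. I would handle this by using the monodromy formulas \eqref{eqfc-1}–\eqref{eqfc-2}: the eigenvalues $e^{\mp2\pi i s}$ and $e^{\pm 2\pi i r}$ pin down $(r,s)$ modulo $\mathbb{Z}^2$ and up to an overall sign, and that is exactly the ambiguity of $\pm a$ modulo $\Lambda_\tau$.
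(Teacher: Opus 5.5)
Your proposal is correct and follows essentially the paper's own argument: both directions reduce to the fact that \eqref{61-37-22} with real $(r,s)$ is exactly the critical point equation \eqref{00a+1p}, and in the converse direction $A$ is defined by \eqref{Aap} and $(r(A),s(A))=(r,s)$ is read off from \eqref{61-37-1}. The step you flag as the main obstacle is in fact automatic: by the Legendre relation, \eqref{61-37-1} gives $r+s\tau=a$ and $r\eta_1+s\eta_2=c(a)$ for every lift of $a$, and your lattice-shift, sign and injectivity checks simply make explicit what the paper leaves implicit.
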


\begin{proof} We provide the proof here for its importance.
Note that $z\notin E_{\tau}[2]$ is equivalent to $(r,s)\in\mathbb{R}^2\setminus\frac12\mathbb{Z}^2$.
Let $a\in E_{\tau}\setminus E_{\tau}[2]$ and write $a=r+s\tau$ with $(r,s)\in\mathbb{R}^2\setminus\frac12\mathbb Z^2$. It follows from \eqref{00a+1p} that $\pm a$ is a pair of nontrivial critical points of $G_p(z)$ if and only if 
\begin{equation}\label{61-37-22-2}
\zeta
(a+p)+\zeta(a-p)-2(r\eta_1+s\eta_2)=0,\;\text{with}\;a=r+s\tau, \;(r,s)\in\mathbb{R}^2\setminus\frac12\mathbb Z^2,
\end{equation}
which is precisely  \eqref{61-37-22}.

Suppose $(r(A), s(A))\in\mathbb{R}^2\setminus\frac12\mathbb Z^2$ for some $A$, then it follows from \eqref{61-37-22} that $\pm(r(A)+s(A)\tau)$ is a pair of nontrivial critical points of $G_p(z)$.

Conversely, suppose $\pm a=\pm (r+s\tau)$ with 
$(r,s)\in\mathbb{R}^2\setminus\frac12\mathbb Z^2$ is a pair of nontrivial critical points of $G_p(z)$, then \eqref{61-37-22-2} holds. Define $A$ by \eqref{Aap}, i.e.,
\[A:=\frac{1}{2}\left[  \zeta(p+a)+\zeta(p-a)-\zeta(2p)\right].\]
Then a direct computation shows that $y_a(z)$ given by \eqref{exp1}-\eqref{61-38}
is a solution of GLE \eqref{GLE} and so is $y_{-a}(z)$. Furthermore, by \eqref{61-37-1} and \eqref{61-37-22-2}, we see that $(r(A), s(A))=(r,s)\in\mathbb{R}^2\setminus\frac12\mathbb Z^2$. This completes the proof.
\end{proof}

\begin{remark}\label{rmk20-5}
Applying the additional formula \eqref{eqfc-add}, it is easy to see that \eqref{61-37-22} is equivalent to
\begin{align}\label{513-1} 
\wp(p)=\wp (r+s\tau)+\frac{\wp ^{\prime }(r+s\tau)}{%
2(\zeta(r+s\tau)-r\eta_1-s\eta_2)}.
\end{align}
The \eqref{513-1} is well known as Hitchin's formula \cite{Hit1}, where Hitchin studied Einstein metrics and proved that for any fixed $(r,s)\in \mathbb{C}^{2}\setminus \frac{1}{2}\mathbb{Z}^{2}$, the $p_{r,s}(\tau)$ defined by $\wp(p_{r,s}(\tau);\tau)=\text{RHS of \eqref{513-1}}$, as a function of $\tau\in \mathbb{H}$, is a solution of the following Painlev\'{e} VI equation
$$
\frac{d^{2}p(\tau)}{d\tau^{2}}=\frac{-1}{32\pi^{2}}\sum_{k=0}^{3}
\wp^{\prime}\left( p(\tau)+\frac{\omega_{k}}{2};
\tau \right).$$
See \cite[Theorem 6]{Hit1}.
Remark that Hitchin's formula \eqref{513-1} has the following important corollary:
\begin{equation} \label{eq: data0}
  \parbox{\dimexpr\linewidth-5em}{If $p\neq \pm p'$ in $E_{\tau}$, then $G_p(z)$ and $G_{p'}(z)$ can not have common nontrivial critical points.
  }
\end{equation}
\end{remark}

For later usage, we recall the criterion of $\frac{\omega_k}{2}$ being a degenerate critical point of $G_p(z)$ from Part I \cite{CFL}.
Define
\begin{equation}\label{B00}
\mathcal{B}_0:=\Big\{z\in\mathbb{C}\; :\; \Big|z-\Big(\frac{\pi}{\operatorname{Im}\tau}-\eta_1\Big)\Big|<\frac{\pi}{\operatorname{Im}\tau}\Big\},
\end{equation}
and for $k\in\{1,2,3\}$, we define
\begin{align}\label{alphak0}
\alpha_k:=\frac{\frac{\pi}{\operatorname{Im}\tau}-(\eta_1+e_k)}{3e_k^2-\frac{g_2}{4}},\quad \beta_k:=\frac{\frac{\pi}{\operatorname{Im}\tau}}{|3e_k^2-\frac{g_2}{4}|}>0,
\end{align}
\begin{equation}\label{alphak1}
\mathcal{B}_k:=\begin{cases}\bigg\{z\in\mathbb{C}\; :\; \bigg|z-e_k-\frac{\overline{\alpha_k}}{|\alpha_k|^2-\beta_k^2}\bigg|<\frac{\beta_k}{\left||\alpha_k|^2-\beta_k^2\right|}\bigg\}\quad\text{if }|\alpha_k|\neq \beta_k,\\
\Big\{z\in\mathbb{C}\; :\; \operatorname{Re}(\alpha_k (z-e_k))>\frac12\Big\}\quad\text{if }|\alpha_k|=\beta_k,\end{cases}
\end{equation}
that is, $\mathcal{B}_k$ is either an open disk or an open half plane. It was proved in Part I \cite{CFL} that $|\alpha_k|=\beta_k$ is equivalent to that $\frac{\omega_k}{2}$ is a degenerate critical point of $G(z)$. 
\begin{Theorem}\cite{CFL}\label{main-thm-01} Let $k\in\{0,1,2,3\}$ and $p\neq \frac{\omega_k}{2}$. Then $\frac{\omega_k}{2}$ is a degenerate critical point of $G_p(z)$ if and only if $\wp(p-\frac{\omega_k}{2})\in \partial\mathcal{B}_0$, if and only if $\wp(p)\in \partial\mathcal{B}_k$, where $\mathcal{B}_k$ is defined by \eqref{B00}-\eqref{alphak1}.
\end{Theorem}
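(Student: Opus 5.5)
The criterion reduces to a Jacobian computation at the origin, followed by a translation and a Möbius change of variables.

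\textbf{Step 1: Hessian via complex derivatives.} By \eqref{G_z} and the Legendre relation, $-4\pi\,\partial_z G_p(z)=\tfrac12 F(z)$ with
$$F(z):=\zeta(z+p)+\zeta(z-p)+2(az+b\bar z),\qquad a=\tfrac{\pi}{\operatorname{Im}\tau}-\eta_1,\quad b=-\tfrac{\pi}{\operatorname{Im}\tau}.$$
For a real function $u$ one has $\det D^2u=16\big(|u_{zz}|^2-u_{z\bar z}^2\big)$. Hence $\det D^2G_p(z_0)$ is a positive multiple of $|\partial_z F|^2-|\partial_{\bar z}F|^2$ at $z_0$. Here
$$\partial_zF=-\wp(z+p)-\wp(z-p)+2a,\qquad \partial_{\bar z}F=2b.$$

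\textbf{Step 2: the case $k=0$.} At $z=0$, evenness of $\wp$ gives $\partial_zF(0)=2(a-\wp(p))$. So $0$ is degenerate if and only if $|\wp(p)-a|=|b|=\pi/\operatorname{Im}\tau$. This is exactly $\wp(p)\in\partial\mathcal B_0$.

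\textbf{Step 3: reduction of general $k$ to $k=0$.} Put $p'=p-\frac{\omega_k}{2}$. Since $G$ is even and $\Lambda_\tau$-periodic and $\omega_k\in\Lambda_\tau$,
$$G\Big(w+\tfrac{\omega_k}{2}+p\Big)=G(w+p'),\qquad G\Big(w+\tfrac{\omega_k}{2}-p\Big)=G(w-p').$$
Therefore $G_p(w+\frac{\omega_k}{2})=G_{p'}(w)$. The hypothesis $p\neq\frac{\omega_k}{2}$ gives $p'\neq0$, so Step 2 applies to $G_{p'}$ at $w=0$. Thus $\frac{\omega_k}{2}$ is degenerate for $G_p$ if and only if $\wp(p-\frac{\omega_k}{2})\in\partial\mathcal B_0$.

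\textbf{Step 4: Möbius change of variables for $k\ge1$.} This is the only step needing care. Use the half-period addition formula
$$\wp\Big(p-\tfrac{\omega_k}{2}\Big)=e_k+\frac{c_k}{\wp(p)-e_k},\qquad c_k=(e_k-e_i)(e_k-e_j)=3e_k^2-\tfrac{g_2}{4}\neq0,$$
and write $x=\wp(p)-e_k$.
\begin{itemize}
\item If $x=0$, then $p\equiv -p$; combined with $p\neq\frac{\omega_k}{2}$ this forces $p\in E_\tau[2]$, where the statement is immediate, so we may take $x\neq0$ (the value $x=\infty$, i.e. $p=0$, is handled the same way).
\item The condition $|e_k+c_k/x-a|=\pi/\operatorname{Im}\tau$, divided by $|c_k|$, becomes $|1/x-\alpha_k|=\beta_k$. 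Equivalently $|1-\alpha_kx|=\beta_k|x|$, since $\alpha_k=(a-e_k)/c_k$ and $\beta_k=\frac{\pi}{\operatorname{Im}\tau}/|c_k|$ as in \eqref{alphak0}.
\item Expanding gives
$$(|\alpha_k|^2-\beta_k^2)|x|^2-2\operatorname{Re}(\alpha_kx)+1=0.$$
If $|\alpha_k|\neq\beta_k$, completing the square yields the circle centered at $\overline{\alpha_k}/(|\alpha_k|^2-\beta_k^2)$ with radius $\beta_k/\big||\alpha_k|^2-\beta_k^2\big|$. If $|\alpha_k|=\beta_k$, it yields the line $\operatorname{Re}(\alpha_kx)=\frac12$.
\end{itemize}
In both cases this is precisely $\partial\mathcal B_k$ in the variable $\wp(p)=e_k+x$, which completes the chain of equivalences.
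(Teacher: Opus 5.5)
Your argument is correct, and it matches the argument behind this result: the paper only quotes it from Part I, but the ingredients it uses elsewhere are exactly yours. Those ingredients are the Hessian formula $\det D^2G_p(a)=\frac{1}{4\pi^2}\big(\frac{\pi^2}{(\operatorname{Im}\tau)^2}-\big|\frac{\wp(a+p)+\wp(a-p)}{2}+\eta_1-\frac{\pi}{\operatorname{Im}\tau}\big|^2\big)$ (cf.~\eqref{eqf0c-15}) evaluated at $a=\frac{\omega_k}{2}$, where $\wp(\frac{\omega_k}{2}+p)=\wp(\frac{\omega_k}{2}-p)=\wp(p-\frac{\omega_k}{2})$, followed by the half-period addition formula $\wp(p-\frac{\omega_k}{2})=e_k+\frac{3e_k^2-g_2/4}{\wp(p)-e_k}$ (as in \eqref{eqfc-s5-31}) to convert $\partial\mathcal B_0$ into $\partial\mathcal B_k$.

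There are two harmless slips. First, for real $u$ one has $\det D^2u=4\big(u_{z\bar z}^2-|u_{zz}|^2\big)$, so $\det D^2G_p=\frac{1}{16\pi^2}\big(|\partial_{\bar z}F|^2-|\partial_zF|^2\big)$ is a negative multiple of your expression rather than a positive one; this does not matter for degeneracy. Second, the case $x=0$ is simply vacuous, since $\wp(p)=e_k$ forces $p=\frac{\omega_k}{2}$, which is excluded by hypothesis.
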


\subsection{Conditional stability sets}
In this subsection, we briefly review the theory of conditional stability sets from Part II \cite[Section 3]{CFL-II}.
Recalling \eqref{eqfc-1}-\eqref{eqfc-2}, we have
\begin{equation}\label{eqfc-001}{y}_{\pm a}(z+2\tau-1)=e^{\pm 2\pi i(2r+s)}{y}_{\pm a}(z).\end{equation}
Define 
\begin{equation}\label{eqfc-3}\triangle_j (A):=\frac12\operatorname{tr} N_j(A)=\begin{cases}\frac12(e^{-2\pi is(A)}+e^{2\pi is(A)}),& j=1,\\
\frac12(e^{2\pi ir(A)}+e^{-2\pi ir(A)}),& j=2,\end{cases}\quad A\in\mathbb{C},\end{equation}
and
\begin{align}\label{eqfc-003}\triangle_3 (A):=&\frac12\operatorname{tr} N_2(A)^2N_1(A)^{-1}\nonumber\\
=&\frac12(e^{2\pi i(2r(A)+s(A))}+e^{-2\pi i(2r(A)+s(A))}),\quad A\in\mathbb{C}.\end{align}
Clearly $\triangle_j (A)$ are holomorphic functions (see \cite[Section 3]{CFL-II} for a proof). Define
\begin{equation}\label{eqfc-4}\sigma_j:=\triangle_j^{-1}([-1,1]),\quad j=1,2,3.\end{equation}
Then $\sigma_j$ consists of analytic arcs. It was proved in \cite[Section 3]{CFL-II} that $\sigma_j$ contains at most $5$ analytic arcs; see Corollaries \ref{coro2-11}-\ref{coro2-11-3} below. Observe that
\begin{itemize}
\item When $A\in \sigma_1$, the corresponding $s(A)\in\mathbb{R}$, so $y_{a}(z)$ is bounded for $z\in q_0+\mathbb{R}$. 
\item When $A\in \sigma_2$, the corresponding $r(A)\in\mathbb{R}$, so $y_{a}(z)$ is bounded for $z\in q_0+\tau\mathbb{R}$. 
\item When $A\in\sigma_3$, the corresponding $2r(A)+s(A)\in\mathbb{R}$, so $y_{a}(z)$ is bounded for $z\in q_0+(2\tau-1)\mathbb{R}$. 
\end{itemize}
Therefore, $\sigma_j$ can be considered as {\it the conditional stability sets} of GLE (\ref{GLE}) (cf. \cite{GW}).

\begin{theorem}\cite[Theorem 3.1]{CFL-II}\label{rmk2-8}
The number of pairs of nontrivial critical points of $G_p(z)$ equals to $\#(\sigma_1\cap\sigma_j\setminus\{A_0, A_1, A_2, A_3\})$ for $j=2,3$. 

More precisely, $A\in \sigma_1\cap\sigma_j\setminus\{A_0, A_1, A_2, A_3\}$ if and only if the corresponding $\pm a=\pm(r(A)+s(A)\tau)$ is a pair of nontrivial critical points of $G_p(z)$.
\end{theorem}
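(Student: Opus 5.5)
Since the statement is Theorem~\ref{rmk2-8} (quoted from \cite[Theorem 3.1]{CFL-II}), the plan is to combine Theorem~\ref{rmk2-5} with the explicit description of $\triangle_j$ in \eqref{eqfc-3}--\eqref{eqfc-003}.

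The first step is to characterize $\sigma_1$ and $\sigma_2$ for $A$ with $Q(A)\neq0$. Here $\triangle_1(A)=\cos 2\pi s(A)$ with $s(A)\in\mathbb C$. For complex $w=x+iy$, $\cos 2\pi w$ is real and lies in $[-1,1]$ exactly when $y=0$: if $y\neq0$, then $\cos 2\pi w$ being real forces $\sin 2\pi x=0$, and then $|\cos 2\pi w|=\cosh 2\pi y>1$. Hence $A\in\sigma_1$ if and only if $s(A)\in\mathbb R$, and $A\in\sigma_2$ if and only if $r(A)\in\mathbb R$. Likewise $A\in\sigma_3$ if and only if $2r(A)+s(A)\in\mathbb R$. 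Therefore, in both cases $j=2$ and $j=3$,
\[
A\in\sigma_1\cap\sigma_j \iff (r(A),s(A))\in\mathbb R^2 .
\]

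The second step removes the points $A_k$. If $A\notin\{A_0,\dots,A_3\}$, then $(r(A),s(A))\notin\frac12\mathbb Z^2$, as recorded in Case 1 (from \cite{CKL1}). So
\[
\sigma_1\cap\sigma_j\setminus\{A_0,\dots,A_3\}=\{A:\ (r(A),s(A))\in\mathbb R^2\setminus\tfrac12\mathbb Z^2\}.
\]
We also need the converse direction: the set on the right contains no $A_k$. This holds because at $A_k$ the pair $(r,s)$ lies in $\frac12\mathbb Z^2$ (Case 2). Note that the $A_k$ themselves do lie in $\sigma_1\cap\sigma_j$, since their traces are $\pm2$ and so $\triangle_j(A_k)=\pm1$. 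This is exactly why they have to be excised.

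The third step applies Theorem~\ref{rmk2-5}. It gives a bijection between the set above and the pairs of nontrivial critical points $\pm a=\pm(r(A)+s(A)\tau)$. This yields both the counting statement and the "more precisely" statement.

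The only delicate point is well-definedness: $(r(A),s(A))$ is defined from $a$, which is determined only modulo $\pm$ and $\Lambda_\tau$. Replacing $a$ by $a+\omega$ shifts $(r,s)$ by an integer vector, and replacing $a$ by $-a$ negates $(r,s)$. Neither change affects reality of $r$, $s$ or $2r+s$, nor membership in $\frac12\mathbb Z^2$, nor the traces. So everything above is independent of the choice of representative, and the injectivity in Theorem~\ref{rmk2-5} (via the uniqueness of $\pm a$ for a given $A$ and formula \eqref{Aap}) completes the argument.
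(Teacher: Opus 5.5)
Your proposal is correct and follows the paper's proof. For $Q(A)\neq 0$, the conditions $\triangle_1(A),\triangle_j(A)\in[-1,1]$ are equivalent to $(r(A),s(A))\in\mathbb R^2\setminus\frac12\mathbb Z^2$, using $(r,s)\in\mathbb R^2\Leftrightarrow(2r+s,s)\in\mathbb R^2$ for $j=3$, and Theorem~\ref{rmk2-5} then supplies the bijection. Your check that $\cos 2\pi w\in[-1,1]$ forces $w\in\mathbb R$, and your remark that nothing depends on the chosen representative of $\pm a$, spell out details the paper leaves implicit.
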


\begin{proof} We provide the proof here for its importance. Note that $(r,s)\in\mathbb R^2$ if and only if $(2r+s, s)\in\mathbb R^2$.
Suppose $A\in \sigma_1\cap\sigma_2\setminus\{A_0, A_1, A_2, A_3\}$, then $Q(A)\neq 0$, and it follows from \eqref{Mono-001} and \eqref{eqfc-3}-\eqref{eqfc-4} that the corresponding $(r(A),s(A))\in \mathbb R^2\setminus\frac12\mathbb{Z}^2$, so $A\in \sigma_1\cap\sigma_3\setminus\{A_0, A_1, A_2, A_3\}$.
Therefore,
$$\sigma_1\cap\sigma_2\setminus\{A_0, A_1, A_2, A_3\}=\sigma_1\cap\sigma_3\setminus\{A_0, A_1, A_2, A_3\},$$
and $(r(A),s(A))\in \mathbb R^2\setminus\frac12\mathbb{Z}^2$ if and only if $A\in \sigma_1\cap\sigma_2\setminus\{A_0, A_1, A_2, A_3\}$. The proof is complete by using Theorem \ref{rmk2-5}.
\end{proof}

In Part II \cite{CFL-II}, we used $\sigma_1\cap\sigma_2$ to study the rectangular torus case. In this paper, we will use $\sigma_1\cap\sigma_3$ to study the rhombus torus case. Thus, we need to recall some basic properties of $\sigma_1$ and $\sigma_3$.

First, we need to give the definitions of endpoints, cusps and branch points.
\begin{figure}[ht]
\includegraphics[scale=0.5, trim=250 370 50 80]{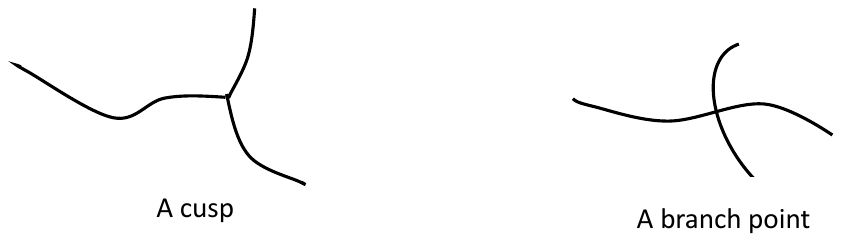}\captionof{figure}{Cusp and branch point}\label{O511-0}
\end{figure}

\begin{remark}\label{rmk2-6}
An endpoint $\tilde{A}$ of an arc of $\sigma_j$ is a point where the arc can not be analytically continued (cf. \cite{GW}). By the Taylor expansion, we see that $\tilde{A}$ is an endpoint if and only if $\triangle_j(\tilde A)^2-1=0$ and
\[d_j(\tilde{A}):=\text{ord}_{\tilde{A}}(\triangle_j(\cdot)^2-1)\]
is \emph{odd}, and in this case there are $d_j(\tilde{A})$ arcs meeting at $\tilde{A}$, each of which can not be analytically continued at $\tilde{A}$ because adjacent arcs meet at $\tilde{A}$ with the same angle $2\pi/d_j(\tilde{A})$. 
Notice that if $\triangle_j(\tilde A)^2-1=0$ and $d_j(\tilde{A})=2k\geq 2$ is even, then $\tilde{A}$ is an inner point of $k$ arcs which are all analytic at $\tilde{A}$, so such $\tilde{A}$ is not considered as an endpoint.
\begin{itemize}
\item
We call an endpoint $\tilde{A}$ of $\sigma_j$ to be {\bf a cusp} if $d_j(\tilde{A})\geq 3$, that is, there are odd numbers (at least $3$) of arcs meeting at $\tilde{A}$. See the left one in Figure \ref{O511-0} for example.
\item
We call a point $\tilde{A}$ of $\sigma_j$ to be {\bf a branch point} if $\tilde{A}$ is not an endpoint and $\tilde{A}$ is a common inner point of at least two analytic arcs. See the right one in Figure \ref{O511-0} for example.
\end{itemize}
\end{remark}

Note from \eqref{var-jjkk} that $\{A_0, A_1, A_2, A_3\}\subset \sigma_j$ with
\begin{equation}\label{eqfc-vjk1}\triangle_j(A_k)=\varepsilon_{j,k}=\begin{cases}1& (j,k)=(1,0),(1,1),(3,0),(3,1)\\
-1&(j,k)=(1,2),(1,3),(3,2),(3,3),\end{cases}\end{equation}
i.e., $\triangle_j(A_k)^2-1=0$. 

\begin{lemma}\cite[Lemma 3.3]{CFL-II}\label{lemma2-7} Fix $j\in \{1,3\}$. Then 
\begin{itemize}
\item[(1)]
 $\{A_0, A_1, A_2, A_3\}$ are precisely the set of endpoints of $\sigma_j$.
 \item[(2)] Any connected component of $\mathbb{C}\setminus\sigma_j$ is unbounded.
 \end{itemize}
\end{lemma}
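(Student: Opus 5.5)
The plan is to understand the local structure of $\sigma_j=\triangle_j^{-1}([-1,1])$ from the function $\triangle_j^2-1$, and then to use global facts about $\triangle_j$: it is entire, and it has controlled growth as $|A|\to\infty$. Both statements follow the classical Floquet-theory argument for Hill's equation, adapted to the entire function $\triangle_j$. Throughout I use the explicit monodromy description of Case 1 and Case 2.

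For (1), I first show that every endpoint lies in $\{A_0,\dots,A_3\}$. By Remark \ref{rmk2-6}, an endpoint $\tilde A$ satisfies $\triangle_j(\tilde A)=\pm1$ with $d_j(\tilde A)$ odd. If $Q(\tilde A)\neq0$, then by \eqref{Mono-001} we have $\triangle_1=\cos 2\pi s(A)$ (and $\triangle_3=\cos 2\pi(2r+s)$). Here $s(A)$ is locally holomorphic, because $a=a(A)$ is locally a holomorphic branch away from the points where $a\in E_\tau[2]$, and $(r,s)$ is obtained from $a$ by \eqref{61-37-1}. Near $\tilde A$ we have $2s(\tilde A)\in\mathbb Z$, so
\[
\triangle_1^2-1=-\sin^2 2\pi s ,
\]
which is a square of a holomorphic function. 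Its order at $\tilde A$ is therefore even, so $\tilde A$ is not an endpoint. The same argument applies with $2r+s$ in place of $s$ for $j=3$.

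Conversely, I must show that $d_j(A_k)$ is odd. The local uniformizing parameter near $A_k$ is $a-\frac{\omega_k}{2}$, and $A-A_k$ is even in this variable by \eqref{Aap}. Hence $A-A_k\sim c\,(a-\frac{\omega_k}{2})^{2m}$, and I expect $m=1$ generically. Meanwhile $s-s_k$ is odd in $a-\frac{\omega_k}{2}$, so $\sin^2 2\pi s$ vanishes to order $2\ell$ in $a$ with $\ell$ odd, giving order $\ell$ in $A$. Ruling out even $\ell$ is the delicate point. I would use the Jordan-block form \eqref{Mono-21}: if $\triangle_j^2-1$ vanished to even order at $A_k$, a local square root $\sqrt{\triangle_j^2-1}$ would be holomorphic. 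The eigenvector structure would then be analytic in $A$ across $A_k$, yielding two independent common eigenfunctions at $A=A_k$. This contradicts the uniqueness of $\pm a$ established in Case 2, unless the relevant $N_j(A_k)$ is $\pm I_2$. That exceptional case ($\mathcal C_k=\infty$ for $j=1$, and a similar condition for $N_2^2N_1^{-1}$) needs separate handling. There I would argue via the dependence of $(r,s)$ on $a$, since $a\mapsto s$ is nonconstant and odd around $\frac{\omega_k}{2}$. This parity and order check at $A_k$ is the step I expect to be the main obstacle.

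For (2), I would argue by the maximum principle. Suppose $U$ is a bounded component of $\mathbb C\setminus\sigma_j$. On $\partial U\subset\sigma_j$ we have $\triangle_j\in[-1,1]$. Consider the holomorphic function $h=\triangle_j+\sqrt{\triangle_j^2-1}$ on $U$. Since $\triangle_j\notin[-1,1]$ on $U$, a single-valued branch of $h$ exists on $U$, chosen with $|h|>1$, and $|h|\to1$ on $\partial U$. Then $\log|h|$ is harmonic and positive on $U$ with boundary values $0$, contradicting the maximum principle. Equivalently, $\triangle_j(U)$ is a bounded open set in $\mathbb C\setminus[-1,1]$ whose boundary lies in $[-1,1]$, which is impossible by the open mapping theorem. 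This part is routine once holomorphy of $\triangle_j$ is granted.
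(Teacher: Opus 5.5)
Your argument is essentially correct. Part (2) and the first half of (1) are fine; the local holomorphy of $a(A)$ that you use there follows from $\frac{dA}{da}=\frac12\big(\wp(p-a)-\wp(p+a)\big)$, which vanishes only on $E_\tau[2]$. The paper only cites Part II for this lemma, so I judged your argument on its own. However, you have misdiagnosed the second half of (1): the ``delicate point'' is already settled by your own parity observation, and the fact you leave as ``expected generically'' is easy and always true.

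Since $2s_k\in\mathbb Z$, the function $\sin 2\pi s=\pm\sin 2\pi(s-s_k)$ is odd in $u=a-\frac{\omega_k}{2}$, so its order $\ell$ is odd. Write $A-A_k=u^{2m}\psi(u)$ with $\psi(0)\neq 0$, and set $q:=d_1(A_k)$. Composition gives $2\ell=2mq$, so $q$ is odd whatever $m$ is. Moreover $m=1$ always: $A$ has only the two simple poles $a=\pm p$ on $E_\tau$, and directly $\frac{d^2A}{da^2}(\frac{\omega_k}{2})=-\wp'(p+\frac{\omega_k}{2})\neq 0$ because $p\notin E_\tau[2]$. Hence $q=\ell$. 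The same argument works for $j=3$ with $2r+s$ in place of $s$. As a consistency check, $\frac{ds}{da}(\frac{\omega_k}{2})=\frac{1}{2\pi i}\big(\wp(p-\frac{\omega_k}{2})+\eta_1\big)$, so $q\geq 3$ exactly in the cusp case of Lemma \ref{lemma2-10}.

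You should delete the Jordan-block paragraph: it is not only unnecessary but invalid. A holomorphic $\sqrt{\triangle_j^2-1}$ does give holomorphic eigenvalues. But when $N_j(A_k)$ is a Jordan block it has a single eigenline, so any continuous families of eigenvectors become parallel at $A_k$. You never obtain two independent eigenfunctions there, and hence no contradiction with the uniqueness of $\pm a$.

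For instance, $N(A)=\exp\begin{pmatrix}0&1\\ A^2&0\end{pmatrix}=\begin{pmatrix}\cosh A&\frac{\sinh A}{A}\\ A\sinh A&\cosh A\end{pmatrix}\in SL(2,\mathbb C)$ is a Jordan block at $A=0$. Yet $\frac14(\operatorname{tr}N)^2-1=\sinh^2 A$ vanishes to even order there, and its holomorphic eigenvectors $(1,\pm A)^{T}$ coincide at $A=0$. Likewise, the case $N_j(A_k)=\pm I_2$ needs no separate handling, since the parity argument never uses the shape of $N_j(A_k)$.
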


\begin{lemma}\cite[Lemma 3.4]{CFL-II}\label{lemma2-10}
\begin{itemize}
\item[(1)]
The endpoint $A_k$ is a cusp of $\sigma_1$ if and only if $\wp(p-\frac{\omega_k}{2})+\eta_1=0$.
\item[(2)] The endpoint $A_k$ is a cusp of $\sigma_3$ if and only if $(2\tau-1)\wp(p-\frac{\omega_k}{2})+2\eta_2-\eta_1=0$, or equivalently, $\wp(p-\frac{\omega_k}{2})+\eta_1=\frac{4\pi i}{2\tau-1}$.
\end{itemize}
Consequently, $\sigma_1$ and $\sigma_3$ can not have common cusps.
\end{lemma}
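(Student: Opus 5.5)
We prove the cusp criterion by a local computation at $A_k$. We parametrize a neighbourhood of $A_k$ by $t:=a-\frac{\omega_k}{2}$, using Lemma \ref{lemma2-3}, \eqref{Aap} and \eqref{61-37-1}.

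\emph{Step 1: $A-A_k$ has a double zero in $t$.} By \eqref{Aap}, $A(a)=\frac12[\zeta(p+a)+\zeta(p-a)-\zeta(2p)]$. Since $\frac{\omega_k}{2}\equiv-\frac{\omega_k}{2}$ modulo $\Lambda_\tau$ and $\zeta$ is quasi-periodic, $A(\frac{\omega_k}{2}+t)=A(\frac{\omega_k}{2}-t)$. Hence $A-A_k$ is an even holomorphic function of $t$. Its $t^2$-coefficient is $\frac12\cdot\frac12\big(\wp'(p+\frac{\omega_k}{2})+\wp'(p-\frac{\omega_k}{2})\big)=\frac12\wp'(p-\frac{\omega_k}{2})$, which is nonzero because $p\notin E_\tau[2]$. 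Therefore $A-A_k=c_kt^2(1+O(t^2))$ with $c_k\neq0$, and $t$ is a local coordinate on the double cover $t\mapsto A$ branched at $A_k$.

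\emph{Step 2: order of $\triangle_j^2-1$.} Put $\theta_1:=s$ and $\theta_3:=2r+s$. By \eqref{61-37-1} these are affine in $(c(a),a)$:
$$-2\pi i\,s=c(a)-\eta_1a,\qquad 2\pi i(2r+s)=(2\tau-1)c(a)-(2\eta_2-\eta_1)a.$$
So $\theta_j$ is holomorphic in $t$, and $\theta_j(\frac{\omega_k}{2})=:\theta_{j,k}\in\frac12\mathbb Z$. Under $t\mapsto-t$, i.e.\ $a\mapsto-a$ modulo $\Lambda_\tau$, we get $\theta_j\mapsto-\theta_j$ modulo $\mathbb Z$. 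Hence $\theta_j-\theta_{j,k}$ is odd in $t$:
$$\theta_j-\theta_{j,k}=\alpha_{j}t+\beta_jt^3+\cdots.$$
Now $\triangle_j^2-1=-\sin^2(2\pi\theta_j)=-4\pi^2(\theta_j-\theta_{j,k})^2(1+O(t^2))$. Converting to the variable $A$ through Step 1, this has order exactly $1$ in $A-A_k$ if $\alpha_j\neq0$, and order at least $3$ if $\alpha_j=0$. The order is always odd, since $A_k$ is an endpoint by Lemma \ref{lemma2-7}. Thus $A_k$ is a cusp of $\sigma_j$ if and only if $\frac{d\theta_j}{da}\big|_{a=\omega_k/2}=0$.

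\emph{Step 3: evaluate the derivatives.} From \eqref{61-38}, $c'(a)=-\frac12[\wp(a+p)+\wp(a-p)]$, and at $a=\frac{\omega_k}{2}$ this equals $-\wp(p-\frac{\omega_k}{2})$ by evenness and periodicity. Then:
\begin{itemize}
\item For $j=1$: $\frac{d\theta_1}{da}=0$ if and only if $-\wp(p-\frac{\omega_k}{2})-\eta_1=0$, which gives (1).
\item For $j=3$: $\frac{d\theta_3}{da}=0$ if and only if $(2\tau-1)\wp(p-\frac{\omega_k}{2})+2\eta_2-\eta_1=0$. The Legendre relation gives $2\eta_2=2\tau\eta_1-4\pi i$, so this condition becomes $\wp(p-\frac{\omega_k}{2})+\eta_1=\frac{4\pi i}{2\tau-1}$, which gives (2).
\end{itemize}

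\emph{Step 4: no common cusps.} The $A_k$ are pairwise distinct, since $\pm a$ is uniquely determined by $A$. So a common cusp of $\sigma_1$ and $\sigma_3$ would have to be one and the same $A_k$, satisfying both conditions with the same $k$. This forces $\frac{4\pi i}{2\tau-1}=0$, which is impossible.

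The main obstacle is Step 2. One must justify the parity of $\theta_j$ in $t$ modulo $\mathbb Z$, which uses the uniqueness of $\pm a$ modulo $\Lambda_\tau$ and \eqref{aunique11}. One must also keep track of the branched change of variables $t\mapsto A$, so that the vanishing of $\alpha_j$ translates exactly into $d_j(A_k)\geq3$.
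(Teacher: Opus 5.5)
Your proof is correct. The paper does not prove this lemma; it quotes it from Part II \cite{CFL-II}. So there is no argument in this text to compare against, and your local computation in the coordinate $t=a-\frac{\omega_k}{2}$ is a complete, self-contained proof.

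A few small corrections:
\begin{itemize}
\item The $t^2$-coefficient in Step 1 is $-\frac12\wp'(p-\frac{\omega_k}{2})$, not $+\frac12\wp'(p-\frac{\omega_k}{2})$. This is harmless, since only its non-vanishing is used.
\item The oddness of $\theta_j-\theta_{j,k}$ in $t$ needs no appeal to the uniqueness of $\pm a$ or to \eqref{aunique11}. The identities $c(-a)=-c(a)$ and $c(a+\omega_j)=c(a)+\eta_j$, together with the Legendre relation, show directly that $a\mapsto\omega_k-a$ sends $(r,s)$ to $(-r,-s)$ plus an integer vector.
\item You should say explicitly that $\theta_j-\theta_{j,k}\not\equiv 0$; otherwise $\wp(a+p)+\wp(a-p)$ would be constant. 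This makes the order of $\triangle_j^2-1$ at $A_k$ finite, and your parity computation then makes it odd without invoking Lemma \ref{lemma2-7}.
\item The distinctness of the $A_k$ used in Step 4 follows immediately from \eqref{Ak}.
\end{itemize}

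If you want to connect the result with the monodromy data of Section \ref{sec-2}, you can do so as follows. Since $A'(\frac{\omega_k}{2})=0$, the function $\partial_a y_a|_{a=\omega_k/2}$ is a second solution at $A_k$. Differentiating \eqref{eqfc-1}--\eqref{eqfc-2} in $a$ then gives $\mathcal C_k=\frac{\tau\wp(p-\frac{\omega_k}{2})+\eta_2}{\wp(p-\frac{\omega_k}{2})+\eta_1}$ in \eqref{Mono-21}. In these terms your two criteria read:
\begin{itemize}
\item $A_k$ is a cusp of $\sigma_1$ if and only if $N_1(A_k)=\pm I_2$, i.e.\ $\mathcal C_k=\infty$;
\item $A_k$ is a cusp of $\sigma_3$ if and only if $N_2(A_k)^2N_1(A_k)^{-1}=\pm I_2$, i.e.\ $\mathcal C_k=\frac12$.
\end{itemize}
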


\begin{lemma}\cite[Lemma 3.5]{CFL-II}\label{lemma2-10-3} Let $A\in\sigma_j\setminus\{A_0, A_1, A_2, A_3\}$. 
\begin{itemize}
\item[(1)]
 $A$ is a branch point of $\sigma_1$ if and only if $\wp(a+p)+\wp(a-p)+2\eta_1=0$.
\item[(2)] $A$ is a branch point of $\sigma_3$ if and only if $(2\tau-1)(\wp(a+p)+\wp(a-p))+4\eta_2-2\eta_1=0$, or equivalently, $\wp(a+p)+\wp(a-p)+2\eta_1=\frac{8\pi i}{2\tau-1}$.
\end{itemize}
Consequently, $\sigma_1$ and $\sigma_3$ can not have common branch points.
\end{lemma}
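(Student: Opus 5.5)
The statement to prove is Lemma \ref{lemma2-10-3}: for $A\in\sigma_j$ away from the endpoints, a branch point is characterized by the vanishing of $\triangle_j'(A)$, which we compute in terms of $a$. The plan has three steps.

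\textbf{Step 1: branch points are critical points of $\triangle_j$.} Let $A\in\sigma_j\setminus\{A_0,\dots,A_3\}$. If $\triangle_j(A)\in(-1,1)$, then $\sigma_j$ near $A$ is the level set $\triangle_j^{-1}([-1,1])$ of the holomorphic function $\triangle_j$. It is a single analytic arc when $\triangle_j'(A)\neq0$. When $\triangle_j$ has order $m\ge2$ at $A$, it is $m$ arcs crossing at $A$.

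If instead $\triangle_j(A)=\pm1$ with $Q(A)\neq0$, Remark \ref{rmk2-6} applies. Since $A$ is not an endpoint, $d_j(A)$ must be even. For $d_j(A)\ge4$, i.e.\ $\triangle_j'(A)=0$, at least two arcs pass through $A$. For $d_j(A)=2$ there is one arc.

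So in all cases, $A$ is a branch point iff $\triangle_j'(A)=0$.

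\textbf{Step 2: compute $\triangle_j'(A)$ via $a$.} Take $j=1$. Then $\triangle_1=\cos 2\pi s$ and $\triangle_1'(A)=-2\pi\sin(2\pi s)\,s'(A)$.

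We first check that $\sin 2\pi s\neq0$ off the endpoints. If $\sin 2\pi s=0$, then $s\in\frac12\mathbb Z$ and $\triangle_1=\pm1$. In that case we argue directly with the order of vanishing: $\triangle_1^2-1=-\sin^2(2\pi s)$, whose order at $A$ is twice the order of $s-s_0$. Hence it is even and at least $4$ exactly when $s'(A)=0$. So in every case the criterion reduces to $s'(A)=0$. By the same argument, for $j=3$ it reduces to $(2r+s)'(A)=0$.

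To compute $s'(A)$ we use $A\mapsto a$, which is locally biholomorphic since $Q(A)\neq0$. From \eqref{Aap},
$$\frac{dA}{da}=\tfrac12\bigl(\wp(a-p)-\wp(a+p)\bigr),$$
which is nonzero at a generic point; where it vanishes, the chain rule has to be rewritten in terms of $a$.

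From \eqref{61-37-1}, $-2\pi i s=c(a)-\eta_1 a$, so
$$-2\pi i\,\frac{ds}{da}=c'(a)-\eta_1=-\tfrac12\bigl(\wp(a+p)+\wp(a-p)\bigr)-\eta_1 .$$
Hence $ds/da=0$ iff $\wp(a+p)+\wp(a-p)+2\eta_1=0$. This is statement (1), because $A$ is a local coordinate equivalent to $a$ on $\sigma_1$ near non-endpoints.

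For $j=3$, solve \eqref{61-37-1} for the combination: $2\pi i(2r+s)=2c\tau-2\eta_2a-c+\eta_1a$. Differentiating,
$$2\pi i\,\frac{d(2r+s)}{da}=(2\tau-1)c'(a)-(2\eta_2-\eta_1).$$
Setting this to zero and using $c'=-\frac12(\wp(a+p)+\wp(a-p))$ gives the first form of (2). The Legendre relation $\tau\eta_1-\eta_2=2\pi i$ gives $2\eta_2-\eta_1=(2\tau-1)\eta_1-4\pi i$. Dividing by $2\tau-1$ then produces the second form.

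\textbf{Step 3: no common branch points.} Suppose $A$ were a branch point of both $\sigma_1$ and $\sigma_3$. The two right-hand sides would then give $\frac{8\pi i}{2\tau-1}=0$, which is impossible.

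\textbf{Main obstacle.} The delicate point is the regularity of the map $A\mapsto a$, i.e.\ points where $dA/da=0$. There the local parameter must be $a$ rather than $A$, and $\triangle_j$ as a function of $A$ involves a square-root-type behavior. I would treat this by noting that $\triangle_j$ is holomorphic in $A$ and $a$ is a double-cover-type coordinate, so order counting in $a$ must be matched with that in $A$. Following \cite{CFL-II}, I expect that $\frac{d\triangle_j}{dA}$ vanishes exactly when $\frac{ds}{da}$ (resp.\ $\frac{d(2r+s)}{da}$) does at points with $Q(A)\neq0$, since $dA/da\neq0$ whenever $a\notin E_\tau[2]$. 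This would be checked from the explicit formula for $dA/da$.
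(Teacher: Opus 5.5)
Your argument is correct and follows the natural route. The paper gives no proof of this lemma (it is quoted from \cite{CFL-II}). Your approach is to reduce ``branch point'' to $s'(A)=0$ (resp.\ $(2r+s)'(A)=0$) and differentiate through the uniformizing variable $a$. It yields
$$s'(A)=\frac{1}{2\pi i}\cdot\frac{\wp(a+p)+\wp(a-p)+2\eta_1}{\wp(a-p)-\wp(a+p)},$$
which is the same ratio that appears in \eqref{deri-r}. Two points need correcting, neither of which breaks the argument.

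\textbf{The summary of Step 1 is false as stated.} Take a non-endpoint with $\triangle_j(A)=\pm1$. Then $d_j(A)=\operatorname{ord}_A(\triangle_j\mp1)$ is even, hence $\ge 2$, so $\triangle_j'(A)=0$ holds automatically there. A point with $d_j(A)=2$ therefore has $\triangle_j'(A)=0$ but lies on only one arc, so it is not a branch point. Consequently the following must be dropped:
\begin{itemize}
\item the claim ``$d_j(A)\ge4$, i.e.\ $\triangle_j'(A)=0$'';
\item the conclusion ``$A$ is a branch point iff $\triangle_j'(A)=0$'';
\item the remark ``$\sin2\pi s\neq0$ off the endpoints''.
\end{itemize}
What you actually use in Step 2 is the correct criterion, and you should state it directly:
\begin{itemize}
\item if $\triangle_1(A)\in(-1,1)$, then $A$ is a branch point $\iff\triangle_1'(A)=0\iff s'(A)=0$;
\item if $\triangle_1(A)=\pm1$, then $A$ is a branch point $\iff d_1(A)=2\operatorname{ord}_A(s-s_0)\ge4\iff s'(A)=0$.
\end{itemize}
The same holds for $j=3$ with $2r+s$ in place of $s$.

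\textbf{The ``main obstacle'' does not exist.} By the addition formula,
$$\frac{dA}{da}=\tfrac12\bigl(\wp(a-p)-\wp(a+p)\bigr)=\frac{\wp'(a)\wp'(p)}{2(\wp(a)-\wp(p))^2}.$$
This vanishes only if $a\in E_{\tau}[2]$ or $p\in E_{\tau}[2]$. The first is excluded by $Q(A)\neq0$ and the second by hypothesis. Also $a\neq\pm p$ because $A$ is finite, so $dA/da$ is finite.

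Hence $A\mapsto a$ is a local biholomorphism at every non-endpoint, and $s'(A)=\frac{ds/da}{dA/da}$ with a finite nonzero denominator. There is no square-root behaviour anywhere on $\sigma_j$ except at the four points $A_k$, which is exactly why they are the endpoints. Replace the hedged sentences (``nonzero at a generic point'' and the final paragraph) with this one-line check, and the proof is complete.
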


\begin{lemma}\cite[Lemma 3.6]{CFL-II}\label{lemma2-8}
Denote $B_R:=\{A\in\mathbb{C} : |A|<R\}$. Then for $R>0$ large, the following statements hold.
\begin{itemize}
\item[(1)] $\sigma_1\setminus{B_R}$ consists of only two disjoint analytic arcs that can be parametrized by 
$$
A=it-\frac{2p\eta_1-\zeta(2p)}{2}+O(t^{-1})
,\quad t\in(-\infty, -t_1]\cup[t_2,+\infty),$$
for some $t_j>0$ large.

\item[(2)] $\sigma_3\setminus{B_R}$ consists of only two disjoint analytic arcs that can be parametrized by 
$$
A=\frac{it}{2\tau-1}-\frac{2p(2\eta_2-\eta_1)-(2\tau-1)\zeta(2p)}{2(2\tau-1)}+O(t^{-1})
,$$
where $t\in(-\infty, -t_3]\cup[t_4,+\infty)$ for some $t_j>0$ large.
\end{itemize}

\end{lemma}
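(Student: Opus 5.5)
The plan is to study the asymptotics of $\triangle_j(A)$ as $|A|\to\infty$. This comes down to the asymptotics of $(r(A),s(A))$, or equivalently of $a=a(A)$, through \eqref{Aap} and \eqref{61-37-2}.

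First, from \eqref{Aap}, $A\to\infty$ forces $a\to\pm p$ modulo $\Lambda_\tau$, since $\zeta(p+a)+\zeta(p-a)$ has poles only at $a\equiv\pm p$. Using the oddness of $\zeta$ and the $\pm a$ symmetry, I will normalize $a=p+\epsilon$ with $\epsilon\to0$. Expanding $\zeta(p-a)=\zeta(-\epsilon)=-1/\epsilon+O(\epsilon)$ and $\zeta(p+a)=\zeta(2p+\epsilon)=\zeta(2p)-\wp(2p)\epsilon+O(\epsilon^2)$ gives
$$A=-\frac{1}{2\epsilon}+O(\epsilon),\qquad\text{so}\qquad \epsilon=-\frac{1}{2A}+O(A^{-3}).$$

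Next I expand $c(a)=\frac12[\zeta(2p+\epsilon)+\zeta(\epsilon)]=\frac{1}{2\epsilon}+\frac12\zeta(2p)+O(\epsilon)=-A+\frac12\zeta(2p)+O(A^{-1})$. Substituting into \eqref{61-37-1} yields
$$-2\pi i s=c(a)-\eta_1 a=-A+\tfrac12\zeta(2p)-\eta_1p+O(A^{-1}).$$
The sign ambiguity $\pm a$ only flips $(r,s)\mapsto(-r,-s)$ modulo $\mathbb{Z}^2$, so it does not affect $\triangle_j$.

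For part (1), the condition $A\in\sigma_1$ means $\cos2\pi s\in[-1,1]$. For $|A|$ large this forces $s\in\mathbb R$ exactly: if $|\operatorname{Im}s|$ were bounded away from $0$, then $|\cos 2\pi s|>1$, and $\cos 2\pi s$ is real only on the lines $\operatorname{Im}s=0$ or $\operatorname{Re}s\in\frac12\mathbb Z$. Writing $2\pi s=t$ real, I get $A=it+\frac12\zeta(2p)-\eta_1p+O(A^{-1})$, and $O(A^{-1})=O(t^{-1})$. This is exactly the stated parametrization, since $-\frac{2p\eta_1-\zeta(2p)}{2}=\frac12\zeta(2p)-p\eta_1$.

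The main obstacle is showing that outside $B_R$ these are exactly two analytic arcs, with nothing else there. The other real solutions of $\cos2\pi s\in[-1,1]$ with $\operatorname{Re}s\in\frac12\mathbb Z$ have $|\cos 2\pi s|\ge1$ and equal $\pm1$ only at $\operatorname{Im}s=0$, so they contribute nothing new. To settle this I will treat $A\mapsto s(A)$ near $\infty$ as a biholomorphism. Indeed, $s(A)=\frac{A}{2\pi i}+h(A)$ with $h$ holomorphic near $\infty$ and bounded, and $h(A)=h_0+O(A^{-1})$ by the expansion above. Hence the map is univalent on $\{|A|>R\}$ for $R$ large, by a Rouché-type argument. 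The preimage of the real line therefore consists of two analytic arcs, one for each of the ends $t\to\pm\infty$.

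For part (2), I repeat the argument with $2r+s$. From \eqref{61-37-1},
$$2\pi i(2r+s)=2(c\tau-\eta_2a)-(c-\eta_1a)=c(2\tau-1)-a(2\eta_2-\eta_1).$$
This gives $2\pi i(2r+s)=-(2\tau-1)A+\frac{2\tau-1}{2}\zeta(2p)-p(2\eta_2-\eta_1)+O(A^{-1})$. Setting $2\pi(2r+s)=-t$ real and solving for $A$ gives the stated formula. The two-arc structure follows from the same univalence argument.
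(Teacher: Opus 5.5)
Your proof is correct and is the natural argument for this lemma, which the paper imports from \cite{CFL-II} rather than reproving: the expansion $a=p+\epsilon$, $\epsilon=-\frac{1}{2A}+O(A^{-3})$, gives $2\pi i s(A)=A-\frac12\zeta(2p)+p\eta_1+O(A^{-1})$ and $2\pi i(2r+s)=-(2\tau-1)A+\frac{2\tau-1}{2}\zeta(2p)-p(2\eta_2-\eta_1)+O(A^{-1})$, and $\cos w\in[-1,1]$ holds exactly when $w\in\mathbb{R}$, for every $A$ and not only large ones, so the stated parametrizations follow (your remark that $|\operatorname{Im}s|$ bounded away from $0$ forces $|\cos 2\pi s|>1$ is not literally true, but your reality argument already suffices). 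The one step to make explicit is that each of the two curves $A(t)=F^{-1}(it)$ with $|t|$ large leaves $B_R$ only once, where $F=2\pi i s$ is univalent near $\infty$ and places $\sigma_1\setminus B_R$ on these curves; this follows from $\frac{d}{dt}|A(t)|^2=2t+O(1)$, and similarly from $\frac{d}{dt}|A(t)|^2=\frac{2t}{|2\tau-1|^2}+O(1)$ for $\sigma_3$.
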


\begin{corollary}\cite[Corollary 3.7]{CFL-II}\label{coro2-11} Let $p\in E_{\tau}\setminus E_{\tau}[2]$. Then $\sigma_1$ consists of $m_1\in [3,5]$ analytic arcs with finite endpoints $A_0, A_1, A_2$, $A_3$, among which there is one or two unbounded arcs. 
More precisely,
\begin{itemize}
\item[(1)] If $\wp(p-\frac{\omega_k}{2})+\eta_1\neq 0$ for all $k$, then $\sigma_1$ has no cusps and $\sigma_1$ consists of $3$ analytic arcs.

\item[(2)] If $\wp(p-\frac{\omega_k}{2})+\eta_1=0$ and $12\eta_1^2-g_2\neq 0$ for some $k$, then $A_k$ is a cusp of $\sigma_1$  with $\text{ord}_{{A}_k}(\triangle_1(\cdot)^2-1)=3$. Furthermore,
\begin{itemize}
\item[(2-1)] If $\wp(p-\frac{\omega_l}{2})+\eta_1\neq 0$ for any $l\neq k$, then $A_k$ is the unique cusp of $\sigma_1$,
and $\sigma_1$ consists of $4$ analytic arcs, among which there are three of them having the common endpoint $A_k$.

\item[(2-2)] If $\wp(p-\frac{\omega_l}{2})+\eta_j=0$ for some $l\neq k$, then $A_l$ is also a cusp of $\sigma_1$ with $\text{ord}_{{A}_l}(\triangle_1(\cdot)^2-1)=3$,
and $\sigma_1$ is path-connected and consists of $5$ analytic arcs, among which one has endpoints $A_k$ and $A_l$, two others have the common endpoints $A_k$, and the remaining two have the common endpoints $A_l$.
\end{itemize}

\item[(3)] If $\wp(p-\frac{\omega_k}{2})+\eta_1=0$ and $12\eta_1^2-g_2=0$ for some $k$, then $A_k$ is the unique cusp of $\sigma_1$  with $\text{ord}_{{A}_k}(\triangle_1(\cdot)^2-1)=5$, and $\sigma_1$ is path-connected and consists of $5$ analytic arcs that have the common endpoint $A_k$.
\end{itemize}

\end{corollary}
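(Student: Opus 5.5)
The plan is to analyze $\triangle_1$ near each endpoint $A_k$ and at infinity, and then do an arc count based on the degree of $\triangle_1^2-1$.

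\emph{Global count.} First we determine the degree of $\triangle_1$ as a function of $A$. By Lemma \ref{lemma2-8}(1), $\sigma_1$ has exactly two unbounded ends. We would show, as in \cite{CFL-II}, that $\triangle_1(A)$ behaves like $\cosh$ of a linear function of $A$, so it is entire of exponential type. The argument that excludes interior critical points will come from Lemma \ref{lemma2-10-3}(1).

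\emph{Local structure at $A_k$.} We compute $\operatorname{ord}_{A_k}(\triangle_1^2-1)$ by expanding $s(A)$ near $A=A_k$, where $a=\frac{\omega_k}{2}$.
\begin{itemize}
\item[(i)] Near $A_k$ we have $A-A_k\sim c\,(a-\frac{\omega_k}{2})^2$, by differentiating \eqref{Aap}.
\item[(ii)] From \eqref{61-37-2}, $s$ is a function of $a$ that is odd about $\frac{\omega_k}{2}$. Its linear coefficient vanishes exactly when $\wp(p-\frac{\omega_k}{2})+\eta_1=0$. This is Lemma \ref{lemma2-10}(1).
\item[(iii)] Consequently $\triangle_1^2-1\propto \sin^2(2\pi(s-s_k))$ vanishes to order $1$ in $A$ generically, and to order $3$ when the linear term of $s$ vanishes but the cubic term does not.
\end{itemize}
The cubic coefficient is computed from the Taylor expansion of $\zeta(a\pm p)$ and $\wp$ at $\frac{\omega_k}{2}$. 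Using $\wp(p-\frac{\omega_k}{2})=-\eta_1$, it should reduce to a multiple of $12\eta_1^2-g_2$. When that quantity vanishes, the next term gives order $5$.

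\emph{Combining.} Lemma \ref{lemma2-7}(1) says the endpoints are exactly $A_0,\dots,A_3$, and Remark \ref{rmk2-6} says that $d$ arcs meet at an endpoint of odd order $d$. We then count arc-ends: the sum of $d_1(A_k)$ over $k$, plus the two ends at infinity. Since each arc has two ends, the number of arcs is (total ends)$/2$, provided there are no branch points or closed loops.
\begin{itemize}
\item Case (1): $4+2=6$ ends, so $3$ arcs.
\item Case (2-1): $3+1+1+1+2=8$ ends, so $4$ arcs.
\item Cases (2-2) and (3): $3+3+1+1+2=10$ ends and $5+1+1+1+2=10$ ends, so $5$ arcs.
\end{itemize}
Closed loops are excluded by Lemma \ref{lemma2-7}(2), since all complementary components are unbounded. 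Branch points have even order and are interior, so they do not change the end count once arcs are understood as maximal analytic arcs.

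\emph{Connectivity and the unbounded arcs.} To determine which endpoints each arc connects in (2-2) and (3), I would use the requirement that every component of the complement is unbounded, together with a planar embedding argument. For the "one or two unbounded arcs" statement, the two infinite ends may belong to one arc or to two arcs.

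\emph{Main obstacle.} I expect the hardest step to be computing the cubic coefficient and identifying it with $12\eta_1^2-g_2$, together with showing that the order cannot exceed $5$.
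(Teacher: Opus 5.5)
Your strategy is sound. You compute $\operatorname{ord}_{A_k}(\triangle_1^2-1)$ from the odd expansion of $s-s_k$ in $t=a-\frac{\omega_k}{2}$, then count ends of maximal analytic arcs using Lemma \ref{lemma2-7} and the two ends at infinity from Lemma \ref{lemma2-8}. This is the natural derivation from the lemmas recalled just before the corollary; the paper itself only cites \cite{CFL-II} here. The step you call the main obstacle is a short computation. From $-2\pi i s=c(a)-\eta_1a$ one gets $-2\pi i\,s'(\frac{\omega_k}{2})=-(\wp(p-\frac{\omega_k}{2})+\eta_1)$. When $\wp(p-\frac{\omega_k}{2})=-\eta_1$, one gets $-2\pi i\,s'''(\frac{\omega_k}{2})=-\wp''(p-\frac{\omega_k}{2})=-\frac12(12\eta_1^2-g_2)$. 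When in addition $\wp''(p-\frac{\omega_k}{2})=0$, one gets $-2\pi i\,s^{(5)}(\frac{\omega_k}{2})=-\wp^{(4)}(p-\frac{\omega_k}{2})=-12\wp'(p-\frac{\omega_k}{2})^2\neq0$. Combined with $A-A_k=-\frac12\wp'(p-\frac{\omega_k}{2})t^2+O(t^4)$, whose coefficient is nonzero since $p\notin E_{\tau}[2]$, the orders are exactly $1,3,5$.

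Two things need repair. First, drop the ``global count'' paragraph. $\triangle_1$ is transcendental, so it has no degree. Branch points also cannot be excluded: they really occur (e.g.\ $A_5,A_6$ in Lemma \ref{lem0-s5-13}), and Lemma \ref{lemma2-10-3} only characterizes them. What you need instead is your later observation. Where $\triangle_1-\triangle_1(A^*)$ vanishes to order $m$, the $2m$ germs pair into $m$ smooth arcs, so maximal arcs pass through branch points. By Lemma \ref{lemma2-7}(2) none of them is closed, so each has exactly two ends.

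Second, your tallies $3+3+1+1+2$ and $5+1+1+1+2$ presuppose the number of cusps, and uniqueness of the cusp in (3) is part of the claim. Since $\wp(u)=\wp(v)$ iff $u\equiv\pm v$, three indices $k,l,m$ with $\wp(p-\frac{\omega}{2})=-\eta_1$ would give $p-\frac{\omega_l}{2}\equiv-(p-\frac{\omega_k}{2})\equiv p-\frac{\omega_m}{2}$, which is impossible. In case (3), a second cusp $A_l$ would automatically have $\wp''(p-\frac{\omega_l}{2})=0$, hence order $5$, giving $14$ ends and $7$ arcs. The germs at one endpoint lie on distinct arcs, since an arc returning to its starting endpoint is a closed curve. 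So at least three arcs would join $A_k$ and $A_l$, and two of them enclose a bounded component of $\mathbb{C}\setminus\sigma_1$, contradicting Lemma \ref{lemma2-7}(2).

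The same counting, not a planar-embedding argument, yields the connectivity claims. In (3), all five arcs contain $A_k$. In (2-2), $3+3>5$ forces an arc from $A_k$ to $A_l$, and there is only one, since two would form a loop.
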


\begin{corollary}\cite[Corollary 3.8]{CFL-II}\label{coro2-11-3} Let $p\in E_{\tau}\setminus E_{\tau}[2]$. Then $\sigma_3$ consists of $m_3\in [3,5]$ analytic arcs with finite endpoints $A_0, A_1, A_2$, $A_3$, among which there is one or two unbounded arcs. 
More precisely,
\begin{itemize}
\item[(1)] If $(2\tau-1)\wp(p-\frac{\omega_k}{2})+2\eta_2-\eta_1\neq 0$ for all $k$, then $\sigma_3$ has no cusps and $\sigma_3$ consists of $3$ analytic arcs.

\item[(2)] If $(2\tau-1)\wp(p-\frac{\omega_k}{2})+2\eta_2-\eta_1=0$ and $12(2\eta_2-\eta_1)^2-(2\tau-1)^2g_2\neq 0$ for some $k$, then $A_k$ is a cusp of $\sigma_3$  with $\text{ord}_{{A}_k}(\triangle_3(\cdot)^2-1)=3$. 
Furthermore,
\begin{itemize}
\item[(2-1)] If $(2\tau-1)\wp(p-\frac{\omega_l}{2})+2\eta_2-\eta_1\neq0$ for any $l\neq k$, then $A_k$ is the unique cusp of $\sigma_3$,
and $\sigma_3$ consists of $4$ analytic arcs, among which there are three of them having the common endpoint $A_k$.

\item[(2-2)] If $(2\tau-1)\wp(p-\frac{\omega_l}{2})+2\eta_2-\eta_1=0$ for some $l\neq k$, then $A_l$ is also a cusp of $\sigma_3$ with $\text{ord}_{{A}_l}(\triangle_3(\cdot)^2-1)=3$,
and $\sigma_3$ is path-connected and consists of $5$ analytic arcs, among which one has endpoints $A_k$ and $A_l$, two others have the common endpoints $A_k$, and the remaining two have the common endpoints $A_l$.
\end{itemize}

\item[(3)] If $(2\tau-1)\wp(p-\frac{\omega_k}{2})+2\eta_2-\eta_1=0$ and $12(2\eta_2-\eta_1)^2-(2\tau-1)^2g_2=0$ for some $k$, then $A_k$ is the unique cusp of $\sigma_3$  with $\text{ord}_{{A}_k}(\triangle_3(\cdot)^2-1)=5$, and  $\sigma_3$ is path-connected and consists of $5$ analytic arcs that have the common endpoint $A_k$.
\end{itemize}

\end{corollary}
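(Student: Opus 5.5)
This corollary is recalled from Part II \cite{CFL-II}. In the paper it is the $\sigma_3$ analogue of Corollary \ref{coro2-11}, so I would prove it by transporting that proof. The key point is that $\sigma_3$ is the conditional stability set attached to the lattice element $2\tau-1$, in the same way that $\sigma_1$ is attached to $1$.

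The plan is to use a change of basis of the lattice. Since $\begin{pmatrix}2&-1\\1&0\end{pmatrix}$ has determinant $1$, the pair $(\omega_1',\omega_2')=(2\tau-1,\tau)$ is a basis of $\Lambda_\tau$. With respect to this basis the quasi-periods are $\eta_1'=2\eta_2-\eta_1$ and $\eta_2'=\eta_2$. Writing $a=r'(2\tau-1)+s'\tau$, we have $r'=-r$ and $s'=2r+s$, up to the sign conventions. Hence $\triangle_3(A)=\cos 2\pi(2r+s)$ is exactly the ``$\triangle_1$'' for the torus presented with periods $(2\tau-1,\tau)$. Normalizing $\tilde\tau=\tau/(2\tau-1)$ rescales $\wp$, $\zeta$ and $g_2$ by powers of $(2\tau-1)$. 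Under this dictionary the condition $\wp(p-\frac{\omega_k}{2})+\eta_1=0$ becomes $(2\tau-1)\wp(p-\frac{\omega_k}{2})+2\eta_2-\eta_1=0$, after clearing a factor. Likewise $12\eta_1^2-g_2$ becomes $12(2\eta_2-\eta_1)^2-(2\tau-1)^2g_2$, up to the factor $(2\tau-1)^{-4}$. The half periods $\frac{\omega_k}{2}$ are merely permuted modulo $\Lambda_\tau$, which is harmless since the statement quantifies over all $k$. Applying Corollary \ref{coro2-11} to the transformed data then yields (1)--(3) directly.

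If one prefers a direct argument, I would repeat the steps of Corollary \ref{coro2-11}. First, Lemma \ref{lemma2-7} shows that the only endpoints of $\sigma_3$ are $A_0,\dots,A_3$. Second, Lemma \ref{lemma2-10}(2) determines which of these are cusps. Third, one computes $\operatorname{ord}_{A_k}(\triangle_3^2-1)$ by expanding $2r+s$ in the local parameter $a-\frac{\omega_k}{2}$ near $A_k$. The order is $1$ generically, $3$ when the cusp condition holds, and $5$ when additionally $12(2\eta_2-\eta_1)^2=(2\tau-1)^2g_2$; this last condition comes from the next Taylor coefficient.

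Finally, the arcs are counted with an Euler-characteristic and degree argument. By Lemma \ref{lemma2-8}(2), exactly two ends go to infinity. By Lemma \ref{lemma2-7}(2), $\mathbb{C}\setminus\sigma_3$ has no bounded components, so $\sigma_3$ is a forest once the branch points are treated as in Lemma \ref{lemma2-10-3}. The sum of the odd orders $d_3(A_k)$ then gives the number of arc-ends: $4+2$ ends give $3$ arcs, $6+2$ give $4$, and $8+2$ give $5$. This count also forces the stated connectivity in cases (2-2) and (3).

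The main obstacle is the local order computation at a cusp, in particular identifying the fifth-order condition. Under the lattice-change approach this reduces to bookkeeping of the $(2\tau-1)$ scaling factors, and that is the route I would take.
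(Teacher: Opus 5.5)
Your proposal is correct, but your main route is different from the one the paper relies on. The paper does not reprove this corollary; it imports it from Part II. Lemmas \ref{lemma2-7}, \ref{lemma2-10}, \ref{lemma2-10-3} and \ref{lemma2-8} are each stated in parallel for $\sigma_1$ and $\sigma_3$, which suggests that Part II obtains it directly, essentially as in your ``direct argument''. That argument has three steps:
\begin{itemize}
\item take the endpoints from Lemma \ref{lemma2-7};
\item expand at $A_k$ the quantity $2\pi i(2r+s)=(2\tau-1)c(a)-(2\eta_2-\eta_1)a$, which is odd in $t=a-\frac{\omega_k}{2}$, while $A-A_k\sim-\frac12\wp'(p-\frac{\omega_k}{2})t^2$; hence $\operatorname{ord}_{A_k}(\triangle_3^2-1)$ is the $t$-order, and the $t$- and $t^3$-coefficients are proportional to $(2\tau-1)\wp(p-\frac{\omega_k}{2})+2\eta_2-\eta_1$ and $\wp''(p-\frac{\omega_k}{2})$;
\item count arc-ends, using the two ends at infinity.
\end{itemize}

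Done that way, two points need an extra word.
\begin{itemize}
\item The order is exactly $5$, not $\ge 7$: $\wp''(q)=0$ forces $\wp^{(4)}(q)=12\wp'(q)^2\neq0$ for $q=p-\frac{\omega_k}{2}\notin E_\tau[2]$.
\item In case (3) there is no second cusp. Since $\sigma_3$ contains no closed curve (Lemma \ref{lemma2-7}(2)), an endpoint of order $5$ is joined once to each other endpoint and twice to $\infty$. That leaves no admissible partner for any extra arc.
\end{itemize}
Your transport argument gets (1)--(3), including these two points, for free from Corollary \ref{coro2-11}. It also explains why the conditions have exactly the shape of \eqref{eqq0-5} in Lemma \ref{lem04-4}. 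The direct route, in exchange, is self-contained.

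To make your route rigorous, fix the orientation. The basis $(2\tau-1,\tau)$ is negatively oriented: $\frac{\tau}{2\tau-1}$ lies in the lower half-plane (it equals $\frac12-\frac{i}{4b}$ for $\tau=\frac12+ib$). Corollary \ref{coro2-11}, however, is formulated for $\tilde\tau\in\mathbb{H}$, where $\tilde\tau\tilde\eta_1-\tilde\eta_2=2\pi i$.

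Use instead $(2\tau-1,\tau-1)$, i.e. $\tilde\tau=\frac{\tau-1}{2\tau-1}$ as in Lemma \ref{lem04-4}, and set $\lambda=2\tau-1$. Then:
\begin{itemize}
\item The substitution $z=\lambda w$ turns the GLE with data $(p,A)$ on $\Lambda_\tau$ into the GLE with data $(p/\lambda,\lambda A)$ on $\Lambda_{\tilde\tau}=\lambda^{-1}\Lambda_\tau$.
\item The new coordinates are $(\tilde r,\tilde s)=(r+s,-(2r+s))$, so $\tilde\triangle_1(\lambda A)=\triangle_3(A)$ and $\sigma_3(p;\tau)=\lambda^{-1}\sigma_1(p/\lambda;\tilde\tau)$.
\item The points $\lambda A_k$ are the endpoints of $\sigma_1(p/\lambda;\tilde\tau)$, with the half periods permuted, and orders are preserved.
\item Since $\eta_1(\tilde\tau)=\lambda(2\eta_2-\eta_1)$, the cusp condition $\wp(\tilde p-\frac{\tilde\omega_{k'}}{2};\tilde\tau)+\eta_1(\tilde\tau)=0$ becomes $\lambda^2\wp(p-\frac{\omega_k}{2})+\lambda(2\eta_2-\eta_1)=0$, as you claimed.
\item Since $g_2(\tilde\tau)=\lambda^4g_2(\tau)$, one gets $12\eta_1(\tilde\tau)^2-g_2(\tilde\tau)=(2\tau-1)^2\bigl[12(2\eta_2-\eta_1)^2-(2\tau-1)^2g_2\bigr]$. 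So the factor is $(2\tau-1)^2$, not $(2\tau-1)^{-4}$; only its nonvanishing matters.
\end{itemize}
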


We conclude this section by proving the following new observation.

\begin{lemma}\label{lem4-s5-13}
For $p\notin E_{\tau}[2]$, we write $\sigma_j=\sigma_j(p)$ to emphasize its dependence on $p$.
Then
\begin{equation}\label{eq7-16}
	-\sigma_j(p)=\sigma_j\Big(\frac12-p\Big),\qquad j=1,3.
\end{equation}
\end{lemma}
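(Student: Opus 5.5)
The plan is to produce an explicit correspondence between the GLE data at $p$ and at $p':=\frac12-p$: it should send $A\mapsto -A$, shift $a\mapsto a+\frac12$, and leave $\triangle_1,\triangle_3$ unchanged. Fix $A\in\mathbb C$ and let $a=a(A)$ be given by Lemma \ref{lemma2-3}, so that $A=\frac12[\zeta(p+a)+\zeta(p-a)-\zeta(2p)]$. Note $p'\notin E_\tau[2]$. Set $a':=a+\frac12$; we need $a'\neq\pm p'$, which follows from $a\neq\pm p$ after reducing mod $\Lambda_\tau$.

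\emph{Step 1: the value of $A$.} Using the quasi-periodicity $\zeta(z+1)=\zeta(z)+\eta_1$ and the oddness of $\zeta$, we get
$$\zeta(p'+a')=\zeta(a-p)+\eta_1,\qquad \zeta(p'-a')=-\zeta(p+a),\qquad \zeta(2p')=\eta_1-\zeta(2p).$$
Substituting into \eqref{Aap} for $p'$ gives
$$A':=\tfrac12\big[\zeta(p'+a')+\zeta(p'-a')-\zeta(2p')\big]=-A .$$
By the uniqueness of $\pm a$ modulo $\Lambda_\tau$ for a given GLE (recalled after Lemma \ref{lemma2-3}), the point $a(-A;p')$ is therefore $\pm a'$. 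The same computation with $a=\frac{\omega_k}{2}$ gives $A_k(p')=-A_{k'}(p)$, where $\frac{\omega_{k'}}{2}\equiv\frac{\omega_k}{2}+\frac12$; so the exceptional set $\{A_0,\dots,A_3\}$ is permuted consistently, although this is not strictly needed.

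\emph{Step 2: the values of $(r,s)$.} From \eqref{61-38},
$$c'(a')=\tfrac12\big[\zeta(a'+p')+\zeta(a'-p')\big]=\tfrac12\big[\zeta(a-p)+\eta_1+\zeta(a+p)\big]=c(a)+\tfrac{\eta_1}{2}.$$
Solving the linear system \eqref{61-37-2}, which is uniquely solvable by the Legendre relation, gives $(r',s')=(r+\frac12,s)$.

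\emph{Step 3: the traces.} Since $\cos 2\pi s$ and $\cos 2\pi(2r+s)$ are unchanged by $s'=s$ and $2r'+s'=2r+s+1$, the definitions \eqref{eqfc-3}--\eqref{eqfc-003} give
$$\triangle_1(-A;p')=\triangle_1(A;p),\qquad \triangle_3(-A;p')=\triangle_3(A;p).$$
Taking preimages of $[-1,1]$ then yields $\sigma_j(p')=-\sigma_j(p)$ for $j=1,3$. (Note that $\triangle_2$ would instead change sign, which is why the statement is only for $j=1,3$.)

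\emph{Main obstacle: well-definedness.} The delicate point is that $(r,s)$ is defined only up to the choice of representative of $\pm a$: changing $a\to-a$ sends $(r,s)\to(-r,-s)$, and adding a lattice point shifts $(r,s)$ by $\mathbb Z^2$. I will verify that $\triangle_1$ and $\triangle_3$ are even and $\mathbb Z^2$-periodic functions of $(r,s)$, so these ambiguities do not matter. Alternatively, one can argue directly at the level of monodromy: $y_{a'}(z;p')$ is expressed through $y_a$, and the eigenvalues of $N_1$ and $N_2^2N_1^{-1}$ are compared using \eqref{eqfc-1}--\eqref{eqfc-2} and \eqref{eqfc-001}. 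The cases $A=A_k$ are covered by the same formulas, since \eqref{Aap} and \eqref{61-37-2} remain valid at $a=\frac{\omega_k}{2}$; alternatively, they follow by continuity of the holomorphic functions $\triangle_j$.
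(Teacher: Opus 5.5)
Your proposal is correct and is essentially the paper's proof: the paper takes the representative $a(-A;\frac12-p)=\frac12-a(A;p)$, which gives $c\mapsto -c+\frac12\eta_1$ and $(r,s)\mapsto(\frac12-r,-s)$; this agrees with your $a+\frac12$ and $(r+\frac12,s)$ up to sign and $\Lambda_\tau$ (resp.\ $\mathbb{Z}^2$), and the paper then compares $\cos 2\pi s$ and $\cos 2\pi(2r+s)$ exactly as you do, leaving implicit the well-definedness check that you make explicit. One correction to your parenthetical remark: since $[-1,1]$ is symmetric, $\triangle_2\mapsto-\triangle_2$ still gives $-\sigma_2(p)=\sigma_2(\frac12-p)$, so that sign change is not why the lemma is stated only for $j=1,3$ (the paper simply uses only those two sets).
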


\begin{proof}
Given $p\not\in E_{\tau}[2]$ and $A\in\mathbb{C}$, by Lemma \ref{lemma2-3}, there is $a=a(A; p)\neq \pm p$ such that
\begin{equation}
A=\frac{1}{2}\left[  \zeta(p+a)+\zeta(p-a)-\zeta(2p)\right]  . \label{005Aap}
\end{equation}
Then
\begin{align*}
&\frac{1}{2}\left[  \zeta\Big(\frac12-p+\frac12-a\Big)+\zeta\Big(\frac12-p-\frac12+a\Big)-\zeta(1-2p)\right]\\
=&-\frac{1}{2}\left[  \zeta(p+a)+\zeta(p-a)-\zeta(2p)\right] =-A.
\end{align*}
This implies
\begin{equation}\label{00a-aa}
a\Big(-A;\frac12-p\Big)=\frac12-a(A; p).
\end{equation}
Recalling \eqref{61-38} that
\begin{equation*}
c(A; p)=\frac{1}{2}[\zeta
(a+p)+\zeta(a-p)],
\end{equation*}
we have
\begin{align}
c\Big(-A; \frac12-p\Big)&=\frac{1}{2}\left[\zeta
\Big(\frac12-a+\frac12-p\Big)+\zeta\Big(\frac12-a-\frac12+p\Big)\right]\nonumber
\\&=\frac12\eta_1-\frac{1}{2}[\zeta
(a+p)+\zeta(a-p)]=-c(A;p)+\frac12\eta_1.\label{00611-38}
\end{align}
Recalling $(r,s)=(r(A;p), s(A;p))\in \mathbb{C}^2$ defined by \eqref{61-37-1}-\eqref{61-37-2}, it follows from \eqref{00a-aa} and \eqref{00611-38} that
$$r\Big(-A;\frac12-p\Big)=\frac12-r(A;p),\quad s\Big(-A;\frac12-p\Big)=-s(A;p).$$
This, together with \eqref{eqfc-3}-\eqref{eqfc-003}, yields
$$\triangle_1 \Big(-A;\frac12-p\Big)=\frac12(e^{-2\pi is(-A;\frac12-p)}+e^{2\pi is(-A;\frac12-p)})=\triangle_1(A;p),$$
and
\begin{align*}&\triangle_3\Big (-A;\frac12-p\Big)\\
=&
\frac12(e^{2\pi i(2r(-A;\frac12-p)+s(-A;\frac12-p))}+e^{-2\pi i(2r(-A;\frac12-p)+s(-A;\frac12-p))})=\triangle_3(A;p).\end{align*}
This implies that $-A\in \sigma_j(\frac12-p)$ if and only if $A\in\sigma_j(p)$, so \eqref{eq7-16} holds.
\end{proof}

\section{The rhombus torus}

\label{sec-3}

From now on, we consider $\tau=\frac12+ib$ with $b>0$, that is, $E_{\tau}$ is a rhombus torus.
Note from $\tau=\frac12+ib$ that $\bar\tau=1-\tau$, i.e., $\mathbb Z+\mathbb Z\tau=\mathbb Z+\mathbb Z\bar{\tau}$. Then it follows from the definition of $\wp(z)=\wp(z;\tau)$ and $\zeta(z)=\zeta(z;\tau)$ that
\begin{equation}\label{barwp-5}\overline{\wp(\bar{z})}=\wp(z),\quad\overline{\zeta(\bar{z})}=\zeta(z).\end{equation}
In particular, $\wp(z)\in\mathbb{R}$ if and only if $\wp(z)=\wp(\bar z)$, if and only if $z=\pm \bar z$ in $E_{\tau}$, which is equivalent to (using $z=r+s\tau$ with $r,s\in[-\frac12,\frac12]$ and $\bar z=r+s-s\tau$)
\begin{equation}\label{eqfc-s5-3}\pm z\in \Big(0,\frac{1}{2}\Big]\cup \Big[\frac{1}{2},\frac{1}{4}+\frac{\tau}{2}\Big]\cup \Big[\frac14-\frac{\tau}{2},0\Big),\quad\operatorname{mod}\;\mathbb{Z}+\mathbb{Z}\tau.\end{equation}
Here and in the sequel, for $z_1\neq z_2\in \mathbb{C}$, $$[z_1, z_2]:=\big\{(1-t)z_1+tz_2\,:\, t\in [0,1]\big\},$$ and $[z_1, z_2)$, $(z_1, z_2]$, $(z_1, z_2)$ are defined in similar ways. 

Indeed, $z=\bar z$ in $E_{\tau}$ corresponds to $s=0$ and so $z\in \pm (0, \frac12]$, while $z=-\bar z$ in $E_{\tau}$ is equivalent to $2r+s=0,\pm 1$ and so
$$\begin{cases}z=-\frac{s}{2}+s\tau\in \pm [\frac14-\frac{\tau}{2},0)\quad\text{for }2r+s=0,\\
z=-\frac{s\pm 1}{2}+s\tau\in \pm[\frac{1}{2},\frac{1}{4}+\frac{\tau}{2}]\quad\text{for }2r+s=\pm 1.\end{cases}$$
Note that $\frac{1}{4}+\frac{\tau}{2}=\frac{1}{4}-\frac{\tau}{2}$ in $E_{\tau}$.
For convenience, we denote
$$I=\Big(0, \frac12\Big),\quad II:=\Big(\frac{1}{2},\frac{1}{4}+\frac{\tau}{2}\Big]=\Big(\frac12,\frac12+i\frac{b}{2}\Big]\subset\frac12+i\mathbb R,$$
$$III:=\Big[\frac14-\frac{\tau}{2},0\Big)=\Big[-i\frac{b}{2},0\Big)\subset i\mathbb{R}.$$
The figures of $I, II, III$ are given in Figure \ref{figure2-1}.

\begin{figure}[htbp]\label{figure2-1}
\centering
\begin{tikzpicture}[ scale=0.6,
    dot/.style={circle,fill=black,inner sep=1pt}
]
\draw[thick, black] (0,-2)--(0,0) -- (2,0) -- (2,2);
\draw[thick, black] (3,2)--(-1,2)--(-3,-2)--(1,-2)--cycle;

\draw[dashed] (-3.5,0) -- (3.5,0) node[right] {};
\draw[dashed] (0,-3) -- (0,3) node[above] {};

\node[above] at (1,0) {I};
\node[left] at (2,1.2) {II};
\node[right] at (0,-1.2) {III};
\node[left] at (0,0) {\footnotesize$0$};
\node[right] at (2,0) {\footnotesize$\frac12$};
\node[above] at (2,2) {\footnotesize$\frac14+\frac{\tau}2$};
\node[below] at (0,-2) {\footnotesize$\frac14 - \frac\tau2$};
\end{tikzpicture}
\captionof{figure}{I,II,III}
\label{figure2-1}
\end{figure}

\begin{lemma}
\label{Lemma53-1} Let $\tau=\frac12+ib$ with $b>0$. Then $\wp$ is strictly increasing and bijective from $(0,\frac14-\frac{\tau}{2}]\cup [\frac{1}{4}+\frac{\tau}{2},\frac{1}{2}]\cup \lbrack \frac{1}{2},0)$ onto $(-\infty,+\infty)$, and
\begin{equation}\label{eqfc-s5-1}
\lim_{\,[\frac{1}{2},0)\ni z\rightarrow0}\wp(z)=+\infty,\quad \lim
_{[\frac14-\frac{\tau}{2},0)\ni z\rightarrow0}\wp(z)=-\infty.
\end{equation}
Furthermore,
\begin{equation}
\label{eqfc-s5-2} e_1, \eta_1, g_2, g_3\in\mathbb{R},\quad e_2=\overline{e_3}\notin\mathbb{R},\quad \operatorname{Im} e_2=-\operatorname{Im} e_3<0.
\end{equation}
\end{lemma}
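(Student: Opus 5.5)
The plan is to use the real-symmetry \eqref{barwp-5} to show that $\wp$ is real on the closed polygonal loop $0\to\frac12\to\frac14+\frac{\tau}{2}\equiv\frac14-\frac{\tau}{2}\to 0$. Then I will combine this with the fact that $\wp$ is a degree-two even map with simple critical points only at the half periods, which forces strict monotonicity along the loop.

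\emph{Reality and the constants.} By \eqref{barwp-5}, $\wp(z)\in\mathbb R$ whenever $z=\pm\bar z$ in $E_\tau$. The computation just before the statement shows that this holds on each of the segments $I$, $II$, $III$ (closures included, minus the lattice point). Hence $e_1=\wp(\frac12)\in\mathbb R$. Since $\overline{\zeta(\bar z)}=\zeta(z)$, we get $\eta_1=2\zeta(\frac12)\in\mathbb R$. The coefficients $g_2=60\sum'\omega^{-4}$ and $g_3=140\sum'\omega^{-6}$ are real because $\bar\Lambda_\tau=\Lambda_\tau$. Next, $\overline{\tau/2}=\frac12-\frac{\tau}{2}\equiv -\frac{\tau}{2}+\frac12$, which is congruent to $\frac{\omega_3}{2}$ modulo $\Lambda_\tau$ up to sign. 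Indeed $\overline{\tau}=1-\tau$, so $\overline{\tau/2}=\frac{1-\tau}{2}\equiv \frac{1+\tau}{2}-\tau\equiv\frac{\omega_3}{2}$. Therefore $e_2=\overline{e_3}$. Since $e_1,e_2,e_3$ are distinct and $e_1$ is real, $e_2\notin\mathbb R$; otherwise $e_2=e_3$.

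\emph{Monotonicity and bijectivity.} Consider the path $\gamma$ that runs from $0$ down along $III$ to $\frac14-\frac{\tau}{2}$, jumps (as the same point of $E_\tau$) to $\frac14+\frac{\tau}{2}$, runs down $II$ to $\frac12$, and then runs along $I$ back to $0$. This path is a simple closed curve in $E_\tau$, and by the previous step $\wp$ is real on it. Its interior points in $E_\tau$ are not half periods except $\frac12$, and $\frac14\pm\frac{\tau}{2}$ is not a half period. So $\wp'\neq0$ along $\gamma$ away from $0$ and $\frac12$. On each real-analytic open piece, $\wp$ is therefore strictly monotone.

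Near $0$, $\wp(z)\sim z^{-2}$. Along the real direction this gives $+\infty$, and along the imaginary direction $III$ (where $z=-it$) it gives $-t^{-2}\to-\infty$; this proves \eqref{eqfc-s5-1}. At $\frac12$, $\wp$ has a simple critical point, so the real level curve through it has two branches crossing orthogonally. One branch is $I$ (real direction) and the other is $II$ (vertical direction), so $\wp$ passes through $e_1$ without turning back along $\gamma$. At the junction point $\frac14+\frac{\tau}{2}$, $\wp'\neq0$, so the two segments $II$ and $III$ (which meet there at angle $\pi$ in $E_\tau$, because $II$ and $III$ are translates of each other along one vertical line) form one analytic arc. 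Consequently $\wp$ is continuous and strictly monotone on the whole open arc $\gamma\setminus\{0\}$, with limits $-\infty$ and $+\infty$ at its two ends, and it is thus bijective onto $\mathbb R$.

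\emph{Sign of $\operatorname{Im} e_2$.} This is the main obstacle. I would use the fact that $\wp$ maps the two components of $E_\tau\setminus\gamma$ conformally onto the upper and lower half-planes, with $\pm$ pairs identified. Concretely, near $0$ the expansion $\wp(z)\approx z^{-2}$ determines which side maps to $\{\operatorname{Im}<0\}$: for $z=\epsilon e^{-i\theta}$ with small $\theta>0$, we get $\operatorname{Im}\wp\approx \epsilon^{-2}\sin 2\theta>0$. Then I would check on which side of $\gamma$ the point $\frac{\tau}{2}$ (or its negative) lies, namely in the region containing directions $e^{i\theta}$ with $\theta\in(0,\frac{\pi}{2})$ from $0$. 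This gives $\operatorname{Im}e_2<0$. The conjugation $e_3=\overline{e_2}$ then gives $\operatorname{Im}e_3>0$. The delicate point is tracking orientation correctly, so I would verify the sign numerically in the limit $b\to\infty$ via the $q$-expansion of $e_2-e_3$.
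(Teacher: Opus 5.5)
The reality statements, $e_2=\overline{e_3}$, and the monotone bijection along $\gamma$ follow the paper's argument, only written out in more detail. The paper just says this part is easy from the double-cover property and from $\wp'\neq0$ off the half periods. Your analysis at $\frac12$ is actually unnecessary. $\wp$ is monotone on the straight segment $II\cup(III+\tau)=(\frac12,\tau)$, which contains no half periods, and on $I$. The end values $-\infty$, $e_1$, $+\infty$ then fix the direction of monotonicity.

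The real difference is how you get the sign of $\operatorname{Im}e_2$. The paper uses the equianharmonic value at $b=\frac{\sqrt3}{2}$, where $e_1>0$ and $e_2=-e^{\pi i/3}e_1$. It then extends $\operatorname{Im}e_2<0$ to all $b>0$ by continuity in $b$, using $\operatorname{Im}e_2\neq0$. You instead fix $b$ and read off the sign from the Laurent expansion at $0$ by a face argument. That is self-contained and needs neither the special value nor a deformation in $b$.

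As written, this step needs two repairs.

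First, $E_\tau\setminus\gamma$ is connected: $\gamma$ lifts to a path joining $0$ to $-\tau$, so it is non-separating. The set you want is $\wp^{-1}(\mathbb R\cup\{\infty\})=\gamma\cup(-\gamma)$. This follows from the bijectivity you proved together with the degree-two count. Its preimage in $\mathbb C$ is the $\Lambda$-invariant grid $\{\operatorname{Re}z\in\frac12\mathbb Z\}\cup\{\operatorname{Im}z\in b\mathbb Z\}$.

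Second, the step you leave as ``I would check'' is then immediate.
\begin{itemize}
\item The open rectangle $\{0<\operatorname{Re}z<\frac12,\ 0<\operatorname{Im}z<b\}$ misses this grid, so $\operatorname{Im}\wp$ has constant sign on it.
\item It contains $\frac{\tau}{2}=\frac14+\frac{ib}{2}$ (its center).
\item It also contains the points $\epsilon e^{i\theta}$ with $0<\theta<\frac{\pi}{2}$ and $\epsilon$ small, where $\operatorname{Im}\wp\approx-\epsilon^{-2}\sin2\theta<0$.
\end{itemize}
This gives $\operatorname{Im}e_2<0$ with no orientation ambiguity. So the $q$-expansion as $b\to\infty$ is only a sanity check. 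Used as a proof, it would need exactly the paper's continuity-plus-nonvanishing argument.
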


\begin{proof}
 By
$$\wp(z)=\frac1{z^2}+O(z^2),\quad\text{as }z\to 0,$$
we obtain \eqref{eqfc-s5-1}. From here and the fact that $\wp: E_{\tau}\to \mathbb{C}\cup\{\infty\}$ is a double cover (i.e., for any $c\in\mathbb{C}\cup\{\infty\}$, there is a unique pair of $\pm z_c\in E_{\tau}$ such that $\wp(\pm z_c)=c$) and $\wp'(z)=0$ if and only if $z\in\{\frac{\omega_k}{2}\}_{k=1}^3$, we easily see that $\wp$ is strictly increasing and bijective from $(0,\frac14-\frac{\tau}{2}]\cup [\frac{1}{4}+\frac{\tau}{2},\frac{1}{2}]\cup \lbrack \frac{1}{2},0)$ onto $(-\infty,+\infty)$.

On the other hand, since $e_1=\wp(\frac12)$, $\eta_1=2\zeta(\frac12)$, $e_2=\wp(\frac{\tau}{2})$ and $e_3=\wp(\frac{1+\tau}{2})$, we see from \eqref{barwp-5} that $e_1, \eta_1\in\mathbb R$ and $e_2=\overline{e_3}$. In particular, $e_2\neq e_3$ implies $e_2=\overline{e_3}\notin\mathbb{R}$. Since
\begin{align*}
 &\wp'(z)^2 = 4\wp(z)^3 - g_2\wp(z) - g_3=4\prod_{k=1}^3(\wp(z)-e_k), \end{align*}
 we see that $e_1+e_2+e_3=0$ and so
\begin{equation}\label{eqfc-g2g3}g_2=4(e_1^2-e_2e_3)=4e_1^2-4|e_2|^2\in\mathbb{R},\quad g_3=4e_1e_2e_3=4e_1|e_2|^2\in\mathbb{R}.\end{equation}

On the other hand, it is known (see e.g. \cite[p.31]{CFL}) that for $b=\frac{\sqrt{3}}2$, it holds that $e_1>0$ and $e_2=-e^{\pi i/3}e_1$, i.e., $\operatorname{Im} e_2=-\frac{\sqrt{3}}{2}e_1<0$. Then by $\operatorname{Im}e_2\neq 0$ for all $b>0$ and the continuity, we conclude that $\operatorname{Im}e_2< 0$ for all $b>0$.
 This completes the proof.
\end{proof}

\begin{remark}\label{rmk-s5-2} 
Write $z=x_1+ix_2$ with $x_1, x_2\in\mathbb R$. Then by \eqref{barwp-5} and Lemma \ref{Lemma53-1}, we have 
\begin{itemize}
\item For $z\in I=(0,\frac12)$, $\wp^{\prime
}(z)=\frac{\partial \wp(z)}{\partial x_{1}}<0$.
\item For $z\in II\cup III=(\frac{1}{2},\frac{1}{4}+\frac{\tau}{2}]\cup [\frac14-\frac{\tau}{2}, 0)$, $\wp^{\prime}(z)=-i\frac{\partial \wp(z)}{\partial x_{2}}$ with $\frac{\partial \wp(z)}{\partial x_{2}}<0$.
\item For $z\in III=[\frac14-\frac{\tau}{2}, 0)\subset i\mathbb{R}$, $\zeta(2z)\in i\mathbb{R}$. 
\item For $z\in II=(\frac{1}{2},\frac{1}{4}+\frac{\tau}{2}]\subset\frac12+i\mathbb R$, $\zeta(2z)=\eta_1+\zeta(2z-1)\in\eta_1+i\mathbb{R}$, so $2z\eta_1-\zeta(2z)\in i\mathbb{R}$.
\end{itemize}
\end{remark}

\subsection{Characterization of conditional stability sets}

In this subsection, we will show that the conditional stability sets $\sigma_1$ and $\sigma_3$ admit certain symmetries.

\begin{lemma}\label{lemma53-3}
Let $\tau=\frac12+ib$ with $b>0$, $p\notin E_{\tau}[2]$ and $\wp(p)\in\mathbb{R}$. Recall $A_k$ defined in \eqref{Ak}. 
\begin{itemize}
\item[(1)] For $p\in I=(0, \frac12)$, we have $A_1<A_0$, $A_2=\overline{A_3}\notin\mathbb{R}$ and $\operatorname{Im}A_2>0$.
\item[(2)] For $p\in II\cup III=(\frac{1}{2},\frac{1}{4}+\frac{\tau}{2}]\cup [\frac14-\frac{\tau}{2}, 0)$, we have $A_0, A_1\in i\mathbb{R}\setminus\{0\}$ with $\operatorname{Im}A_1<\operatorname{Im} A_0$,  $A_2=-\overline{A}_3\notin i\mathbb{R}$ and $\operatorname{Re}A_2>0$. 

\end{itemize}
\end{lemma}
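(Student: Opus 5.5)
We work directly from the explicit formulas \eqref{Ak}: $A_0=-\frac{\wp''(p)}{4\wp'(p)}$ and $A_k=A_0+\frac{\wp'(p)}{2(\wp(p)-e_k)}$ for $k=1,2,3$. The plan is to determine, in each of the two cases, whether $\wp(p)$, $\wp'(p)$ and $\wp''(p)=6\wp(p)^2-\frac{g_2}{2}$ are real or purely imaginary. We then combine this with the information on $e_1,e_2,e_3$ from Lemma \ref{Lemma53-1}.

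\emph{Case (1), $p\in I$.} Here $\wp(p)\in\mathbb R$, and by Remark \ref{rmk-s5-2} we have $\wp'(p)<0$. Since $g_2\in\mathbb R$, also $\wp''(p)\in\mathbb R$. Hence $A_0\in\mathbb R$. Because $e_1\in\mathbb R$ and $\wp(p)\neq e_1$, we get $A_1\in\mathbb R$.

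For the ordering we need the sign of $\wp(p)-e_1$. By Lemma \ref{Lemma53-1}, $\wp$ is increasing along $[\frac12,0)$, i.e. decreasing as $z$ runs from $0$ to $\frac12$. So for $p\in(0,\frac12)$ we have $\wp(p)>\wp(\frac12)=e_1$. Therefore $A_1-A_0=\frac{\wp'(p)}{2(\wp(p)-e_1)}<0$.

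Since $e_3=\overline{e_2}$ and $\wp(p),\wp'(p)$ are real, $A_3=\overline{A_2}$. For the sign of the imaginary part,
\[\operatorname{Im}A_2=\frac{\wp'(p)}{2}\operatorname{Im}\frac{1}{\wp(p)-e_2}=\frac{\wp'(p)}{2}\cdot\frac{\operatorname{Im}e_2}{|\wp(p)-e_2|^2}.\]
Both $\wp'(p)<0$ and $\operatorname{Im}e_2<0$, so $\operatorname{Im}A_2>0$. In particular $A_2\notin\mathbb R$.

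\emph{Case (2), $p\in II\cup III$.} Again $\wp(p)\in\mathbb R$, and by Remark \ref{rmk-s5-2}, $\wp'(p)=-i\partial_{x_2}\wp$ with $\partial_{x_2}\wp<0$. Thus $\wp'(p)=i\kappa$ with $\kappa>0$. Then $\wp''(p)$ is real, so $A_0=-\frac{\wp''(p)}{4i\kappa}\in i\mathbb R$, and $A_1-A_0=\frac{i\kappa}{2(\wp(p)-e_1)}\in i\mathbb R$.

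Two points need checking here.

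First, we need $\operatorname{Im}A_1<\operatorname{Im}A_0$, i.e. $\wp(p)<e_1$. By Lemma \ref{Lemma53-1}, $\wp$ increases along $(0,\frac14-\frac\tau2]\cup[\frac14+\frac\tau2,\frac12]$ up to the value $e_1$ at $\frac12$. Points of $II\cup III$ are, up to sign and modulo the lattice, on this path and distinct from $\frac12$. So $\wp(p)<e_1$ holds.

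Second, we need $A_0,A_1\neq0$. This amounts to $\wp''(p)\neq0$ and $\wp''(p)\neq \frac{2\kappa^2}{\wp(p)-e_1}$. I would argue as follows: if $A_0=0$ or $A_1=0$, then $-A_k=A_k$. Apply Lemma \ref{lem4-s5-13} together with the symmetry $p\mapsto\bar p$, using $\overline{\wp(\bar z)}=\wp(z)$. For $p\in II\cup III$ we have $\bar p=-p$, which gives $\overline{A_k(p)}$ expressed through $A_k(-p)=-A_k(p)$; this only re-proves $A_k\in i\mathbb R$. So vanishing has to be excluded differently. I would try showing $\wp''(p)=0$ forces $p$ to be a trivial critical point of $G_p$ in a degenerate way, contradicting Theorem \ref{main-thm-01}. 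Alternatively, one can argue directly: $A_0=\zeta(p)-\frac12\zeta(2p)$ (for $k=0$), and on $III\subset i\mathbb R$ this is a monotone function of $\operatorname{Im}p$. The derivative is $-\wp(p)+\wp(2p)$ times $i$, and one checks the sign against its limits at the ends of the segment. This nonvanishing is the step I expect to be the main obstacle.

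Finally, $\wp(p)-e_3=\overline{\wp(p)-e_2}$ gives
\[A_3=A_0+\frac{i\kappa}{2\,\overline{(\wp(p)-e_2)}}=-\overline{A_0+\frac{i\kappa}{2(\wp(p)-e_2)}}=-\overline{A_2},\]
using $\overline{A_0}=-A_0$. Moreover
\[\operatorname{Re}A_2=\operatorname{Re}\frac{i\kappa}{2(\wp(p)-e_2)}=\frac{\kappa}{2}\cdot\frac{-\operatorname{Im}e_2}{|\wp(p)-e_2|^2}>0,\]
so $A_2\notin i\mathbb R$.
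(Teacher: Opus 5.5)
Apart from the claim $A_0,A_1\neq 0$, your proof is correct and follows the paper's proof exactly. It uses the same formulas \eqref{Ak}, the same facts $\wp(p)\gtrless e_1$ and the sign of $\wp'(p)$ from Remark \ref{rmk-s5-2}, and the same expressions for $\operatorname{Im}A_2$ and $\operatorname{Re}A_2$.

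The step you flag as the main obstacle cannot be completed by any route, because it is false. The paper's own proof never addresses it either: it only concludes $A_0,A_1\in i\mathbb{R}$ with $\operatorname{Im}A_1<\operatorname{Im}A_0$.

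Here is a counterexample. As you noted, on $II\cup III$ one has $A_0=\frac{i\wp''(p)}{4\kappa}$, so $A_0=0$ if and only if $6\wp(p)^2=\frac{g_2}{2}$.
- Take $b=\frac{\sqrt3}{2}$, where $g_2=0$ and $e_1>0$.
- By Lemma \ref{Lemma53-1}, $\wp$ maps $II\cup III$ onto $(-\infty,e_1)$, so some $p\in II\cup III$ has $\wp(p)=0$. Explicitly, $p=\frac{1+\tau}{3}=\frac12+\frac{i}{2\sqrt3}\in II$, a fixed point of the order-three rotation of the hexagonal lattice.
- At this $p$, $\wp''(p)=0$ while $\wp'(p)^2=-g_3\neq 0$, hence $A_0=0$.
- The computation in Lemma \ref{lem4-s5-13} gives $A_1(p)=-A_0(\tfrac12-p)$, so also $A_1=0$ at $p=-\frac{i}{2\sqrt3}\in III$.

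The same mechanism produces zeros whenever $g_2\geq 0$, e.g. for all large $b$.

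Two smaller points:
- Your route (a) misreads the setting. The points $\pm p$ are the singularities of $G_p$, not critical points, and Theorem \ref{main-thm-01} only concerns the half periods.
- Your criterion for $A_1=0$ has a sign slip. It should read $\wp''(p)=-\frac{2\kappa^2}{\wp(p)-e_1}$.

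The correct form of (2) simply drops ``$\setminus\{0\}$''; nothing later in the paper uses it. Nonvanishing holds exactly when both $\wp(p)^2$ and $\wp(\tfrac12-p)^2$ differ from $\frac{g_2}{12}$, for instance whenever $g_2<0$.
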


\begin{proof}
By \eqref{eqfc-s5-3} and by replacing $p$ with $-p$ if necessary, the assumption $\wp(p)\in\mathbb{R}$ and $p\notin E_{\tau}[2]$ imply that \begin{equation}\label{eqfc-p}p\in I\cup II\cup III=\Big(0,\frac{1}{2}\Big)\cup \Big(\frac{1}{2},\frac{1}{4}+\frac{\tau}{2}\Big]\cup \Big[\frac14-\frac{\tau}{2}, 0\Big).\end{equation}
Furthermore, $\wp''(p)=6\wp(p)^2-g_2/2\in\mathbb{R}$.

(1). Since $p\in I=(0, \frac12)$, we have $e_1<\wp(p)$ and $\wp'(p)<0$, which together with \eqref{Ak} implies
 $A_1<A_0$. Furthermore, we see from \eqref{Ak} and $e_2=\overline{e_3}\notin\mathbb{R}$ that $A_2=\overline{A_3}\notin\mathbb{R}$, and it follows from $\operatorname{Im}{e_2}<0$ that
 $$\operatorname{Im} A_2=\frac{\wp'(p) \operatorname{Im}{e_2}}{2|\wp(p)-e_2|^2}>0.$$

(2). Since $p\in II\cup III=(\frac{1}{2},\frac{1}{4}+\frac{\tau}{2}]\cup [\frac14-\frac{\tau}{2}, 0)$ implies $\wp(p)<e_1$ and $\wp^{\prime}(p)=-i\frac{\partial \wp(p)}{\partial x_{2}}$ with $\frac{\partial \wp(p)}{\partial x_{2}}<0$, it follows  that $A_0, A_1\in i\mathbb{R}$ with $\operatorname{Im}A_1<\operatorname{Im} A_0$.  Furthermore, we see from \eqref{Ak} and $e_2=\overline{e_3}\notin\mathbb{R}$ that $A_2=-\overline{A}_3\notin i\mathbb{R}$ and
  $$\operatorname{Re} A_2=-\frac{|\wp'(p)| \operatorname{Im}{e_2}}{2|\wp(p)-e_2|^2}>0.$$
  This completes the proof.
\end{proof}


\begin{lemma}\label{lemma53-4}
Let $\tau=\frac12+ib$ with $b>0$.
\begin{itemize}
\item[(1)] If $p\in I=(0, \frac12)$, then $\sigma_j$ is symmetric with respect to the real axis for each $j=1,3$.
\item[(2)] If $p\in II\cup III=(\frac{1}{2},\frac{1}{4}+\frac{\tau}{2}]\cup [\frac14-\frac{\tau}{2}, 0)$, then $\sigma_j$ is symmetric with respect to the imaginary axis for each $j=1,3$.
\end{itemize}

\end{lemma}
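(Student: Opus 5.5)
The approach is to show that complex conjugation (for $p\in I$), respectively the map $A\mapsto -\bar A$ (for $p\in II\cup III$), preserves the functions $\triangle_j$ up to conjugation. Then $\triangle_j^{-1}([-1,1])$ is invariant under that reflection.

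\textbf{Case $p\in I$.} Since $\bar p=p$, the relations \eqref{barwp-5} give $\overline{\zeta(\bar w)}=\zeta(w)$ and $\overline{\zeta(2p)}=\zeta(2p)$. Given $A$, let $a=a(A)$ be as in Lemma \ref{lemma2-3}, so that \eqref{Aap} holds. Conjugating \eqref{Aap} shows that $\bar A=\frac12[\zeta(p+\bar a)+\zeta(p-\bar a)-\zeta(2p)]$. Because $\pm a$ is uniquely determined by $A$, we get $a(\bar A)=\pm\bar a$, and we fix the sign $+$. Then \eqref{61-38} gives $c(\bar A)=\overline{c(A)}$.

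Next we solve \eqref{61-37-2} for $\bar A$. Write $a=r+s\tau$, so that $\bar a=\bar r+\bar s\bar\tau=(\bar r+\bar s)-\bar s\tau$. Using $\overline{\eta_1}=\eta_1$ and $\overline{\eta_2}=\overline{2\zeta(\tau/2)}=2\zeta(\bar\tau/2)=2\zeta((1-\tau)/2)=\eta_1-\eta_2$, we find $\overline{c(A)}=\bar r\eta_1+\bar s(\eta_1-\eta_2)=(\bar r+\bar s)\eta_1-\bar s\eta_2$. Hence $(r(\bar A),s(\bar A))=(\bar r+\bar s,-\bar s)$. This is determined only modulo the ambiguities of representative and sign, but those do not affect traces.

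It follows that $\triangle_1(\bar A)=\cos(2\pi\bar s)=\overline{\triangle_1(A)}$. Also $2r(\bar A)+s(\bar A)=2\bar r+\bar s$, so $\triangle_3(\bar A)=\overline{\triangle_3(A)}$. Since $[-1,1]$ is real, $A\in\sigma_j$ if and only if $\bar A\in\sigma_j$.

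\textbf{Case $p\in II\cup III$.} Here $\bar p=-p$ in $\mathbb C$ for $p\in III$, and $\bar p=1-p$ for $p\in II$. We show that $-\bar A$ corresponds to $\bar a$ (or $-\bar a$), again using oddness of $\zeta$ and \eqref{barwp-5}.

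For $p\in III$, conjugating \eqref{Aap} gives
$\overline A=\frac12[\zeta(-p+\bar a)+\zeta(-p-\bar a)-\zeta(-2p)]=-\frac12[\zeta(p-\bar a)+\zeta(p+\bar a)-\zeta(2p)]$.
Therefore $-\bar A$ corresponds to $\bar a$. Moreover $c(-\bar A)=\frac12[\zeta(\bar a+p)+\zeta(\bar a-p)]=\overline{c(A)}$, because conjugation sends $a\pm p$ to $\bar a\mp p$.

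For $p\in II$, the shift by $1$ produces extra terms $\eta_1$. In \eqref{Aap} these cancel, since $\zeta(1-p+\bar a)+\zeta(1-p-\bar a)-\zeta(2-2p)$ picks up $\eta_1+\eta_1-2\eta_1=0$. In $c$ they contribute $\frac12(\eta_1-\eta_1)=0$. The same computation as in the first case then gives $(r,s)(-\bar A)\equiv(\bar r+\bar s,-\bar s)$. Hence $\triangle_j(-\bar A)=\overline{\triangle_j(A)}$, and $\sigma_j$ is symmetric about $i\mathbb R$.

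Lemma \ref{Lemma53-1} and Lemma \ref{lemma53-3} serve as consistency checks: the endpoints $A_k$ permute exactly as predicted ($A_2\leftrightarrow A_3$ in both cases, with $A_0,A_1$ fixed).

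\textbf{Main obstacle.} The delicate point is the bookkeeping of the lattice ambiguity in $a$: the sign choice, the representative modulo $\Lambda_\tau$, and the $\eta_1$ shifts for $p\in II$. One must check that $(r,s)$ changes only by $\mathbb Z^2$ and a global sign, so that $\cos(2\pi s)$ and $\cos(2\pi(2r+s))$ are unaffected. I would handle this by noting that $\triangle_j$ is defined intrinsically via traces of $N_j$, which are independent of these choices.
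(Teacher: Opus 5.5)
Your proof is correct, and it reaches the conclusion by a different route from the paper's.

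The paper works with the ODE itself. It sets $\hat y_a(z):=\overline{y_a(\bar z)}$ and uses \eqref{barwp-5} to check that $\hat y_a$ solves the GLE with potential $I(z;\bar p,\bar A,\tau)$. This potential equals $I(z;p,\bar A,\tau)$ for $p\in I$, and $I(z;p,-\bar A,\tau)$ for $p\in II\cup III$, since $I$ depends on $p$ only modulo $\Lambda_\tau$ and $I(z;-p,A,\tau)=I(z;p,-A,\tau)$. The paper then reads off the multipliers of $\hat y_a$ along $1$ and $2\tau-1$ directly from \eqref{eqfc-1}, \eqref{eqfc-001} and $\overline{2\tau-1}=-(2\tau-1)$. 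These are the conjugates of the multipliers of $y_a$, which gives $\triangle_j(\pm\bar A)=\overline{\triangle_j(A)}$ without ever computing $(r,s)$.

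You instead push the conjugation through the parametrization of Lemma \ref{lemma2-3}. The point $\bar a$ satisfies \eqref{Aap} for $\bar A$ (resp.\ $-\bar A$), $c$ gets conjugated, and solving \eqref{61-37-2} with $\overline{\eta_2}=\eta_1-\eta_2$ gives $(r,s)\mapsto(\bar r+\bar s,-\bar s)$ modulo sign and $\mathbb Z^2$.

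The two arguments encode the same symmetry, since $\overline{y_a(\bar z)}$ is, up to a nonzero constant, exactly $y_{\bar a}$.

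What each buys:
\begin{itemize}
\item The paper's version is shorter. It avoids your sign/lattice bookkeeping and the $\eta_1$-cancellations for $p\in II$, because these are absorbed into the invariance of $I$ under $p\mapsto p+\omega$.
\item Yours is purely algebraic. It never has to discuss how the cuts $L_p+\Lambda_\tau$ and the cycles $\ell_j$ behave under $z\mapsto\bar z$; that is not entirely innocent for $p\in II$, where $\overline{L_p}$ is not a translate of $L_p$.
\item Yours also makes the induced map on $(r,s)$ explicit. This is the same relation $\bar a=r+s-s\tau$ that the paper uses later in Lemma \ref{lemma-s5-13}.
\end{itemize}
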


\begin{proof}  
Note from $\tau=\frac12+ib$ that $\overline{2\tau-1}=1-2\tau$.
Set $\hat{y}_a(z):=\overline{y_{a}(\bar z)}$. Then \eqref{eqfc-1} and \eqref{eqfc-001} imply that
\begin{equation}\label{eqfc-s5-5}\hat{y}_{a}(z+1)=\overline{e^{-2\pi is}}\hat{y}_{a}(z),\end{equation}
\begin{equation}\label{eqfc-s5-6} \hat{y}_{a}(z+2\tau-1)=\overline{e^{-2\pi i(2r+s)}}\hat{y}_{ a}(z).\end{equation}
Furthermore, $\hat{y}_a''(z)=\overline{I(\bar z; p, A, \tau)}\hat{y}_a(z)$, where by using \eqref{barwp-5}, 
\begin{align}\label{eqfc-s5-7}
\overline{I(\bar z; p, A, \tau)}=
&\frac{3}{4}
(\wp(z+\bar{p})+\wp(z-\bar{p})-\wp (2\bar p))\nonumber\\
&+\overline{A}(\zeta(z+\bar{p})-\zeta(z-\bar{p})-\zeta(2\bar p))+\overline{A}^{2}\nonumber\\
=&I(z; \bar p, \overline A, \tau).
\end{align}

(1) For $p\in (0, \frac12)$, we have $\bar p=p$, so $I(z; \bar p, \bar A, \tau)=I(z; p, \bar A, \tau)$. Then it follows from \eqref{eqfc-s5-5}-\eqref{eqfc-s5-6} that \begin{equation}\label{eqfc-8}\triangle_1(\overline{A})=\overline{\cos 2\pi s}=\overline{\triangle_1(A)},\quad \triangle_3(\overline{A})=\overline{\cos 2\pi (2r+s)}=\overline{\triangle_3(A)}.\end{equation}
Thus, $\overline{A}\in \sigma_j$ if and only if $A\in\sigma_j$, namely $\sigma_j$ is symmetric with respect to the real axis for $j=1,3$.

(2) For $p\in II\cup III= (\frac{1}{2},\frac{1}{4}+\frac{\tau}{2}]\cup [\frac14-\frac{\tau}{2}, 0)$, we have either $\bar{p}=-p$ or $\bar p=1-p\equiv -p$ mod $\Lambda_{\tau}$, so $I(z; \bar p, \overline A, \tau)=I(z; p, -\overline A, \tau)$. Consequently,
\begin{equation}\label{eqfc-s5-9}\triangle_1(-\overline{A})=\overline{\cos 2\pi s}=\overline{\triangle_1(A)},\quad \triangle_3(-\overline{A})=\overline{\cos 2\pi (2r+s)}=\overline{\triangle_3(A)}.\end{equation}
Thus, $-\overline{A}\in \sigma_j$ if and only if $A\in\sigma_j$, namely $\sigma_j$ is symmetric with respect to the imaginary axis for $j=1,3$.
\end{proof}

\begin{lemma}\label{Lemma53-5} Let $\tau=\frac12+ib$ with $b>0$ and $p\in I=(0,\frac12)$. Then
\begin{itemize}

\item[(1)] $\sigma_3=(-\infty, A_1]\cup [A_0, +\infty)\cup\sigma_{3,A_2A_3}$, where $\sigma_{3,A_2A_3}$ is a simple curve connecting $A_2$ and $A_3$, and is symmetric with respect to the real axis with $\sigma_{3,A_2A_3}\cap\mathbb{R}$ containing a single point. 
\item[(2)] $[A_1, A_0]\subset\sigma_1$, and one of the following holds.
\begin{itemize}
\item[(2-1)] $\sigma_1=[A_1, A_0]\cup\sigma_{1,\infty A_2}\cup\overline{\sigma_{1,\infty A_2}}$, where $\sigma_{1,\infty A_2}\subset\{z : \operatorname{Im}z>0\}$ is a semi-unbounded simple curve with the endpoint $A_2$ and tending to $-\frac{2p\eta_1-\zeta(2p)}{2}+i\infty$, while $\overline{\sigma_{1,\infty A_2}}\subset\{z : \operatorname{Im}z<0\}$ is the complex conjugate of $\sigma_{1,\infty A_2}$ such that $\sigma_{1,\infty A_2}\cup\overline{\sigma_{1,\infty A_2}}$ is symmetric with respect to the real axis.
\item[(2-2)] $\sigma_1=[A_1, A_0]\cup\sigma_{1,A_2A_3}\cup\sigma_{1,\infty}$, where $\sigma_{1,\infty}$ is an unbounded simple curve tending to $-\frac{2p\eta_1-\zeta(2p)}{2}\pm i\infty$, which is symmetric with respect to the real axis with $\sigma_{1,\infty}\cap\mathbb{R}$ containing a single point, and $\sigma_{1,A_2A_3}$ is a simple curve connecting $A_2$ and $A_3$, and is symmetric with respect to the real axis with $\sigma_{1,A_2A_3}\cap\mathbb{R}$ containing a single point. Remark that it might happen $\sigma_{1,\infty}\cap\mathbb{R}=\sigma_{1,A_2A_3}\cap\mathbb{R}$; see Section \ref{sec-7} for examples.
\end{itemize}
\end{itemize}
\end{lemma}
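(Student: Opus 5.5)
We work with $p\in I=(0,\frac12)$, so by Lemma \ref{lemma53-4} both $\sigma_1$ and $\sigma_3$ are symmetric with respect to the real axis. By Lemma \ref{lemma53-3}, $A_1<A_0$ are real and $A_2=\overline{A_3}$ with $\operatorname{Im}A_2>0$. The plan is first to show that $\triangle_j$ is real on $\mathbb R$, and then to find which real intervals belong to $\sigma_1$ and $\sigma_3$. For $A\in\mathbb R$, \eqref{eqfc-8} gives $\triangle_j(A)=\overline{\triangle_j(A)}$, so $\triangle_j$ is real-valued on $\mathbb R$. Hence the real part of $\sigma_j$ is $\{A\in\mathbb R : |\triangle_j(A)|\le 1\}$.

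\textbf{The role of the endpoints.} The real endpoints are $A_0,A_1$ and, by \eqref{eqfc-vjk1}, $\triangle_1(A_0)=\triangle_1(A_1)=1=\triangle_3(A_0)=\triangle_3(A_1)$. By Lemma \ref{lemma2-7} these are endpoints, so an odd number of arcs emanates from each. By reality and symmetry, if $A_k$ is not a cusp, then exactly one arc leaves it, and this arc must be real. We decide the direction of that real arc by asymptotics.

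\textbf{Asymptotics of $\triangle_3$ and $\triangle_1$.} By Lemma \ref{lemma2-8}(2), the unbounded arcs of $\sigma_3$ go in the direction $\frac{i}{2\tau-1}\mathbb R=\frac{1}{2b}\mathbb R$, which is the real direction. Since $\sigma_3$ is symmetric about $\mathbb R$, these two arcs lie on $\mathbb R$ near $\pm\infty$. Similarly, Lemma \ref{lemma2-8}(1) gives that the unbounded arcs of $\sigma_1$ go vertically, with real part tending to the real number $-\frac{2p\eta_1-\zeta(2p)}{2}$. Here I use Lemma \ref{Lemma53-1} ($\eta_1\in\mathbb R$) and that $\zeta(2p)\in\mathbb R$ for $p\in(0,\frac12)$. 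Consequently $\sigma_1\cap\mathbb R$ is bounded.

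\textbf{Cusps and the exact real pieces.} Lemma \ref{lemma2-10}(2) says $A_k$ is a cusp of $\sigma_3$ iff $\wp(p-\frac{\omega_k}{2})+\eta_1=\frac{4\pi i}{2\tau-1}=\frac{2\pi}{b}$, a real condition. So a cusp at $A_0$ or $A_1$ is possible a priori. To rule it out, or to handle it, I would use Corollary \ref{coro2-11-3}, which says $\sigma_3$ has at most two cusps, together with the symmetric arc count. A cleaner route is the following.

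1. The arcs of $\sigma_3$ number $3$ to $5$, with endpoints $A_0,\dots,A_3$ and exactly two unbounded ends, namely the real rays. One real ray must terminate at an endpoint. Along $\mathbb R$, $\triangle_3$ is real and tends to $\pm\infty$ in absolute value away from $\sigma_3$. Since $\triangle_3$ is monotone on real arcs (a critical point with value in $(-1,1)$ would be a branch point, and branch points are ruled out on $\mathbb R$ by checking Lemma \ref{lemma2-10-3}(2), whose right-hand side $\frac{4\pi}{b}$ is real), each ray runs from $\pm\infty$ to the first real point where $\triangle_3=\pm1$, which is $A_1$ on the left and $A_0$ on the right.

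2. On $(A_1,A_0)$ we have $\triangle_3>1$, because both ends have value $1$ and a nonreal arc emanating from $A_0$ or $A_1$ must come in a conjugate pair, contradicting oddness unless it is a cusp. A cusp would give three arcs, namely the real ray plus a conjugate pair. Excluding this is the main obstacle. I would exclude it by counting: the conjugate pair plus the arcs out of $A_2,A_3$ would exceed $5$ arcs, or they would force $\sigma_3$ into a bounded complementary region, contradicting Lemma \ref{lemma2-7}(2).

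3. The remaining arcs form a symmetric curve joining $A_2$ and $A_3$. It crosses $\mathbb R$ once, at a point of $(A_1,A_0)$ where $|\triangle_3|\le1$. Uniqueness of the crossing follows from Lemma \ref{lemma2-7}(2): two crossings would enclose a bounded region. This gives (1).

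For (2), the same reasoning with $\sigma_1\cap\mathbb R$ bounded shows that $\sigma_1\cap\mathbb R\supset[A_1,A_0]$. The arcs from $A_1$ and $A_0$ must go inward along $\mathbb R$, so $\triangle_1$ runs monotonically from $1$ at $A_1$ down to $-1$, hitting the other endpoint value, or returns, so that $[A_1,A_0]$ is a single arc. The remaining set consists of the two vertical unbounded ends and the endpoints $A_2,A_3$ with conjugate symmetry. Either each unbounded end connects to $A_2$ (respectively $A_3$), which is (2-1), or the two ends join into one symmetric curve crossing $\mathbb R$ once and $A_2,A_3$ are joined by a symmetric curve, which is (2-2). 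No other configurations occur, by Lemma \ref{lemma2-7}(2) and the arc counts in Corollary \ref{coro2-11}.
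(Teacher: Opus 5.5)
Your framework (reality of $\triangle_j$ on $\mathbb R$, symmetry, the asymptotics of Lemma \ref{lemma2-8}, the endpoint structure, unbounded complementary components) is the same one the paper uses in its short deduction from facts (a)--(d). There is a genuine gap, though: you try to exclude configurations that really occur for $p\in I$.

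\textbf{(i) Real branch points of $\sigma_3$.} In Step 1 you discard them because $\frac{8\pi i}{2\tau-1}=\frac{4\pi}{b}$ in Lemma \ref{lemma2-10-3}(2) is real. That is a non sequitur: a real right-hand side makes the branch-point equation a real equation, solvable at real $A$. Indeed, for $\frac12\le b<b_1$ and $p\in(0,p_{1,2})\cup(p_{0,2},\frac12)$, the curve $\sigma_{3,A_2A_3}$ meets one of the real rays at a branch point (Lemma \ref{lem-s5-20}(2)).

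\textbf{(ii) Cusps of $\sigma_3$ at $A_0$.} These cannot be excluded. By Lemma \ref{lemma2-10}(2), $A_0$ is a cusp iff $\wp(p)=\frac{2\pi}{b}-\eta_1$. For $b<b_1$ this value exceeds $e_1$ (Remark \ref{rmk5-9}), so it is attained by some $p\in I$. The resulting picture is the ray $[A_0,+\infty)$ plus a conjugate pair of arcs from $A_0$ to $A_2$ and $A_3$. It has $4$ arcs, as Corollary \ref{coro2-11-3}(2-1) allows, and no bounded complementary component. So neither of your counting arguments gives a contradiction, and the lemma holds there with $\sigma_{3,A_2A_3}\cap\mathbb R=\{A_0\}$.

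\textbf{(iii) Steps 2 and 3 contradict each other.} If $\triangle_3>1$ on $(A_1,A_0)$, the symmetric curve cannot cross there. In fact $\triangle_3\ge 1$ on $(A_1,A_0)$, with equality exactly where $\sigma_{3,A_2A_3}$ meets that interval, at an even-order zero of $\triangle_3^2-1$. For $p\in J$ this point is $A_4$: since $a_0\in II\cup III$ gives $2r+s\in\mathbb Z$, you get $\triangle_3(A_4)=1$.

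\textbf{(iv) Monotonicity is false.} On the real rays $\triangle_3=\cos 2\pi(2r+s)$ with $2\pi(2r+s)$ real and unbounded, so $\triangle_3$ hits $\pm1$ infinitely often. Also $\sigma_1$ may have branch points $A_5,A_6\in(A_1,A_0)$ (Lemma \ref{lem0-s5-13}), and $\triangle_1(A_0)=\triangle_1(A_1)=1$, so $\triangle_1$ does not run from $1$ down to $-1$.

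\textbf{How to repair it.} Do not try to locate the real crossing point.
\begin{enumerate}
\item Replace monotonicity by the following. A boundary point, relative to $\mathbb R$, of $\sigma_j\cap\mathbb R$ is a sign change of the real-analytic function $\triangle_j^2-1$. Hence it is an odd-order zero, hence an endpoint, hence $A_0$ or $A_1$. Moreover, $\mathbb R$ cannot pass straight through an odd-order zero, since the arcs there meet at angles $2\pi/d$ with $d$ odd.
\item Step 1 then gives $(-\infty,A_1]\cup[A_0,+\infty)\subset\sigma_3$. Since $\sigma_1\cap\mathbb R$ is bounded, it also gives $[A_1,A_0]\subset\sigma_1$.
\item For (1): the arcs leaving $A_2$ are bounded, since the only unbounded arcs are the rays, and they cannot return to $A_2$. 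Together with their mirror images they must form a single symmetric simple curve from $A_2$ to $A_3$. Otherwise a closed loop bounds a bounded component of $\mathbb C\setminus\sigma_3$, contradicting Lemma \ref{lemma2-7}(2).
\item Complex conjugation restricted to this curve is an involution exchanging its endpoints, so it has exactly one fixed point. That is the single real point. It may lie in $(A_1,A_0)$, on a ray (a branch point), or at $A_0$ or $A_1$ (a cusp).
\end{enumerate}
Part (2) is handled the same way. Real branch points, and the cusps of $\sigma_1$ at $A_0,A_1$ that occur when $b<b_0$, are absorbed into case (2-2) rather than excluded.
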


\begin{proof} Note that $2\tau-1=2ib$.
For $p\in (0,\frac12)$, by the Legendre relation $\tau\eta_1-\eta_2=2\pi i$, we have $2p\eta_1-\zeta(2p)\in \mathbb{R}$ and
$$\frac{2p(2\eta_2-\eta_1)-(2\tau-1)\zeta(2p)}{2(2\tau-1)}=\frac{2p\eta_1-\zeta(2p)}{2}-\frac{2p\pi}{b}\in\mathbb{R}.$$
Together with Lemmas \ref{lemma2-7}, \ref{lemma2-8}, \ref{lemma53-3}, \ref{lemma53-4} and Corollaries \ref{coro2-11}-\ref{coro2-11-3}, we obtain the following facts:

\begin{itemize}
\item[(a)] $A_1<A_0$ and $A_2=\overline{A_3}\notin\mathbb R$ are all the endpoints of $\sigma_j$;

\item[(b)] $\sigma_j$ is symmetric with respect to the
real line $\mathbb{R}$ for $j=1,3$;

\item[(c)] any connected component of
$\mathbb{C}\setminus \sigma_j$ is unbounded;

\item[(d)] for $R>0$ large, $\sigma_3\setminus B_R$ consists of exactly two unbounded arcs that tend to $+\infty$ and $-\infty$ separately, and $\sigma_1\setminus B_R$ consists of exactly two unbounded arcs that tend to $-\frac{2p\eta_1-\zeta(2p)}{2}+ i\infty$ and $-\frac{2p\eta_1-\zeta(2p)}{2}-i\infty$ separately.
\end{itemize}
Then we easily obtain the desired assertions. 
\end{proof}

\begin{lemma}\label{Lemma53-6} Let $\tau=\frac12+ib$ with $b>0$ and $p=\frac14$. Then $A_0=-A_1>0$, $A_2=\overline{A_3}\in i\mathbb{R}\setminus\{0\}$ with $\operatorname{Im}A_2>0$ and 
\begin{itemize}
\item[(1)] $\sigma_3=(-\infty, A_1]\cup [A_0, +\infty)\cup [A_3, A_2]$.
\item[(2)] $\sigma_1=[A_1, A_0]\cup (-i\infty, A_3]\cup [A_2, +i\infty)$.
\end{itemize}
Consequently, $\sigma_1\cap\sigma_3\setminus\{A_0,A_1, A_2,A_3\}=\{0\}$.
\end{lemma}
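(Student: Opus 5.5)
The plan is to combine the two symmetries available at $p=\frac14$ with the structure results of Lemma \ref{Lemma53-5}, and then read off the intersection.

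\emph{Step 1: the endpoints.} Since $2p=\frac12$, we have $\zeta(2p)=\frac{\eta_1}{2}$. Moreover $\wp''(\frac14)=6\wp(\frac14)^2-\frac{g_2}{2}$, and the duplication formula $\wp(2p)=\frac{1}{4}\big(\frac{\wp''(p)}{\wp'(p)}\big)^2-2\wp(p)$ relates $A_0=-\frac{\wp''(p)}{4\wp'(p)}$ to $e_1=\wp(\frac12)$. Rather than computing directly, I will use Lemma \ref{lem4-s5-13}. Because $\frac12-p=p$, it gives $-\sigma_j(\frac14)=\sigma_j(\frac14)$, so each $\sigma_j$ is symmetric under $A\mapsto -A$. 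By \eqref{00a-aa}, the map $A\mapsto -A$ sends $a\mapsto \frac12-a$, and hence permutes the endpoints: $A_0\leftrightarrow A_1$ and $A_2\leftrightarrow A_3$. (Indeed $a=0\mapsto\frac12$, and $a=\frac{\tau}{2}\mapsto \frac12-\frac{\tau}{2}\equiv\frac{1+\tau}{2}$.) Therefore $A_0=-A_1$ and $A_2=-A_3$. Lemma \ref{lemma53-3}(1) gives $A_1<A_0$, so $A_0>0$. It also gives $A_2=\overline{A_3}$ with $\operatorname{Im}A_2>0$. Combining $A_2=-A_3=-\overline{A_2}$ yields $A_2\in i\mathbb{R}\setminus\{0\}$.

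\emph{Step 2: double symmetry of $\sigma_j$.} By Lemma \ref{lemma53-4}(1), each $\sigma_j$ is symmetric about $\mathbb{R}$. By Step 1 it is also symmetric under $A\mapsto -A$, and hence also symmetric about $i\mathbb{R}$. Next I locate the asymptotic directions from Lemma \ref{lemma2-8}. For $\sigma_1$ the shift is $-\frac{2p\eta_1-\zeta(2p)}{2}=-\frac{\eta_1/2-\eta_1/2}{2}=0$, so the unbounded arcs tend to $\pm i\infty$ along $i\mathbb{R}+O(t^{-1})$. For $\sigma_3$ the unbounded arcs tend to $\pm\infty$ along $\mathbb{R}$. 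Lemma \ref{Lemma53-5}(1) already gives $\sigma_3=(-\infty,A_1]\cup[A_0,+\infty)\cup\sigma_{3,A_2A_3}$. Here $\sigma_{3,A_2A_3}$ is a simple curve joining $A_2$ and $A_3$, symmetric under both reflections. I need to show it is exactly the segment $[A_3,A_2]$.

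For this I will show $\triangle_3$ is real on $i\mathbb{R}$. The two reflection symmetries give $\triangle_3(-\bar A)=\overline{\triangle_3(A)}$ (combine \eqref{eqfc-8} with $\triangle_3(-A;\frac14)=\triangle_3(A;\frac14)$). On $i\mathbb{R}$ we have $-\bar A=A$, so $\triangle_3$ is real there. The same argument shows $\triangle_1$ is real on $\mathbb{R}$ and on $i\mathbb{R}$. I then argue by monotonicity. Between the two endpoints $A_3$ and $A_2$ on $i\mathbb{R}$, $\triangle_3$ goes from $-1$ to $-1$ along a real-valued analytic function. Any point of $i\mathbb{R}$ lies in $\sigma_3$ exactly when $|\triangle_3|\le 1$ there. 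The curve $\sigma_{3,A_2A_3}$ is invariant under $A\mapsto-\bar A$ and under conjugation. If it left $i\mathbb{R}$, its reflection would be a second curve joining $A_2$ and $A_3$, which contradicts the arc count in Corollary \ref{coro2-11-3} together with Lemma \ref{lemma2-7}(2) (an enclosed bounded component is impossible). Hence $\sigma_{3,A_2A_3}=[A_3,A_2]$.

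\emph{Step 3: $\sigma_1$.} Cases (2-1) and (2-2) of Lemma \ref{Lemma53-5} need to be distinguished. In case (2-2), $\sigma_{1,\infty}$ is a single curve from $-i\infty$ to $+i\infty$, and $\sigma_{1,A_2A_3}$ is another curve. The same reflection argument forces both to lie on $i\mathbb{R}$, which would make them overlap, contradicting the fact that they are simple disjoint-interior arcs (or would create a branch point, forbidden by the count of arcs). Hence case (2-1) holds. Then $\sigma_{1,\infty A_2}$ is invariant under $A\mapsto-\bar A$, and by uniqueness of the arc through $A_2$ it equals $[A_2,+i\infty)$; conjugation gives $(-i\infty,A_3]$.

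\emph{Step 4: the intersection.} Finally, $\sigma_1\cap\sigma_3$ consists of the endpoints together with $[A_1,A_0]\cap[A_3,A_2]=\{0\}$; the other pieces meet only at endpoints.

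I expect the main obstacle to be Steps 2 and 3, namely rigorously excluding non-straight symmetric arcs. As a fallback, I would check that $|\triangle_j|\le1$ fails just off the axis using the local behaviour at the endpoints, where $d_j=1$ (no cusps, since $\wp(p-\frac{\omega_k}{2})+\eta_1\ne0$ would need verification).
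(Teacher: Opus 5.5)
Your proposal is correct and follows the paper's route: the paper also uses $-\sigma_j(\frac14)=\sigma_j(\frac14)$ from Lemma \ref{lem4-s5-13} together with the conjugation symmetry of Lemma \ref{lemma53-4} to get symmetry of each $\sigma_j$ about both axes, and then reads the result off Lemma \ref{Lemma53-5}. Your additions fill in what the paper leaves as ``we obtain the desired assertions'': the endpoint matching $A_0\leftrightarrow A_1$, $A_2\leftrightarrow A_3$ via $a\mapsto\frac12-a$, and the Jordan-curve argument with Lemma \ref{lemma2-7}(2) forcing a reflection-invariant arc with fixed endpoints onto $i\mathbb{R}$. Two small tightenings are worthwhile: state explicitly that an involution preserving a simple arc and fixing both of its ends must fix the arc pointwise, and note that in case (2-2) the cleanest contradiction is that $A_2$ would become an interior point of the line $i\mathbb{R}\subset\sigma_1$, which is impossible for an endpoint of odd order.
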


\begin{proof}
Recall \eqref{eq7-16} that for $p=\frac14$, 
$$-\sigma_j=\sigma_j,\qquad j=1,3.$$
From here and Lemma \ref{lemma53-4}, we see that $\sigma_j$ is symmetric with respect to both the real axis and the imaginary axis for each $j=1,3$. Together with Lemma \ref{Lemma53-5}, we obtain the desired assertions.
\end{proof}

In Section \ref{sec-7}, we will improve Lemma \ref{Lemma53-5}  to a sharp version for general $p\in (0, \frac12)$.


\begin{lemma}\label{Lemma53-7} Let $\tau=\frac12+ib$ with $b>0$ and $p\in II\cup III=(\frac{1}{2},\frac{1}{4}+\frac{\tau}{2}]\cup [\frac14-\frac{\tau}{2}, 0)$. Then
\begin{itemize}
\item[(1)] $\sigma_1=(-i\infty, A_1]\cup [A_0, +i\infty)\cup\sigma_{1,A_2A_3}$, where $\sigma_{1,A_2A_3}$ is a simple curve connecting $A_2$ and $A_3=-\overline{A_2}$, and is symmetric with respect to the imaginary axis with $\sigma_{1,A_2A_3}\cap i\mathbb{R}$ containing a single point. 

\item[(2)] $[A_1, A_0]\subset\sigma_3$, and one of the following holds.
\begin{itemize}
\item[(2-1)] $\sigma_3=[A_1, A_0]\cup\sigma_{3,\infty A_2}\cup -\overline{\sigma_{3,\infty A_2}}$, where $\sigma_{3,\infty A_2}\subset\{z : \operatorname{Re}z>0\}$ is a semi-unbounded simple curve with the endpoint $A_2$ and tending to $-\frac{2p(2\eta_2-\eta_1)-(2\tau-1)\zeta(2p)}{2(2\tau-1)}+\infty$, while $-\overline{\sigma_{3,\infty A_2}}\subset\{z : \operatorname{Re}z<0\}$ is the reflection of $\sigma_{3,\infty A_2}$ with respect to the imaginary axis.
\item[(2-2)] $\sigma_3=[A_1, A_0]\cup\sigma_{3,A_2A_3}\cup\sigma_{3,\infty}$, where $\sigma_{3,\infty}$ is an unbounded simple curve tending to $-\frac{2p(2\eta_2-\eta_1)-(2\tau-1)\zeta(2p)}{2(2\tau-1)}\pm\infty$, which is symmetric with respect to the imaginary axis with $\sigma_{3,\infty}\cap i\mathbb{R}$ containing a single point, and $\sigma_{3,A_2A_3}$ is a simple curve connecting $A_2$ and $A_3$, and is symmetric with respect to the imaginary axis with $\sigma_{3,A_2A_3}\cap i\mathbb{R}$ containing a single point. It might happen that $\sigma_{3,\infty}\cap i\mathbb{R}=\sigma_{3,A_2A_3}\cap i\mathbb{R}$; see \eqref{eqq04-21} for example.
\end{itemize}
\end{itemize}

\end{lemma}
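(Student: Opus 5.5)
The plan mirrors the proof of Lemma \ref{Lemma53-5}, with the roles of $\sigma_1$ and $\sigma_3$ and of the real and imaginary axes interchanged. First I locate the asymptotic directions using Lemma \ref{lemma2-8}.

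Since $2\tau-1=2ib$, the unbounded arcs of $\sigma_3$ are parametrized by $A=\frac{t}{2b}-c_3+O(t^{-1})$ with $c_3:=\frac{2p(2\eta_2-\eta_1)-(2\tau-1)\zeta(2p)}{2(2\tau-1)}$. So they tend to $-c_3\pm\infty$ in the horizontal direction. The unbounded arcs of $\sigma_1$ are $A=it-c_1+O(t^{-1})$ with $c_1:=\frac{2p\eta_1-\zeta(2p)}{2}$, so they go vertically to $-c_1\pm i\infty$.

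For $p\in II\cup III$, Remark \ref{rmk-s5-2} gives $2p\eta_1-\zeta(2p)\in i\mathbb{R}$. For $p\in III$ this holds because $p\in i\mathbb R$, $\eta_1\in\mathbb R$ and $\zeta(2p)\in i\mathbb R$. Hence $c_1\in i\mathbb R$, and the two ends of $\sigma_1$ lie asymptotically on the imaginary axis. By the symmetry of $\sigma_1$ about $i\mathbb R$ (Lemma \ref{lemma53-4}(2)), these ends are in fact contained in $i\mathbb R$ for large $|A|$. The reason is that a curve $A=it-c_1+O(1/t)$ which is invariant under $A\mapsto -\bar A$, and is the unique arc of $\sigma_1$ near $+i\infty$, must be mapped to itself. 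Along it, $\triangle_1$ is real and the arc is a level set; the symmetric pair then coincides, so the arc lies on $i\mathbb R$ near infinity.

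Next I determine the structure of $\sigma_1$ on $i\mathbb R$ using the reflection identity (\ref{eqfc-s5-9}). For $A\in i\mathbb R$ we have $-\bar A=A$, so $\triangle_1(A)=\overline{\triangle_1(A)}$ is real on the imaginary axis. Therefore $i\mathbb R\cap\sigma_1=\{A\in i\mathbb R:|\triangle_1(A)|\le1\}$. The endpoints $A_0,A_1\in i\mathbb R$ with $\operatorname{Im}A_1<\operatorname{Im}A_0$ (Lemma \ref{lemma53-3}(2)) satisfy $\triangle_1(A_0)=1$ and $\triangle_1(A_1)=1$ by (\ref{eqfc-vjk1}). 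The claim $(-i\infty,A_1]\cup[A_0,+i\infty)\subset\sigma_1$ is then a level-set statement, obtained by combining the facts listed below with the count of arcs:
\begin{itemize}
\item[(a)] the endpoints are exactly $A_0,\dots,A_3$ (Lemma \ref{lemma2-7});
\item[(b)] every component of $\mathbb C\setminus\sigma_1$ is unbounded (Lemma \ref{lemma2-7});
\item[(c)] $\sigma_1$ has at most $5$ arcs and is cusp-free unless $\wp(p-\frac{\omega_k}2)+\eta_1=0$ (Corollary \ref{coro2-11});
\item[(d)] the symmetry about $i\mathbb R$;
\item[(e)] $A_2=-\overline{A_3}$ lies off the axis.
\end{itemize}

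The argument runs as follows. The arc of $\sigma_1$ coming from $+i\infty$ lies on $i\mathbb R$ and must end at an endpoint. It cannot branch off the axis symmetrically without creating a bounded complementary component or a cusp that violates the count. Since $A_2,A_3\notin i\mathbb R$, it can only terminate at $A_0$ or $A_1$; ordering along the axis gives $[A_0,+i\infty)$, and similarly $(-i\infty,A_1]$. The remaining endpoints $A_2,A_3$ must then be joined by an arc avoiding the other components, since $\sigma_1$ has exactly two unbounded arcs. By the reflection symmetry this arc maps to an arc joining $A_3$ to $A_2$, giving $\sigma_{1,A_2A_3}$. It crosses $i\mathbb R$ at exactly one point, since a second crossing would enclose a bounded component together with its mirror image.

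Part (2) is the same analysis for $\sigma_3$. Now $\triangle_3$ is real on $i\mathbb R$, with $\triangle_3(A_0)=\triangle_3(A_1)=1$ by (\ref{eqfc-vjk1}), and the unbounded ends are horizontal. The segment $[A_1,A_0]$ must lie in $\sigma_3$, since otherwise $A_0$ and $A_1$ would have to be joined through $A_2,A_3$ or to infinity in a non-symmetric way. Then $A_2$ either connects to one horizontal end, with its mirror image $A_3$ connected to the other end (case (2-1)); or $A_2$ is joined to $A_3$, the two ends form one curve $\sigma_{3,\infty}$, and each curve crosses $i\mathbb R$ once (case (2-2)).

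The main obstacle is to rule out arcs running along $i\mathbb R$ from $A_0$ to $A_1$ in $\sigma_1$ in part (1), or attaching $A_2$ to the axis, in the presence of possible cusps. I would handle this by noting that on $i\mathbb R$ the real function $\triangle_1$ has a simple zero of $\triangle_1^2-1$ at non-cusp endpoints. I would then use the monotonicity of the real parameter $s(A)$ along the axis to exclude the alternatives.
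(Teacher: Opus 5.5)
Your proposal follows the paper's route. The paper likewise takes the asymptotics of Lemma~\ref{lemma2-8} (with $2p\eta_1-\zeta(2p)\in i\mathbb{R}$ from Remark~\ref{rmk-s5-2}), the reflection symmetry of Lemma~\ref{lemma53-4}, and the endpoint and unbounded-component facts of Lemma~\ref{lemma2-7}, and runs the same topological argument as in Lemma~\ref{Lemma53-5}.

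The step you flag as the main obstacle is settled more simply than you suggest. $\triangle_j-1$ is real on $i\mathbb{R}$ and changes sign along the axis at the odd-order points $A_0,A_1$ (cusps included), and an analytic arc lying on $i\mathbb{R}$ stays on it until it reaches an endpoint. So the axis ray of $\sigma_1$ coming from $+i\infty$ stops exactly at $A_0$, and the axis arc of $\sigma_3$ at $A_0$ must run down to $A_1$ because the ends of $\sigma_3$ are horizontal. By contrast, monotonicity of $s(A)$ along the axis is not available in general, since branch points of $\sigma_1$ can lie on the axis rays (cf.\ $\hat A(b,p)$ in the proof of Lemma~\ref{lem04-9}).
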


\begin{proof}
For $p\in II\cup III$, it follows from Remark \ref{rmk-s5-2} that $2p\eta_1-\zeta(2p)\in i\mathbb{R}$.
Then Lemma \ref{lemma2-8} says that for $R>0$ large, $\sigma_1\setminus B_R$ consists of exactly two unbounded arcs that tend to $+i\infty$ and $-i\infty$ separately, and $\sigma_3\setminus B_R$ consists of exactly two unbounded arcs that tend to $-\frac{2p(2\eta_2-\eta_1)-(2\tau-1)\zeta(2p)}{2(2\tau-1)}+ \infty$ and $-\frac{2p(2\eta_2-\eta_1)-(2\tau-1)\zeta(2p)}{2(2\tau-1)}-\infty$ separately.
Consequently, this lemma can be proved similarly as Lemma \ref{Lemma53-5}.
\end{proof}

\subsection{The map $f(a)$}
For $a\in E_{\tau}\setminus E_{\tau}[2]$, we can write $a=r+s\tau$ with $r,s\in\mathbb{R}^2\setminus\frac12\mathbb{Z}^2$.
Recalling Hitchin's formula \eqref{513-1-0}, we define a map $f: E_{\tau}\setminus E_{\tau}[2]\to \mathbb{C}\cup\{\infty\}$ by
\begin{align}\label{513-1-0} 
f(a)=f(r+s\tau):=&\wp (a)+\frac{\wp ^{\prime }(a)}{%
2(\zeta(a)-r\eta_1-s\eta_2)}\\
=&\wp (a)+\frac{\wp ^{\prime }(a)}{%
2(\zeta(a)+(\frac{\pi}{\operatorname{Im}\tau}-\eta_1)a-\frac{\pi}{\operatorname{Im}\tau}\bar{a})}.\nonumber
\end{align}
Note that $f(a)$ is not a meromorphic function of $a$.
By defining $\wp(p):=f(a)$, we see from Remark \ref{rmk20-5} that \eqref{513-1-0} is equivalent to
\begin{equation}\label{61-37-22--1}
\zeta
(a+p)+\zeta(a-p)-2r\eta_1-2s\eta_2=0,
\end{equation}
so $\pm a$ is a pair of nontrivial critical points of $G_p(z)=G_{-p}(z)$.
Therefore, it is important to study the image of $f(a)$.

When $\wp(p)=f(a)\notin\{e_1, e_2, e_3,\infty\}$, i.e., $p\notin E_{\tau}[2]$, then $\wp(a-p)-\wp(a+p)\neq 0$, $\wp'(p)\neq 0$ and then it was computed in Part I \cite[Theorem 4.2]{CFL} that
\begin{equation}\label{deri-r}
 \frac{\partial f}{\partial r}(a)=\wp'(p)\frac{\wp(a+p)+\wp(a-p)+2\eta_1}{\wp(a-p)-\wp(a+p)}.
 \end{equation}

To study the image of $f(a)$ for $\wp(a)\in\mathbb{R}$ in Sections \ref{sec-4}-\ref{sec-6}, we need to establish the following lemmas.

\begin{lemma}\label{lemma-s5-13}
Let $\tau=\frac12+ib$ with $b>0$ and $p\in I\cup II\cup III$. Suppose $\pm a$ is a pair of nontrivial critical points of $G_p(z)$. Then $\pm \bar{a}$ is also a pair of nontrivial critical points of $G_p(z)$. Furthermore, $\{\pm a\}=\{\pm \bar a\}$ in $E_{\tau}$ if and only if $\wp(a)\in\mathbb{R}$, if and only if the corresponding $A\in\mathbb{R}$ for $p\in I$ (resp. $A\in i\mathbb{R}$ for $p\in II\cup III$).
\end{lemma}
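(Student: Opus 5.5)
Conjugating the critical point equation \eqref{61-37-22-2} and using the symmetry \eqref{barwp-5} should settle the first two claims; the claim about $A$ then follows from the one-to-one correspondence of Theorem \ref{rmk2-5}.

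\emph{Step 1: $\pm\bar a$ are critical points.} Write $a=r+s\tau$ with $(r,s)\in\mathbb{R}^2\setminus\frac12\mathbb{Z}^2$. Since $\bar\tau=1-\tau$, we have $\bar a=(r+s)-s\tau$, so $\bar a$ has real coordinates $(r',s')=(r+s,-s)\notin\frac12\mathbb{Z}^2$; in particular $\bar a\notin E_\tau[2]$. Conjugating \eqref{61-37-22-2} and using \eqref{barwp-5} gives $\zeta(\bar a+\bar p)+\zeta(\bar a-\bar p)-2(r\eta_1+s\bar\eta_2)=0$. Lemma \ref{Lemma53-1} gives $\eta_1\in\mathbb{R}$. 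The Legendre relation gives $\bar\eta_2=\bar\tau\eta_1+2\pi i=(1-\tau)\eta_1+2\pi i=\eta_1-\eta_2$. Hence $r\eta_1+s\bar\eta_2=(r+s)\eta_1-s\eta_2=r'\eta_1+s'\eta_2$. Next I use $\bar p\equiv\pm p$ mod $\Lambda_\tau$, which holds for $p\in I\cup II\cup III$. If $\bar p\equiv p$, the conjugated equation is exactly \eqref{61-37-22-2} for $\bar a$. If $\bar p\equiv -p$, I need care with representatives: for $p\in II$ we have $\bar p=1-p$, and $\zeta(\bar a+1-p)+\zeta(\bar a-1+p)=\zeta(\bar a-p)+\zeta(\bar a+p)$ because the shifts by $+\eta_1$ and $-\eta_1$ cancel. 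In either case the conjugated equation says that $\bar a$ is a critical point of $G_p$. Since $G_p$ is even, $-\bar a$ is one too.

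\emph{Step 2: $\{\pm a\}=\{\pm\bar a\}$ iff $\wp(a)\in\mathbb{R}$.} By \eqref{barwp-5}, $\wp(\bar a)=\overline{\wp(a)}$. Since $\wp$ is a double cover identifying exactly $\pm z$, we get $\{\pm a\}=\{\pm\bar a\}$ iff $\wp(a)=\wp(\bar a)$ iff $\wp(a)\in\mathbb{R}$. This is the same observation already made around \eqref{eqfc-s5-3}.

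\emph{Step 3: the condition on $A$.} By Theorem \ref{rmk2-5}, the pair $\pm a$ corresponds to $A=\frac12[\zeta(p+a)+\zeta(p-a)-\zeta(2p)]$, and this value is symmetric in $\pm a$. I would compute the $A$ belonging to $\pm\bar a$ by conjugating. For $p\in I$ (so $\bar p=p$), conjugation gives $\bar A=\frac12[\zeta(p+\bar a)+\zeta(p-\bar a)-\zeta(2p)]$, which is $A(\bar a)$. For $p\in II\cup III$, conjugation gives $\frac12[\zeta(-p+\bar a)+\zeta(-p-\bar a)-\zeta(-2p)]=-A(\bar a)$. Again this needs the quasi-periodicity check for $p\in II$: there $\zeta(2\bar p)=\zeta(2-2p)=2\eta_1-\zeta(2p)$, and this matches the $\eta_1$ shifts coming from $\zeta(\bar a\pm(1-p))$. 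So $A(\bar a)=\bar A$ for $p\in I$, and $A(\bar a)=-\bar A$ for $p\in II\cup III$. This is consistent with \eqref{eqfc-8} and \eqref{eqfc-s5-9}. The correspondence in Theorem \ref{rmk2-5} is injective, and by Lemma \ref{lemma2-3} $\pm a$ is uniquely determined by $A$. Therefore $\{\pm a\}=\{\pm\bar a\}$ iff $A=\bar A$ for $p\in I$, i.e. $A\in\mathbb{R}$, and iff $A=-\bar A$ for $p\in II\cup III$, i.e. $A\in i\mathbb{R}$.

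\emph{Main obstacle.} The main difficulty is bookkeeping rather than concept: keeping the representatives mod $\Lambda_\tau$ consistent for $p\in II$, where $\bar p=1-p$, and verifying the $\eta_1$ cancellations there. I expect no other obstacles.
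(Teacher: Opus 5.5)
Your proposal is correct and follows essentially the same route as the paper: you conjugate the critical-point equation using $\overline{\eta_2}=\eta_1-\eta_2$ and $\bar p=\pm p$ in $E_\tau$, read off $\wp(\bar a)=\overline{\wp(a)}$, and then compare $\overline{A}$ with the $A$ attached to $\pm\bar a$ via the one-to-one correspondence $A\leftrightarrow\pm a$. Your explicit check of the $\eta_1$ cancellations for $p\in II$ (where $\bar p=1-p$) fills in a detail that the paper leaves implicit in the phrase ``$\bar p=\pm p$ in $E_\tau$''.
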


\begin{proof}
Write $a=r+s\tau$ with $(r,s)\in\mathbb R^2\setminus\frac12\mathbb Z^2$. Then $\bar a=r+s-s\tau$. Note from the Legendre relation $\eta_2=\tau\eta_1-2\pi i$ that $\overline{\eta_2}=(1-\tau)\eta_1+2\pi i=\eta_1-\eta_2$. Recall that $a$ is a critical point of $G_p(z)$ if and only if \eqref{61-37-22--1} holds.
From here, $\overline{\zeta(z)}=\zeta(\bar z)$ and $\bar p=\pm p$ in $E_{\tau}$, we obtain
\begin{align*}
\zeta(\bar a+p)+\zeta(\bar a-p)-2(r+s)\eta_1+2s\eta_2=0,
\end{align*}
so $\bar a=r+s-s\tau$ is a nontrivial critical point of $G_p(z)$. Since $\overline{\wp(a)}=\wp(\bar a)$, we have $\wp(a)\in\mathbb R$ if and only if $\bar a=\pm a$ in $E_{\tau}$, i.e., $\{\pm a\}=\{\pm \bar a\}$ in $E_{\tau}$.
Recall from \eqref{Aap} that
$$A=\frac{1}{2}\left[  \zeta(p+a)+\zeta(p-a)-\zeta(2p)\right].$$

{\bf Case 1.} $p\in I=(0,\frac12)$.

Then $\bar p=p$, so
$$\overline A=\frac{1}{2}\left[  \zeta(p+\bar a)+\zeta(p-\bar a)-\zeta(2p)\right].$$
By the one-to-one correspondence of $A\leftrightarrow \pm a$, we see that $\{\pm a\}=\{\pm \bar a\}$ if and only if $A=\overline A$, i.e. $A\in\mathbb R$.

{\bf Case 2.} $p\in II\cup III=(\frac{1}{2},\frac{1}{4}+\frac{\tau}{2}]\cup [\frac14-\frac{\tau}{2}, 0)$.

Then $\bar p=-p$ in $E_{\tau}$, so
$$\overline A=-\frac{1}{2}\left[  \zeta(p+\bar a)+\zeta(p-\bar a)-\zeta(2p)\right].$$
Consequently, $\{\pm a\}=\{\pm \bar a\}$ if and only if $A=-\overline A$, i.e. $A\in i\mathbb R$.
\end{proof}

\begin{lemma}\label{lemma-aup}
Let $\tau=\frac12+ib$ with $b>0$. Given any $a\in I\cup II\cup III$, there is a unique $p\in I\cup II\cup III\cup \{0,\frac12\}$ such that $\pm a$ is a pair of nontrivial critical points of $G_p(z)$. 
\end{lemma}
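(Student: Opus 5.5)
Given $a\in I\cup II\cup III$, we write $a=r+s\tau$ with $(r,s)\in\mathbb{R}^2\setminus\frac12\mathbb{Z}^2$. The plan is to show that $f(a)$, defined in \eqref{513-1-0}, is a well-defined real number or $\infty$, and then to take $p$ to be the point of the admissible set where $\wp$ takes that value.

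\textbf{Step 1: $f(a)$ is real or $\infty$.} Since $a\notin E_\tau[2]$, we have $\wp'(a)\neq 0$, and the denominator $\zeta(a)-r\eta_1-s\eta_2=-4\pi\,\partial_z G(a)$ by \eqref{G_z}. If this denominator vanishes, $a$ is a critical point of $G$ itself, and we set $f(a)=\infty$. To see that $f(a)\in\mathbb{R}\cup\{\infty\}$, we use $\overline{\wp(\bar z)}=\wp(z)$ and $\overline{\zeta(\bar z)}=\zeta(z)$ from \eqref{barwp-5}, together with $\overline{\eta_2}=\eta_1-\eta_2$ from the proof of Lemma \ref{lemma-s5-13}. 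Since $\bar a=(r+s)-s\tau$, these give
\[
\overline{f(a)}=f(\bar a).
\]
For $a\in I\cup II\cup III$ we have $\bar a=\pm a$ in $E_\tau$. The function $f$ is even and well defined on $E_\tau$, because changing the representative $(r,s)$ by $\mathbb{Z}^2$ leaves the denominator unchanged, again by the quasi-periodicity of $\zeta$. Hence $f(\bar a)=f(a)$, so $f(a)\in\mathbb{R}\cup\{\infty\}$.

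\textbf{Step 2: choosing $p$.} By Lemma \ref{Lemma53-1}, $\wp$ is a bijection from the closed path $(0,\frac14-\frac\tau2]\cup[\frac14+\frac\tau2,\frac12]\cup[\frac12,0)$ onto $\mathbb{R}$, and $\wp(0)=\infty$. Up to sign, this path is exactly $I\cup II\cup III\cup\{\frac12\}$, using $\frac14+\frac\tau2=\frac14-\frac\tau2$ in $E_\tau$. Hence for each value $c\in\mathbb{R}\cup\{\infty\}$ there is a unique $p\in I\cup II\cup III\cup\{0,\frac12\}$, modulo $\pm$, with $\wp(p)=c$. We take $c=f(a)$.

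\textbf{Step 3: $\pm a$ are nontrivial critical points of $G_p$.}
\begin{itemize}
\item If $p\notin E_\tau[2]$, Remark \ref{rmk20-5} shows that $\wp(p)=f(a)$ is equivalent to \eqref{61-37-22--1}, so $\pm a$ is a pair of nontrivial critical points of $G_p$.
\item If $p\in\{0,\frac12\}$, then $G_p(z)=G(z-p)$. The identity \eqref{eqfc-add} still converts \eqref{00a+1p} into the condition $\wp(p)=f(a)$. Concretely, $p=0$ corresponds to $f(a)=\infty$, which is exactly $\partial_zG(a)=0$. For $p=\frac12$, $\zeta(a+\frac12)+\zeta(a-\frac12)=2\zeta(a-\frac12)+\eta_1$, and the addition formula with $w=\frac12$ gives the condition $f(a)=e_1$.
\item The degenerate cases where $\wp(a\pm p)$ becomes infinite, i.e.\ $a=\pm p$, must be excluded. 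Here $f(a)=\wp(a)$ would force $\wp'(a)=0$, which is false; so $a\neq\pm p$.
\end{itemize}

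\textbf{Step 4: uniqueness.} If $\pm a$ is a pair of nontrivial critical points of both $G_p$ and $G_{p'}$, the derivation in Step 3 run backwards forces $\wp(p)=f(a)=\wp(p')$. Hence $p'=\pm p$, and the normalization of $p$ in the admissible set picks one representative. This is the content of \eqref{eq: data0}, extended to $p\in\{0,\frac12\}$.

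\textbf{Main obstacle.} The delicate points are the boundary cases: the value $f(a)=\infty$, and the requirement that $p$ lie in $E_\tau[2]$ be handled via the direct critical-point equation rather than via Hitchin's formula. One must also check that the three identifications of $\pm p$ in $E_\tau$ really account for every real value of $\wp$.
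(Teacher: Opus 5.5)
Your proof is correct and is essentially the paper's argument: both rest on the conjugation symmetry $\overline{\zeta(\bar z)}=\zeta(z)$, $\overline{\eta_2}=\eta_1-\eta_2$ together with $\bar a=\pm a$ in $E_\tau$. Both also get uniqueness from the fact that the critical-point equation at $a$ forces $\wp(p)=f(a)$.

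The only difference is packaging. The paper shows that $\pm\bar a$ are critical points of $G_{\bar p}$ and then invokes uniqueness of $\pm p$ to conclude $\bar p=\pm p$. You instead read off $\overline{f(a)}=f(\bar a)=f(a)$ directly from the evenness and $\Lambda_\tau$-invariance of $f$. You also check the cases $p\in\{0,\frac12\}$ explicitly, which the paper passes over.
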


\begin{proof}
Write $a=r+s\tau$ with $(r,s)\in\mathbb R^2\setminus\frac12\mathbb Z^2$. Recall that $\pm a$ is a pair of nontrivial critical points of $G_p(z)=G_{-p}(z)$ by letting $\wp(p)=f(a)$, this proves the existence of $p$. The uniqueness of $\pm p$ follows from \eqref{eq: data0}. Thus, we only need to prove $\wp(p)\in\mathbb{R}\cup\{\infty\}$. 

By $\bar a=r+s-s\tau$ and $\overline{\eta_2}=(1-\tau)\eta_1+2\pi i=\eta_1-\eta_2$, we see from \eqref{61-37-22--1} that
\begin{align*}
\zeta(\bar a+\bar p)+\zeta(\bar a-\bar p)-2(r+s)\eta_1+2s\eta_2=0,
\end{align*}
so $\pm \bar a$ is a pair of nontrivial critical points of $G_{\bar p}(z)$. However, $a\in I\cup II\cup III$ implies that $\pm\bar a=\pm a$  is also a pair of nontrivial critical points of $G_p(z)$. Then it follows the uniqueness of $\pm p$ that $p=\pm \bar p$ in $E_{\tau}$, which implies $\wp(p)\in\mathbb{R}\cup\{\infty\}$, so we may assume $p\in I\cup II\cup III\cup \{0,\frac12\}$.
\end{proof}

\subsection{Known results for the Green function $G(z)$}
For later usage, we recall some results concerning the Green function $G(z)$.

\begin{theorem}\cite[Theorem 1.6]{LW}\label{thm-LW-16}
Let $\tau=\frac12+ib$ with $b>0$. Then
\begin{itemize}
\item[(1)] There are $b_0\approx0.35473<\frac12<b_1=\frac{1}{4b_0}\approx 0.70476<\frac{\sqrt{3}}{2}$ such that $\frac{\omega_1}{2}=\frac12$ is a degenerate critical point of $G(z;\tau)$ if and only if $b=b_0$ or $b=b_1$. Moreover, $\frac12$ is a non-degenerate minimum point of $G(z;\tau)$ if $b_0<b<b_1$ and it is a non-degenerate saddle point if $b<b_0$ or $b>b_1$.
\item[(2)] Both $\frac{\omega_2}{2}$ and $\frac{\omega_3}{2}$ are non-degenerate saddle points of $G(z;\tau)$.
\item[(3)] $G(z;\tau)$ has a unique pair of nontrivial critical points $\pm z_0(\tau)$ when $b<b_0$ or $b>b_1$. They are non-degenerate minimum points of $G(z;\tau)$ and
$$\begin{cases}\operatorname{Re} z_0(\tau)=\frac12,\quad 0<\operatorname{Im} z_0(\tau)<\frac{b}{2},\\
-\eta_1<\wp(z_0(\tau))<\frac{2\pi}{b}-\eta_1,\end{cases}\quad\text{for }b>b_1,$$
namely $z_0(\tau)\in II^{\circ}=(\frac12,\frac{1}{4}+\frac{\tau}{2})$ and $\frac12-z_0\in III^{\circ}=(\frac14-\frac{\tau}{2}, 0)$ for $b>b_1$.
\end{itemize}
\end{theorem}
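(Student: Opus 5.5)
The plan is to reduce everything to the Hessian of $G$ at the half periods, computed from \eqref{G_z}, and then to count critical points with the Lin--Wang bound (Theorem \ref{thm-0LW}) together with a Poincar\'e--Hopf index argument. Write $z=x_1+ix_2$. From $-4\pi G_z=\zeta(z)+\big(\frac{\pi}{\operatorname{Im}\tau}-\eta_1\big)z-\frac{\pi}{\operatorname{Im}\tau}\bar z$ we get
$$-4\pi G_{zz}=\frac{\pi}{b}-\eta_1-\wp(z),\qquad -4\pi G_{z\bar z}=-\frac{\pi}{b}.$$
Since $\det D^2G=16\big(|G_{z\bar z}|^2-|G_{zz}|^2\big)$, at $\frac{\omega_k}{2}$ this gives
$$\det D^2G\Big(\frac{\omega_k}{2}\Big)=\frac{1}{\pi^2}\Big[\Big(\frac{\pi}{b}\Big)^2-\Big|\frac{\pi}{b}-\eta_1-e_k\Big|^2\Big].$$
This is the $p=0$ case of Theorem \ref{main-thm-01}, i.e.\ the criterion $|\alpha_k|=\beta_k$. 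For $k=1$, Lemma \ref{Lemma53-1} says $e_1+\eta_1\in\mathbb R$. Hence $\frac12$ is degenerate if and only if $e_1+\eta_1\in\{0,\frac{2\pi}{b}\}$, and it is a non-degenerate minimum exactly when $0<e_1+\eta_1<\frac{2\pi}{b}$. (A minimum rather than a maximum because $\Delta G=\frac1{|E_\tau|}>0$.)

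For part (1) I would study the real function $h(b):=e_1+\eta_1$ of $b$ on $\tau=\frac12+ib$. The key is the modular substitution $\tau\mapsto\tilde\tau=\frac{\tau-1}{2\tau-1}$ from the Remark in the introduction, which maps $b\mapsto\frac1{4b}$. Tracking how $e_1$ and $\eta_1$ transform under it (via the Legendre relation), the condition $h=\frac{2\pi}{b}$ at $b$ should become $h=0$ at $\frac1{4b}$, which gives $b_1=\frac1{4b_0}$. It then remains to show that $h(b)=0$ has exactly one root $b_0$ in $(0,\frac12)$ and none in $[\frac12,\infty)$. I would do this through the $q$-expansions of $e_1$ and $\eta_1$ in $q=e^{2\pi i\tau}=-e^{-2\pi b}$ (so $q$ is real and negative), proving monotonicity of $h$ where needed. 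The numerical values come from evaluating these series. \textbf{This monotonicity/uniqueness step is the main obstacle.} If a direct sign computation fails, I would fall back on Lin--Wang's original argument, which uses the degree count below: the count forces the sign pattern of $\det D^2G(\frac12)$ to change only where a pair of nontrivial critical points bifurcates from $\frac12$.

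For part (2), since $e_2=\overline{e_3}$, we need
$$\Big|\frac{\pi}{b}-\eta_1-e_2\Big|>\frac{\pi}{b}\quad\text{for all } b.$$
I would argue by continuity. At $b=\frac{\sqrt3}{2}$ the values are explicit ($e_2=-e^{\pi i/3}e_1$, and $\eta_1$ is known for the hexagonal lattice), and the inequality can be checked there. If equality ever occurred, $\frac{\omega_2}{2}$ would be a degenerate critical point, which would force a bifurcation of nontrivial critical points off the axis of symmetry. By the conjugation symmetry $z\mapsto\bar z$ (\eqref{barwp-5}), such critical points would come in two distinct pairs, contradicting Theorem \ref{thm-0LW}. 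Making this bifurcation claim rigorous is the delicate point; alternatively, one can compare directly with the degree count.

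For part (3), I would apply Poincar\'e--Hopf to $\nabla G$ on $E_\tau$. Near $0$ we have $G\to+\infty$, so the singularity contributes index $+1$, and the total index is $\chi(E_\tau)=0$. The saddles $\frac{\omega_2}{2},\frac{\omega_3}{2}$ contribute $-2$. When $b<b_0$ or $b>b_1$, $\frac12$ is a saddle and contributes $-1$, so the nontrivial critical points must contribute $+2$. By Theorem \ref{thm-0LW} there is at most one pair, so there is exactly one pair $\pm z_0$, each of index $+1$; this, combined with Lin--Wang's result that nontrivial critical points are minima and non-degenerate, gives non-degenerate minima.

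For the localization when $b>b_1$, note that $z\mapsto1-\bar z$ preserves the lattice $\Lambda_\tau$ and hence $G$. So $1-\bar z_0$ is again a nontrivial critical point, and by uniqueness $\{\pm z_0\}=\{\pm(1-\bar z_0)\}$. Thus $\wp(z_0)\in\mathbb R$ and $z_0\in I\cup II\cup III$ up to sign. Ruling out $I$ and choosing the representative in $II^\circ$ uses continuity in $b$ from the bifurcation at $b_1$ out of $\frac12$, along the direction in which $\det D^2G(\frac12)$ degenerates (the $x_2$-direction when $h=\frac{2\pi}{b}$), together with the fact that the pair never collides with $\frac12$ or with the saddle $\frac{1}{4}+\frac{\tau}{2}$ for $b>b_1$. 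Finally, the bounds $-\eta_1<\wp(z_0)<\frac{2\pi}{b}-\eta_1$ should follow from $\det D^2G(z_0)>0$ evaluated along the axis $\operatorname{Re}z=\frac12$, where $\wp$ is real and strictly monotone by Lemma \ref{Lemma53-1}.
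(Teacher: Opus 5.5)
The paper does not prove this theorem. It is quoted from Lin--Wang, and Remark \ref{rmk5-9} records what their proof rests on: the Hessian formula at $\frac12$, the monotonicity $\frac{d}{db}(e_1+\eta_1)>0$, and the inequalities \eqref{eqfc-s5-11}--\eqref{eqfc-s5-12}.

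Several pieces of your plan are correct:
\begin{itemize}
\item The Hessian computation. Since $\det D^2G=4(|G_{z\bar z}|^2-|G_{zz}|^2)$, the prefactor is $\frac1{4\pi^2}$, not $\frac1{\pi^2}$; this is harmless for signs.
\item The modular step. With $\tilde\tau=\frac{\tau-1}{2\tau-1}$ one checks $e_1(\tilde\tau)=-4b^2e_1$ and $\eta_1(\tilde\tau)=-4b^2\eta_1+8\pi b$. Hence $h(\frac1{4b})=-4b^2(h(b)-\frac{2\pi}b)$, which gives $b_1=\frac1{4b_0}$ and $h(\frac12)=2\pi\in(0,4\pi)$.
\item The Poincar\'e--Hopf count.
\item The deduction $-\eta_1<\wp(z_0)<\frac{2\pi}b-\eta_1$ from $\det D^2G(z_0)>0$ and $\wp(z_0)\in\mathbb R$.
\end{itemize}
However, the two genuinely analytic inputs are missing. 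In (1), uniqueness of the zero of $h$ needs the monotonicity of $h$, which you do not supply. Your fallback cannot replace it. Poincar\'e--Hopf together with Theorem \ref{thm-0LW} only links, at each fixed $b$, the number of critical points to the sign of $h(h-\frac{2\pi}b)$. It says nothing about how often $h$ vanishes as $b$ varies.

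In (2), the continuity/bifurcation argument only excludes a \emph{sign change} of $F(b):=|\frac\pi b-\eta_1-e_2|^2-\frac{\pi^2}{b^2}$. A sign change does force a pair near $\frac\tau2$ and, by conjugation, another near $\frac{1+\tau}2$. But $F$ could have a zero of even order: a degenerate saddle at an isolated $b^*$, locally like $x^2-(b-b^*)^2y^2-y^4$. Such a zero creates no new critical points, keeps index $-1$, and is compatible with everything you use. So non-degeneracy for every $b$ is not obtained.

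The missing input is \eqref{eqfc-s5-11}--\eqref{eqfc-s5-12}. They give $[-\eta_1,\frac{2\pi}b-\eta_1]\subset(e_1-|e_1-e_2|,\,e_1+|e_1-e_2|)$. Hence $\overline{\mathcal B_0}$ lies in the open disk with center $e_1$ and radius $|e_1-e_2|$, on whose boundary $e_2$ sits. This yields $|\frac\pi b-\eta_1-e_2|>\frac\pi b$ for all $b$.

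The same inequality is what (3) lacks. The point $\frac14+\frac\tau2$ is not a critical point of $G$. Among $\pm z,\pm\bar z$ it is fixed only by $z\mapsto-\bar z$, which forces $\nabla G$ to be tangent to $\operatorname{Re}z=\frac12$ but not to vanish. So nothing stops $z_0$ from sliding along $(\frac12,\tau)=II\cup(III+\tau)$ past it. Your final bound already excludes $I$ directly, since $\frac{2\pi}b-\eta_1<e_1$ for $b>b_1$; no bifurcation argument is needed there. But $\operatorname{Im}z_0<\frac b2$ follows only from $\wp(z_0)>-\eta_1>\wp(\frac14+\frac\tau2)$, and the second inequality is exactly \eqref{eqfc-s5-11}.
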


\begin{remark}\label{rmk5-9}
Let $\tau=\frac12+ib$ with $b>0$. It was proved in \cite{LW} that
$$\det D^2G\Big(\frac{1}{2}\Big)=-\frac1{4\pi^2}(e_1+\eta_1)\left(e_1+\eta_1-\frac{2\pi}{b}\right),$$
and $\frac{d}{db}(e_1+\eta_1)>0$. Thus
\begin{equation}\label{eqfc-s5-10}e_1+\eta_1=0\;\text{at }b=b_0,\quad e_1+\eta_1=\frac{2\pi}{b}\;\text{at }b=b_1.\end{equation}
Besides, in the proof of Theorem \ref{thm-LW-16}-(2), Lin and Wang proved that (see \cite[(6.25), (6.26)]{LW})
\begin{equation}\label{eqfc-s5-11}
-\eta_1>\wp\Big(\frac{\omega_2+\omega_3}{4}\Big)=\wp\Big(\frac14+\frac{\tau}{2}\Big)=e_1-|e_1-e_2|,
\end{equation}
\begin{equation}\label{eqfc-s5-12}
\frac{2\pi}{b}-\eta_1<\wp\Big(\frac14\Big)=e_1+|e_1-e_2|.
\end{equation}
\end{remark}

\begin{theorem}\cite[Theorem 3.6]{CFL}\label{thm-5c}
Fix any $\tau$ such that $G(z)=G(z;\tau)$ has $5$ critical points. Then there exists $\varepsilon>0$ small such that if $0<|p-\frac{\omega_k}{2}|<\varepsilon$ for some $k\in \{0,1,2,3\}$, $G_p(z;\tau)$ has exactly $6$ critical points, which are all non-degenerate. Furthermore, the unique pair of nontrivial critical points are non-degenerate minimal points of $G_p(z)$.
\end{theorem}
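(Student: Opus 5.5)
Since $G_p=G_{-p}$ and $G$ is $\Lambda_\tau$-periodic, I would first reduce to $k=0$. Write $p=\frac{\omega_k}{2}+q$ with $q$ small. Then $G(z+p)=G(z-\frac{\omega_k}{2}+q)$ modulo $\Lambda_\tau$, so after the translation $w=z-\frac{\omega_k}{2}$ we get $G_p(z)=G_q(w)$, where $G_q(w)=\frac12\big(G(w-q)+G(w+q)\big)$. This translation maps critical points to critical points, preserves their type, and preserves non-degeneracy. It therefore suffices to show that for $0<|q|<\varepsilon$, $G_q$ has exactly six critical points: the four half periods and one pair $\pm a_q$, all non-degenerate, with $\pm a_q$ minima. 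By hypothesis $G$ has five critical points. These are the three half periods $\frac{\omega_j}{2}$, which I take to be non-degenerate saddles (a degenerate half period occurs only at the transition values where the count changes, e.g. $b_0,b_1$ in Theorem \ref{thm-LW-16}), and a pair $\pm z_0$ of non-degenerate minima (Lin--Wang together with \cite{CFL}).

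\textbf{Step 1 (far region).} Fix $\delta>0$ small, so that the five critical points of $G$ lie outside $B_{2\delta}(0)$. On $E_\tau\setminus B_\delta(0)$, $G_q\to G$ in $C^2$ as $q\to0$, since $G$ is smooth there. By the implicit function theorem, each non-degenerate critical point of $G$ has, for small $|q|$, exactly one nearby critical point of $G_q$, of the same Morse type. By the $C^1$ convergence, $G_q$ has no other critical points in this region. Since $G_q$ is even, the half periods stay critical, so the perturbed points near $\frac{\omega_j}{2}$ are exactly $\frac{\omega_j}{2}$. The pair $\pm z_0$ persists as a unique nontrivial pair $\pm a_q$ of non-degenerate minima.

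\textbf{Step 2 (near region).} On $B_\delta(0)$ I would use \eqref{G_z}: $-8\pi\,\partial_w G_q=\zeta(w+q)+\zeta(w-q)+2(aw+b\bar w)$. Since $\zeta(u)=u^{-1}+O(u^3)$, this equals
\[
\frac{2w}{w^2-q^2}+O(|w|).
\]
\begin{itemize}
\item In the annulus $2|q|\le|w|\le\delta$, the leading term has modulus $\ge c/|w|$, which dominates $O(|w|)$ once $\delta$ is small. Hence there are no critical points there.
\item For $|w|\le 2|q|$, rescale $w=qx$. The equation becomes $\frac{2x}{x^2-1}+O(|q|^2)=0$ on $|x|\le2$. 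Near $x=\pm1$ the left side blows up, and the leading term vanishes only at $x=0$, where its derivative is $-2\neq0$. So $w=0$ is the only critical point in $B_{2|q|}$.
\item For non-degeneracy at $0$: the Hessian of $G_q$ at $0$ equals that of $-\frac1{4\pi}\log|w^2-q^2|$, which is traceless with entries of order $|q|^{-2}$, plus an $O(1)$ correction. Equivalently, $\partial_w^2G_q(0)\sim\frac{1}{4\pi q^2}$ dominates $\partial_{w\bar w}G_q(0)=O(1)$. Thus $\det D^2G_q(0)<0$, and $0$ is a non-degenerate saddle.
\end{itemize}

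Combining the two steps gives exactly $4+2=6$ critical points, all non-degenerate, with $\pm a_q$ non-degenerate minima. The main obstacle is uniformity in the transition zone $|w|\sim|q|$ to $\delta$. The explicit $\zeta$-expansion above is what I would rely on, making sure the $O(|w|)$ error is uniform in $q$. This holds because $\zeta(w\pm q)-\frac1{w\pm q}$ is holomorphic and odd near $0$.
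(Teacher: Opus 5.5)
The paper does not prove this statement. It quotes it from Part I \cite[Theorem 3.6]{CFL}, so there is no in-paper argument to compare yours with. Your proof is correct and self-contained.
\begin{itemize}
\item The reduction $G_p(z)=G_q(z-\frac{\omega_k}{2})$ with $q=p-\frac{\omega_k}{2}$ is right.
\item The far-region step is a standard $C^2$-perturbation of the five non-degenerate critical points of $G$. Evenness of $G_q$ pins the perturbed trivial ones exactly at $\frac{\omega_j}{2}$.
\item The near-region expansion $-8\pi\partial_wG_q=\frac{2w}{w^2-q^2}+O(|w|)$ is uniform in $q$. This is because $\tilde\zeta(w+q)+\tilde\zeta(w-q)$, with $\tilde\zeta(u):=\zeta(u)-u^{-1}$, is holomorphic and odd in $w$. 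This excludes critical points in $2|q|\le|w|\le\delta$ and in $B_{2|q|}(0)\setminus\{0\}$.
\item At the origin, $\partial_w^2G_q(0)=\frac{1}{4\pi}\bigl(\wp(q)-\frac{\pi}{\operatorname{Im}\tau}+\eta_1\bigr)$ dominates $\partial_{w\bar w}G_q(0)=\frac{1}{4\operatorname{Im}\tau}$. So $0$ is a non-degenerate saddle, consistent with Theorem \ref{main-thm-01}, since $\wp(q)\to\infty$ leaves the bounded disk $\overline{\mathcal{B}_0}$.
\end{itemize}

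Two points should be tightened.

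First, you `take' the three half periods to be non-degenerate saddles of $G$. This is true when $G$ has five critical points, but it is an input that must be cited. For the rhombus torus it is Theorem \ref{thm-LW-16}; in general it comes from the Lin--Wang theory of $G$ \cite{LW} as used in \cite{CFL}. Your parenthetical remark is a heuristic, not a proof. An index count cannot settle it, because a degenerate saddle at a half period has the same index $-1$ as a non-degenerate one.

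Second, after rescaling $w=qx$ the remainder is $q\bigl[\tilde\zeta(qx+q)+\tilde\zeta(qx-q)\bigr]+2q(aqx+b\bar q\bar x)$ in your notation. This is not holomorphic in $x$. So to get a unique zero near $x=0$ you need the remainder to be $O(|q|^2)$ in $C^1$, not merely in $C^0$. It is, and then the real Jacobian $4$ of $x\mapsto\frac{2x}{x^2-1}$ at $0$ gives exactly one zero there, which evenness identifies with $x=0$.

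Compare this with the tools used in this series. The disk criterion of Theorem \ref{main-thm-01} already gives non-degeneracy of the four trivial critical points of $G_p$ for $p$ near $\frac{\omega_k}{2}$. For $j\neq k$, $\wp(p-\frac{\omega_j}{2})$ tends to some $e_l$, which lies off $\partial\mathcal{B}_0$ exactly because $\frac{\omega_l}{2}$ is non-degenerate for $G$. But that criterion says nothing about nontrivial critical points that might appear near the colliding singularities $\pm q$. Your rescaling step is precisely what rules these out. Your route therefore gets the exact count of six by elementary local analysis, with no global degree or monodromy input.
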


\section{The case $\tau=\frac12+ib$ with $b>b_1$.}

\label{sec-4}

In this section, we study the case $\tau=\frac12+ib$ with $b>b_1$ and prove Theorems \ref{III-thm3}-\ref{III-thm4}. 
First, recall from Remark \ref{rmk5-9} that
\begin{align}\label{eqfc4-1}
e_1+\eta_1>\frac{2\pi}{b}, \quad\forall b>b_1.
\end{align}
Recall $\mathcal{B}_k, \alpha_k, \beta_k$ defined in \eqref{B00}-\eqref{alphak1}.
Since Theorem \ref{thm-LW-16} says that $\frac{\omega_k}{2}$ are non-degenerate saddle points of $G(z)$ for all $k=1,2,3$, it follows from Part I \cite[Section 3]{CFL} that
\begin{align}\label{alphak0-5}
|\alpha_k|=\left|\frac{\frac{\pi}{b}-(\eta_1+e_k)}{3e_k^2-\frac{g_2}{4}}\right|>\beta_k=\frac{\frac{\pi}{b}}{|3e_k^2-\frac{g_2}{4}|}>0,\quad k=1,2,3,
\end{align}
\begin{equation}\label{alphak1-5}
\mathcal{B}_k=\bigg\{z\in\mathbb{C}\; :\; \bigg|z-e_k-\frac{\overline{\alpha_k}}{|\alpha_k|^2-\beta_k^2}\bigg|<\frac{\beta_k}{|\alpha_k|^2-\beta_k^2}\bigg\},\quad k=1,2,3.
\end{equation}
By \eqref{eqfc-s5-2}, we see that the center of $\mathcal{B}_1$ is on the real axis, while $\mathcal{B}_2$ is the refection of $\mathcal{B}_3$ with respect to the real axis.
Also recall that
\begin{equation}\label{B00-5}
\mathcal{B}_0=\Big\{z\in\mathbb{C}\; :\; \Big|z-\Big(\frac{\pi}{b}-\eta_1\Big)\Big|<\frac{\pi}{b}\Big\}.
\end{equation}
The figures of these four disks for $\tau=\frac12+i\frac{\sqrt{3}}{2}$ are given in Figure \ref{tO5-1}.

\begin{figure}
\includegraphics[width=1.6in]{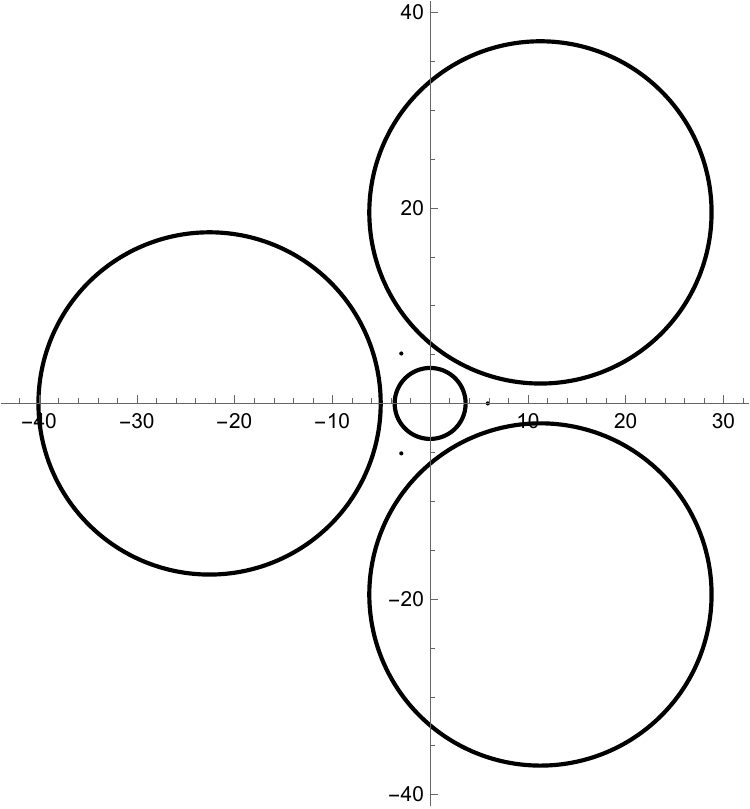}\captionof{figure}{The four disks and $\{e_1,e_2,e_3\}$ for $\tau=\frac12+i\frac{\sqrt{3}}{2}$: the smallest circle for $\partial\mathcal{B}_0$, left for $\partial\mathcal{B}_1$, upper right for $\partial\mathcal{B}_2$ and lower right for $\partial\mathcal{B}_3$.}\label{tO5-1}
\end{figure}

\begin{lemma}\label{lemma-s5-9} Let $\tau=\frac12+ib$ with $b>b_1$.
Let $d_{1,1}<d_{1,2}$ be the two intersection points of $\partial\mathcal{B}_1$ with $\mathbb{R}$, and $d_{0,1}:=-\eta_1<d_{0,2}:=\frac{2\pi}{b}-\eta_1$ be the two intersection points of $\partial\mathcal{B}_0$ with $\mathbb{R}$. Then $d_{1,2}<\wp(\frac{1}4+\frac{\tau}{2})<d_{0,1}$, so $\partial\mathcal{B}_0\cap \partial\mathcal{B}_1=\emptyset$ and
\begin{itemize}
\item[(1)] there are $p_{1,1}, p_{1,2}\in III^{\circ}$ such that $p_{1,1}\in (p_{1,2}, 0)$ and $\wp(p_{1,1})=d_{1,1}$, $\wp(p_{1,2})=d_{1,2}$.
\item[(2)] there are $p_{0,1}=\frac12-p_{1,2}, p_{0,2}=\frac12-p_{1,1}\in II^{\circ}$ such that $p_{0,2}\in (p_{0,1}, \frac12)$ and $\wp(p_{0,1})=d_{0,1}$, $\wp(p_{0,2})=d_{0,2}$. 
\end{itemize} 
Consequently, $z_0(\tau)\in (p_{0,2}, p_{0,1})$ and $\frac12-z_0(\tau)\in (p_{1,2}, p_{1,1})$, where $z_0(\tau)$ is introduced in Theorem \ref{thm-LW-16}. The relative locations of these points are seen in the following future.
\end{lemma}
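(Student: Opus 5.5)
The plan is to make both circles $\partial\mathcal{B}_1$ and $\partial\mathcal{B}_0$ explicit, compare their real intersection points with $\wp(\frac14+\frac{\tau}{2})$ using the Lin--Wang inequalities \eqref{eqfc-s5-11}--\eqref{eqfc-s5-12}, and then read off the locations of the preimages from the monotonicity in Lemma \ref{Lemma53-1}.

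\textbf{Step 1: explicit form of $\mathcal{B}_1$.} By \eqref{eqfc-s5-2} and $e_1+e_2+e_3=0$,
\[
D:=3e_1^2-\frac{g_2}{4}=(e_1-e_2)(e_1-e_3)=|e_1-e_2|^2>0 .
\]
Set $u:=e_1+\eta_1-\frac{\pi}{b}$. By \eqref{eqfc4-1} we have $u>\frac{\pi}{b}>0$. Then
\[
\alpha_1=-\frac{u}{D},\qquad \beta_1=\frac{\pi/b}{D},\qquad |\alpha_1|^2-\beta_1^2=\frac{(e_1+\eta_1)(e_1+\eta_1-\frac{2\pi}{b})}{D^2}>0 .
\]
Plugging these into \eqref{alphak1-5} gives the real intersection points of $\partial\mathcal{B}_1$:
\[
d_{1,1}=e_1-\frac{D}{e_1+\eta_1-\frac{2\pi}{b}},\qquad d_{1,2}=e_1-\frac{D}{e_1+\eta_1} .
\]
These agree with the formulas stated in the Remark after Theorem \ref{III-thm4}. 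For $\mathcal{B}_0$, \eqref{B00-5} gives directly $d_{0,1}=-\eta_1$ and $d_{0,2}=\frac{2\pi}{b}-\eta_1$.

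\textbf{Step 2: the key inequalities.} The bound $\wp(\frac14+\frac{\tau}{2})<d_{0,1}$ is exactly \eqref{eqfc-s5-11}. For the other bound, \eqref{eqfc-s5-11} says $e_1-|e_1-e_2|<-\eta_1$, that is,
\[
0<e_1+\eta_1<|e_1-e_2|=\sqrt D .
\]
Hence $\frac{D}{e_1+\eta_1}>\sqrt D$, and therefore $d_{1,2}<e_1-\sqrt D=\wp(\frac14+\frac{\tau}{2})$. Since both disks are symmetric about $\mathbb{R}$ and their real diameters $[d_{1,1},d_{1,2}]$ and $[d_{0,1},d_{0,2}]$ are disjoint, the closed disks are disjoint, so $\partial\mathcal{B}_0\cap\partial\mathcal{B}_1=\emptyset$. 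Moreover $d_{0,2}<e_1$ by \eqref{eqfc4-1}.

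\textbf{Step 3: locating the preimages.} By Lemma \ref{Lemma53-1}, $\wp$ is increasing along the path that starts at $0$, runs down $III$ to $\frac14-\frac{\tau}{2}\equiv\frac14+\frac{\tau}{2}$, and then runs down $II$ to $\frac12$. On $III$ the values sweep $(-\infty,\wp(\frac14+\frac{\tau}{2})]$, and on $II$ they sweep $[\wp(\frac14+\frac{\tau}{2}),e_1)$.
\begin{itemize}
\item Since $d_{1,1}<d_{1,2}<\wp(\frac14+\frac{\tau}{2})$, the points $d_{1,1},d_{1,2}$ are attained at $p_{1,1},p_{1,2}\in III^\circ$, with $p_{1,1}$ closer to $0$.
\item Since $\wp(\frac14+\frac{\tau}{2})<d_{0,1}<d_{0,2}<e_1$, the points $d_{0,1},d_{0,2}$ are attained at $p_{0,1},p_{0,2}\in II^\circ$, with $p_{0,2}$ closer to $\frac12$.
\end{itemize}

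\textbf{Step 4: the relations $p_{0,j}=\frac12-p_{1,\cdot}$.} Use the half-period addition formula
\[
\wp\Big(p-\frac12\Big)=e_1+\frac{D}{\wp(p)-e_1}.
\]
Then $\wp(p_{1,2})=d_{1,2}$ gives $\wp(\frac12-p_{1,2})=e_1-(e_1+\eta_1)=-\eta_1=d_{0,1}$. Similarly, $\wp(p_{1,1})=d_{1,1}$ gives $\wp(\frac12-p_{1,1})=\frac{2\pi}{b}-\eta_1=d_{0,2}$. Since $\frac12-III^\circ=II^\circ$, injectivity of $\wp$ there yields $p_{0,1}=\frac12-p_{1,2}$ and $p_{0,2}=\frac12-p_{1,1}$.

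\textbf{Step 5: locating $z_0(\tau)$.} Theorem \ref{thm-LW-16}(3) gives $z_0\in II^\circ$ with $-\eta_1<\wp(z_0)<\frac{2\pi}{b}-\eta_1$. Monotonicity on $II$ then places $z_0$ strictly between $p_{0,2}$ and $p_{0,1}$. Reflecting by $z\mapsto\frac12-z$ places $\frac12-z_0$ between $p_{1,2}$ and $p_{1,1}$.

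The only genuinely nontrivial point is recognizing that $d_{1,2}<\wp(\frac14+\frac{\tau}{2})$ reduces to $e_1+\eta_1<|e_1-e_2|$, which is precisely \eqref{eqfc-s5-11} in disguise. The rest is bookkeeping of signs, where $D>0$ and $e_1+\eta_1>\frac{2\pi}{b}$ are both essential.
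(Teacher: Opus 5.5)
Your proposal is correct and follows essentially the same route as the paper. Both arguments compute the real intersection points of $\partial\mathcal{B}_1$ explicitly, reduce $d_{1,2}<\wp(\frac14+\frac{\tau}{2})$ to $e_1+\eta_1<|e_1-e_2|$ via \eqref{eqfc-s5-11}, locate the points using the monotonicity of Lemma \ref{Lemma53-1}, identify $p_{0,j}$ through the half-period addition formula, and place $z_0(\tau)$ using Theorem \ref{thm-LW-16}(3). You are only slightly more explicit than the paper, in writing $3e_1^2-\frac{g_2}{4}=(e_1-e_2)(e_1-e_3)$ and in justifying why the two disks are disjoint.
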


\begin{figure}[htbp]
\centering
\begin{tikzpicture}[ scale=0.7,
    dot/.style={circle,fill=black,inner sep=1pt}
]
\draw[thick, red] (0,-2)--(0,0) -- (2,0) -- (2,2);
\draw[thick, black] (3,2)--(-1,2)--(-3,-2)--(1,-2)--cycle;

\node[dot] at (2, 1.5) (p01) {};
\node[dot] at (2, 1) (z0) {};
\node[dot] at (2, 0.5) (p02) {};
\node[dot] at (0, -0.5) (p11) {};
\node[dot] at (0, -1) (mid) {};
\node[dot] at (0, -1.5) (p12) {};

\node[above] at (1,0) {I};
\node[left] at (2,1.2) {II};
\node[right] at (0,-1.2) {III};
\node[left] at (0,0) {\scriptsize$0$};
\node[right] at (2,0) {\scriptsize$\frac12$};
\node[above] at (2,2) {\scriptsize$\frac14+\frac{\tau}2$};
\node[right] at (p01) {\scriptsize$p_{0,1}$};
\node[right] at (z0) {\scriptsize$z_0$};
\node[left] at (p02) {\scriptsize$p_{0,2}$};
\node[left] at (p11) {\scriptsize$p_{1,1}=\frac12-p_{0,2}$};
\node[left] at (mid) {\scriptsize$\frac12 - z_0$};
\node[left] at (p12) {\scriptsize$p_{1,2}=\frac12-p_{0,1}$};
\node[below] at (0,-2) {\scriptsize$\frac14 - \frac\tau2$};
\end{tikzpicture}
\captionof{figure}{Relative locations of those points in Lemma \ref{lemma-s5-9}}
\label{figure4-1}
\end{figure}

%
%
%
%
%
%

\begin{proof} 
Since $e_1+e_2+e_3=0$ and $e_3=\overline{e_2}$, so $e_2=-\frac12 e_1+i\operatorname{Im}e_2$, from which we have $|e_1-e_2|^2=\frac94e_1^2+(\operatorname{Im}e_2)^2=2e_1^2+|e_2|^2$.
Consequently, it follows from $g_2=4e_1^2-4|e_2|^2$ that $$3e_1^2-\frac{g_2}{4}=2e_1^2+|e_2|^2=|e_1-e_2|^2>0,$$and then \eqref{alphak0} implies
$$\alpha_1=\frac{\frac{\pi}{b}-(\eta_1+e_1)}{3e_1^2-\frac{g_2}{4}}\in\mathbb R,\qquad \beta_1=\frac{\frac{\pi}{b}}{3e_1^2-\frac{g_2}{4}}.$$
It follows from \eqref{alphak1-5} that $$d_{1,1}=e_1+\frac{\alpha_1-\beta_1}{\alpha_1^2-\beta_1^2}=e_1+\frac{1}{\alpha_1+\beta_1}=e_1+\frac{3e_1^2-\frac{g_2}{4}}{\frac{2\pi}{b}-(e_1+\eta_1)}\notin\{e_1, e_2, e_3\},$$
and
\begin{align}\label{eqfc-d12}d_{1,2}&=e_1+\frac{\alpha_1+\beta_1}{\alpha_1^2-\beta_1^2}=e_1+\frac{1}{\alpha_1-\beta_1}=e_1-\frac{3e_1^2-\frac{g_2}{4}}{e_1+\eta_1}\\
&=e_1-\frac{|e_1-e_2|^2}{e_1+\eta_1}\notin\{e_1, e_2, e_3\}.\nonumber\end{align}
Recalling \eqref{eqfc-s5-10}-\eqref{eqfc-s5-11} that $|e_1-e_2|>e_1+\eta_1>0$, we obtain
$$d_{1,2}=e_1-\frac{|e_1-e_2|^2}{e_1+\eta_1}<e_1-|e_1-e_2|=\wp\Big(\frac14+\frac{\tau}{2}\Big)<-\eta_1=d_{0,1}.$$
This proves $d_{1,1}<d_{1,2}<\wp(\frac14+\frac{\tau}{2})=\wp(\frac14-\frac{\tau}{2})$, so the assertion (1) follows from Lemma \ref{Lemma53-1}.
Besides, together with \eqref{eqfc-s5-10}, we have $$\wp\Big(\frac14+\frac{\tau}{2}\Big)<d_{0,1}=-\eta_1<d_{0,2}=\frac{2\pi}{b}-\eta_1<e_1,$$ so it follows from Lemma \ref{Lemma53-1} that $p_{0,1}, p_{0,2}\in II^\circ$. 

Note that $p\in II$ if and only if $\frac12-p\in III$.
Recalling \eqref{eqfc-d12} that $d_{1,2}=\wp(p_{1,2})=e_1-\frac{3e_1^2-\frac{g_2}{4}}{e_1+\eta_1}$, it follows from the additional formula of elliptic functions that
\begin{align}\label{eqfc-s5-31}\wp\Big(p_{1,2}-\frac12\Big)&=e_1+\frac{\wp''(\frac12)}{2(\wp(p_{1,2})-e_1)}\\
&=e_1+\frac{3e_1^2-\frac{g_2}{4}}{\wp(p_{1,2})-e_1}=-\eta_1=\wp(p_{0,1}),\nonumber\end{align}
so $p_{0,1}=\frac12-p_{1,2}$.
Similarly, by $d_{1,1}=\wp(p_{1,1})=e_1+\frac{3e_1^2-\frac{g_2}{4}}{\frac{2\pi}{b}-(e_1+\eta_1)}$, we have
\begin{align}\label{eqfc-s5-22}\wp\Big(p_{1,1}-\frac12\Big)&=e_1+\frac{\wp''(\frac12)}{2(\wp(p_{1,1})-e_1)}\\
&=e_1+\frac{3e_1^2-\frac{g_2}{4}}{\wp(p_{1,1})-e_1}=\frac{2\pi}{b}-\eta_1=\wp(p_{0,2}),\nonumber\end{align}
so $p_{0,2}=\frac12-p_{1,1}$, which proves the assertion (2).

Finally, recall from Theorem \ref{thm-LW-16} that $\wp(p_{0,1})<\wp(z_0(\tau))<\wp(p_{0,2})$, we get $z_0(\tau)\in (p_{0,2}, p_{0,1})$ and then $\frac12-z_0(\tau)\in (p_{1,2}, p_{1,1})$.
\end{proof}

\begin{corollary}\label{coro-s5-10}
Let $\tau=\frac12+ib$ with $b>b_1$ and $p\in I\cup II\cup III$. Then for $j=1,3$, $\sigma_j$ has at most one cusp, which must be one of $\{A_0, A_1\}$ if exists. Furthermore,
\begin{itemize}
\item[(1)]  $A_0$ is a cusp of $\sigma_3$ if and only if $\wp(p)=\wp(p_{0,2})=d_{0,2}=\frac{2\pi}{b}-\eta_1$.
\item[(2)]  $A_0$ is a cusp of $\sigma_1$ if and only if $\wp(p)=\wp(p_{0,1})=d_{0,1}=-\eta_1$.
\item[(3)]  $A_1$ is a cusp of $\sigma_1$ if and only if $\wp(p)=\wp(p_{1,2})=d_{1,2}$, or equivalently $\wp(p-\frac12)=-\eta_1$.
\item[(4)]  $A_1$ is a cusp of $\sigma_3$ if and only if $\wp(p)=\wp(p_{1,1})=d_{1,1}$, or equivalently $\wp(p-\frac12)=\frac{2\pi}{b}-\eta_1$.
\end{itemize}
Therefore, if $\wp(p)\notin\{\wp(p_{j,k}), j=0,1, k=1,2\}$, then $\sigma_j$ has no cusps for $j=1,3$.
\end{corollary}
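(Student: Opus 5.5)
The plan is to use Lemma \ref{lemma2-10} directly. By Lemma \ref{lemma2-10}, $A_k$ is a cusp of $\sigma_1$ if and only if $\wp(p-\frac{\omega_k}{2})=-\eta_1$. It is a cusp of $\sigma_3$ if and only if $\wp(p-\frac{\omega_k}{2})+\eta_1=\frac{4\pi i}{2\tau-1}$. Since $2\tau-1=2ib$, the second condition becomes $\wp(p-\frac{\omega_k}{2})=\frac{2\pi}{b}-\eta_1$. Both target values $-\eta_1$ and $\frac{2\pi}{b}-\eta_1$ are real, and Lemma \ref{lemma-s5-9} gives $\wp(\frac14+\frac{\tau}{2})<-\eta_1<\frac{2\pi}{b}-\eta_1<e_1$. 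So every question reduces to locating the points $p-\frac{\omega_k}{2}$ at which $\wp$ takes a prescribed real value in this interval.

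\textbf{Step 1: exclude $A_2,A_3$.} Take $p\in I\cup II\cup III$, so $\wp(p)\in\mathbb R$. For $k=2,3$ the addition formula gives
$$\wp\Big(p-\frac{\omega_k}{2}\Big)=e_k+\frac{(e_k-e_l)(e_k-e_m)}{\wp(p)-e_k},\qquad \{k,l,m\}=\{1,2,3\}.$$
If this value were real, then $p-\frac{\omega_k}{2}$ would satisfy $\pm\bar z=z$. Since $\bar p=\pm p$ and $\overline{\omega_2/2}=\frac{1-\tau}{2}\equiv\frac{\omega_3}{2}$ modulo half-periods, conjugation maps $p-\frac{\omega_2}{2}$ to $\pm p-\frac{\omega_3}{2}$. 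Reality would then force $\wp(p-\frac{\omega_2}{2})=\wp(p-\frac{\omega_3}{2})$. That means $p-\frac{\omega_2}{2}=\pm(p-\frac{\omega_3}{2})$, so either $\frac{\omega_1}{2}\in\Lambda_\tau$, which is false, or $2p\in E_\tau[2]$, i.e.\ $p$ is a quarter period. The quarter-period case needs a check: $p$ would have to be one of $\frac14$, $\frac14+\frac{\tau}{2}$ or their relatives in $I\cup II\cup III$. For those I would verify directly that $\wp(p-\frac{\omega_2}{2})$ is a non-real value, using $e_2\notin\mathbb R$, or else that its real value differs from $-\eta_1$ and $\frac{2\pi}{b}-\eta_1$.

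A cleaner route, which I would try first, is to note that $p-\frac{\omega_2}{2}$ lies on none of the segments in \eqref{eqfc-s5-3} once $p$ is on $I\cup II\cup III$. Those segments are exactly the preimage of $\mathbb R$, so this settles the matter. This handling of the quarter periods is the main obstacle I expect.

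\textbf{Step 2: $A_0$ and $A_1$.} For $k=0$, Lemma \ref{Lemma53-1} makes $\wp$ bijective from the union of segments onto $\mathbb R$. Hence $\wp(p)=-\eta_1$ exactly when $p=p_{0,1}$, and $\wp(p)=\frac{2\pi}{b}-\eta_1$ exactly when $p=p_{0,2}$; this gives (1) and (2). For $k=1$, the condition is $\wp(p-\frac12)=-\eta_1$, respectively $\frac{2\pi}{b}-\eta_1$. That is, $p-\frac12\equiv\pm p_{0,1}$, respectively $\pm p_{0,2}$, so $p=\frac12-p_{0,1}=p_{1,2}$, respectively $p_{1,1}$, by Lemma \ref{lemma-s5-9}(2). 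Equivalently, by \eqref{eqfc-s5-31} and \eqref{eqfc-s5-22}, $\wp(p)=d_{1,2}$, respectively $d_{1,1}$. This gives (3) and (4).

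\textbf{Step 3: at most one cusp.} The four values $d_{1,1}<d_{1,2}<d_{0,1}<d_{0,2}$ are distinct by Lemma \ref{lemma-s5-9}. So for a given $p$ at most one of the four conditions can hold, and $\sigma_1$ and $\sigma_3$ each have at most one cusp. If $\wp(p)$ avoids all four values, neither $\sigma_1$ nor $\sigma_3$ has a cusp, which is the final statement.
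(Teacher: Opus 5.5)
Your Steps 2 and 3 are exactly the paper's argument: Lemma \ref{lemma2-10} with $\frac{4\pi i}{2\tau-1}=\frac{2\pi}{b}$, the half-period translation \eqref{eqfc-s5-31}--\eqref{eqfc-s5-22} for $A_1$, and the distinctness of the four values $d_{j,k}$. You differ in how you exclude $A_2,A_3$.

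The paper reads this off from the graph descriptions in Lemmas \ref{Lemma53-5}--\ref{Lemma53-7}. There $A_2$ and $A_3=\overline{A_2}$ (resp.\ $-\overline{A_2}$) are each the endpoint of a single arc. This ultimately rests on the reflection symmetry of Lemma \ref{lemma53-4}, which swaps $A_2\leftrightarrow A_3$, together with the arc counting in Corollaries \ref{coro2-11}--\ref{coro2-11-3}. You instead test the algebraic cusp criterion directly. Your reduction is correct: if $\wp(p-\frac{\omega_2}{2})$ is real, then $\wp(p-\frac{\omega_2}{2})=\wp(p-\frac{\omega_3}{2})$, hence $2p\equiv\frac12$. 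Inside $I\cup II\cup III$ this leaves only $p=\frac14$ and $p=\frac14+\frac{\tau}{2}\,(\equiv\frac14-\frac{\tau}{2})$.

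However, Step 1 is left unfinished, and two of the options you offer for these quarter periods fail.

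\begin{itemize}
\item The ``cleaner route'' is false. For $p=\frac14\in I$ one has $p-\frac{\omega_2}{2}=\frac14-\frac{\tau}{2}$, which is the endpoint of $III$ in \eqref{eqfc-s5-3}. For $p=\frac14+\frac{\tau}{2}$ one has $p-\frac{\omega_2}{2}=\frac14\in I$.
\item For the same reason the value is real at these points, so ``non-real via $e_2\notin\mathbb{R}$'' cannot work.
\end{itemize}

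What does work is your second alternative. For $p=\frac14$, both $\wp(p-\frac{\omega_2}{2})$ and $\wp(p-\frac{\omega_3}{2})$ equal $\wp(\frac14+\frac{\tau}{2})$. For $p=\frac14+\frac{\tau}{2}$, both equal $\wp(\frac14)$. The chain
$\wp(\frac14+\frac{\tau}{2})<-\eta_1<\frac{2\pi}{b}-\eta_1<e_1<\wp(\frac14)$
(Lemma \ref{lemma-s5-9} and \eqref{eqfc-s5-11}, plus Lemma \ref{Lemma53-1} or \eqref{eqfc-s5-12}) shows that neither value equals $-\eta_1$ or $\frac{2\pi}{b}-\eta_1$. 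With this check inserted, your proof is complete.

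Compared with the paper, your route is self-contained and does not depend on the global graph of $\sigma_j$. It also gives a little more: $\wp(p-\frac{\omega_k}{2})\notin\mathbb{R}$ for $k=2,3$ away from the quarter periods. The paper's route is a one-line citation once Lemmas \ref{Lemma53-5}--\ref{Lemma53-7} are granted.
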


\begin{proof}
By Lemmas \ref{Lemma53-5}-\ref{Lemma53-7}, we see that none of $\{A_2, A_3\}$ can be cusps of $\sigma_j$, so if $\sigma_j$ has cusps, then they must be $A_0$ or $A_1$.
Consequently, the assertions (1)-(4) follow directly from Lemma \ref{lemma2-10}, $\frac{4\pi i}{2\tau-1}=\frac{2\pi}{b}$ and \eqref{eqfc-s5-31}-\eqref{eqfc-s5-22}. The proof is complete.
\end{proof}

Now we begin to study the image of the map $f(a)$ for $a\in I\cup II\cup III$ by using the conditional stability sets.

\begin{lemma}\label{lemma-aup-1}
Let $\tau=\frac12+ib$ with $b>b_1$. Recall Lemma \ref{lemma-aup} that for any $a\in I\cup II\cup III$, there is a unique $p\in I\cup II\cup III\cup \{0,\frac12\}$ such that  $\wp(p)=f(a)$ and $\pm a$ is a pair of nontrivial critical points of $G_p(z)$, where $f(a)$ is defined in \eqref{513-1-0}.
Then $f: I\to f(I)$ is a differmorphism and
\begin{equation}\label{dddd}f(I)=(\wp(p_{1,2}), \wp(p_{0,1})).\end{equation}
\end{lemma}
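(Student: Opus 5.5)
The plan has three parts: compute the limits of $f$ at the two ends of $I$, show that $f$ is strictly monotone on $I$, and conclude by the intermediate value theorem.

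\textbf{Setup and boundary limits.} For $a\in I=(0,\frac12)$ we have $a=r$ with $s=0$. Since $\wp,\wp',\zeta,\eta_1$ are real there (by \eqref{barwp-5} and Lemma \ref{Lemma53-1}), $f$ is a real‑valued real‑analytic function of $r\in(0,\frac12)$, namely
$$f(r)=\wp(r)+\frac{\wp'(r)}{2(\zeta(r)-r\eta_1)}.$$
The denominator does not vanish on $I$: $\zeta(r)-r\eta_1$ is odd about $\frac12$ and has derivative $-(\wp(r)+\eta_1)$; I will check its sign using $\wp(r)>e_1$ and \eqref{eqfc4-1}.

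I will compute the two endpoint limits by Laurent and Taylor expansion.
\begin{itemize}
\item[(a)] As $r\to0$: $\wp=r^{-2}+O(r^2)$, $\wp'=-2r^{-3}+O(r)$ and $\zeta(r)-r\eta_1=r^{-1}-\eta_1 r+O(r^3)$. These give $f(r)\to-\eta_1=d_{0,1}=\wp(p_{0,1})$.
\item[(b)] As $r\to\frac12$: $\wp'(r)\approx\wp''(\frac12)(r-\frac12)$ and $\zeta(r)-r\eta_1\approx-(e_1+\eta_1)(r-\frac12)$, using $\zeta(\frac12)=\eta_1/2$. Since $\wp''(\frac12)=2(3e_1^2-\frac{g_2}4)$, this gives $f(r)\to e_1-\frac{3e_1^2-g_2/4}{e_1+\eta_1}=d_{1,2}=\wp(p_{1,2})$ by \eqref{eqfc-d12}.
\end{itemize}
Lemma \ref{lemma-s5-9} gives $d_{1,2}<d_{0,1}$. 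So once $f$ is shown to be strictly monotone on $I$, the intermediate value theorem yields $f(I)=(\wp(p_{1,2}),\wp(p_{0,1}))$, and $f$ is a diffeomorphism onto this interval provided $f'\neq0$ throughout.

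\textbf{Monotonicity.} This is the main step and the expected obstacle. I would argue through the conditional stability sets. Let $\wp(p)=f(a)$ with $a\in I$. Because $f(a)\in(d_{1,2},d_{0,1})\subset(-\infty,e_1)$ (granted a priori boundedness), Lemma \ref{lemma-aup} gives $p\in II\cup III$.
\begin{itemize}
\item[(i)] By Lemma \ref{lemma-s5-13}, the corresponding $A$ lies in $i\mathbb R$. Since $s(A)=0$, we have $\triangle_1(A)=1$ with $Q(A)\neq0$.
\item[(ii)] Lemma \ref{Lemma53-7}(1) describes $\sigma_1\cap i\mathbb R$ as $(-i\infty,A_1]\cup[A_0,+i\infty)$ together with one point of $\sigma_{1,A_2A_3}$. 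Interior points where $\triangle_1=\pm1$ must be points of even order, i.e. branch points of $\sigma_1$.
\item[(iii)] I would use this rigidity, together with \eqref{eq: data0} (distinct $\pm p$ cannot share nontrivial critical points) and the fact that $p\mapsto\wp(p)$ is monotone on $II\cup III$, to show that each admissible $p$ has at most one $a\in I$. This gives injectivity of $f$ on $I$; injectivity plus continuity gives strict monotonicity.
\end{itemize}

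\textbf{Nonvanishing of $f'$.} To upgrade strict monotonicity to $f'(r)\neq0$, I would apply \eqref{deri-r}. A zero of $\partial f/\partial r$ would mean that $a$ is a degenerate critical point of $G_p$ in the sense of Part I. I would then show that such degeneracy forces $\sigma_1$ to have a cusp or a higher-order point at $A$. Corollary \ref{coro-s5-10} and Corollary \ref{coro2-11} exclude this, because by Lemma \ref{Lemma53-7} the only possible cusps are $A_0,A_1$, and these correspond to the endpoint values $d_{0,1},d_{1,2}$, which are not attained on the open interval.

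If this conditional-stability argument proves too delicate, the fallback is a direct one-variable computation. Differentiate $f(r)$ explicitly and use $\wp(r)>e_1$, $\wp'(r)<0$ and \eqref{eqfc4-1} (equivalently $e_1+\eta_1>\frac{2\pi}{b}>0$) to fix the sign of $f'(r)$ on $(0,\frac12)$.
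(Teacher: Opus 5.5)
Your endpoint limits and the sign argument for $\zeta(r)-r\eta_1$ match the paper, and so does the overall plan of combining the endpoint limits with $f'\neq0$. But both nontrivial steps have genuine gaps.

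\textbf{First gap: the reduction to $p\in II\cup III$ is circular.} You derive it from $f(a)\in(d_{1,2},d_{0,1})$, which is the conclusion of the lemma. Boundedness of $f$ on $I$ cannot give $f<e_1$: both end limits lie below $e_1$, yet a continuous $f$ could still rise above $e_1$ in between. The missing input is $e_1\notin f(I)$. By Theorem \ref{thm-LW-16}, $G_{\frac12}(z)=G(z-\frac12)$ has the unique nontrivial pair $\pm(\frac12-z_0(\tau))$ with $\frac12-z_0(\tau)\in III^\circ$, so no $a\in I$ satisfies $f(a)=e_1$. Since $f(I)$ is connected and contains values near $d_{0,1}=-\eta_1<e_1$, this gives $f(I)\subset(-\infty,e_1)$, hence $p\in II\cup III$.

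\textbf{Second gap: the non-vanishing of $f'$ is misdiagnosed.} By \eqref{deri-r} and Lemma \ref{lemma2-10-3}(1), $f'(r)=0$ exactly when $A(r)$ is a \emph{branch point} of $\sigma_1$, not a cusp. Cusps are endpoints, i.e. among $A_0,\dots,A_3$, and are excluded anyway since $Q(A(r))\neq0$. So Corollaries \ref{coro-s5-10} and \ref{coro2-11} give nothing here. What is actually needed is the following.
\begin{itemize}
\item[(a)] $A(r)=A_0+\frac{\wp'(p)}{2(\wp(p)-\wp(r))}\in(A_1,A_0)$. This follows directly from $\wp(p)<e_1<\wp(r)$ and $\wp'(p)\in i\mathbb{R}$, or by continuity from $r=\frac14$ as the paper does.
\item[(b)] By Lemma \ref{Lemma53-7}(1), the only point of $\sigma_1$ on $(A_1,A_0)$ is the unique crossing of the simple curve $\sigma_{1,A_2A_3}$ with $i\mathbb{R}$, and that point is not a branch point.
\end{itemize}

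\textbf{Your step (ii) is false as stated.} Since $s=0$ on $I$, $\triangle_1(A(r))=1$ for every $r$. If every interior zero of $\triangle_1^2-1$ were a branch point, \eqref{deri-r} would force $f'\equiv0$. In fact a zero of order exactly $2$ is a simple point of a single arc.

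\textbf{Remaining remarks.} Once $f'\neq0$ is proved, your injectivity step (iii) is unnecessary. The fallback computation is not a proof as it stands. With $Z=\zeta(r)-r\eta_1>0$ one has $f'=\wp'+\frac{\wp''}{2Z}+\frac{\wp'(\wp+\eta_1)}{2Z^2}$, and these terms have opposite signs, so no sign is evident.
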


\begin{proof}
Let $a=r\in I=(0,\frac12)$.
Then $\wp(r)>e_1, \wp'(r)<0$. Since \eqref{eqfc4-1} implies
$$\frac{d}{dr}(\zeta(r)-r\eta_1)=-\wp(r)-\eta_1<-(e_1+\eta_1)<0,\quad\forall r\in \Big(0,\frac12\Big),$$ and $\zeta(\frac12)-\frac12\eta_1=0$, we obtain $\zeta(r)-r\eta_1>0$ for any $r\in (0,\frac12)$. Inserting this into \eqref{513-1-0} leads to $$\wp(p)=f(a)=\wp(r)+\frac{2\wp'(r)}{2(\zeta(r)-r\eta_1)}<\wp(r)=\wp(a),$$
so $p\neq 0$, i.e., $p\in I\cup II\cup III\cup\{\frac12\}$ and $\infty\notin f(I)$. 

When $r>0$ is sufficiently small,
$$\wp(r)=\frac{1}{r^2}+O(r^2), \quad \wp'(r)=\frac{-2}{r^3}+O(r),$$
$$\zeta(r)-r\eta_1=\frac1r-r\eta_1+O(r^3),$$
so 
\begin{equation}\label{eqq-d01}\wp(p)=f(a)=\wp(r)+\frac{\wp'(r)}{2(\zeta(r)-r\eta_1)}=-\eta_1+O(r^2)=\wp(p_{0,1})+o(1).\end{equation}
Simiarly, when $\frac12-r>0$ is sufficiently small, 
$$\wp(r)=e_1+O\left(\Big(\frac12-r\Big)^2\right), \quad \wp'(r)=\wp''\Big(\frac12\Big)\Big(r-\frac12\Big)+O\left(\Big(\frac12-r\Big)^3\right),$$
$$\zeta(r)-r\eta_1=-(e_1+\eta_1)\Big(r-\frac12\Big)+O\left(\Big(\frac12-r\Big)^3\right),$$
so
\begin{align}\label{eqfc-s5-98}\wp(p)&=f(a)=\wp(r)+\frac{\wp'(r)}{2(\zeta(r)-r\eta_1)}=e_1-\frac{\wp''(\frac12)}{2(e_1+\eta_1)}+o(1)\nonumber\\
&=e_1-\frac{3e_1^2-\frac{g_2}{4}}{e_1+\eta_1}+o(1)=\wp(p_{1,2})+o(1).\end{align}

On the other hand, the same proof as \cite[Lemma 5.5-(3)]{CFL-II} implies that $G_{\frac14+\frac{\tau}{2}}(z)$ has a unique pair of nontrivial critical points $\pm\frac14$, so \begin{equation}\label{eq4-11}f\Big(\frac14\Big)=\wp\Big(\frac14+\frac{\tau}{2}\Big)\in (\wp(p_{1,2}), \wp(p_{0,1})).\end{equation} Since $f: I\to f(I)$ is continuous, so $f(I)\subset\mathbb R$ is connected, which together with \eqref{eqq-d01}-\eqref{eq4-11} implies
$$(\wp(p_{1,2}), \wp(p_{0,1}))\subset f(I).$$

Recall Theorem \ref{thm-LW-16} that $G_{\frac12}(z)=G(z-\frac12)$ has a unique pair of nontrivial critical points $\pm (\frac12-z_0(\tau))$ with $\frac12-z_0(\tau)\in III^\circ$, so  $e_1=\wp(\frac12)\notin f(I)$ and then
$$(\wp(p_{1,2}), \wp(p_{0,1}))\subset f(I)\subset (-\infty, e_1),$$
i.e., $p\in II\cup III$.
Together with $a=r\in I=(0, \frac12)$, we see from \eqref{deri-r} that $f: I\to f(I)\subset \mathbb{R}$ satisfies
$$f'(r)=\wp'(p)\frac{\wp(r+p)+\wp(r-p)+2\eta_1}{\wp(r-p)-\wp(r+p)}\in\mathbb{R}.$$
We want to show $f'(r)\neq 0$ for any $r\in (0,\frac12)$.

Since $a=r$ is a nontrivial critical point of $G_p(z)$, it follows from Lemma \ref{lemma-s5-13} that the corresponding
\begin{align}\label{eqq-ar}A(r)&=\frac{1}{2}\left[  \zeta(p+r)+\zeta(p-r)-\zeta(2p)\right]\\
&=A_0+\frac{\wp'(p)}{2(\wp(p)-\wp(r))}\in\sigma_1\cap i\mathbb{R}\setminus\{A_k\}_{k=0}^3,\nonumber\end{align}
where we have used the additional formula \eqref{eqfc-add} to obtain the second equality.
Since $f(\frac14)=\wp(\frac14+\frac{\tau}{2})\in (\wp(p_{1,2}), \wp(p_{0,1}))$, we have $\wp(\frac14+\frac{\tau}{2})<e_1<\wp(\frac14)$, and then we see from Remark \ref{rmk-s5-2} and \eqref{Ak} that
$$A\Big(\frac14\Big)=A_0+\frac{\wp'(\frac14+\frac{\tau}{2})}{2(\wp(\frac14+\frac{\tau}{2})-\wp(\frac14))}\in (A_1, A_0).$$
By the continuity of $A(r)$ and \eqref{eqq-ar}, we obtain $A(r)\in (A_1, A_0)$ for all $r\in (0,\frac12)$. Consequently, we see from Lemma \ref{Lemma53-7} and Lemma \ref{lemma2-10-3} that $A(r)$ is not a branch point of $\sigma_1$, so $\wp(r+p)+\wp(r-p)+2\eta_1\neq0$ for all $r\in (0,\frac12)$, which implies $f'(r)\neq 0$ for all $r\in (0, \frac12)$. Thus, $f: I\to f(I)$ is differomorphism and we can conclude from \eqref{eqq-d01} and \eqref{eqfc-s5-98} that $f(I)=(\wp(p_{1,2}), \wp(p_{0,1}))$.
\end{proof}

\begin{lemma}\label{lem04-4}
Let $\tau=\frac12+ib$ with $b>0$ and define
\begin{equation}\label{betab}
\beta=\beta(b):=12\Big(\eta_1\Big(\frac12+ib\Big)-\frac{2\pi}{b}\Big)^2-g_2\Big(\frac12+ib\Big).
\end{equation}
Then $\beta=0$ if and only if
\begin{equation}\label{eqq0-4}
12\big(2\eta_2-\eta_1\big)^2-(2\tau-1)^2g_2=0.
\end{equation}
Furthermore, there is $b_2\in (\sqrt{3}/2, 6/5)$ such that
\begin{equation}\label{eqq-413}
\beta(b)\begin{cases} >0\quad\text{for }0<b<b_2,\\
=0\quad\text{for }b=b_2,\\
<0\quad\text{for }b>b_2.
\end{cases}
\end{equation}
\end{lemma}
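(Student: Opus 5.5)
The equivalence of $\beta=0$ with \eqref{eqq0-4} is an algebraic computation with the Legendre relation. Since $\tau=\frac12+ib$, we have $2\tau-1=2ib$. From $\eta_2=\tau\eta_1-2\pi i$ we get
$$2\eta_2-\eta_1=(2\tau-1)\eta_1-4\pi i=2ib\,\eta_1-4\pi i=2ib\Big(\eta_1-\frac{2\pi}{b}\Big).$$
Hence
$$12(2\eta_2-\eta_1)^2-(2\tau-1)^2g_2=(2ib)^2\Big[12\Big(\eta_1-\frac{2\pi}{b}\Big)^2-g_2\Big]=-4b^2\beta(b),$$
and since $b>0$, \eqref{eqq0-4} holds exactly when $\beta=0$. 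Note also that $\beta$ is real-valued, because $\eta_1,g_2\in\mathbb{R}$ by Lemma \ref{Lemma53-1}, and it is real-analytic in $b$.

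For the sign pattern \eqref{eqq-413}, I would proceed in three steps.

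\textbf{Positivity for $b\le\sqrt3/2$ and at small $b$.} At $b=\sqrt3/2$ we have $g_2=0$, since $\tau=e^{\pi i/3}$ is the equianharmonic point, so $\beta=12(\eta_1-2\pi/b)^2>0$. Indeed $\eta_1=2\pi/\sqrt3\neq 4\pi/\sqrt3$ there: by symmetry of the hexagonal lattice, the Legendre relation gives $\eta_1=2\pi/\sqrt3$. For general $b$ I would write $\beta$ in terms of $q$-expansions with $q=e^{2\pi i\tau}=-e^{-2\pi b}$. The standard formulas are
$$\eta_1=\frac{\pi^2}{3}E_2(\tau),\qquad g_2=\frac{4\pi^4}{3}E_4(\tau).$$
Then
$$\beta=\frac{4\pi^4}{3}\Big[\Big(E_2-\frac{6}{\pi b}\Big)^2-E_4\Big]=\frac{4\pi^4}{3}\Big(E_2-\frac{6}{\pi b}-\sqrt{E_4}\Big)\Big(E_2-\frac{6}{\pi b}+\sqrt{E_4}\Big),$$
with a choice of sign of $\sqrt{E_4}$ (or the modular-covariant substitution $E_2^*=E_2-\frac{3}{\pi\operatorname{Im}\tau}$) used for organizing the analysis. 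The sign of $\beta$ is then governed by the factor $\hat E_2:=E_2-\frac{6}{\pi b}$ compared with $\pm\sqrt{E_4}$.

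\textbf{Behavior as $b\to\infty$.} Here $E_2,E_4\to1$ and $\frac{6}{\pi b}\to0$, so $\beta\to\frac{4\pi^4}{3}(1-1)=0$, and I need the sign of the leading correction. Expanding,
$$E_2=1-24q+\dots,\qquad E_4=1+240q+\dots,\qquad q=-e^{-2\pi b}.$$
The dominant term is
$$\Big(1-\frac{6}{\pi b}\Big)^2-1\approx-\frac{12}{\pi b}<0,$$
since the $q$-corrections are exponentially small. So $\beta<0$ for large $b$. At $b=6/5$ a numerical evaluation (with $|q|=e^{-12\pi/5}\approx5\times10^{-4}$) should show $\beta<0$. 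Explicitly, $\frac{6}{\pi b}=\frac{5}{\pi}\approx1.59$, so
$$\hat E_2\approx1.012-1.59=-0.58,\qquad \hat E_2^2\approx0.34<E_4\approx0.88,$$
giving $\beta<0$. At $b=\sqrt3/2$, $\beta>0$ from the first step. The intermediate value theorem then gives a zero $b_2\in(\sqrt3/2,6/5)$.

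\textbf{Uniqueness of the zero — the main obstacle.} I would show that $h(b):=\hat E_2^2-E_4$ changes sign only once on $(0,\infty)$. For $b\ge\sqrt3/2$, the function $\hat E_2=E_2-\frac{6}{\pi b}$ is strictly increasing in $b$: by Ramanujan's identity, $\frac{d}{db}E_2$ is $-2\pi\cdot\frac{E_2^2-E_4}{12}\cdot(\text{sign from }q<0)$, which is exponentially small, while $\frac{6}{\pi b^2}$ dominates. Then, on the range where $\hat E_2<0$, the function $\hat E_2^2-E_4$ is decreasing because $E_4$ is nearly constant. The zero must occur where $\hat E_2\in(-\sqrt{E_4},0)$, and in that region monotonicity gives uniqueness. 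For $b<\sqrt3/2$ there is an alternative to estimating directly. Using $\tau\mapsto\frac{\tau-1}{2\tau-1}$, i.e.\ $b\mapsto\frac1{4b}$, together with the transformation laws for $\eta_1$ and $g_2$, one can show that $\beta(b)$ for $b<\frac12$ relates to the expression with $\eta_1$ replaced by $\eta_1-\frac{2\pi}{b}$, mirror-symmetric. Combining this with the explicit bounds \eqref{eqfc-s5-11}-\eqref{eqfc-s5-12} should yield $\beta>0$ there. The careful monotonicity estimates in this last step, including controlling the exponentially small $q$-terms uniformly, are where I expect the real work to lie.
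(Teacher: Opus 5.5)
Your proof of the equivalence is correct. It is more direct than the paper's, which gets it as a by-product of a modular identity. Your anchor values $\beta(\sqrt3/2)=16\pi^2>0$ and $\beta(6/5)<0$ are also right.

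\textbf{The gap: $\beta>0$ on $(0,\sqrt3/2)$.} Your proposal never establishes this, and each route you suggest fails.
\begin{itemize}
\item $q$-expansions at $\tau=\frac12+ib$ itself are useless as $b\to0$, since $|q|\to1$. In fact $\beta(b)\to0^+$ there, so no crude bound can work.
\item The relation you sketch under $b\mapsto\frac1{4b}$ is never made precise.
\item \eqref{eqfc-s5-11}--\eqref{eqfc-s5-12} do not by themselves force positivity. Using $g_2=12e_1^2-4|e_1-e_2|^2$ one has $\beta=12[(\frac{2\pi}{b}-\eta_1)^2-e_1^2]+4|e_1-e_2|^2$. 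Those bounds only confine $\frac{2\pi}{b}-\eta_1-e_1$ to the interval $(\frac{2\pi}{b}-|e_1-e_2|,\,|e_1-e_2|)$, which is not enough.
\end{itemize}

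\textbf{The missing idea.} It is the exact identity \eqref{eqq0-5}, which follows from the transformation laws of $\eta_1,g_2$ and the Legendre relation:
\[
16b^4\beta(b)=12\eta_1(\tau')^2-g_2(\tau'),\qquad \tau'=\frac{\tau-1}{2\tau-1}=\frac12+\frac{i}{4b}.
\]
With it the missing range splits into two easy pieces.
\begin{itemize}
\item For $b<\frac1{2\sqrt3}$ you move to $\operatorname{Im}\tau'>\frac{\sqrt3}2$. There $12\eta_1^2-g_2=\frac{4\pi^4}{3}(E_2^2-E_4)=-384\pi^4\sum_{n\ge1}n\sigma_1(n)q^n$ with $q=-e^{-2\pi\operatorname{Im}\tau'}$, which is visibly positive.
\item On $[\frac1{2\sqrt3},\frac{\sqrt3}2]$, which the involution preserves, use $g_2\le0$ instead. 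Its only zeros there are the images $\frac12+\frac{\sqrt3}2i$ and $\frac12+\frac{i}{2\sqrt3}$ of $e^{\pi i/3}$, and $g_2(\frac{1+i}2)=-4g_2(i)<0$. So $\beta\ge12(\eta_1-\frac{2\pi}b)^2$, which is nonzero at both endpoints ($\eta_1=\frac{2\pi}{\sqrt3}$, resp. $2\sqrt3\,\pi$).
\end{itemize}

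\textbf{The range $[\sqrt3/2,\infty)$.} Your plan can be made rigorous here, since $|q|\le e^{-\pi\sqrt3}\approx4.3\cdot10^{-3}$. However, your stated reason for monotonicity is wrong: $E_4$ is far from constant on this range. It rises from $0$ at $b=\sqrt3/2$ to about $0.87$ at $b=6/5$.

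What actually makes $\hat E_2^2-E_4$ decrease while $\hat E_2<0$ is the pair of facts
\[
\frac{dE_4}{db}=-\frac{2\pi}{3}(E_2E_4-E_6)>0,\qquad \frac{dE_2}{db}=-\frac{\pi}{6}(E_2^2-E_4)>-\frac{6}{\pi b^2}.
\]
Both need explicit tail estimates. You must also rule out a zero on the branch $\hat E_2=+\sqrt{E_4}$ once $\hat E_2$ turns positive.

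\textbf{Comparison with the paper.} The paper needs none of these estimates. By the identity above, $\beta(b)$ has the sign of $12\eta_1^2-g_2=-24\pi\frac{d}{db}\eta_1$ at $\frac12+\frac{i}{4b}$. Then \cite{CL-JDG19} shows that $\frac{d}{db}\eta_1(\frac12+ib)$ changes sign exactly once, at some $\tilde b\in(\frac5{24},\frac1{2\sqrt3})$, so $b_2=\frac1{4\tilde b}$. Your $q$-series analysis on $b\ge\sqrt3/2$ amounts to reproving that result in the range you need. That is legitimate, but only if carried out with rigorous error bounds.
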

\begin{proof}
For any 
$\begin{pmatrix} a&\hat{b}\\c&d \end{pmatrix}\in SL_2(\mathbb Z),$
we recall the modular properties of $\eta_1$ and $g$:
$$g_2\Big(\frac{a\tau+\hat{b}}{c\tau+d}\Big)=(c\tau+d)^4g_2(\tau),\quad \eta_1\Big(\frac{a\tau+\hat{b}}{c\tau+d}\Big)=(c\tau+d)(c\eta_2(\tau)+d\eta_1(\tau)).$$
Using $\begin{pmatrix} a&\hat{b}\\c&d \end{pmatrix}=\begin{pmatrix} 1&-1\\2&-1 \end{pmatrix}$, $\tau\eta_1-\eta_2=2\pi i$ and $\tau=\frac12+ib$, i.e. $2\tau-1=2bi$, we see that
{\allowdisplaybreaks
\begin{align}\label{eqq0-5}
&12\eta_1\Big(\frac{\tau-1}{2\tau-1}\Big)^2-g_2\Big(\frac{\tau-1}{2\tau-1}\Big)\nonumber\\
=&(2\tau-1)^2\big[12(2\eta_2(\tau)-\eta_1(\tau))^2-(2\tau-1)^2g_2(\tau)\big]\nonumber\\
=&(2\tau-1)^2\big[12\big(2(\tau\eta_1(\tau)-2\pi i)-\eta_1(\tau)\big)^2-(2\tau-1)^2g_2(\tau)\big]\nonumber\\
=&16b^4\Big(12\Big(\eta_1\Big(\frac12+ib\Big)-\frac{2\pi}{b}\Big)^2-g_2\Big(\frac12+ib\Big)\Big)=16b^4\beta(b).
\end{align}
}%
Clearly \eqref{eqq0-5} implies that $\beta=0$ if and only if \eqref{eqq0-4} holds.
Recall the classical formula (see e.g. \cite[(1.5)]{CL-JDG19}) that
$$\frac{d}{db}\eta_1\Big(\frac12+ib\Big)=-\frac{1}{24\pi}\left(12\eta_1\Big(\frac12+ib\Big)^2-g_2\Big(\frac12+ib\Big)\right).$$
It was proved in \cite[Corollary 1.5]{CL-JDG19} that there exists $\tilde b\in (\frac5{24},\frac1{2\sqrt{3}})$ such that 
\begin{equation}\label{derivative-eta}\frac{d}{db}\eta_1\Big(\frac12+ib\Big)\begin{cases}>0&\text{for }0<b<\tilde b,\\
=0&\text{for }b=\tilde b,\\
<0&\text{for }b>\tilde b.\end{cases}\end{equation}
Together these with $\frac{\tau-1}{2\tau-1}=\frac12+\frac{1}{4b}i$ for $\tau=\frac12+ib$, by letting $b_2=\frac{1}{4\tilde b}\in (\frac{\sqrt{3}}{2}, \frac65)$, we easily see that \eqref{eqq-413} holds.
\end{proof}


\begin{lemma}\label{lemma-aup-2}
Let $\tau=\frac12+ib$ with $b>b_1$. Recall Lemma \ref{lemma-aup} that for any $a\in I\cup II\cup III$, there is a unique $p\in I\cup II\cup III\cup \{0,\frac12\}$ such that  $\wp(p)=f(a)$ and $\pm a$ is a pair of nontrivial critical points of $G_p(z)$, where $f(a)$ is defined in \eqref{513-1-0}. Recall $z_0(\tau)\in II^\circ$ defined in Theorem \ref{thm-LW-16}.
If $a\in II=(\frac{1}{2},\frac{1}{4}+\frac{\tau}{2}]$, then $$f(z_0(\tau))=\infty,\qquad f\Big(\frac{1}{4}+\frac{\tau}{2}\Big)=\wp\Big(\frac14\Big),$$ and the following statements hold.
\begin{itemize}
\item[(1)] $f: (z_0(\tau), \frac14+\frac{\tau}2)\to f((z_0(\tau), \frac14+\frac{\tau}2))$ is a homeomorphism, and
\begin{equation}\label{dddd2-0}
f\Big(\Big(z_0(\tau), \frac14+\frac{\tau}2\Big)\Big)=\Big(\wp\Big(\frac14\Big),+\infty\Big).
\end{equation}
\item[(2)] We have
\begin{align}\label{dddd2}(-\infty, \wp(p_{1,1}))\subset f\Big(\Big(\frac12, z_0(\tau)\Big)\Big)\subset  \Big(-\infty, \wp\Big(\frac14-\frac{\tau}{2}\Big)\Big).\end{align}
Furthermore, recall $b_2\in (\sqrt{3}/2, 6/5)$ defined in Lemma \ref{lem04-4}.
\begin{itemize}
\item[(2-1)]  If $b\in (b_1, b_2]$, then $f: (\frac12, z_0(\tau))\to f((\frac12, z_0(\tau)))=(-\infty, \wp(p_{1,1}))$ is a homeomorphism.
\item[(2-2)] If $b>b_2$, then there is $\tilde{p}_{1,1}=\tilde{p}_{1,1}(b)\in (\frac14-\frac{\tau}{2}, p_{1,1})$ such that
$$f\Big(\Big(\frac12, z_0(\tau)\Big)\Big)=(-\infty, \wp(\tilde p_{1,1})].$$
More precisely, there is a unique $\tilde{a}_{1,1}\in (\frac12, z_0(\tau))$ such that
\begin{equation}\label{eqq-64}\begin{cases}f(\tilde{a}_{1,1})=\wp(\tilde p_{1,1}),\\
f: (\tilde{a}_{1,1}, z_0(\tau))\to  (-\infty, \wp(\tilde p_{1,1}))\quad\text{is a homeomorphism},\\
f: (\frac12,\tilde{a}_{1,1})\to  (\wp(p_{1,1}), \wp(\tilde p_{1,1}))\quad\text{is a homeomorphism}.\end{cases}\end{equation}
In particular, $G_p(z)$ has exactly two pairs of nontrivial critical points $\pm a_j$ satisfying $a_1, a_2\in (\frac12, z_0(\tau))$ for any $p\in (\tilde{p}_{1,1}, p_{1,1})$, and $\tilde{a}_{1,1}$ is a degenerate nontrivial critical point of $G_{\tilde p_{1,1}}(z)$.
\end{itemize}
\end{itemize}
\end{lemma}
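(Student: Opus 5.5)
Parametrize $a=\frac12+it$ with $t\in(0,\frac b2]$, so that $a=r+s\tau$ with $s=2t/(2b)\cdot$(suitable scaling) and $2r+s=1$. The denominator of $f$ is then $D(a):=\zeta(a)-r\eta_1-s\eta_2$. We have $\zeta(a)-r\eta_1-s\eta_2=-4\pi\partial_zG(a)$, and by Theorem \ref{thm-LW-16}(3) the only critical point of $G$ on $II$ is $z_0(\tau)$. Along $II$ the denominator is purely imaginary (by Remark \ref{rmk-s5-2} and the symmetry $a=1-\bar a$), and it vanishes exactly at $z_0(\tau)$, where $\wp'(z_0)\neq0$. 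Hence $f(z_0(\tau))=\infty$. At $a=\frac14+\frac\tau2$ I will repeat the argument giving \eqref{eq4-11}: the same reasoning as \cite[Lemma 5.5-(3)]{CFL-II} shows that $\pm(\frac14+\frac\tau2)$ is the unique nontrivial pair for $G_{1/4}$, so $f(\frac14+\frac\tau2)=\wp(\frac14)$. On $II\setminus\{z_0\}$, Lemma \ref{lemma-aup} shows that $f$ is real-valued. The corresponding $A(a)=A_0+\frac{\wp'(p)}{2(\wp(p)-\wp(a))}$ lies in $\sigma_1\cap\sigma_3$ and is real or imaginary according to Lemma \ref{lemma-s5-13}. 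Moreover, $f'$ along the segment vanishes exactly where $\wp(a+p)+\wp(a-p)+2\eta_1=0$ (by the analogue of \eqref{deri-r} for the $s$-derivative; since $2r+s$ is constant, I will express the derivative through $\partial_r$ and $\partial_s$). Equivalently, this happens exactly when $A(a)$ is a branch point of $\sigma_1$.

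For part (1), I take $a\in(z_0,\frac14+\frac\tau2)$. Near $z_0$, the sign of $\wp'(z_0)/D$ shows that $f\to+\infty$, and at the endpoint $f=\wp(\frac14)>e_1$. I will check that $f(a)>e_1$ throughout by a sign argument on $D$, as in Lemma \ref{lemma-aup-1}, using $\operatorname{Im}D$ and $\wp(a)<e_1$. Then $p\in I$, and $A(a)\in\mathbb R$ lies on $\sigma_1\cap\sigma_3\cap\mathbb R$. By Lemma \ref{Lemma53-5}(1), this intersection is contained in $(-\infty,A_1)\cup(A_0,\infty)$ together with a single point of $\sigma_{3,A_2A_3}$. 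The real part of $\sigma_1$ consists of $[A_1,A_0]$ and at most one further point, so $A(a)$ must be that isolated crossing point. Such a point is a transversal crossing of $\sigma_1$ with $\mathbb R$ and not a branch point of $\sigma_1$, because a branch point would give $\sigma_1$ additional real points, contradicting the symmetry and the structure in Lemma \ref{Lemma53-5}(2). Hence $f'\neq0$, so $f$ is monotone and yields \eqref{dddd2-0}. Finiteness on the open interval uses \eqref{eq: data0}, and the value $e_1$ is excluded by Theorem \ref{thm-LW-16}.

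For part (2), I take $a\in(\frac12,z_0)$. As $a\to\frac12$, expanding as in \eqref{eqfc-s5-98} (with $\zeta(a)-r\eta_1-s\eta_2\approx(\frac{2\pi}{b}-e_1-\eta_1)(a-\frac12)$) gives $f\to e_1+\frac{3e_1^2-g_2/4}{\frac{2\pi}b-e_1-\eta_1}=d_{1,1}=\wp(p_{1,1})$. As $a\to z_0^-$, we get $f\to-\infty$. By connectedness this gives the left inclusion in \eqref{dddd2}. For the right inclusion, I will show $f<e_1$, so that $p\in II\cup III$. Since $\wp(\frac14-\frac\tau2)$ is attained only by $p=\frac14-\frac\tau2$, whose nontrivial pair is $\pm\frac14$ (by the uniqueness just used), the value $\wp(\frac14-\frac\tau2)$ is excluded. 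Connectedness and $f\to-\infty$ then place the image below $\wp(\frac14-\frac\tau2)$.

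The main obstacle is the monotonicity dichotomy in (2-1) and (2-2). Here $A(a)\in i\mathbb R$, and by Lemma \ref{Lemma53-7} it lies on $\sigma_3$ crossing the imaginary axis, either on $[A_1,A_0]$ or on the crossing point of $\sigma_{3,\infty}$ or $\sigma_{3,A_2A_3}$. Critical points of $f$ correspond to $\sigma_1$-branch points, which I plan to control through the local behaviour at $a\to\frac12$. I would first compute the second-order term of $f$ at $a=\frac12$: its sign changes exactly when $12(\eta_1-\frac{2\pi}b)^2-g_2$ changes sign. Indeed, by Corollary \ref{coro2-11-3}(3) and Lemma \ref{lem04-4}, this is the condition for $A_1$ to become a fifth-order cusp of $\sigma_3$ at $p=p_{1,1}$. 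This shows that $f$ leaves $d_{1,1}$ increasing when $b>b_2$ and decreasing when $b\le b_2$. Next, I would use Theorem \ref{main-thm-1} together with Theorem \ref{III}-type counting, namely at most one extra $\sigma_1\cap\sigma_3$ point on the imaginary axis beyond the structure of Lemma \ref{Lemma53-7}, to show that $f'$ has at most one zero on $(\frac12,z_0)$. This gives the homeomorphism in (2-1) and the unique fold point $\tilde a_{1,1}$ in (2-2). In the latter case $f(\tilde a_{1,1})=\wp(\tilde p_{1,1})$ is a maximum with value greater than $\wp(p_{1,1})$, which yields \eqref{eqq-64}. The two preimages for $p\in(\tilde p_{1,1},p_{1,1})$ and the degeneracy at $\tilde a_{1,1}$ then follow from the fold, using the Jacobian relation of \cite[(3.1)]{CFL}.
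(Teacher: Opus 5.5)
Your boundary analysis agrees with the paper: the limit $\wp(p_{1,1})$ at $\frac12$, $f\to-\infty$ and $f\to+\infty$ as $a\to z_0$ from $(\frac12,z_0)$ and from $(z_0,\frac14+\frac\tau2)$, $f(\frac14+\frac\tau2)=\wp(\frac14)$, the exclusion of $\wp(\frac14-\frac\tau2)$, and the role of the sign of $\beta(b)$. The mechanism you use for monotonicity, however, is wrong. On $II$ one has $2r+s\equiv1$, i.e.\ $a=r+(1-2r)\tau$, so the derivative along the segment is $(\partial_r-2\partial_s)f$. Implicit differentiation of \eqref{61-37-22--1} gives
\[
\frac{d}{dr}\wp(p)=\wp'(p)\,\frac{(1-2\tau)\big(\wp(a+p)+\wp(a-p)\big)+2\eta_1-4\eta_2}{\wp(a-p)-\wp(a+p)},
\]
which vanishes iff $(2\tau-1)(\wp(a+p)+\wp(a-p))+4\eta_2-2\eta_1=0$, i.e.\ iff $\wp(a+p)+\wp(a-p)+2\eta_1=\frac{4\pi}{b}$. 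That is the branch-point condition for $\sigma_3$ in Lemma \ref{lemma2-10-3}(2), not for $\sigma_1$. Formula \eqref{deri-r} and the $\sigma_1$ condition belong to $I$, where $s$ is constant.

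Your argument for part (1) therefore targets the wrong statement, and the claim it rests on is false. For $a\in(z_0,\frac14+\frac\tau2)$ you have $\wp(a)<e_1<\wp(p)$, so $A(a)\in(A_1,A_0)\subset\sigma_1$. It is the real point of $\sigma_{3,A_2A_3}$, not an isolated real point of $\sigma_1$. Moreover, $\sigma_1$ can have real branch points in $(A_1,A_0)$: this is case (2-2) of Lemma \ref{Lemma53-5}, which by Lemma \ref{lem0-s5-13} occurs for all small $p$ (the points $A_5,A_6$). Whether $A(a)$ avoids them is exactly the non-degeneracy question $\alpha\neq0$, and the paper settles that only up to finitely many $p$ (Corollary \ref{III-coro}).

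With the correct criterion, part (1) is immediate: $A(a)$ is an interior point of the simple arc $\sigma_{3,A_2A_3}$, away from the rest of $\sigma_3$, so it is not a $\sigma_3$ branch point. The paper avoids derivatives altogether. It notes that $\sigma_1\cap\sigma_3\cap(A_1,A_0)$ has at most one point, so by Theorem \ref{rmk2-8} each value has at most one preimage in $(z_0,\frac14+\frac\tau2)$.

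For part (2), the claim that $f'$ has at most one zero, obtained from an unspecified count of extra intersection points, is neither proved nor what you need. The usable input is as follows. For $a\in(\frac12,z_0)$, the chain $\wp(p)<\wp(\frac14-\frac\tau2)<\wp(a)<e_1$ places $A(a)$ in $(-i\infty,A_1)\subset\sigma_1$. Lemma \ref{Lemma53-7} gives $\#(\sigma_3\cap(-i\infty,A_1))\le2$, so every value has at most two preimages. Combined with the end limits $\wp(p_{1,1})$ and $-\infty$, this has two consequences. If $f$ approaches $\wp(p_{1,1})$ from below, it must be monotone, since a non-monotone $f$ would take some value three times. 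If it approaches from above, it has a single interior maximum, attained by compactness.

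At $b=b_2$ your local expansion decides nothing, because the coefficient $\hat\beta\propto\beta(b_2)=0$ in \eqref{eqq-61}. So your claim that $f$ decreases away from $\frac12$ for $b\le b_2$ is unjustified at $b=b_2$. The paper closes this case differently. At $p=p_{1,1}$, $A_1$ is a fifth-order cusp of $\sigma_3$ (Corollary \ref{coro2-11-3}(3) with Lemma \ref{lem04-4}). Hence $\sigma_3\cap(-i\infty,A_1)=\emptyset$ and $\wp(p_{1,1})\notin f((\frac12,z_0))$.

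Once the derivative is corrected, the degeneracy of $\tilde a_{1,1}$ follows from $f'(\tilde a_{1,1})=0$. This means $\alpha=\frac{2\pi}{b}$, with $\alpha=\frac12(\wp(a+p)+\wp(a-p)+2\eta_1)$, and then \eqref{eqf0c-15} gives $\det D^2G_p(\tilde a_{1,1})=0$.
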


\begin{proof}
Let $a=r+s\tau\in II=(\frac{1}{2},\frac{1}{4}+\frac{\tau}{2}]\subset\frac12+i\mathbb R$. 
Then $s=1-2r$, i.e., $a=r+(1-2r)\tau$ and so $a-\frac12=(1-2\tau)(r-\frac12)$ with $r\in [\frac14, \frac12)$. 

{\bf Step 1.} We prove \eqref{dddd2-0}-\eqref{dddd2}.

When $\frac12-r>0$ is sufficiently small, 
$$\wp(a)=e_1+\frac{\wp''(\frac12)}{2}\Big(a-\frac12\Big)^2+O\left(\Big(\frac12-r\Big)^4\right), $$
$$\wp'(a)=\wp''\Big(\frac12\Big)\Big(a-\frac12\Big)+\frac{\wp^{(4)}(\frac12)}{6}\Big(a-\frac12\Big)^3+O\left(\Big(\frac12-r\Big)^5\right),$$
and
\begin{align*}&\zeta(a)-r\eta_1-s\eta_2\\
=&-\Big[e_1+\eta_1-\frac{4\pi i}{2\tau-1}\Big]\Big(a-\frac12\Big)-\frac{\wp''(\frac12)}{6}\Big(a-\frac12\Big)^3+O\left(\Big(\frac12-r\Big)^5\right).\end{align*}
Note that $\wp^{(4)}(\frac12)=12e_1\wp''(\frac12)$ and $\wp''(\frac12)=6e_1^2-g_2/2=4e_1^2+2|e_2|^2>0$. Consequently,
{\allowdisplaybreaks
\begin{align}\label{eqq-61}\wp(p)&=f(a)=\wp(a)+\frac{\wp'(a)}{2(\zeta(a)-r\eta_1-s\eta_2)}\nonumber\\
&=e_1-\frac{\wp''(\frac12)}{2(e_1+\eta_1-\frac{2\pi}{b})}+\hat{\beta}\Big(a-\frac12\Big)^2+O\left(\Big(\frac12-r\Big)^4\right)\nonumber\\
&=e_1-\frac{3e_1^2-\frac{g_2}{4}}{e_1+\eta_1-\frac{2\pi}{b}}+\hat{\beta}\Big(a-\frac12\Big)^2+O\left(\Big(\frac12-r\Big)^4\right)\nonumber\\&=\wp(p_{1,1})+\hat{\beta}\Big(a-\frac12\Big)^2+O\left(\Big(\frac12-r\Big)^4\right),\end{align}
}%
where, by using $\wp''(\frac12)=6e_1^2-g_2/2>0$,
{\allowdisplaybreaks
\begin{align*}\hat{\beta}:&=\frac{\wp''(\frac12)}{2(e_1+\eta_1-\frac{2\pi}{b})}\left(\Big(e_1+\eta_1-\frac{2\pi}{b}\Big)-2e_1+\frac{\wp''(\frac12)}{6(e_1+\eta_1-\frac{2\pi}{b})}\right)\\
&=\frac{\wp''(\frac12)}{24(e_1+\eta_1-\frac{2\pi}{b})^2}\left(12\Big(e_1+\eta_1-\frac{2\pi}{b}\Big)^2-24e_1\Big(e_1+\eta_1-\frac{2\pi}{b}\Big)+12e_1^2-g_2\right)\\
&=\frac{\wp''(\frac12)}{24(e_1+\eta_1-\frac{2\pi}{b})^2}\left(12\Big(\eta_1-\frac{2\pi}{b}\Big)^2-g_2\right)=\frac{\wp''(\frac12)}{24(e_1+\eta_1-\frac{2\pi}{b})^2}\beta(b).\end{align*}
}%

Recall \cite[Lemma 5.5-(1)]{CFL-II} that $G_{\frac14}(z)$ has a unique pair of nontrivial critical points $\pm(\frac14+\frac{\tau}{2})$, so 
\begin{equation}\label{eqq-f5}f\Big(\frac14+\frac{\tau}{2}\Big)=\wp\Big(\frac14\Big),\qquad
f(a)\neq\wp\Big(\frac14\Big)\quad\text{for any }a\in II^\circ,\end{equation}
where $II^\circ:=(\frac12, \frac12+\frac{\tau}4)$.
 Also recall Theorem \ref{thm-LW-16} that $G_0(z)=G(z)$ has a unique pair of nontrivial critical points $\pm z_0(\tau)$ with $z_0(\tau)\in II^\circ$, so \begin{equation}\label{eq4-22-0}f(z_0(\tau))=\infty\in f(II^\circ),\qquad f(a)\neq\infty\quad\text{for any } a\in II\setminus\{z_0\}.\end{equation} From \eqref{eqq-61}, \eqref{eqq-f5}, \eqref{eq4-22-0} and $f: II^\circ\to f(II^{\circ})$ is continuous, i.e., $f(II^{\circ})\subset\mathbb R\cup\{\infty\}$ is path-connected, we obtain
\begin{equation}\label{eq4-22} (-\infty, \wp(p_{1,1}))\cup \Big(\wp\Big(\frac14\Big),+\infty\Big)
\cup\{\infty\}\subset f(II^\circ).\end{equation}
Recall \eqref{eq4-11} that $G_{\frac14-\frac{\tau}{2}}(z)=G_{\frac14+\frac{\tau}{2}}(z)$ has a unique pair of nontrivial critical points $\pm\frac14\notin II$, so $\wp(\frac14-\frac{\tau}{2})\notin f(II)$. This, together with \eqref{eqq-61}, \eqref{eqq-f5}, \eqref{eq4-22-0} and \eqref{eq4-22}, implies that \eqref{dddd2-0}-\eqref{dddd2} hold.
In particular, $p\in III^\circ$ for $a\in (\frac{1}{2}, z_0(\tau))$.

{\bf Step 2.} We prove that $f: (z_0(\tau), \frac14+\frac{\tau}2)\to (\wp(\frac14),+\infty)$ is a homeomorphism.

For any $a\in  (z_0(\tau), \frac14+\frac{\tau}2)$, we have $\wp(a)<e_1<\wp(p)$, so it follows from Remark \ref{rmk-s5-2} and \eqref{Ak} that the corresponding 
\begin{align}\label{eqq-Aj}A(a)&=\frac{1}{2}\left[  \zeta(p+a)+\zeta(p-a)-\zeta(2p)\right]\\
&=A_0+\frac{\wp'(p)}{2(\wp(p)-\wp(a))}\in(A_1, A_0).\nonumber\end{align}
Since Lemma \ref{Lemma53-5} shows \begin{equation}\label{eq4-22-1}\#(\sigma_1\cap\sigma_3\cap (A_1, A_0))\leq 1,\quad\forall p\in I,\end{equation}  we conclude from Theorem \ref{rmk2-8} that
for any $\wp(p)\in (\wp(\frac14),+\infty)$, $f^{-1}(\wp(p))$ contains exactly one preimage in $(z_0(\tau), \frac14+\frac{\tau}2)$. Thus, $f: (z_0(\tau), \frac14+\frac{\tau}2)\to (\wp(\frac14),+\infty)$ is bijective. The inverse map $f^{-1} : (\wp(\frac14),+\infty)\to (z_0(\tau), \frac14+\frac{\tau}2)$ is also continuous because $a=f^{-1}(\wp(p))$ is the nontrivial critical point of $G_p(z)$. This completes the proof of Step 2.



{\bf Step 3.} We consider $a\in (\frac12, z_0(\tau))$.

For any $a\in (\frac12, z_0(\tau))$, we see from \eqref{dddd2} that $$\wp(p)=f(a)<\wp\Big(\frac14-\frac{\tau}2\Big)<\wp(a)<e_1,$$ then the same proof as \eqref{eqq-Aj} shows that the corresponding \begin{equation}\label{eqq04-1}A(a)=\frac{1}{2}\left[  \zeta(p+a)+\zeta(p-a)-\zeta(2p)\right]\in (-i\infty, A_1).\end{equation} Since Lemma \ref{Lemma53-7} shows \begin{equation}\label{eq4-24}\#(\sigma_1\cap\sigma_3\cap (-i\infty, A_1))\leq 2,\qquad\forall p\in III,\end{equation}we conclude that
\begin{equation} \label{eq: data3}
  \parbox{\dimexpr\linewidth-5em}{
 for any $\wp(p)\in f((\frac12, z_0(\tau)))$, $f^{-1}(\wp(p))$ contains at most two preimages in $(\frac12, z_0(\tau))$.
  }
\end{equation}
We consider two cases.

{\bf Case 3-1. }
Suppose $b_1<b\leq b_2$. We prove that  $f: (\frac12, z_0(\tau))\to (-\infty, \wp(p_{1,1}))$ is a homeomorphism.

Assume first that $b_1<b<b_2$. Then $\beta(b)>0$. From here and $(a-\frac12)^{2}<0$, we see from  \eqref{eqq-61} that 
$$\wp(p)=f(a)\uparrow \wp(p_{1,1})\quad\text{as} \quad \Big(\frac12, z_0(\tau)\Big)\ni a\to \frac12.$$
Recall from \eqref{dddd2} that
$$(-\infty, \wp(p_{1,1}))\subset f\Big(\Big(\frac12, z_0(\tau)\Big)\Big).$$
Assume by contradiction that $(-\infty, \wp(p_{1,1}))\subsetneqq f((\frac12, z_0(\tau)))$, then  there are $p\in (p_{1,1}, 0)$ close to $p_{1,1}$ and $a_1\neq a_2\neq a_3\neq a_1\in (\frac12, z_0(\tau))$ such that $\wp(p)=f(a_1)=f(a_2)=f(a_3)$, a contradiction with \eqref{eq: data3}.
This proves $(-\infty, \wp(p_{1,1}))=f((\frac12, z_0(\tau)))$. Repeating this argument again, we conclude that $f: (\frac12, z_0(\tau))\to (-\infty, \wp(p_{1,1}))$ is indeed bijective and then a homeomorphism.

Now we consider $b=b_2$. Assume by contradiction that $(-\infty, \wp(p_{1,1}))\subsetneqq f((\frac12, z_0(\tau)))$, i.e., $\wp(p_{1,1})\in f((\frac12, z_0(\tau)))$. Then by \eqref{eqq04-1} and \eqref{eq4-24}, we have
\begin{equation}\label{eqq04-8}1\leq\#(\sigma_1\cap\sigma_3\cap (-i\infty, A_1))\leq 2,\quad \text{for }p=p_{1,1}.\end{equation}
However, for $p=p_{1,1}$, it follows from Corollary \ref{coro-s5-10} that $A_1$ is a cusp of $\sigma_3$.
Since $b=b_2$ implies \eqref{eqq0-4}, we see from Corollary \ref{coro2-11-3}-(3) that $\sigma_3$ consists of $5$ analytic arcs that have the common endpoint $A_1$. This, together with Lemma \ref{Lemma53-7}, yields that $\sigma_3=[A_1, A_0]\cup\sigma_{3,A_2A_3}\cup\sigma_{3,\infty}$ with
\begin{equation}\label{eqq04-21}\sigma_{3,A_2A_3}\cap i\mathbb R=\sigma_{3,\infty}\cap i\mathbb{R}=\{A_1\},\end{equation}
so $\sigma_3\cap (-i\infty, A_1)=\emptyset$, a contradiction with \eqref{eqq04-8}. This proves $(-\infty, \wp(p_{1,1}))=f((\frac12, z_0(\tau)))$ for $b=b_2$. Then by repeating the same argument as $b_1<b<b_2$, we obtain that $f: (\frac12, z_0(\tau))\to (-\infty, \wp(p_{1,1}))$ is indeed bijective and then a homeomorphism also for $b=b_2$.

{\bf Case 3-2.} Suppose $b>b_2$. We prove \eqref{eqq-64}.

Since $b>b_2$ implies $\beta(b)<0$, we see from
\eqref{eqq-61} that 
\begin{equation}\label{eqq-63}\wp(p)=f(a)\downarrow \wp(p_{1,1})\quad\text{as} \quad \Big(\frac12, z_0(\tau)\Big)\ni a\to \frac12,\end{equation}
so it follows from \eqref{dddd2} that $(-\infty, \wp(p_{1,1})]\subsetneqq f((\frac12, z_0(\tau)))$. Since $f((\frac12, z_0(\tau)))\subset  (-\infty, \wp(\frac14-\frac{\tau}{2}))$ is path-connected, there is $\tilde{p}_{1,1}=\tilde{p}_{1,1}(b)\in [\frac14-\frac{\tau}{2}, p_{1,1})$ such that
$$(-\infty, \wp(\tilde p_{1,1}))\subset f\Big(\Big(\frac12, z_0(\tau)\Big)\Big)\subset (-\infty, \wp(\tilde p_{1,1})].$$
Take $a_m\in (\frac12, z_0(\tau))$ such that $f(a_m)\uparrow \wp(\tilde{p}_{1,1})$, then up to a subsequence, $a_m\to \tilde a\in (\frac12, z_0(\tau))$ and so $\wp(\tilde{p}_{1,1})=f(\tilde a)$. This proves $\tilde{p}_{1,1}\in (\frac14-\frac{\tau}{2}, p_{1,1})$ and
$$f\Big(\Big(\frac12, z_0(\tau)\Big)\Big)=(-\infty, \wp(\tilde p_{1,1})].$$
Together with \eqref{eqq-63} and \eqref{eq: data3}, we easily see that there is a unique $\tilde{a}_{1,1}\in (\frac12, z_0(\tau))$ such that \eqref{eqq-64} holds.
The proof is complete.
\end{proof}

%

\begin{lemma}\label{lemma-aup-3}
Let $\tau=\frac12+ib$ with $b>b_1$. Recall Lemma \ref{lemma-aup} that for any $a\in I\cup II\cup III$, there is a unique $p\in I\cup II\cup III\cup \{0,\frac12\}$ such that  $\wp(p)=f(a)$ and $\pm a$ is a pair of nontrivial critical points of $G_p(z)$, where $f(a)$ is defined in \eqref{513-1-0}. Recall $\frac12-z_0(\tau)\in III^{\circ}$ defined in Theorem \ref{thm-LW-16}.
If $a\in III=[\frac14-\frac{\tau}{2}, 0)$, then $$f\Big(\frac12-z_0(\tau)\Big)=e_1,\quad f\Big(\frac{1}{4}-\frac{\tau}{2}\Big)=\wp\Big(\frac14\Big),$$ and the following statements hold.
\begin{itemize}
\item[(1)] $f: (\frac14-\frac{\tau}2, \frac12-z_0(\tau))\to f((\frac14-\frac{\tau}2, \frac12-z_0(\tau)))$ is a homeomorphism, and
\begin{equation}\label{dddd3-0}
f\Big(\Big(\frac14-\frac{\tau}2, \frac12-z_0(\tau)\Big)\Big)=\Big(e_1,\wp\Big(\frac14\Big)\Big).
\end{equation}
\item[(2)] We have
\begin{align}\label{dddd3}(\wp(p_{0,2}), e_1)\subset f\Big(\Big(\frac12-z_0(\tau), 0\Big)\Big)\subset  \Big(\wp\Big(\frac14+\frac{\tau}{2}\Big), e_1\Big).\end{align}
Furthermore, recall $b_2\in (\sqrt{3}/2, 6/5)$ defined in Lemma \ref{lem04-4}. 
\begin{itemize}
\item[(2-1)] If $b\in (b_1, b_2]$, then $f: (\frac12-z_0(\tau),0)\to f((\frac12-z_0(\tau),0))=(\wp(p_{0,2}), e_1)$ is a homeomorphism.
\item[(2-2)] If $b>b_2$, recalling $\tilde{p}_{1,1}, \tilde{a}_{1,1}$ in Lemma \ref{lemma-aup-2} (2-2), we define
\begin{equation}\label{eq4-34}
\tilde{p}_{0,2}:=\frac12-\tilde{p}_{1,1}\in \Big(p_{0,2},\frac14+\frac{\tau}{2}\Big),\quad\tilde{a}_{0,2}:=\frac12-\tilde{a}_{1,1}\in \Big(\frac12-z_0(\tau),0\Big),
\end{equation}
then
$$f\Big(\Big(\frac12-z_0(\tau),0\Big)\Big)=[\wp(\tilde p_{0,2}), e_1),$$
and more precisely,
\begin{equation}\label{eqq-64-2}\begin{cases}f(\tilde{a}_{0,2})=\wp(\tilde p_{0,2}),\\
f: (\frac12-z_0(\tau),\tilde{a}_{0,2})\to  (\wp(\tilde p_{0,2}), e_1)\quad\text{is a homeomorphism},\\
f: (\tilde{a}_{0,2}, 0)\to  (\wp(\tilde p_{0,2}), \wp(p_{0,2}))\quad\text{is a homeomorphism}.\end{cases}\end{equation}
In particular, $G_p(z)$ has exactly two pairs of nontrivial critical points $\pm a_j$ satisfying $a_1, a_2\in (\frac12-z_0(\tau),0)$ for any $p\in (p_{0,2}, \tilde{p}_{0,2})$, and $\tilde{a}_{0,2}$ is a degenerate nontrivial critical point of $G_{\tilde p_{0,2}}(z)$.
\end{itemize}
\end{itemize}
\end{lemma}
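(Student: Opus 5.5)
The plan is to deduce Lemma \ref{lemma-aup-3} from Lemma \ref{lemma-aup-2} through the symmetry $a\mapsto \frac12-a$, $p\mapsto\frac12-p$. This symmetry interchanges $II$ and $III$; it is also behind Lemma \ref{lem4-s5-13} and the relation $p_{0,j}=\frac12-p_{1,3-j}$ in Lemma \ref{lemma-s5-9}.

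\textbf{The key identity.} The first step is to show that if $\pm a$ is a pair of nontrivial critical points of $G_p(z)$, then $\pm(\frac12-a)$ is a pair of nontrivial critical points of $G_{\frac12-p}(z)$. Write $a=r+s\tau$; then $\frac12-a=(\frac12-r)+(-s)\tau$. Using $\zeta(z+1)=\zeta(z)+\eta_1$ and oddness of $\zeta$, we get
\[
\zeta\Big(\tfrac12-a+\tfrac12-p\Big)+\zeta\Big(\tfrac12-a-\tfrac12+p\Big)=\eta_1-\big[\zeta(a+p)+\zeta(a-p)\big].
\]
This equals $\eta_1-2(r\eta_1+s\eta_2)=2\big((\tfrac12-r)\eta_1+(-s)\eta_2\big)$, which is exactly \eqref{61-37-22--1} for the pair $(\frac12-a,\frac12-p)$. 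By Lemma \ref{lemma-aup} and the uniqueness \eqref{eq: data0}, this yields
\[
f\Big(\tfrac12-a\Big)=\wp\Big(\tfrac12-p\Big)=\wp\Big(p-\tfrac12\Big)=e_1+\frac{3e_1^2-\frac{g_2}{4}}{f(a)-e_1},
\]
with the conventions $f(a)=\infty\mapsto e_1$ and $f(a)=e_1\mapsto\infty$. Call the right-hand side $h(f(a))$. Here $h(x)=e_1+\frac{c}{x-e_1}$ with $c=|e_1-e_2|^2>0$, so $h$ is a decreasing homeomorphism of $\mathbb{R}\cup\{\infty\}$ on each side of $e_1$. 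It sends $(-\infty,e_1)$ onto $(-\infty,e_1)$ and $(e_1,+\infty)$ onto $(e_1,+\infty)$.

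\textbf{Transferring the results.} The map $a\mapsto\frac12-a$ sends $II=(\frac12,\frac14+\frac{\tau}{2}]$ onto $III=[\frac14-\frac{\tau}{2},0)$, since $\frac12-(\frac14+\frac\tau2)=\frac14-\frac\tau2$. It sends $z_0(\tau)$ to $\frac12-z_0(\tau)$, and it preserves the order along the segments in the appropriate sense. Now I read off each item.
\begin{itemize}
\item $f(\frac12-z_0)=h(\infty)=e_1$.
\item $f(\frac14-\frac\tau2)=h(\wp(\frac14))=\wp(\frac14-\frac12)=\wp(\frac14)$, since $\frac14$ and $\frac14-\frac12=-\frac14$ give the same value of $\wp$. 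Alternatively this follows from \cite[Lemma 5.5]{CFL-II}, as in \eqref{eq4-11}.
\item Part (1) is the image of Lemma \ref{lemma-aup-2}(1) under $h$: the interval $(\wp(\frac14),+\infty)$ is sent to $(e_1,h(\wp(\frac14)))=(e_1,\wp(\frac14))$, and the composition of homeomorphisms is a homeomorphism.
\item For (2), $h$ sends $(-\infty,\wp(p_{1,1}))$ onto $(\wp(p_{0,2}),e_1)$, because $h(\wp(p_{1,1}))=\wp(\frac12-p_{1,1})=\wp(p_{0,2})$ by \eqref{eqfc-s5-22}. It also sends $(-\infty,\wp(\frac14-\frac\tau2))$ onto $(\wp(\frac14+\frac\tau2),e_1)$. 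This gives \eqref{dddd3} and (2-1).
\item For (2-2), $h(\wp(\tilde p_{1,1}))=\wp(\tilde p_{0,2})$. The membership $\tilde p_{0,2}\in(p_{0,2},\frac14+\frac\tau2)$ follows from $\tilde p_{1,1}\in(\frac14-\frac\tau2,p_{1,1})$ under $p\mapsto\frac12-p$. Since $h$ is decreasing, $(-\infty,\wp(\tilde p_{1,1})]$ becomes $[\wp(\tilde p_{0,2}),e_1)$, and the two homeomorphism pieces in \eqref{eqq-64} transfer to those in \eqref{eqq-64-2}.
\end{itemize}

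\textbf{Degeneracy and the two pairs.} For $p\in(p_{0,2},\tilde p_{0,2})$, the two pairs of critical points are the images of the two pairs for $\frac12-p\in(\tilde p_{1,1},p_{1,1})$. For degeneracy, note that $G_{\frac12-p}(z)=G_p(\frac12-z)$ up to the symmetries $G(-z)=G(z)$ and $z\mapsto z+1$. Indeed, $\frac12(G(z-\frac12+p)+G(z-\frac12-p))$ evaluated at $\frac12-z$ gives back $G_p(z)$. So critical points and their degeneracy correspond exactly under $z\mapsto\frac12-z$, and $\tilde a_{0,2}$ is degenerate for $G_{\tilde p_{0,2}}$.

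\textbf{Main obstacle.} The main care is bookkeeping rather than analysis. One must check that the orientation reversal of $h$, combined with the reflection $a\mapsto\frac12-a$, matches the stated endpoints, with the right open and closed ends. One must also check the boundary cases where $f(a)\in\{e_1,\infty\}$.
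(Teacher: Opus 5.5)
Your proposal is correct and is essentially the paper's own (second) proof: the paper likewise deduces Lemma \ref{lemma-aup-3} from Lemma \ref{lemma-aup-2} and Lemma \ref{lemma-s5-9} via the symmetry $G_p(\frac12-z)=G_{\frac12-p}(z)$. Your explicit decreasing map $h(x)=e_1+\frac{3e_1^2-g_2/4}{x-e_1}$ spells out the bookkeeping the paper leaves implicit (orientation reversal, open versus closed endpoints, $\infty\leftrightarrow e_1$), and the symmetry argument gives the exact relations $\tilde p_{0,2}=\frac12-\tilde p_{1,1}$ and $\tilde a_{0,2}=\frac12-\tilde a_{1,1}$, which the paper's alternative route via the expansion of $f$ near $0$ does not provide.
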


\begin{proof} There are two proofs for this lemma. The first is to follow the proof of Lemma \ref{lemma-aup-2}.
Indeed, let $a=r+s\tau\in III=[\frac14-\frac{\tau}{2}, 0)$,
 then $s=-2r$, i.e., $a=(1-2\tau)r$ with $r\in (0, \frac14]$. 
Hence, for $r>0$ is sufficiently small,$$\wp(a)=\frac{1}{a^2}+\frac{g_2}{20}a^2+O(r^4), \quad \wp'(a)=\frac{-2}{a^3}+\frac{g_2}{10}a+O(r^3),$$
$$\zeta(a)-r\eta_1-s\eta_2=\frac1{a}+\Big(\frac{4\pi i}{2\tau-1}-\eta_1\Big)a-\frac{g_2}{60}a^3+O(r^5),$$
so 
\begin{align}\label{eqq-61-2}\wp(p)&=f(a)=\wp(a)+\frac{\wp'(a)}{2(\zeta(a)-r\eta_1-s\eta_2)}\nonumber\\
&=\frac{2\pi}{b}-\eta_1+\Big(\frac{g_2}{12}-\Big(\frac{2\pi}{b}-\eta_1\Big)^2\Big)a^2+O(r^4)\nonumber\\
&=\wp(p_{0,2})-\frac1{12}\beta(b)a^2+o(1).\end{align}
Consequently, following the argument of Lemma \ref{lemma-aup-2}, we can obtain all the assertions except the precise relation \eqref{eq4-34} between $\tilde{p}_{0,2}, \tilde{a}_{0,2}$ and $\tilde{p}_{1,1}, \tilde{a}_{1,1}$. We omit the details.

Here we use a different idea to prove this lemma. Note that $p\in II$ if and only if $\frac12-p\in III$, and $p\in (0,\frac12)$ if and only if $\frac12-p\in (0, \frac12)$. Since $G(z)=G(-z)$ implies
$G_p(\frac12-z)=G_{\frac12-p}(z)$, we see that 
\begin{equation} \label{eq: da11ta3}
  \parbox{\dimexpr\linewidth-5em}{
 $a$ is a nontrivial critical point of $G_p(z)$ if and only if $\frac12-a$ is a nontrivial critical point $G_{\frac12-p}(z)$.
  }
\end{equation}
 From here, we immediately see that Lemma \ref{lemma-aup-3} follows directly from Lemma \ref{lemma-aup-2} and Lemma \ref{lemma-s5-9}, from which we can further obtain that if $b>b_2$, i.e., $\beta(b)<0$, then  $$\tilde{p}_{0,2}=\frac12-\tilde{p}_{1,1},\quad\tilde{a}_{0,2}=\frac12-\tilde{a}_{1,1}.$$
The proof is complete.
\end{proof}

\begin{theorem}[=Theorem \ref{III-thm3}]\label{thm4-7}
Let $\tau=\frac12+ib$ with $b_1<b\leq b_2$. Then the following statements hold.
\begin{itemize}
\item[(1)] When $\wp(p)\in [\wp(p_{1,1}), \wp(p_{1,2})]\cup [\wp(p_{0,1}), \wp(p_{0,2})]$, $G_p(z)$ has no nontrivial critical points satisfying $\wp(a)\in\mathbb{R}$.
\item[(2)] When $\wp(p)\in (-\infty, \wp(p_{1,1}))\cup(\wp(p_{1,2}), \wp(p_{0,1}))\cup(\wp(p_{0,2}),+\infty)$, $G_p(z)$ has a unique pair of nontrivial critical points $\pm a$ satisfying $\wp(a)\in\mathbb{R}$.
\end{itemize}
\end{theorem}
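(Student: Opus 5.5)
The plan is to assemble the images of $f$ computed in Lemmas \ref{lemma-aup-1}, \ref{lemma-aup-2} and \ref{lemma-aup-3}. By Lemma \ref{lemma-s5-13}, a pair of nontrivial critical points $\pm a$ of $G_p(z)$ satisfies $\wp(a)\in\mathbb{R}$ if and only if $\pm a\in I\cup II\cup III$. By Lemma \ref{lemma-aup}, together with the uniqueness \eqref{eq: data0} coming from Hitchin's formula, such pairs correspond exactly to the points $a\in I\cup II\cup III$ with $f(a)=\wp(p)$. So for each real value $\wp(p)$ we must count $\#\big(f^{-1}(\wp(p))\cap (I\cup II\cup III)\big)$.

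Next we list the pieces for $b_1<b\le b_2$. On $I$, $f$ is a diffeomorphism onto $(\wp(p_{1,2}),\wp(p_{0,1}))$ by Lemma \ref{lemma-aup-1}. On $II$ we have:
\begin{itemize}
\item $f(z_0(\tau))=\infty$ and $f(\frac14+\frac{\tau}{2})=\wp(\frac14)$;
\item $f$ maps $(z_0(\tau),\frac14+\frac\tau2)$ homeomorphically onto $(\wp(\frac14),+\infty)$;
\item by (2-1) of Lemma \ref{lemma-aup-2}, $f$ maps $(\frac12,z_0(\tau))$ homeomorphically onto $(-\infty,\wp(p_{1,1}))$.
\end{itemize}
On $III$ we have:
\begin{itemize}
\item $f(\frac12-z_0(\tau))=e_1$ and $f(\frac14-\frac\tau2)=\wp(\frac14)$, where $\frac14-\frac\tau2$ and $\frac14+\frac\tau2$ are the same point of $E_\tau$;
\item $f$ maps $(\frac14-\frac\tau2,\frac12-z_0(\tau))$ onto $(e_1,\wp(\frac14))$;
\item by (2-1) of Lemma \ref{lemma-aup-3}, $f$ maps $(\frac12-z_0(\tau),0)$ onto $(\wp(p_{0,2}),e_1)$.
\end{itemize}
Each of these maps is injective.

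The key step is checking that these pieces are disjoint and that together they cover exactly the complement of $[\wp(p_{1,1}),\wp(p_{1,2})]\cup[\wp(p_{0,1}),\wp(p_{0,2})]$. The ordering
$$\wp(p_{1,1})<\wp(p_{1,2})<\wp(p_{0,1})<\wp(p_{0,2})<e_1<\wp\Big(\frac14\Big)$$
follows from Lemma \ref{lemma-s5-9}, namely $d_{1,1}<d_{1,2}<d_{0,1}<d_{0,2}<e_1$, together with \eqref{eqfc-s5-12}. With this ordering the images are the pairwise disjoint sets
$$(-\infty,\wp(p_{1,1})),\ (\wp(p_{1,2}),\wp(p_{0,1})),\ (\wp(p_{0,2}),e_1),\ \{e_1\},\ (e_1,\wp(\tfrac14)),\ \{\wp(\tfrac14)\},\ (\wp(\tfrac14),+\infty),$$
together with the value $\infty$. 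Their union with $\{\infty\}$ removed is exactly $(-\infty,\wp(p_{1,1}))\cup(\wp(p_{1,2}),\wp(p_{0,1}))\cup(\wp(p_{0,2}),+\infty)$. Each real value in this union has exactly one preimage in $I\cup II\cup III$, which proves (2). Each value in the two closed intervals has no preimage, which proves (1).

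Some care is needed at the points where $p\in E_\tau[2]$, i.e.\ $\wp(p)=e_1$ or $\wp(p)=\infty$. These fall outside the theorem's hypothesis $p\notin E_\tau[2]$, so they are excluded from the statement anyway. Note that $\wp(p)=e_1$, i.e.\ $p=\frac12$, has its unique preimage $\frac12-z_0(\tau)$.

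The main obstacle, which has in fact already been handled in Lemmas \ref{lemma-aup-2} and \ref{lemma-aup-3}, is the injectivity on $(\frac12,z_0(\tau))$ and on $(\frac12-z_0(\tau),0)$. In the present proof the only remaining point to verify is the endpoint ordering above. Finally, we translate the statement to the notation of Theorem \ref{III-thm3}:
$$d_1=\wp(p_{1,1}),\quad d_2=\wp(p_{1,2}),\quad d_3=\wp(p_{0,1})=-\eta_1,\quad d_4=\wp(p_{0,2})=\frac{2\pi}b-\eta_1.$$
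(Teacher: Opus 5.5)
Your proposal is correct and is the paper's argument: the paper proves Theorem~\ref{thm4-7} by saying it follows directly from Lemmas~\ref{lemma-aup-1}, \ref{lemma-aup-2} and \ref{lemma-aup-3}, and you make explicit what that assembly uses, namely the endpoint ordering from Lemma~\ref{lemma-s5-9} and the identification $\frac14+\frac{\tau}{2}=\frac14-\frac{\tau}{2}$ in $E_\tau$. The value $\wp(p)=e_1$ is not actually excluded by the statement, since it lies in the interval $(\wp(p_{0,2}),+\infty)$ of (2); your observation that its unique preimage is $\frac12-z_0(\tau)$ already covers this case.
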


\begin{proof} This result follows directly from Lemmas \ref{lemma-aup-1}, \ref{lemma-aup-2} and \ref{lemma-aup-3}. Consequently, by defining
$$d_1:=\wp(p_{1,1})=e_1+\frac{3e_1^2-\frac{g_2}{4}}{\frac{2\pi}{b}-e_1-\eta_1}, \quad d_2:=\wp(p_{1,2})=e_1-\frac{3e_1^2-\frac{g_2}{4}}{e_1+\eta_1},$$
$$d_3:=\wp(p_{0,1})=-\eta_1,\quad d_4:=\wp(p_{0,2})=\frac{2\pi}{b}-\eta_1,$$
we obtain Theorem \ref{III-thm3}.
\end{proof}

\begin{lemma}\label{lem04-9}
Let $\tau=\frac12+ib$ with $b>b_2$, and recalling $\tilde{p}_{1,1}=\tilde{p}_{1,1}(b)\in (\frac14-\frac{\tau}{2}, p_{1,1})$ given in Lemma \ref{lemma-aup-2}. Then $$\wp(\tilde{p}_{1,1})< \wp(p_{1,2}),\quad\text{i.e.,}\quad \tilde{p}_{1,1}\in (p_{1,2}, p_{1,1}),$$ or equivalently,
$$\wp(\tilde{p}_{0,2})>\wp(p_{0,1}),\quad\text{i.e.,}\quad \tilde{p}_{0,2}\in (p_{0,2}, p_{0,1}).$$
\end{lemma}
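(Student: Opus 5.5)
The plan is to argue by contradiction at the single parameter $p=p_{1,2}$, where Corollary \ref{coro-s5-10}-(3) says that $A_1$ is a cusp of $\sigma_1$. The equivalence of the two formulations is immediate: by \eqref{eq4-34} and Lemma \ref{lemma-s5-9} we have $\tilde p_{0,2}=\frac12-\tilde p_{1,1}$ and $p_{0,1}=\frac12-p_{1,2}$, and $p\mapsto\frac12-p$ interchanges $III$ and $II$ with the order reversed. So it suffices to prove $\wp(\tilde p_{1,1})<\wp(p_{1,2})$.

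Suppose instead that $\wp(\tilde p_{1,1})\ge \wp(p_{1,2})=d_{1,2}$. Lemma \ref{lemma-s5-9} gives $d_{1,1}<d_{1,2}$, and Lemma \ref{lemma-aup-2}-(2-2) gives $f((\frac12,z_0(\tau)))=(-\infty,\wp(\tilde p_{1,1})]$ with the two homeomorphic branches \eqref{eqq-64}. Hence $d_{1,2}$ is attained by $f$ on $(\frac12,z_0(\tau))$: twice if the inequality is strict, and once at $\tilde a_{1,1}$ if it is an equality. Fix $p=p_{1,2}\in III^\circ$. By \eqref{eqq04-1} and Theorem \ref{rmk2-8}, each such preimage $a$ gives a point $A(a)\in\sigma_1\cap\sigma_3\cap(-i\infty,A_1)$. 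So $\sigma_3$ must meet the open ray $(-i\infty,A_1)$, and these crossings come from the branches $\sigma_{3,A_2A_3}$ or $\sigma_{3,\infty}$ in Lemma \ref{Lemma53-7}-(2-2). Case (2-1) is excluded at once, because there $\sigma_3\cap i\mathbb R=[A_1,A_0]$.

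Next I would use the shape of $\sigma_1$ at $p=p_{1,2}$. By Corollary \ref{coro2-11}-(2-1) (whether the cusp has order $3$ or $5$ is checked via $12\eta_1^2-g_2$; I expect order $3$), $\sigma_1$ has exactly three arcs ending at $A_1$. Combining this with Lemma \ref{Lemma53-7}-(1) and the symmetry in the imaginary axis, these arcs are the ray $(-i\infty,A_1]$ and the two halves of $\sigma_{1,A_2A_3}$. Thus $\sigma_{1,A_2A_3}$ runs $A_2\to A_1\to A_3$, and $\sigma_{1,A_2A_3}\cap i\mathbb R=\{A_1\}$. Now suppose some branch of $\sigma_3$ that joins $A_2$ to $A_3$, or is symmetric and unbounded, crosses $i\mathbb R$ strictly below $A_1$. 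Then, together with the $\sigma_1$-path $A_2\to A_1\to A_3$ and the ray $(-i\infty,A_1]$, it must also cross $\sigma_1$ off the imaginary axis. The Jordan-curve argument is run in the right half-plane, using the reflection symmetry: $\sigma_1\cup\{\text{ray}\}$ separates the region near $A_2$ from the far region $\operatorname{Re}A\to+\infty$ into which $\sigma_{3,\infty}$ escapes (Lemma \ref{lemma2-8}). Each such off-axis crossing $A\notin i\mathbb R$ is, by Lemma \ref{lemma-s5-13}, a pair $\pm a$ with $\wp(a)\notin\mathbb R$. Its mirror $-\bar A$ gives the distinct pair $\pm\bar a$.

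Finally count. The on-axis pairs from $(\frac12,z_0(\tau))$ number at least one, and the off-axis crossings contribute at least two more pairs. If the strict inequality holds, there are two on-axis pairs, so $G_{p_{1,2}}$ has at least four pairs of nontrivial critical points, contradicting Theorem \ref{main-thm-1}. In the equality case there is only one on-axis pair (the degenerate $\tilde a_{1,1}$), giving exactly three pairs, i.e. $10$ critical points. Theorem \ref{thm-nondegenerate} then forces all of them to be non-degenerate, which contradicts the degeneracy of $\tilde a_{1,1}$ from Lemma \ref{lemma-aup-2}. The main obstacle is the topological step: proving rigorously that a crossing of $\sigma_3$ with $i\mathbb R$ below a cusp $A_1$ of $\sigma_1$ forces an extra off-axis intersection. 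This needs care with how $\sigma_{3,A_2A_3}$ leaves $A_2$ relative to the arc $A_2A_1$ of $\sigma_1$, and I would use that cusps and branch points of $\sigma_1$ and $\sigma_3$ never coincide (Lemmas \ref{lemma2-10}, \ref{lemma2-10-3}). If this parity argument fails, my fallback is a continuity argument in $p$ along $(p_{1,2},p_{1,1})$: follow the three on-axis intersection points and show that two of them cannot coexist in $(-i\infty,A_1)$ at the moment the $I$-branch point reaches the cusp $A_1$.
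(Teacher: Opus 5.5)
Your reduction to $\wp(\tilde p_{1,1})<\wp(p_{1,2})$ is fine, and so are the exclusion of case (2-1) of Lemma \ref{Lemma53-7} at $p=p_{1,2}$ and the shape of $\sigma_1$ there (the cusp has order $3$, since $12\eta_1^2-g_2>0$ for $b>b_2$). The final count via Theorems \ref{main-thm-1} and \ref{thm-nondegenerate} also works, \emph{provided} the extra off-axis intersections exist. The step you flag as the main obstacle is not just delicate, however: as stated it is false. In the closed right half-plane, $\sigma_1$ consists of the two rays on $i\mathbb R$ and the arc $\gamma\subset\sigma_{1,A_2A_3}$ from $A_1$ to $A_2$. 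Since $\gamma$ has a free end at $A_2$, this set separates nothing, so neither the right half of $\sigma_{3,A_2A_3}$ nor that of $\sigma_{3,\infty}$ is forced to meet it. The only separating curve is $\sigma_{3,\infty}$ itself.

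Put $x_1:=\sigma_{3,A_2A_3}\cap i\mathbb R$ and $x_2:=\sigma_{3,\infty}\cap i\mathbb R$. If $\operatorname{Im}x_1<\operatorname{Im}x_2$ (with the two branches of $\sigma_3$ disjoint), then $A_2$ lies below $\sigma_{3,\infty}$ and $A_1$ above it. So $\gamma$ must cross $\sigma_{3,\infty}$ off the axis, and your count works. If instead $\operatorname{Im}x_2<\operatorname{Im}x_1<\operatorname{Im}A_1$ (again with disjoint branches), then $A_1$, $A_2$ and all of $\sigma_{3,A_2A_3}$ lie in the upper component of $\mathbb C\setminus\sigma_{3,\infty}$. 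Then $\gamma$, the right half of $\sigma_{3,A_2A_3}$ and the segment $[x_1,A_1]$ simply bound a loop. None of the facts you invoke (endpoints, reflection symmetry, asymptotics, unbounded complementary components, no common cusps or branch points) forces an off-axis point of $\sigma_1\cap\sigma_3$. You are left with only $2$ pairs in the strict case, and only $1$ in the equality case when the half-arc leaving the $\sigma_3$-branch point $\tilde A$ up and to the right ends at $A_2$. Neither count gives a contradiction. Your fallback, continuity in $p$ at fixed $b$, does not close this either. A simple crossing point of $\sigma_3$ passing through the $\sigma_1$-branch point on $(-i\infty,A_1)$ is not excluded by Lemma \ref{lemma2-10-3}, which only forbids common branch points.

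What is missing is control over the relative position of these axis crossings. The paper gets it by deforming in $b$ starting from $b=b_2$, in four steps.

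First, for $b\ge b_2$ and $p\in(p_{1,2},p_{1,1}]$, the axis point $\hat A(b,p)$ of $\sigma_{1,A_2A_3}$ is a branch point of $\sigma_1$ lying in $(-i\infty,A_1)$. Were it in $(A_1,A_0)\subset\sigma_3$, it would produce a critical point with real $\wp$ outside $(\frac12,z_0(\tau))$.

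Second, at $p=\tilde p_{1,1}(b)$ the two $\sigma_3$-crossings merge into a branch point $\tilde A(b)\in(-i\infty,A_1)$ of $\sigma_3$.

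Third, as $b\downarrow b_2$ one has $\tilde A(b)\to A_1$ (cf.\ \eqref{eqq04-21}), whereas $\hat A$ stays strictly below $A_1$.

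Fourth, at a first $b_3$ with $\tilde p_{1,1}(b_3)=p_{1,2}(b_3)$ one would have $\hat A=A_1$ above $\tilde A$. The intermediate value theorem then gives $b_4\in(b_2,b_3)$ with $\tilde A=\hat A$, a common branch point of $\sigma_1$ and $\sigma_3$, contradicting Lemma \ref{lemma2-10-3}.

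Your counting idea via Theorems \ref{main-thm-1} and \ref{thm-nondegenerate} would be a nice alternative. It needs an independent argument that $\operatorname{Im}x_1<\operatorname{Im}x_2$ at $p=p_{1,2}$, which you have not supplied.
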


\begin{figure}[htbp]
\centering
\begin{tikzpicture}[ scale=0.7,
    dot/.style={circle,fill=black,inner sep=1pt}
]
\draw[thick, red] (0,-2)--(0,0) -- (2,0) -- (2,2);
\draw[thick, black] (3,2)--(-1,2)--(-3,-2)--(1,-2)--cycle;

\node[dot] at (2, 1.5) (p01) {};
\node[dot] at (2, 0.9) (z0) {};
\node[dot] at (2, 0.5) (p02) {};
\node[dot] at (0, -0.5) (p11) {};
\node[dot] at (0, -0.9) (mid) {};
\node[dot] at (0, -1.5) (p12) {};

\node[above] at (1,0) {I};
\node[left] at (2,1.2) {II};
\node[right] at (0,-1.2) {III};
\node[left] at (0,0) {\scriptsize$0$};
\node[right] at (2,0) {\scriptsize$\frac12$};
\node[above] at (2,2) {\scriptsize$\frac14+\frac{\tau}2$};
\node[right] at (p01) {\scriptsize$p_{0,1}$};
\node[right] at (z0) {\scriptsize$\tilde p_{0,2}$};
\node[left] at (p02) {\scriptsize$p_{0,2}$};
\node[left] at (p11) {\scriptsize$p_{1,1}=\frac12-p_{0,2}$};
\node[left] at (mid) {\scriptsize$\tilde p_{1,1}=\frac12-\tilde p_{0,2}$};
\node[left] at (p12) {\scriptsize$p_{1,2}=\frac12-p_{0,1}$};
\node[below] at (0,-2) {\scriptsize$\frac14 - \frac\tau2$};
\end{tikzpicture}
\captionof{figure}{Relative locations of those points in Lemma \ref{lem04-9}}
\label{figure4-6}
\end{figure}

\begin{proof} 
Clearly $\sigma_j$ depends on $\tau=\frac12+ib$ and $p$, and in this proof, we write $\sigma_j=\sigma_j(b, p)$ to emphasize its dependence on $b$ and $p$.

From the proof of Lemma \ref{lemma-aup-2}, it is clear that $\tilde{p}_{1,1}(b)$ is continuous as a fuction of $b>b_2$ and $\lim_{b\to b_2}\tilde{p}_{1,1}=p_{1,1}$.

{\bf Step 1.} We consider the graph of $\sigma_1(b,p)$ for $b\geq b_2$ and $p\in [p_{1,2}, p_{1,1}]$.

Since $p\in [p_{1,2}, p_{1,1}]\subset III$, it follows from Lemma \ref{Lemma53-7} that $$\sigma_1(b,p)=(-i\infty, A_1]\cup [A_0, +i\infty)\cup\sigma_{1,A_2A_3}(b,p),$$ where $\sigma_{1,A_2A_3}(b,p)$ is a simple curve connecting $A_2$ and $A_3=-\overline{A_2}$, and is symmetric with respect to the imaginary axis with $$\sigma_{1,A_2A_3}(b,p)\cap i\mathbb{R}=\{\text{a single point}\}=:\{\hat A(b,p)\}.$$ 
Since $b\geq b_2>b_1$, by Corollary \ref{coro-s5-10} we have
\begin{equation}\label{eeqq-01}
\hat A(b,p_{1,2})=A_1,\qquad \hat A(b,p)\notin\{A_0, A_1\}\;\text{for any }p\in (p_{1,2}, p_{1,1}].
\end{equation}
Then by continuity, we have either $\hat A(b,p)\in (-i\infty, A_1)$ for any $p\in (p_{1,2}, p_{1,1}]$ or 
$\hat A(b,p)\in (A_1, A_0)$ for any $p\in (p_{1,2}, p_{1,1}]$. 

We claim that $\hat A(b,p)\in (-i\infty, A_1)$ for all  $b\geq b_2$ and $p\in (p_{1,2}, p_{1,1}]$.
Assume by contradiction that $\hat A(b,p)\in (A_1, A_0)\subset \sigma_3$ for any $p\in (p_{1,2}, p_{1,1}]$, then $\hat A(b,p)\in\sigma_1\cap\sigma_3\setminus\{A_k\}_{k=0}^3$, so the corresponding $\pm a(b,p)$ of  $\hat A(b,p)$ is a pair of nontrivial critical points of $G_p(z)$, and we see from $\hat A(b,p)\in i\mathbb{R}$ and Lemma \ref{lemma-s5-13} that $\wp(a(b,p))\in\mathbb R$, so we may assume $a(b,p)\in I\cup II\cup III$. Together with $p\in (p_{1,2}, p_{1,1}]$, we see from  Lemmas \ref{lemma-aup-1}, \ref{lemma-aup-2} and \ref{lemma-aup-3}  that $a(b,p)\in (\frac12, z_0(\tau))\subset II^\circ$, which together with \eqref{eqq04-1} yields that $\hat A(b,p)\in (-i\infty, A_1)$, a contradiction. This proves that
\begin{equation}\label{eeq-01}
\sigma_{1,A_2A_3}(b,p)\cap i\mathbb{R}=\{\hat A(b,p)\}\subset (-i\infty, A_1),\;\forall b\geq b_2,\; p\in (p_{1,2}, p_{1,1}].
\end{equation}
In particular, 
\begin{equation} \label{eq: da12ta3}
  \parbox{\dimexpr\linewidth-5em}{
$\hat A(b,p)$ is a branch point of $\sigma_1(b,p)$ for any $b\geq b_2$ and $p\in (p_{1,2}, p_{1,1}]$.  }
\end{equation}

{\bf Step 2.} We consider the graph of $\sigma_3(b,\tilde{p}_{1,1}(b))$ for $b> b_2$.

For $b>b_2$ and $p\in (\tilde{p}_{1,1}(b), p_{1,1})$, Lemma \ref{lemma-aup-2} implies that $G_p(z)$ has exactly two pairs of nontrivial critical points $\pm a_j(b, p)$ satisfying $a_1(b,p), a_2(b,p)\in (\frac12, z_0(\tau))$, and it follows from \eqref{eqq04-1} that the corresponding 
\begin{equation}
\label{eqq04-001}\tilde{A}_j(b,p):=\frac{1}{2}\left[  \zeta(p+a_j(b,p))+\zeta(p-a_j(b,p))-\zeta(2p)\right]\in (-i\infty, A_1).\end{equation}
Then by $\tilde{A}_j(b,p)\in \sigma_1(b,p)\cap\sigma_3(b,p)$ and Lemma \ref{Lemma53-7}, we have $\sigma_3(b, p)=[A_1, A_0]\cup\sigma_{3,A_2A_3}(b, p)\cup\sigma_{3,\infty}(b,p)$ with
\begin{align}\label{eqq04-022}&\sigma_{3,A_2A_3}(b, p)\cap i\mathbb R=\{\tilde{A}_j(b,p)\},\\
&\sigma_{3,\infty}(b, p)\cap i\mathbb{R}=\{\tilde{A}_{3-j}(b,p)\}, \;\text{for some $j\in\{1,2\}$}.\nonumber\end{align}
When $(\tilde{p}_{1,1}(b), p_{1,1})\ni p\to \tilde{p}_{1,1}(b)$, we see from Lemma \ref{lemma-aup-2}  that the two critical points  $a_1(b,p), a_2(b,p)$ converge to the same critical point $\tilde a_{1,1}=\tilde a_{1,1}(b)$ of $G_{\tilde{p}_{1,1}(b)}(z)$, so the corresponding $\tilde{A}_1(b,p), \tilde{A}_2(b,p)$ converge to the corresponding \begin{align*}\tilde{A}(b,\tilde{p}_{1,1}(b)):=&\frac{1}{2}\left[  \zeta(\tilde{p}_{1,1}(b)+\tilde a_{1,1}(b))+\zeta(\tilde{p}_{1,1}(b)-\tilde a_{1,1}(b))-\zeta(2\tilde{p}_{1,1}(b))\right]\\
\in& (-i\infty, A_1).\end{align*}
By the continuous deformation of $\sigma_3(b,p)$ as  $(\tilde{p}_{1,1}(b), p_{1,1})\ni p\to \tilde{p}_{1,1}(b)$, we see from \eqref{eqq04-022} that 
$\sigma_3(b, \tilde{p}_{1,1}(b))=[A_1, A_0]\cup\sigma_{3,A_2A_3}(b, \tilde{p}_{1,1}(b))\cup\sigma_{3,\infty}(b,\tilde{p}_{1,1}(b))$ with
\begin{equation}\label{eqq04-002}\sigma_{3,A_2A_3}(b, \tilde{p}_{1,1}(b))\cap i\mathbb R=\sigma_{3,\infty}(b, \tilde{p}_{1,1}(b))\cap i\mathbb{R}=\{\tilde{A}(b,\tilde{p}_{1,1}(b))\}\in (-i\infty, A_1).\end{equation}
In particular,
\begin{equation} \label{eq: da13ta3}
  \parbox{\dimexpr\linewidth-5em}{
$\tilde{A}(b,\tilde{p}_{1,1}(b))$ is a branch point of $\sigma_3(b, \tilde{p}_{1,1}(b))$ for any $b>b_2$.  }
\end{equation}
Also recall \eqref{eqq04-21} that $\sigma_3(b_2, p_{1,1})=[A_1, A_0]\cup\sigma_{3,A_2A_3}(b_2, p_{1,1})\cup\sigma_{3,\infty}(b_2, p_{1,1})$ with
\begin{equation}\label{eqq04-22}\sigma_{3,A_2A_3}(b_2, p_{1,1})\cap i\mathbb R=\sigma_{3,\infty}(b_2, p_{1,1})\cap i\mathbb{R}=\{A_1\}.\end{equation}

{\bf Step 3.} Recalling $\hat A(b, p)$ in \eqref{eeq-01}, we prove that for $b-b_2>0$ small, $\tilde{p}_{1,1}(b)\in (p_{1,2}, p_{1,1})$ and $\tilde{A}(b,\tilde{p}_{1,1}(b))\in (\hat{A}(b,\tilde{p}_{1,1}(b)), A_1)$, or equivalently, \begin{equation}\label{eeqq-02}\operatorname{Im}\tilde{A}(b,\tilde{p}_{1,1}(b))>\operatorname{Im}\hat{A}(b,\tilde{p}_{1,1}(b))\quad\text{for $b-b_2>0$ small}.\end{equation}

Since $\tilde{p}_{1,1}(b)\in (\frac14-\frac{\tau}{2}, p_{1,1})$ and $\lim_{b\downarrow b_2}\tilde{p}_{1,1}(b)=p_{1,1}$, we see that  for $b-b_2>0$ small, $\tilde{p}_{1,1}(b)\in (p_{1,2}, p_{1,1})$, so it follows from \eqref{eeq-01} that $\hat A(b,\tilde{p}_{1,1}(b))$ is well-defined. Recall $\lim_{b\downarrow b_2}\tilde{p}_{1,1}(b)=p_{1,1}$.
Then \eqref{eeq-01}, \eqref{eqq04-002} and \eqref{eqq04-22} imply  $$\lim_{b\downarrow b_2}\hat{A}(b,\tilde{p}_{1,1}(b))=\hat{A}(b_2, p_{1,1})\in (-i\infty, A_1),\quad \lim_{b\downarrow b_2}\tilde{A}(b,\tilde{p}_{1,1}(b))=A_1,$$ we obtain \eqref{eeqq-02}.

{\bf Step 4.} We prove that for any $b>b_2$, $\tilde{p}_{1,1}(b)\in (p_{1,2}, p_{1,1})$.

Assume by contradiction that $\tilde{p}_{1,1}(\tilde {b})\in (\frac14-\frac\tau2, p_{1,2}]$ for some $\tilde b>b_2$. Then by Step 3 and the continuity, there is the first $b_3>b_2$ such that $\tilde{p}_{1,1}(b_3)=p_{1,2}(b_3)$ (Note that $p_{1,2}=p_{1,2}(b)$ also continuously depends on $b$) and $$\tilde{p}_{1,1}(b)\in (p_{1,2}(b), p_{1,1}(b)),\quad\text{for }b\in (b_2, b_3).$$ Since \eqref{eeqq-01} says $\hat A(b_3$, $p_{1,2}(b_3))=A_1$, we see from \eqref{eqq04-002} that $$\operatorname{Im}\tilde{A}(b_3,\tilde{p}_{1,1}(b_3))<
\operatorname{Im}\hat{A}(b_3,\tilde{p}_{1,1}(b_3)).$$ Together with \eqref{eeqq-02}, there is $b_4\in (b_2, b_3)$ such that $$\operatorname{Im}\tilde{A}(b_4,\tilde{p}_{1,1}(b_4))=\operatorname{Im}\hat{A}(b_4,\tilde{p}_{1,1}(b_4)),$$ i.e., $$\tilde{A}(b_4,\tilde{p}_{1,1}(b_4))=\hat{A}(b_4,\tilde{p}_{1,1}(b_4))\in (-i\infty, A_1).$$ Consequently, it follows from \eqref{eq: da12ta3} and \eqref{eq: da13ta3} that $\tilde{A}(b_4,\tilde{p}_{1,1}(b_4))$ is a common branch point of $\sigma_1(b_4,\tilde{p}_{1,1}(b_4))$ and $\sigma_3(b_4,\tilde{p}_{1,1}(b_4))$, a contradiction with Lemma \ref{lemma2-10-3}.
This completes the proof.
\end{proof}

\begin{theorem}[=Theorem \ref{III-thm4}]\label{thm4-9}
Let $\tau=\frac12+ib$ with $b>b_2$. 
Then  the following statements hold.
\begin{itemize}
\item[(1)] When $\wp(p)\in (\wp(\tilde{p}_{1,1}), \wp(p_{1,2})]\cup [\wp(p_{0,1}),\wp(\tilde p_{0,2}))$, $G_p(z)$ has no nontrivial critical points satisfying $\wp(a)\in\mathbb{R}$.
\item[(2)] When $\wp(p)\in (-\infty, \wp(p_{1,1})]\cup(\wp(p_{1,2}), \wp(p_{0,1}))\cup[\wp(p_{0,2}),+\infty)\cup\{\wp(\tilde{p}_{1,1})\}\cup\{\wp(\tilde{p}_{0,2})\}$, $G_p(z)$ has a unique pair of nontrivial critical points $\pm a$ satisfying $\wp(a)\in\mathbb{R}$. Moreover, $\pm a$ are degenerate when $\wp(p)\in\{\wp(\tilde{p}_{1,1}), \wp(\tilde{p}_{0,2})\}$.
\item[(3)] When $\wp(p)\in (\wp(p_{1,1}),\wp(\tilde{p}_{1,1}))\cup (\wp(\tilde p_{0,2}), \wp(p_{0,2}))$, $G_p(z)$ has exactly two pairs of nontrivial critical points satisfying $\wp(a)\in\mathbb{R}$.
\end{itemize}
\end{theorem}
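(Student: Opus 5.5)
The plan is to assemble the image of $f$ on $I\cup II\cup III$ piece by piece, as in the proof of Theorem \ref{thm4-7}, and then count preimages. By Lemma \ref{lemma-s5-13}, a pair $\pm a$ of nontrivial critical points with $\wp(a)\in\mathbb R$ can be taken with $a\in I\cup II\cup III$. By Lemma \ref{lemma-aup} and \eqref{eq: data0}, for a given $p$ the number of such pairs equals $\#f^{-1}(\wp(p))\cap(I\cup II\cup III)$. So it suffices to list, for each real value $c=\wp(p)$, how many preimages $f$ has on each of the five subintervals
$$I,\quad \Big(\tfrac12,z_0(\tau)\Big),\quad \Big(z_0(\tau),\tfrac14+\tfrac\tau2\Big],\quad \Big[\tfrac14-\tfrac\tau2,\tfrac12-z_0(\tau)\Big),\quad \Big(\tfrac12-z_0(\tau),0\Big).$$
The excluded points $z_0(\tau)$ and $\frac12-z_0(\tau)$ map to $\infty$ and to $e_1$ respectively, and $p=\frac12$ is excluded.

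The contributions come from the earlier lemmas as follows.
\begin{itemize}
\item By Lemma \ref{lemma-aup-1}, $f$ is a bijection of $I$ onto $(\wp(p_{1,2}),\wp(p_{0,1}))$.
\item By Lemma \ref{lemma-aup-2}-(1), together with $f(\frac14+\frac\tau2)=\wp(\frac14)$, $f$ is a bijection of $(z_0,\frac14+\frac\tau2]$ onto $[\wp(\frac14),+\infty)$.
\item By Lemma \ref{lemma-aup-3}-(1), together with $f(\frac14-\frac\tau2)=\wp(\frac14)$, $f$ is a bijection of $[\frac14-\frac\tau2,\frac12-z_0)$ onto $(e_1,\wp(\frac14)]$. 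The value $\wp(\frac14)$ is attained twice here, but at $\frac14\pm\frac\tau2$, which are the same point of $E_\tau$. So these last two pieces together give exactly one pair for every $c\in(e_1,+\infty)$.
\item For $b>b_2$, Lemma \ref{lemma-aup-2}-(2-2) says that on $(\frac12,z_0)$ the value $c$ has one preimage for $c<\wp(p_{1,1})$ and for $c=\wp(p_{1,1})$ (since $\wp(p_{1,1})$ lies in the image of $(\tilde a_{1,1},z_0)$), two preimages for $c\in(\wp(p_{1,1}),\wp(\tilde p_{1,1}))$, one (the degenerate $\tilde a_{1,1}$) for $c=\wp(\tilde p_{1,1})$, and none for $c>\wp(\tilde p_{1,1})$.
\item Symmetrically, Lemma \ref{lemma-aup-3}-(2-2) gives the analogous count on $(\frac12-z_0,0)$, with $[\wp(\tilde p_{0,2}),e_1)$ as the image, two preimages on $(\wp(\tilde p_{0,2}),\wp(p_{0,2}))$, and the value $\wp(p_{0,2})$ hit once.
\end{itemize}

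The key ordering input is Lemma \ref{lem04-9}: $\wp(\tilde p_{1,1})<\wp(p_{1,2})$ and $\wp(\tilde p_{0,2})>\wp(p_{0,1})$. Combined with Lemma \ref{lemma-s5-9}, this gives the chain
$$\wp(p_{1,1})<\wp(\tilde p_{1,1})<\wp(p_{1,2})<\wp(p_{0,1})<\wp(\tilde p_{0,2})<\wp(p_{0,2})<e_1 .$$
Here $\wp(p_{0,2})<e_1$ follows from \eqref{eqfc4-1}. Because of this chain the image of $(\frac12,z_0)$ is disjoint from $f(I)$ and from the image of $(\frac12-z_0,0)$, and similarly on the other side. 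So the counts simply add with no overlaps, and reading them off interval by interval gives (1), (2) and (3). Two boundary cases need care. First, $c=e_1$ corresponds to $p=\frac12\in E_\tau[2]$ and is excluded, while $(\wp(p_{0,2}),e_1)$ is covered once by $(\frac12-z_0,0)$. Second, $c=\wp(p_{1,2})$ and $c=\wp(p_{0,1})$ are endpoints not attained by $f(I)$, so they fall into the "no critical point" set in (1).

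The degeneracy claim at $\wp(p)\in\{\wp(\tilde p_{1,1}),\wp(\tilde p_{0,2})\}$ is already stated in Lemmas \ref{lemma-aup-2} and \ref{lemma-aup-3}. If a justification is wanted: $f$ has a local extremum at $\tilde a_{1,1}$ along $II$. If $\tilde a_{1,1}$ were non-degenerate, the implicit function theorem would give a critical point depending smoothly on $p$ near $\tilde p_{1,1}$, and by the conjugation symmetry it would stay on the curve $\wp(a)\in\mathbb R$; this contradicts $f$ not being locally surjective there. I expect the main obstacle to be the bookkeeping at the boundary values, namely checking that each endpoint is attained exactly once or not at all, and keeping straight that $\pm(\frac14\pm\frac\tau2)$ give a single pair. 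The substantive ingredient, the ordering from Lemma \ref{lem04-9}, is already available.
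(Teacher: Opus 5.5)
Your proof is correct and follows the paper's route. The paper also obtains the theorem directly from Lemmas \ref{lemma-aup-1}, \ref{lemma-aup-2}, \ref{lemma-aup-3}: it reads off the preimage counts of $f$ on $I$, on the two pieces of $II$ and on the two pieces of $III$, and uses the ordering from Lemma \ref{lem04-9} to keep those images disjoint. Your endpoint bookkeeping is sound, including the identification $\frac14+\frac\tau2=\frac14-\frac\tau2$ in $E_\tau$, and so is your implicit-function-theorem argument for the degeneracy of $\tilde a_{1,1}$, which the paper asserts without detail.
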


\begin{figure}[htbp]\label{figure4-3}
\centering
\begin{tikzpicture}[ scale=0.75,
    dot/.style={circle,fill=black,inner sep=1pt}
]
\draw[thick, black] (-7,0)--(7,0);

\node[dot] at (-5, 0) (p11) {};
\node[dot] at (-3, 0) (tp11) {};
\node[dot] at (-1, 0) (p12) {};
\node[dot] at (1, 0) (p01) {};
\node[dot] at (3, 0) (tp02) {};
\node[dot] at (5, 0) (p02) {};

\node[above] at (-6,0) {\footnotesize$1$};
\node[below] at (-5,0) {\footnotesize$\wp(p_{1,1})$};
\node[above] at (-4,0) {\footnotesize$2$};
\node[below] at (-3,0) {\footnotesize$\wp(\tilde p_{1,1})$};
\node[above] at (-5,0) {\footnotesize$1$};
\node[above] at (-3,0) {\footnotesize$1$};
\node[above] at (-2,0) {\footnotesize$0$};
\node[below] at (-1,0) {\footnotesize$\wp(p_{1,2})$};
\node[above] at (-1,0) {\footnotesize$0$};

\node[above] at (0,0) {\footnotesize$1$};
\node[below] at (1,0) {\footnotesize$\wp(p_{0,1})$};
\node[above] at (1,0) {\footnotesize$0$};

\node[above] at (2,0) {\footnotesize$0$};
\node[below] at (3,0) {\footnotesize$\wp(\tilde p_{0,2})$};
\node[above] at (3,0) {\footnotesize$1$};

\node[above] at (4,0) {\footnotesize$2$};
\node[below] at (5,0) {\footnotesize$\wp(p_{0,2})$};
\node[above] at (5,0) {\footnotesize$1$};
\node[above] at (6,0) {\footnotesize$1$};
\node[below] at (7,0) {\footnotesize$+\infty$};
\node[below] at (-7,0) {\footnotesize$-\infty$};

\end{tikzpicture}
\captionof{figure}{Number of pairs of nontrivial critical points satisfying $\wp(a)\in\mathbb{R}$ when $b>b_2$}
\end{figure}

\begin{proof} 
This result follows directly from Lemmas \ref{lemma-aup-1}, \ref{lemma-aup-2}, \ref{lemma-aup-3} and \ref{lem04-9}. Consequently, by defining $$d_2:=\wp(\tilde{p}_{1,1}),\quad d_5:=\wp(\tilde{p}_{0,2}),$$
$$d_1:=\wp(p_{1,1})=e_1+\frac{3e_1^2-\frac{g_2}{4}}{\frac{2\pi}{b}-e_1-\eta_1}, \quad d_3:=\wp(p_{1,2})=e_1-\frac{3e_1^2-\frac{g_2}{4}}{e_1+\eta_1},$$
$$d_4:=\wp(p_{0,1})=-\eta_1,\quad d_6:=\wp(p_{0,2})=\frac{2\pi}{b}-\eta_1,$$
we obtain Theorem \ref{III-thm4}.
\end{proof}

%
%

\section{The case $\tau=\frac12+ib_1$}

\label{sec-5}

In this section, we study the case $\tau=\frac12+ib_1$ and prove Theorem \ref{III-thm2}. 
First, recall from Remark \ref{rmk5-9} that
\begin{align}\label{e6qfc4-1}
e_1+\eta_1=\frac{2\pi}{b_1}.
\end{align}
Recall $\mathcal{B}_k, \alpha_k, \beta_k$ defined in \eqref{B00}-\eqref{alphak1}.
Since Theorem \ref{thm-LW-16} says that $\frac{\omega_1}{2}$ is a degenerate critical point of $G(z)$, it follows from Part I \cite[Section 3]{CFL} that
$$\alpha_1=\frac{\frac{\pi}{b_1}-(\eta_1+e_1)}{3e_1^2-\frac{g_2}{4}}=-\frac{\pi}{b_1(3e_1^2-\frac{g_2}{4})},$$
and
$$\mathcal{B}_1=\Big\{z\in\mathbb{C}\; :\; \operatorname{Re}(\alpha_1 (z-e_1))>\frac12\Big\}.$$
Also recall that
$$
\mathcal{B}_0=\Big\{z\in\mathbb{C}\; :\; \Big|z-\Big(\frac{\pi}{b}-\eta_1\Big)\Big|<\frac{\pi}{b}\Big\},
$$
we see from \eqref{e6qfc4-1} that $\partial\mathcal{B}_0\cap\mathbb{R}=\{-\eta_1, \frac{2\pi}{b_1}-\eta_1\}=\{-\eta_1, e_1\}$.

\begin{lemma}\label{lemma-5s5-9} Let $\tau=\frac12+ib_1$.
Let $d_{1}:=e_1+\frac1{2\alpha_1}$ be the intersection point of $\partial\mathcal{B}_1$ with $\mathbb{R}$. Then $d_{1}<\wp(\frac{1}4+\frac{\tau}{2})<d_{0}:=-\eta_1$. In particular, $\partial\mathcal{B}_0\cap \partial\mathcal{B}_1=\emptyset$ and
\begin{itemize}
\item[(1)] there is $p_{1}\in III^{\circ}$ such that $\wp(p_{1})=d_{1}$.
\item[(2)] there is $p_{0}=\frac12-p_{1}\in II^{\circ}$ such that $\wp(p_{0})=d_{0}=-\eta_1$. 
\end{itemize} 
\end{lemma}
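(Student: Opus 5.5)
This is the degenerate analogue of Lemma \ref{lemma-s5-9}, and I will follow that proof with the circle $\partial\mathcal{B}_1$ replaced by a line.

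First I compute $d_1$. From the proof of Lemma \ref{lemma-s5-9}, $3e_1^2-\frac{g_2}{4}=|e_1-e_2|^2>0$ for every rhombus torus. Hence $\alpha_1=-\frac{\pi}{b_1|e_1-e_2|^2}$ is real and negative. The boundary $\partial\mathcal{B}_1=\{\operatorname{Re}(\alpha_1(z-e_1))=\frac12\}$ is therefore a vertical line, and it meets $\mathbb{R}$ only at
\[
d_1=e_1+\frac{1}{2\alpha_1}=e_1-\frac{|e_1-e_2|^2}{2\pi/b_1}=e_1-\frac{|e_1-e_2|^2}{e_1+\eta_1},
\]
where the last equality uses \eqref{e6qfc4-1}. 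This is the same expression as $d_{1,2}$ in \eqref{eqfc-d12}.

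Next I order the values. Since $b_1<\frac{\sqrt3}{2}$, inequality \eqref{eqfc-s5-11} holds, so $e_1-|e_1-e_2|<-\eta_1$, that is, $|e_1-e_2|>e_1+\eta_1=\frac{2\pi}{b_1}>0$. Then $\frac{|e_1-e_2|^2}{e_1+\eta_1}>|e_1-e_2|$, and so
\[
d_1<e_1-|e_1-e_2|=\wp\Big(\frac14+\frac{\tau}{2}\Big)<-\eta_1=d_0 .
\]
For disjointness, $\partial\mathcal{B}_0$ lies in the strip $-\eta_1\le \operatorname{Re}z\le \frac{2\pi}{b_1}-\eta_1$, while the line $\partial\mathcal{B}_1$ is $\operatorname{Re}z=d_1<-\eta_1$. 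Hence $\partial\mathcal{B}_0\cap\partial\mathcal{B}_1=\emptyset$.

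Now the points $p_1$ and $p_0$. By Lemma \ref{Lemma53-1}, $\wp$ is a strictly increasing bijection from $(0,\frac14-\frac{\tau}{2}]$ onto $(-\infty,\wp(\frac14-\frac\tau2)]$, and $\wp(\frac14-\frac\tau2)=\wp(\frac14+\frac\tau2)$. Since $d_1<\wp(\frac14+\frac\tau2)$, there is $p_1\in III^\circ$ with $\wp(p_1)=d_1$.

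For $p_0$, I apply the addition formula exactly as in \eqref{eqfc-s5-31}:
\[
\wp\Big(p_1-\frac12\Big)=e_1+\frac{3e_1^2-\frac{g_2}{4}}{d_1-e_1}=e_1-(e_1+\eta_1)=-\eta_1 .
\]
Put $p_0=\frac12-p_1$. It lies in $II$ because $p\in III$ if and only if $\frac12-p\in II$, and it lies in the interior because $p_1$ does. It satisfies $\wp(p_0)=\wp(p_1-\frac12)=-\eta_1=d_0$.

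As a consistency check, $\wp(\frac14+\frac\tau2)<-\eta_1<e_1$, so $-\eta_1$ indeed lies in $\wp(II^\circ)$ by Lemma \ref{Lemma53-1}.

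The only step needing care is the inequality $|e_1-e_2|>e_1+\eta_1$, which is exactly where \eqref{eqfc-s5-11} is used; everything else is direct computation.
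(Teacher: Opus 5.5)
Your proof is correct and follows the paper's argument essentially line by line. It uses the same closed form $d_1=e_1-\frac{|e_1-e_2|^2}{e_1+\eta_1}$, the same use of \eqref{eqfc-s5-11} to get $d_1<\wp(\frac14+\frac\tau2)<-\eta_1$, Lemma \ref{Lemma53-1} for $p_1$, and the addition formula at $\frac12$ for $p_0=\frac12-p_1$; you also spell out the disjointness of $\partial\mathcal{B}_0$ and $\partial\mathcal{B}_1$, which the paper leaves implicit. One minor point: the phrase ``since $b_1<\frac{\sqrt3}{2}$'' is not the actual reason, because the paper quotes \eqref{eqfc-s5-11} from Lin--Wang with no restriction on $b$ (it is also used for $b>b_1$ in Lemma \ref{lemma-s5-9}), so that remark is superfluous but harmless.
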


\begin{figure}[htbp]
\centering
\begin{tikzpicture}[ scale=0.7,
    dot/.style={circle,fill=black,inner sep=1pt}
]
\draw[thick, red] (0,-2)--(0,0) -- (2,0) -- (2,2);
\draw[thick, black] (3,2)--(-1,2)--(-3,-2)--(1,-2)--cycle;

\node[dot] at (2, 1.2) (p0) {};
\node[dot] at (0, -1.2) (p1) {};

\node[above] at (1,0) {I};
\node[left] at (2,1) {II};
\node[right] at (0,-1) {III};
\node[left] at (0,0) {\scriptsize$0$};
\node[right] at (2,0) {\scriptsize$\frac12$};
\node[above] at (2,2) {\scriptsize$\frac14+\frac{\tau}2$};
\node[right] at (p0) {\scriptsize$p_{0}$};
\node[left] at (p1) {\scriptsize$p_{1}$};
\node[below] at (0,-2) {\scriptsize$\frac14 - \frac\tau2$};
\end{tikzpicture}
\captionof{figure}{Relative locations of those points in Lemma \ref{lemma-5s5-9}}
\label{figure5-1}
\end{figure}

\begin{proof} 
Recall the proof of Lemma \ref{lemma-s5-9} that $3e_1^2-\frac{g_2}{4}=|e_1-e_2|^2>0$, we have
\begin{align}\label{e6qfc-d12}d_{1}=e_1+\frac1{2\alpha_1}=e_1-\frac{3e_1^2-\frac{g_2}{4}}{\frac{2\pi}{b_1}}=e_1-\frac{|e_1-e_2|^2}{e_1+\eta_1}\notin\{e_1, e_2, e_3\}.\end{align}
Recalling \eqref{eqfc-s5-11} that $|e_1-e_2|>e_1+\eta_1>0$, we obtain
$$d_{1}=e_1-\frac{|e_1-e_2|^2}{e_1+\eta_1}<e_1-|e_1-e_2|=\wp\Big(\frac14+\frac{\tau}{2}\Big)<-\eta_1=d_{0}.$$
Then it follows from Lemma \ref{Lemma53-1} that $p_1\in III^\circ$ and $p_0\in II^\circ$.
Furthermore,
\[\wp\Big(p_{1}-\frac12\Big)=e_1+\frac{\wp''(\frac12)}{2(\wp(p_{1})-e_1)}
=e_1+\frac{3e_1^2-\frac{g_2}{4}}{\wp(p_{1})-e_1}=-\eta_1=\wp(p_{0}),\]
so $p_{0}=\frac12-p_{1}$.
\end{proof}

\begin{corollary}\label{c6oro-s5-10}
Let $\tau=\frac12+ib_1$ and $p\in I\cup II\cup III$. Then $\sigma_3$ has no cusps, and
 $\sigma_1$ has at most one cusp, which must be one of $\{A_0, A_1\}$ if exists. Furthermore,
\begin{itemize}
\item[(1)]  $A_0$ is a cusp of $\sigma_1$ if and only if $\wp(p)=\wp(p_{0})=d_{0}=-\eta_1$.
\item[(2)]  $A_1$ is a cusp of $\sigma_1$ if and only if $\wp(p)=\wp(p_{1})=d_{1}$, or equivalently $\wp(p-\frac12)=-\eta_1$.
\end{itemize}
Therefore, if $\wp(p)\notin\{\wp(p_0), \wp(p_1)\}$, then $\sigma_1$ has no cusps.
\end{corollary}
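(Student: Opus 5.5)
The plan mirrors the proof of Corollary \ref{coro-s5-10}, with one change: at $b=b_1$ the identity \eqref{e6qfc4-1} removes all possible cusps of $\sigma_3$.

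\textbf{Step 1: only $A_0,A_1$ can be cusps.} By Lemmas \ref{Lemma53-5}--\ref{Lemma53-7}, for $p\in I\cup II\cup III$ the endpoints $A_2,A_3=\pm\overline{A_2}$ are each the endpoint of exactly one arc, so neither is a cusp. Hence any cusp of $\sigma_1$ or $\sigma_3$ lies in $\{A_0,A_1\}$.

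\textbf{Step 2: the criteria at $A_0,A_1$.} Lemma \ref{lemma2-10}, with $\frac{4\pi i}{2\tau-1}=\frac{2\pi}{b_1}$, gives:
\begin{itemize}
\item $A_k$ is a cusp of $\sigma_1$ iff $\wp(p-\frac{\omega_k}{2})=-\eta_1$;
\item $A_k$ is a cusp of $\sigma_3$ iff $\wp(p-\frac{\omega_k}{2})=\frac{2\pi}{b_1}-\eta_1$.
\end{itemize}
By \eqref{e6qfc4-1}, $\frac{2\pi}{b_1}-\eta_1=e_1$.

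For $\sigma_3$ and $k=0$, the condition reads $\wp(p)=e_1$, i.e.\ $p=\frac12$. This is excluded since $p\notin E_\tau[2]$.

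For $\sigma_3$ and $k=1$, the condition reads $\wp(p-\frac12)=e_1$, i.e.\ $p-\frac12\equiv\frac12$, i.e.\ $p\equiv 0$. This is again excluded. So $\sigma_3$ has no cusps.

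For $\sigma_1$ and $k=0$, the condition is $\wp(p)=-\eta_1=d_0=\wp(p_0)$, by Lemma \ref{lemma-5s5-9}. This is item (1).

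For $\sigma_1$ and $k=1$, the condition is $\wp(p-\frac12)=-\eta_1$. By the addition formula (as in the proof of Lemma \ref{lemma-5s5-9}),
\[\wp\Big(p-\frac12\Big)=e_1+\frac{3e_1^2-\frac{g_2}{4}}{\wp(p)-e_1}.\]
The map $x\mapsto e_1+\frac{c}{x-e_1}$ is injective, and the computation in Lemma \ref{lemma-5s5-9} shows it sends $d_1$ to $-\eta_1$. So $\wp(p-\frac12)=-\eta_1$ iff $\wp(p)=d_1=\wp(p_1)$. This is item (2).

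\textbf{Step 3: at most one cusp of $\sigma_1$.} Since $d_1<d_0$ by Lemma \ref{lemma-5s5-9}, the two conditions in Step 2 cannot hold simultaneously. The final sentence of the statement follows immediately.

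The only delicate point is the observation in Step 2 that \eqref{e6qfc4-1} forces the $\sigma_3$ cusp conditions onto $p\in E_\tau[2]$. Everything else is bookkeeping.
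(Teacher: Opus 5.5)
Your proof is correct and is essentially the paper's argument. The paper likewise notes that $p\notin E_\tau[2]$ gives $\wp(p)\neq e_1=\frac{2\pi}{b_1}-\eta_1$ and $\wp(p-\frac12)\neq e_1$, which rules out both possible cusps of $\sigma_3$, and otherwise defers to the proof of Corollary \ref{coro-s5-10}: Lemmas \ref{Lemma53-5}--\ref{Lemma53-7} exclude $A_2,A_3$, and then Lemma \ref{lemma2-10} is combined with the addition-formula identity from Lemma \ref{lemma-5s5-9}. Your remark that $d_1<d_0$ gives ``at most one cusp'' just makes explicit a step the paper leaves implicit.
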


\begin{proof} 
Noting from $p\in I\cup II\cup III$ that $p\notin E_{\tau}[2]$, so $\wp(p)\neq e_1=\frac{2\pi}{b_1}-\eta_1$ and $\wp(p-\frac12)\neq e_1=\frac{2\pi}{b_1}-\eta_1$. The rest proof is the same as that of Corollary \ref{coro-s5-10}.
\end{proof}

\begin{lemma}\label{l6emma-aup-1}
Let $\tau=\frac12+ib_1$. Recall Lemma \ref{lemma-aup} that for any $a\in I\cup II\cup III$, there is a unique $p\in I\cup II\cup III\cup \{0,\frac12\}$ such that  $\wp(p)=f(a)$ and $\pm a$ is a pair of nontrivial critical points of $G_p(z)$, where $f(a)$ is defined in \eqref{513-1-0}.
Then $f: I\to f(I)$ is a differmorphism and
\begin{equation}\label{6dddd}f(I)=(\wp(p_{1}), \wp(p_{0})).\end{equation}
\end{lemma}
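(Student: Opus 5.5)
I will mirror the proof of Lemma \ref{lemma-aup-1}, with $b$ replaced by $b_1$ and the inequality \eqref{eqfc4-1} replaced by the equality \eqref{e6qfc4-1}. Write $a=r\in I=(0,\frac12)$, so that $\wp(r)>e_1$ and $\wp'(r)<0$.

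\textbf{Step 1: sign of the denominator.} Since $e_1+\eta_1=\frac{2\pi}{b_1}>0$, we have
$$\frac{d}{dr}\big(\zeta(r)-r\eta_1\big)=-\wp(r)-\eta_1<-(e_1+\eta_1)<0$$
for $r\in(0,\frac12)$, and $\zeta(\frac12)-\frac12\eta_1=0$. Hence $\zeta(r)-r\eta_1>0$ on $I$. This gives $f(r)<\wp(r)$, so $\infty\notin f(I)$ and $p\neq 0$.

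\textbf{Step 2: endpoint limits.} The expansion \eqref{eqq-d01} near $r=0$ is unchanged and gives $f(r)\to-\eta_1=\wp(p_0)$. The expansion \eqref{eqfc-s5-98} near $r=\frac12$ only uses $e_1+\eta_1\neq0$. It gives
$$f(r)\to e_1-\frac{3e_1^2-\frac{g_2}{4}}{e_1+\eta_1}=e_1-\frac{3e_1^2-\frac{g_2}{4}}{2\pi/b_1}=d_1=\wp(p_1),$$
by \eqref{e6qfc-d12}.

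\textbf{Step 3: interior value and connectedness.} As in \eqref{eq4-11}, $f(\frac14)=\wp(\frac14+\frac\tau2)$, and Lemma \ref{lemma-5s5-9} places this value in $(d_1,d_0)$. Since $f(I)\subset\mathbb R$ is connected, $(\wp(p_1),\wp(p_0))\subset f(I)$.

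\textbf{Step 4: $e_1\notin f(I)$.} This is where $b=b_1$ differs. At $b_1$, Theorem \ref{thm-LW-16} gives no pair $z_0$, because $\frac12$ is degenerate. I will argue instead that $G_{1/2}(z)=G(z-\frac12)$ has exactly $3$ critical points for $b=b_1$: Lin–Wang's dichotomy says there are $5$ only for $b<b_0$ or $b>b_1$. So $G_{1/2}$ has no nontrivial critical points, hence $e_1\notin f(I)$ and $f(I)\subset(-\infty,e_1)$, i.e.\ $p\in II\cup III$.

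\textbf{Step 5: local injectivity.} By Lemma \ref{lemma-s5-13}, \eqref{eqq-ar} gives $A(r)\in\sigma_1\cap i\mathbb R\setminus\{A_k\}$. Since $\wp(\frac14+\frac\tau2)<e_1<\wp(\frac14)$, we get $A(\frac14)\in(A_1,A_0)$. By continuity, and because $A(r)\neq A_k$, we have $A(r)\in(A_1,A_0)$ for all $r\in I$. By Lemma \ref{Lemma53-7}(1), $(A_1,A_0)\cap\sigma_1=\emptyset$ apart from the single point where $\sigma_{1,A_2A_3}$ crosses $i\mathbb R$. That crossing point is a branch point only if it also lies on the vertical rays $(-i\infty,A_1]\cup[A_0,+i\infty)$, which it does not, since it lies in $(A_1,A_0)$. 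So $A(r)$ is not a branch point of $\sigma_1$. Lemma \ref{lemma2-10-3} then gives $\wp(r+p)+\wp(r-p)+2\eta_1\neq0$, and \eqref{deri-r} yields $f'(r)\neq0$. Therefore $f$ is strictly monotone on $I$, hence a diffeomorphism onto the interval bounded by its endpoint limits, namely $(\wp(p_1),\wp(p_0))$.

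The main obstacle I expect is Step 4, justifying that $G(z;\frac12+ib_1)$ has no nontrivial critical points at the degenerate parameter. If the cited dichotomy is not accepted as covering $b=b_1$ exactly, I would argue directly. Any nontrivial critical point $a$ of $G_{1/2}$ with $a\in I$ would satisfy $f(a)=e_1$, which is impossible: $\frac12$ lies in neither $I$ nor $\{0\}$, and Step 1 already forces $f(r)<\wp(r)$ with the limits from Step 2.
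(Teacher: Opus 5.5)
Your proof is correct and is essentially the paper's: the paper reruns the proof of Lemma~\ref{lemma-aup-1}, noting from Lin--Wang that $G(z;\frac12+ib_1)$ has no nontrivial critical points, so $e_1,\infty\notin f(I)$; your Step~4 uses the same fact. The fallback in your last paragraph does not work as written, since $f(r)<\wp(r)$ together with the endpoint limits does not rule out an interior value $f(r)=e_1$; it is not needed, but a correct self-contained version is that Step~1's positivity of $\zeta(x)-x\eta_1$ on $(0,\frac12)$ means, by \eqref{G_z}, that $G$ has no critical points in $(0,\frac12)$, hence $G_{1/2}=G(\cdot-\frac12)$ has none in $I$.
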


\begin{proof} Since Theorem \ref{thm-LW-16} says that $G_{\frac12}(z)=G(z-\frac12)$ has no nontrivial critical points and $G_0(z)=G(z)$ has no nontrivial critical points, we have $e_1, \infty\notin f(I)$.
The rest proof is the same as that of Lemma \ref{lemma-aup-1}.
\end{proof}

\begin{lemma}\label{l6emma-aup-2}
Let $\tau=\frac12+ib_1$. Recall Lemma \ref{lemma-aup} that for any $a\in I\cup II\cup III$, there is a unique $p\in I\cup II\cup III\cup \{0,\frac12\}$ such that  $\wp(p)=f(a)$ and $\pm a$ is a pair of nontrivial critical points of $G_p(z)$, where $f(a)$ is defined in \eqref{513-1-0}. Then $f: II=(\frac{1}{2},\frac{1}{4}+\frac{\tau}{2}]\to f(II)$ is a homeomorphism and
\begin{equation}\label{6dddd2-0}
f(II)=\Big[\wp\Big(\frac14\Big),+\infty\Big).
\end{equation}
\end{lemma}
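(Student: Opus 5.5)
We follow the scheme of Lemma \ref{lemma-aup-2}, simplified by the fact that at $b=b_1$ the Green function $G(z)$ has no nontrivial critical points (Theorem \ref{thm-LW-16}). Hence $f(a)\neq\infty$ for every $a\in II$. Also, since $\frac12$ is a degenerate critical point of $G$, we expect $f(a)\to+\infty$ as $a\to\frac12$ along $II$.

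\textbf{Endpoint behaviour.} First we record the two endpoint values of $f$ on $II$. Since $G_{\frac14}(z)$ has the unique pair of nontrivial critical points $\pm(\frac14+\frac\tau2)$ (as used in \eqref{eqq-f5}), we get $f(\frac14+\frac\tau2)=\wp(\frac14)$ and $f(a)\neq\wp(\frac14)$ for $a\in II^\circ$.

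Next we redo the expansion \eqref{eqq-61} near $a=\frac12$, writing $a-\frac12=(1-2\tau)(r-\frac12)$, which is purely imaginary. Now $e_1+\eta_1-\frac{2\pi}{b_1}=0$ by \eqref{e6qfc4-1}. Hence the linear term of $\zeta(a)-r\eta_1-s\eta_2$ vanishes, and this quantity equals $-\frac{\wp''(\frac12)}{6}(a-\frac12)^3+O((\frac12-r)^5)$. Then
\[
f(a)=e_1+\frac{\wp''(\frac12)(a-\frac12)}{-\frac13\wp''(\frac12)(a-\frac12)^3}+O(1)=-\frac{3}{(a-\frac12)^2}+O(1).
\]
Since $(a-\frac12)^2<0$, this gives $f(a)\to+\infty$ as $a\to\frac12$. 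Consequently $f$ is real-valued and continuous on $II$, never equals $\wp(\frac14)$ on $II^\circ$, and tends to $+\infty$ at $\frac12$. By connectedness,
\[
f(II^\circ)\subset(\wp(\tfrac14),+\infty), \qquad f(II)\supset[\wp(\tfrac14),+\infty).
\]
Hence $f(II)=[\wp(\frac14),+\infty)$. In particular $\wp(p)>\wp(\frac14)>e_1$, so $p\in I$ for every $a\in II$.

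\textbf{Injectivity.} For $a\in II$ we have $\wp(a)<e_1<\wp(p)$. As in \eqref{eqq-Aj}, Remark \ref{rmk-s5-2} and \eqref{Ak} give that the corresponding $A(a)=A_0+\frac{\wp'(p)}{2(\wp(p)-\wp(a))}$ is real and lies in $(A_1,A_0)$. By Lemma \ref{Lemma53-5}, $\sigma_3\cap(A_1,A_0)$ meets $\sigma_1$ only at the single point $\sigma_{3,A_2A_3}\cap\mathbb R$, so $\#(\sigma_1\cap\sigma_3\cap(A_1,A_0))\le 1$ for $p\in I$, exactly as in \eqref{eq4-22-1}. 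Theorem \ref{rmk2-8} then shows that each value $\wp(p)$ has at most one preimage in $II$. So $f:II\to[\wp(\frac14),+\infty)$ is a continuous bijection. Its inverse is continuous, as in Step 2 of Lemma \ref{lemma-aup-2}: a limit of nontrivial critical points $a$ of $G_{p_n}$ with $p_n\to p$ is a critical point of $G_p$, it cannot be trivial because of the endpoint behaviour above, and it is then unique.

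\textbf{Main obstacle.} The delicate point is the degenerate expansion at $a=\frac12$. One has to check that the cubic coefficient of $\zeta(a)-r\eta_1-s\eta_2$ is nonzero, which holds because $\wp''(\frac12)=|e_1-e_2|^2\cdot 2>0$. One also has to confirm the sign, so that the limit is $+\infty$ and not $-\infty$. Everything else transfers directly from Lemma \ref{lemma-aup-2}.
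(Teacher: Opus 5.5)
Your proposal is correct and follows essentially the same route as the paper: the degenerate expansion $f(a)=-\frac{3}{(a-\frac12)^2}+O(1)\to+\infty$ (from $e_1+\eta_1=\frac{2\pi}{b_1}$), the exclusions $\infty\notin f(II)$ and $\wp(\frac14)\notin f(II^\circ)$ combined with connectedness to get $f(II)=[\wp(\frac14),+\infty)$ and $p\in I$, and injectivity from $A(a)\in(A_1,A_0)$ together with $\#(\sigma_1\cap\sigma_3\cap(A_1,A_0))\le 1$ from Lemma \ref{Lemma53-5}. Your compactness argument for continuity of the inverse, which rules out accumulation at $\frac12$ via the blow-up, only spells out what the paper asserts in one line.
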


\begin{proof} The proof is similar to Steps 1-2 of Lemma \ref{lemma-aup-2}.
Let $a=r+s\tau\in II=(\frac{1}{2},\frac{1}{4}+\frac{\tau}{2}]$. 
Then $s=1-2r$, i.e., $a-\frac12=(1-2\tau)(r-\frac12)=-2b_1 (r-\frac12)i$ with $r\in [\frac14, \frac12)$. 
When $\frac12-r>0$ is sufficiently small, we see from $\frac{4\pi i}{2\tau-1}=\frac{2\pi}{b_1}=e_1+\eta_1$ that
$$\wp(a)=e_1+O\left(\Big(\frac12-r\Big)^2\right), \quad \wp'(a)=\wp''\Big(\frac12\Big)\Big(a-\frac12\Big)+O\left(\Big(\frac12-r\Big)^3\right),$$
and
\begin{align*}&\zeta(a)-r\eta_1-s\eta_2\\
=&-\Big[e_1+\eta_1-\frac{4\pi i}{2\tau-1}\Big]\Big(a-\frac12\Big)
-\frac{\wp''(\frac12)}{6}\Big(a-\frac12\Big)^3+O\left(\Big(\frac12-r\Big)^5\right)\\
=&-\frac{\wp''(\frac12)}{6}\Big(a-\frac12\Big)^3+O\left(\Big(\frac12-r\Big)^5\right).\end{align*}
Note that $\wp''(\frac12)=6e_1^2-g_2/2=4e_1^2+2|e_2|^2>0$. Consequently, we see from $(a-\frac12)^2=-4b_1^2 (r-\frac12)^2$ that
\begin{align}\label{e6qq-61}\wp(p)&=f(a)=\wp(a)+\frac{\wp'(a)}{2(\zeta(a)-r\eta_1-s\eta_2)}\nonumber\\
&=-\frac{3}{(a-\frac12)^2}+O(1)\nearrow +\infty\quad \text{as}\quad r\nearrow\frac12.\end{align}

Since $G_0(z)=G(z)$ has no nontrivial critical points, so $\infty\notin f(II)$.
Recall \eqref{eqq-f5} that
$$f\Big(\frac14+\frac{\tau}{2}\Big)=\wp\Big(\frac14\Big),\qquad
f(a)\neq\wp\Big(\frac14\Big)\quad\text{for any }a\in II^\circ.$$
From here and $f: II^\circ\to f(II^{\circ})$ is continuous, i.e., $f(II^{\circ})\subset\mathbb R$ is path-connected, we obtain
$$f(II^\circ)=\Big(\wp\Big(\frac14\Big),+\infty\Big).
$$
In particular, $p\in I=(0,\frac12)$ for $a\in II$.

Now we prove that $f: II^\circ=(\frac12, \frac14+\frac{\tau}2)\to (\wp(\frac14),+\infty)$ is a homeomorphism. Indeed,
for any $a\in  (\frac12, \frac14+\frac{\tau}2)$, we have $\wp(a)<e_1<\wp(p)$, so the corresponding 
\begin{align}\label{e6qq-Aj}A(a)&=\frac{1}{2}\left[  \zeta(p+a)+\zeta(p-a)-\zeta(2p)\right]\\
&=A_0+\frac{\wp'(p)}{2(\wp(p)-\wp(a))}\in(A_1, A_0).\nonumber\end{align}
Since Lemma \ref{Lemma53-5} shows $\#(\sigma_1\cap\sigma_3\cap (A_1, A_0))\leq 1$ for any $p\in I$, we conclude that
for any $\wp(p)\in (\wp(\frac14),+\infty)$, $f^{-1}(\wp(p))$ contains exactly one preimage in $(\frac12, \frac14+\frac{\tau}2)$. Thus, $f: (\frac12, \frac14+\frac{\tau}2)\to (\wp(\frac14),+\infty)$ is bijective. The inverse map $f^{-1} : (\wp(\frac14),+\infty)\to (\frac12, \frac14+\frac{\tau}2)$ is also continuous because $a=f^{-1}(\wp(p))$ is the nontrivial critical point of $G_p(z)$. This completes the proof.
\end{proof}

\begin{lemma}\label{l6emma-aup-3}
Let $\tau=\frac12+ib_1$. Recall Lemma \ref{lemma-aup} that for any $a\in I\cup II\cup III$, there is a unique $p\in I\cup II\cup III\cup \{0,\frac12\}$ such that  $\wp(p)=f(a)$ and $\pm a$ is a pair of nontrivial critical points of $G_p(z)$, where $f(a)$ is defined in \eqref{513-1-0}. 
Then $f: III=[\frac14-\frac{\tau}{2}, 0)\to f(III)$ is a homeomorphism and
\begin{equation}\label{6dddd3-0}
f(III)=\Big(e_1,\wp\Big(\frac14\Big)\Big].
\end{equation}
\end{lemma}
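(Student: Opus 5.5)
The quickest route is the reflection symmetry \eqref{eq: da11ta3}, used in the second proof of Lemma \ref{lemma-aup-3}. Since $G(z)=G(-z)$ gives $G_p(\frac12-z)=G_{\frac12-p}(z)$, a point $a$ is a nontrivial critical point of $G_p$ if and only if $\frac12-a$ is a nontrivial critical point of $G_{\frac12-p}$.

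The map $a\mapsto \frac12-a$ sends $II=(\frac12,\frac14+\frac{\tau}{2}]$ bijectively onto $[\frac14-\frac{\tau}{2},0)=III$, using $\frac14+\frac{\tau}{2}\equiv\frac14-\frac{\tau}{2}$ modulo $\Lambda_\tau$ after a sign change. Combined with the uniqueness of $\pm p$ in Lemma \ref{lemma-aup}, this gives
\[
f\Big(\frac12-a\Big)=\wp\Big(\frac12-p\Big)=\wp\Big(p-\frac12\Big)=e_1+\frac{3e_1^2-\frac{g_2}{4}}{f(a)-e_1}=:T(f(a)),\quad a\in II,
\]
where the middle equality is the addition formula already used in \eqref{eqfc-s5-31}. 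Since $3e_1^2-\frac{g_2}{4}=|e_1-e_2|^2>0$, the Möbius map $T$ is a strictly decreasing homeomorphism from $(e_1,+\infty)$ onto $(e_1,+\infty)$. It sends $+\infty$ to $e_1$ and $\wp(\frac14)$ to $\wp(\frac14-\frac12)=\wp(\frac14)$.

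By Lemma \ref{l6emma-aup-2}, $f\colon II\to[\wp(\frac14),+\infty)$ is a homeomorphism. Therefore $f|_{III}=T\circ f|_{II}\circ(\frac12-\cdot)^{-1}$ is a homeomorphism onto $T([\wp(\frac14),+\infty))=(e_1,\wp(\frac14)]$, which is \eqref{6dddd3-0}. The endpoint values check out: $f(\frac14-\frac{\tau}{2})=\wp(\frac14)$ matches \eqref{eq4-11}, and $f(a)\to e_1$ as $a\to 0$ along $III$ matches $f\to+\infty$ as $a\to\frac12$ along $II$ from \eqref{e6qq-61}.

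The main thing to verify carefully is the identification $f(\frac12-a)=\wp(\frac12-p)$. Here $f$ is defined through Hitchin's formula with $a=r+s\tau$ and real $(r,s)$, and replacing $a$ by $\frac12-a$ changes $(r,s)$ to $(\frac12-r,-s)$. So one has to check that \eqref{61-37-22--1} transforms correctly; the same computation gives $c(\frac12-a;\frac12-p)=\frac12\eta_1-c(a;p)$, exactly as in the proof of Lemma \ref{lem4-s5-13}. One must also check that $\wp(p)=e_1$, i.e.\ $p=\frac12$, never arises on $II$. This holds because $f(II)\subset(e_1,+\infty)$, consistent with $G_{1/2}$ having no nontrivial critical points at $b=b_1$.

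If this symmetry argument runs into trouble, the fallback is to mirror the proof of Lemma \ref{l6emma-aup-2} directly. Near $a=0$ one uses the expansion \eqref{eqq-61-2} with $\frac{2\pi}{b_1}-\eta_1=e_1$, which gives $f(a)\to e_1$. Continuity and path-connectedness of $f(III)$, together with $e_1,\infty\notin f(III)$ (Theorem \ref{thm-LW-16}), pin down the image. Injectivity then follows from $A(a)\in(A_1,A_0)$ and $\#(\sigma_1\cap\sigma_3\cap(A_1,A_0))\le1$ from Lemma \ref{Lemma53-5}.
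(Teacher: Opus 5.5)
Your proposal is correct and is essentially the paper's own proof: the lemma follows from the reflection symmetry \eqref{eq: da11ta3} (i.e.\ $G_p(\frac12-z)=G_{\frac12-p}(z)$) together with Lemma \ref{l6emma-aup-2}, and your M\"obius map $T(x)=e_1+\frac{3e_1^2-g_2/4}{x-e_1}$ only makes explicit the decreasing correspondence $\wp(p)\mapsto\wp(\frac12-p)$ on $(e_1,+\infty)$, which the paper leaves implicit. One small correction: $\frac12-(\frac14+\frac{\tau}{2})=\frac14-\frac{\tau}{2}$ holds exactly, so no lattice identification is needed for $a\mapsto\frac12-a$ to carry $II$ onto $III$.
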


\begin{proof}
Note that $a\in II$ if and only if $\frac12-a\in III$, and $p\in (0,\frac12)$ if and only if $\frac12-p\in (0, \frac12)$, this lemma follows directly from \eqref{eq: da11ta3} and Lemma \ref{l6emma-aup-2}.
\end{proof}

\begin{theorem}[=Theorem \ref{III-thm2}]
Let $\tau=\frac12+ib_1$. Then the following statements hold.
\begin{itemize}
\item[(1)] When $\wp(p)\in (-\infty, \wp(p_{1})]\cup [\wp(p_{0}), e_1]$, $G_p(z)$ has no nontrivial critical points satisfying $\wp(a)\in\mathbb{R}$.
\item[(2)] When $\wp(p)\in (\wp(p_{1}), \wp(p_{0}))\cup(e_1,+\infty)$, $G_p(z)$ has a unique pair of nontrivial critical points $\pm a$ satisfying $\wp(a)\in\mathbb{R}$.
\end{itemize}
\end{theorem}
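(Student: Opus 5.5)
The plan is to assemble the three image computations of this section, in the same way Theorem \ref{thm4-7} was deduced from Lemmas \ref{lemma-aup-1}--\ref{lemma-aup-3}. By Lemma \ref{lemma-s5-13}, a pair of nontrivial critical points $\pm a$ of $G_p(z)$ with $\wp(a)\in\mathbb{R}$ may be normalized so that $a\in I\cup II\cup III$. By Lemma \ref{lemma-aup} and Hitchin's formula \eqref{513-1}, such pairs are in bijection with the points $a\in I\cup II\cup III$ satisfying $f(a)=\wp(p)$. Moreover, $I$, $II$ and $III$ represent pairwise distinct pairs $\pm a$ in $E_\tau$, by the description \eqref{eqfc-s5-3}. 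Hence the number of such pairs is exactly
$$\#\big(f|_I^{-1}(\wp(p))\big)+\#\big(f|_{II}^{-1}(\wp(p))\big)+\#\big(f|_{III}^{-1}(\wp(p))\big).$$

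Next I would insert the three injectivity and image statements:
\begin{itemize}
\item By Lemma \ref{l6emma-aup-1}, $f$ is injective on $I$ with image $(\wp(p_1),\wp(p_0))$.
\item By Lemma \ref{l6emma-aup-2}, $f$ is injective on $II$ with image $[\wp(\frac14),+\infty)$.
\item By Lemma \ref{l6emma-aup-3}, $f$ is injective on $III$ with image $(e_1,\wp(\frac14)]$.
\end{itemize}
The last two images are disjoint and their union is $(e_1,+\infty)$.

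It remains to check that $(\wp(p_1),\wp(p_0))$ is disjoint from $(e_1,+\infty)$, i.e.\ that $\wp(p_0)\le e_1$. This holds because $\wp(p_0)=-\eta_1<\frac{2\pi}{b_1}-\eta_1=e_1$ by \eqref{e6qfc4-1}; alternatively, Lemma \ref{lemma-5s5-9} gives $p_0\in II^\circ$, so $\wp(p_0)<e_1$ by Lemma \ref{Lemma53-1}. Consequently every $\wp(p)$ in $(\wp(p_1),\wp(p_0))\cup(e_1,+\infty)$ has exactly one preimage, and every other real value has none. The points $\wp(p)\in\{e_1\}$ correspond to $p\in E_\tau[2]$ and are covered trivially, since the images above exclude $e_1$. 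This gives statements (1) and (2), with $d_1=\wp(p_1)=e_1-\frac{3e_1^2-g_2/4}{2\pi/b_1}$ and $d_2=\wp(p_0)=-\eta_1$.

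The only genuinely delicate point has already been handled in the lemmas: the injectivity of $f$ on $II$, which comes from $\#(\sigma_1\cap\sigma_3\cap(A_1,A_0))\le1$, and the blow-up \eqref{e6qq-61} at $a\to\frac12$, which is specific to $b=b_1$. So the main thing I would double-check is the bookkeeping of disjointness. In particular, the endpoint values $\wp(\frac14)$ must be attained only once, from the point $\frac14+\frac{\tau}{2}\in II$, and not also from $\frac14-\frac{\tau}{2}\in III$. The images as stated give $\wp(\frac14)$ in both $f(II)$ and $f(III)$. But $\frac14-\frac\tau2=\frac14+\frac\tau2$ in $E_\tau$ up to sign, so these represent the same pair $\pm a$ and the count is one.
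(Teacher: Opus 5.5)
Your proposal is correct and follows the paper's proof, which reads the theorem off directly from Lemmas \ref{l6emma-aup-1}, \ref{l6emma-aup-2} and \ref{l6emma-aup-3} with $d_1=\wp(p_1)$ and $d_2=\wp(p_0)=-\eta_1$. Your extra bookkeeping makes explicit what the paper leaves implicit: that $\wp(p_0)=-\eta_1<e_1$, that $e_1$ lies in none of the images, and how the shared value $\wp(\frac14)$ is counted. On that last point, $\frac14-\frac\tau2$ and $\frac14+\frac\tau2$ differ by $\tau$, so they are literally the same point of $E_\tau$ rather than just equal up to sign. Your earlier claim that $I$, $II$, $III$ give pairwise distinct pairs therefore fails exactly at this endpoint, which is the overlap you correctly repair at the end.
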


\begin{proof} This result follows directly from Lemmas \ref{l6emma-aup-1}, \ref{l6emma-aup-2} and \ref{l6emma-aup-3}. Consequently,
by defining
$$d_1:=\wp(p_{1})=e_1-\frac{3e_1^2-\frac{g_2}{4}}{\frac{2\pi}{b_1}}, \quad d_2:=\wp(p_{0})=-\eta_1,$$
 we obtain Theorem \ref{III-thm2}.
\end{proof}

\section{The case $\tau=\frac12+ib$ with $b\in [\frac12, b_1)$}

\label{sec-6}

In this section, we study the case $\tau=\frac12+ib$ with $b\in [\frac12, b_1)$ and prove Theorem \ref{III-thm1}. 
By Remark \ref{rmk5-9} and $b_0<\frac12$, we have
\begin{align}\label{7eqfc4-1}
0<e_1+\eta_1<\frac{2\pi}{b},\qquad \forall\, b\in  \Big[\frac12, b_1\Big).
\end{align}
Recall $\mathcal{B}_k, \alpha_k, \beta_k$ defined in \eqref{B00}-\eqref{alphak1}.
Since Theorem \ref{thm-LW-16} says that $\frac{\omega_1}{2}$ is a non-degenerate minimal point of $G(z)$, it follows from Part I \cite[Section 3]{CFL} and $3e_1^2-\frac{g_2}{4}=|e_1-e_2|^2>0$
 that
\begin{align}\label{alphak0-5}
|\alpha_1|=\frac{|\frac{\pi}{b}-(\eta_1+e_1)|}{3e_1^2-\frac{g_2}{4}}<\beta_1=\frac{\frac{\pi}{b}}{3e_1^2-\frac{g_2}{4}},
\end{align}
\begin{equation}\label{7alphak1-5}
\mathcal{B}_1=\bigg\{z\in\mathbb{C}\; :\; \bigg|z-e_1-\frac{\alpha_1}{\alpha_1^2-\beta_1^2}\bigg|<\frac{\beta_1}{\beta_1^2-\alpha_1^2}\bigg\}.
\end{equation}
By \eqref{eqfc-s5-2}, we see that the center of $\mathcal{B}_1$ is on the real axis.
Also recall that
\[
\mathcal{B}_0=\Big\{z\in\mathbb{C}\; :\; \Big|z-\Big(\frac{\pi}{b}-\eta_1\Big)\Big|<\frac{\pi}{b}\Big\}.
\]

\begin{lemma}\label{7lemma-s5-9} Let $\tau=\frac12+ib$ with $\frac12\leq b<b_1$.
Let $d_{1,1}<d_{1,2}$ be the two intersection points of $\partial\mathcal{B}_1$ with $\mathbb{R}$, and $d_{0,1}:=-\eta_1<d_{0,2}:=\frac{2\pi}{b}-\eta_1$ be the two intersection points of $\partial\mathcal{B}_0$ with $\mathbb{R}$. Then $$d_{1,1}<\wp\Big(\frac{1}4+\frac{\tau}{2}\Big)<d_{0,1}<e_1<d_{0,2}<\wp\Big(\frac14\Big)<d_{1,2}.$$  In particular, $\overline{\mathcal{B}_0}\Subset\mathcal{B}_1$ and
\begin{itemize}
\item[(1)] there are $p_{1,1}\in III^{\circ}$ and $p_{1,2}\in (0, \frac14)$ such that $\wp(p_{1,1})=d_{1,1}$, $\wp(p_{1,2})=d_{1,2}$.
\item[(2)] there are $p_{0,1}=\frac12-p_{1,1}\in II^{\circ}$ and $p_{0,2}=\frac12-p_{1,2}\in (\frac14,\frac12)$ such that $\wp(p_{0,1})=d_{0,1}$, $\wp(p_{0,2})=d_{0,2}$. 
\end{itemize} 
\end{lemma}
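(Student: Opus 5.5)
We mirror the proof of Lemma \ref{lemma-s5-9}, with the sign changes forced by \eqref{7eqfc4-1}. We first compute the two real points of $\partial\mathcal{B}_1$. Since $3e_1^2-\frac{g_2}{4}=|e_1-e_2|^2>0$ and $\alpha_1,\beta_1\in\mathbb R$ with $|\alpha_1|<\beta_1$, \eqref{7alphak1-5} gives
$$d_{1,1}=e_1+\frac{1}{\alpha_1-\beta_1}=e_1-\frac{|e_1-e_2|^2}{e_1+\eta_1},\qquad d_{1,2}=e_1+\frac{1}{\alpha_1+\beta_1}=e_1+\frac{|e_1-e_2|^2}{\frac{2\pi}{b}-(e_1+\eta_1)}.$$
The denominators are positive by \eqref{7eqfc4-1}; in particular $d_{1,1}<e_1<d_{1,2}$. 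We write $\kappa:=|e_1-e_2|$.

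Next we establish the chain of inequalities. The inequalities $\wp(\frac14+\frac{\tau}{2})<-\eta_1$ and $\frac{2\pi}{b}-\eta_1<\wp(\frac14)$ are exactly \eqref{eqfc-s5-11}--\eqref{eqfc-s5-12}, and these were proved by Lin--Wang for all $b$. The inequalities $-\eta_1<e_1<\frac{2\pi}{b}-\eta_1$ are just \eqref{7eqfc4-1}. Together with $\wp(\frac14+\frac\tau2)=e_1-\kappa$ and $\wp(\frac14)=e_1+\kappa$, these give
$$0<e_1+\eta_1<\kappa,\qquad 0<\frac{2\pi}{b}-(e_1+\eta_1)<\kappa .$$
Hence $\kappa^2/(e_1+\eta_1)>\kappa$, so $d_{1,1}<e_1-\kappa=\wp(\frac14+\frac\tau2)$. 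Likewise $\kappa^2/(\frac{2\pi}{b}-e_1-\eta_1)>\kappa$, so $d_{1,2}>e_1+\kappa=\wp(\frac14)$. This yields the full chain. Both $\mathcal B_0$ and $\mathcal B_1$ are disks centered on $\mathbb R$, and $[d_{0,1},d_{0,2}]\subset(d_{1,1},d_{1,2})$; therefore $\overline{\mathcal B_0}\Subset\mathcal B_1$.

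Now we locate the points. By Lemma \ref{Lemma53-1}, $\wp$ is an increasing bijection along the path $(0,\frac14-\frac\tau2]\cup[\frac14+\frac\tau2,\frac12]\cup[\frac12,0)$, where $\frac14-\frac\tau2\equiv\frac14+\frac\tau2$. Values below $\wp(\frac14-\frac\tau2)$ are attained on $III^\circ$, values in $(\wp(\frac14+\frac\tau2),e_1)$ on $II^\circ$, and values in $(e_1,\wp(\frac14))$ on $(\frac14,\frac12)$. Values above $\wp(\frac14)$ are attained on $(0,\frac14)$, since $\wp$ decreases on $(0,\frac12)$ from $+\infty$. This places $p_{1,1}\in III^\circ$, $p_{1,2}\in(0,\frac14)$, $p_{0,1}\in II^\circ$ and $p_{0,2}\in(\frac14,\frac12)$.

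Finally, for the relations $p_{0,j}=\frac12-p_{1,j}$ we use the addition formula $\wp(p-\frac12)=e_1+\frac{\kappa^2}{\wp(p)-e_1}$, as in \eqref{eqfc-s5-31}. Substituting $\wp(p_{1,1})=d_{1,1}$ gives $e_1-(e_1+\eta_1)=-\eta_1=\wp(p_{0,1})$, and substituting $\wp(p_{1,2})=d_{1,2}$ gives $\frac{2\pi}{b}-\eta_1=\wp(p_{0,2})$. Since $p\mapsto\frac12-p$ maps $III$ to $II$ and $(0,\frac14)$ to $(\frac14,\frac12)$, uniqueness of preimages within these arcs identifies the points. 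The only real input beyond algebra is the pair of Lin--Wang bounds \eqref{eqfc-s5-11}--\eqref{eqfc-s5-12}; these are the main point to check, and we take them as stated for all $b>0$.
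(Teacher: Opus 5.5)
Your proposal is correct and follows the paper's proof. You compute $d_{1,1},d_{1,2}$ explicitly from \eqref{7alphak1-5}, combine \eqref{7eqfc4-1} with the Lin--Wang bounds \eqref{eqfc-s5-11}--\eqref{eqfc-s5-12} to obtain the chain, locate the points via Lemma \ref{Lemma53-1}, and identify $p_{0,j}=\frac12-p_{1,j}$ through the half-period addition formula. Your added justifications (the containment criterion for two disks centered on $\mathbb{R}$, and injectivity of $\wp$ on $II^{\circ}$ and on $(0,\frac12)$) make explicit steps the paper leaves implicit.
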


\begin{figure}[htbp]
\centering
\begin{tikzpicture}[ scale=0.7,
    dot/.style={circle,fill=black,inner sep=1pt}
]
\draw[thick, red] (0,-2)--(0,0) -- (2,0) -- (2,2);
\draw[thick, black] (3,2)--(-1,2)--(-3,-2)--(1,-2)--cycle;

\node[dot] at (2, 1.2) (p01) {};
\node[dot] at (1.5, 0) (p02) {};
\node[dot] at (0, -1.2) (p11) {};
\node[dot] at (0.5,0) (p12) {};

\node[above] at (1,0) {I};
\node[left] at (2,0.7) {II};
\node[right] at (0,-1) {III};
\node[left] at (0,0) {\scriptsize$0$};
\node[right] at (2,0) {\scriptsize$\frac12$};
\node[above] at (2,2) {\scriptsize$\frac14+\frac{\tau}2$};
\node[left] at (p01) {\scriptsize$p_{0,1}$};
\node[below] at (p02) {\scriptsize$p_{0,2}$};
\node[left] at (p11) {\scriptsize$p_{1,1}$};
\node[below] at (p12) {\scriptsize$p_{1,2}$};
\node[below] at (0,-2) {\scriptsize$\frac14 - \frac\tau2$};
\end{tikzpicture}
\captionof{figure}{Relative locations of those points in Lemma \ref{7lemma-s5-9}}
\label{figure6-1}
\end{figure}

\begin{proof} 
By $3e_1^2-\frac{g_2}{4}=|e_1-e_2|^2>0$,
we have $$d_{1,2}=e_1+\frac{\beta_1-\alpha_1}{\beta_1^2-\alpha_1^2}=e_1+\frac{1}{\beta_1+\alpha_1}=e_1+\frac{3e_1^2-\frac{g_2}{4}}{\frac{2\pi}{b}-(e_1+\eta_1)}\notin\{e_1, e_2, e_3\},$$
and
\begin{align}\label{7eqfc-d12}d_{1,1}&=e_1-\frac{\alpha_1+\beta_1}{\beta_1^2-\alpha_1^2}=e_1-\frac{1}{\beta_1-\alpha_1}=e_1-\frac{3e_1^2-\frac{g_2}{4}}{e_1+\eta_1}\\
&=e_1-\frac{|e_1-e_2|^2}{e_1+\eta_1}\notin\{e_1, e_2, e_3\}.\nonumber\end{align}
Recalling \eqref{eqfc-s5-10}-\eqref{eqfc-s5-11} that $|e_1-e_2|>e_1+\eta_1>0$, we obtain
$$d_{1,1}=e_1-\frac{|e_1-e_2|^2}{e_1+\eta_1}<e_1-|e_1-e_2|=\wp\Big(\frac14+\frac{\tau}{2}\Big)<-\eta_1=d_{0,1}<e_1.$$
Similarly, noting from \eqref{eqfc-s5-12} that $|e_1-e_2|>\frac{2\pi}{b}-(e_1+\eta_1)>0$, we have
$$d_{1,2}=e_1+\frac{|e_1-e_2|^2}{\frac{2\pi}{b}-(e_1+\eta_1)}>e_1+|e_1-e_2|=\wp\Big(\frac14\Big)>\frac{2\pi}{b}-\eta_1=d_{0,2}>e_1.$$
Furthermore,
\begin{align}\label{7eqfc-s5-31}\wp\Big(p_{1,1}-\frac12\Big)&=e_1+\frac{\wp''(\frac12)}{2(\wp(p_{1,1})-e_1)}\\
&=e_1+\frac{3e_1^2-\frac{g_2}{4}}{\wp(p_{1,1})-e_1}=-\eta_1=\wp(p_{0,1}),\nonumber\end{align}
and 
\begin{align}\label{7eqfc-s5-22}\wp\Big(p_{1,2}-\frac12\Big)&=e_1+\frac{\wp''(\frac12)}{2(\wp(p_{1,2})-e_1)}\\
&=e_1+\frac{3e_1^2-\frac{g_2}{4}}{\wp(p_{1,2})-e_1}=\frac{2\pi}{b}-\eta_1=\wp(p_{0,2}),\nonumber\end{align}
so $p_{0,1}=\frac12-p_{1,1}$ and $p_{0,2}=\frac12-p_{1,2}$. The proof is complete.
\end{proof}

\begin{corollary}\label{7coro-s5-10}
Let $\tau=\frac12+ib$ with $\frac12\leq b<b_1$ and $p\in I\cup II\cup III$. Then for $j=1,3$, $\sigma_j$ has at most one cusp, which must be one of $\{A_0, A_1\}$ if exists. Furthermore,
\begin{itemize}
\item[(1)]  $A_0$ is a cusp of $\sigma_3$ if and only if $\wp(p)=\wp(p_{0,2})=d_{0,2}=\frac{2\pi}{b}-\eta_1$.
\item[(2)]  $A_0$ is a cusp of $\sigma_1$ if and only if $\wp(p)=\wp(p_{0,1})=d_{0,1}=-\eta_1$.
\item[(3)]  $A_1$ is a cusp of $\sigma_1$ if and only if $\wp(p)=\wp(p_{1,1})=d_{1,1}$, or equivalently $\wp(p-\frac12)=-\eta_1$.
\item[(4)]  $A_1$ is a cusp of $\sigma_3$ if and only if $\wp(p)=\wp(p_{1,2})=d_{1,2}$, or equivalently $\wp(p-\frac12)=\frac{2\pi}{b}-\eta_1$.
\end{itemize}
Therefore, if $\wp(p)\notin\{\wp(p_{j,k}), j=0,1, k=1,2\}$, then $\sigma_j$ has no cusps for $j=1,3$.
\end{corollary}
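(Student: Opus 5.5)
The plan is to mirror the proof of Corollary \ref{coro-s5-10}: first rule out $A_2, A_3$ as cusps, then translate the cusp criteria of Lemma \ref{lemma2-10} into conditions on $\wp(p)$ via Lemma \ref{7lemma-s5-9}.

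\emph{Step 1: $A_2,A_3$ are never cusps.} By Lemmas \ref{Lemma53-5}--\ref{Lemma53-7}, for $p\in I\cup II\cup III$ the points $A_2$ and $A_3$ are endpoints of $\sigma_j$ lying off the axis of symmetry. Each of them is the endpoint of exactly one arc: a simple curve $\sigma_{j,A_2A_3}$, or a semi-unbounded curve $\sigma_{j,\infty A_2}$ in the cases (2-1). Hence $d_j(A_2)=d_j(A_3)=1$. Alternatively, by Lemma \ref{lemma2-10} a cusp at $A_k$ requires $\wp(p-\frac{\omega_k}{2})$ to equal the real number $-\eta_1$ or $\frac{4\pi i}{2\tau-1}-\eta_1=\frac{2\pi}{b}-\eta_1$. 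For $k=2,3$ and $\wp(p)\in\mathbb{R}$ the addition formula gives
\[
\wp\Big(p-\frac{\omega_k}{2}\Big)=e_k+\frac{\wp''(\frac{\omega_k}{2})}{2(\wp(p)-e_k)},
\]
and using $e_2=\overline{e_3}\notin\mathbb{R}$ one checks this value is non-real (a short computation). Either route excludes $A_2,A_3$, so any cusp lies in $\{A_0,A_1\}$.

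\emph{Step 2: the four equivalences.} Apply Lemma \ref{lemma2-10} with $\frac{4\pi i}{2\tau-1}=\frac{2\pi}{b}$.
\begin{itemize}
\item For $k=0$: $A_0$ is a cusp of $\sigma_1$ iff $\wp(p)=-\eta_1=d_{0,1}=\wp(p_{0,1})$, and a cusp of $\sigma_3$ iff $\wp(p)=\frac{2\pi}{b}-\eta_1=d_{0,2}=\wp(p_{0,2})$. This gives (1) and (2).
\item For $k=1$: $A_1$ is a cusp of $\sigma_1$ iff $\wp(p-\frac12)=-\eta_1$, and a cusp of $\sigma_3$ iff $\wp(p-\frac12)=\frac{2\pi}{b}-\eta_1$. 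The addition formula $\wp(p-\frac12)=e_1+\frac{3e_1^2-g_2/4}{\wp(p)-e_1}$ is a bijection of $\wp(p)\ne e_1$ onto values $\ne e_1$. So by \eqref{7eqfc-s5-31}--\eqref{7eqfc-s5-22} these conditions are equivalent to $\wp(p)=d_{1,1}=\wp(p_{1,1})$ and $\wp(p)=d_{1,2}=\wp(p_{1,2})$ respectively. This gives (3) and (4).
\end{itemize}

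\emph{Step 3: at most one cusp for each $j$.} By Lemma \ref{7lemma-s5-9} the four numbers $d_{1,1},d_{0,1},d_{0,2},d_{1,2}$ are pairwise distinct. Hence for a given $p$ at most one of the conditions holds for $\sigma_1$ (namely $d_{0,1}$ or $d_{1,1}$), and at most one holds for $\sigma_3$ (namely $d_{0,2}$ or $d_{1,2}$). The final sentence of the corollary is then immediate.

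The only mildly delicate point is Step 1. I will rely on the structural lemmas, since they already show $A_2,A_3$ bound a single arc each; this is exactly the argument used for Corollary \ref{coro-s5-10}.
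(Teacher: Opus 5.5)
Your main argument is correct and is essentially the paper's proof, which just says ``similar to Corollary \ref{coro-s5-10}'': exclude $A_2,A_3$ via the structural Lemmas \ref{Lemma53-5}--\ref{Lemma53-7}, then read off (1)--(4) from Lemma \ref{lemma2-10}, $\frac{4\pi i}{2\tau-1}=\frac{2\pi}{b}$ and \eqref{7eqfc-s5-31}--\eqref{7eqfc-s5-22}. You also make explicit two things the paper leaves implicit: the injectivity of $\wp(p)\mapsto\wp(p-\frac12)$, and the pairwise distinctness of the $d_{j,k}$ from Lemma \ref{7lemma-s5-9}.

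Your parenthetical alternative for Step 1 is false as stated, however. For $\wp(p)\in\mathbb{R}$, the value $\wp(p-\frac{\omega_k}{2})$ with $k=2,3$ is real exactly when $2p\equiv\frac12+\tau \pmod{\Lambda_\tau}$. For example, at $p=\frac14$ it equals $\wp(\frac14+\frac{\tau}{2})=e_1-|e_1-e_2|$, and at $p=\frac14\pm\frac{\tau}{2}$ it equals $\wp(\frac14)=e_1+|e_1-e_2|$. That route would therefore need the extra input \eqref{eqfc-s5-11}--\eqref{eqfc-s5-12}, which shows that these two real values differ from $-\eta_1$ and $\frac{2\pi}{b}-\eta_1$. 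Since you ultimately rely on the structural lemmas, the proof itself is unaffected.
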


\begin{proof}
The proof is similar to that of Corollary \ref{coro-s5-10}.
\end{proof}

\begin{lemma}\label{7lemma-aup-1}
Let $\tau=\frac12+ib$ with $\frac12\leq b<b_1$. Recall Lemma \ref{lemma-aup} that for any $a\in I\cup II\cup III$, there is a unique $p\in I\cup II\cup III\cup \{0,\frac12\}$ such that  $\wp(p)=f(a)$ and $\pm a$ is a pair of nontrivial critical points of $G_p(z)$, where $f(a)$ is defined in \eqref{513-1-0}.
Then $f: I\to f(I)$ is a differmorphism and
\begin{equation}\label{7dddd}f(I)=(\wp(p_{1,1}), \wp(p_{0,1})).\end{equation}
\end{lemma}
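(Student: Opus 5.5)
We follow the proof of Lemma \ref{lemma-aup-1} almost verbatim; the only new input is the location of the image when $\frac12\le b<b_1$. Let $a=r\in I=(0,\frac12)$, so $\wp(r)>e_1$ and $\wp'(r)<0$. By \eqref{7eqfc4-1}, $e_1+\eta_1>0$, hence
$$\frac{d}{dr}(\zeta(r)-r\eta_1)=-\wp(r)-\eta_1<-(e_1+\eta_1)<0.$$
Since $\zeta(\frac12)-\frac12\eta_1=0$, we get $\zeta(r)-r\eta_1>0$ on $I$. Therefore $f(r)<\wp(r)$, and $f(r)$ is finite; in particular $p\neq 0$. The boundary expansions are identical to \eqref{eqq-d01} and \eqref{eqfc-s5-98}. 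They give $f(r)\to-\eta_1=\wp(p_{0,1})$ as $r\downarrow 0$, and
$$f(r)\to e_1-\frac{3e_1^2-\frac{g_2}{4}}{e_1+\eta_1}=d_{1,1}=\wp(p_{1,1})\quad\text{as } r\uparrow\tfrac12,$$
where the last equality is \eqref{7eqfc-d12}. By Lemma \ref{7lemma-s5-9} both limits are $<e_1$.

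Next we exclude $e_1$ from $f(I)$. For $\frac12\le b<b_1$, Theorem \ref{thm-LW-16} says $\frac12$ is a nondegenerate minimum of $G$ and $G$ has no nontrivial critical points. Hence $G_{1/2}=G(\cdot-\frac12)$ has none either, so $e_1\notin f(I)$. Also $f(\frac14)=\wp(\frac14+\frac{\tau}{2})$, by the same argument as for \eqref{eq4-11}, and this lies in $(d_{1,1},d_{0,1})$ by Lemma \ref{7lemma-s5-9}. Connectedness of $f(I)\subset\mathbb R$ then gives
$$(\wp(p_{1,1}),\wp(p_{0,1}))\subset f(I)\subset(-\infty,e_1),$$
so the associated $p$ always lies in $II\cup III$.

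It remains to show $f'(r)\neq0$ on $I$. By \eqref{deri-r}, this reduces to $\wp(r+p)+\wp(r-p)+2\eta_1\neq0$, that is, by Lemma \ref{lemma2-10-3}, to $A(r)$ not being a branch point of $\sigma_1$. Write
$$A(r)=A_0+\frac{\wp'(p)}{2(\wp(p)-\wp(r))},$$
which by Lemma \ref{lemma-s5-13} lies in $\sigma_1\cap i\mathbb R\setminus\{A_k\}$. At $r=\frac14$ we have $\wp(p)<e_1<\wp(\frac14)$, so Remark \ref{rmk-s5-2} and \eqref{Ak} give $A(\frac14)\in(A_1,A_0)$. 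Since $A(r)$ is continuous and never equals $A_1$ or $A_0$, we get $A(r)\in(A_1,A_0)$ for all $r\in I$. By Lemma \ref{Lemma53-7}, $\sigma_1\cap i\mathbb R$ consists of $(-i\infty,A_1]\cup[A_0,+i\infty)$ together with the single point of $\sigma_{1,A_2A_3}$. So the open segment $(A_1,A_0)$ meets $\sigma_1$ only in that one point, which belongs to a single arc and is therefore not a branch point. Hence $f'\neq0$, $f$ is strictly monotone, and by the endpoint limits $f(I)=(\wp(p_{1,1}),\wp(p_{0,1}))$, with $f$ a diffeomorphism.

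The main point needing care is the claim that $A(r)$ is not a branch point. Here we use that $\sigma_{1,A_2A_3}$ is a simple curve. In principle $\sigma_{1,A_2A_3}$ could be tangent to the axis or cross itself, but Lemma \ref{Lemma53-7} asserts that its intersection with $i\mathbb R$ is a single point of a simple arc. A common point of two different arcs of $\sigma_1$ on the open segment would contradict the arc description there, since the two vertical unbounded arcs do not meet $(A_1,A_0)$. Should this fall short, we would instead argue as in Step 2 of Lemma \ref{lemma-aup-2}, using $\#(\sigma_1\cap\sigma_3\cap(A_1,A_0))\le1$ to obtain injectivity of $f$ on $I$ directly.
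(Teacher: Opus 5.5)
Your proof is correct and is essentially the paper's proof. The paper also just reruns the argument of Lemma \ref{lemma-aup-1}: the only change is that $e_1,\infty\notin f(I)$, because $G_{\frac12}$ and $G$ have no nontrivial critical points for $\frac12\le b<b_1$. Nonvanishing of $f'$ then follows, exactly as you argue, from $A(r)\in(A_1,A_0)$ not being a branch point of $\sigma_1$, using Lemmas \ref{Lemma53-7} and \ref{lemma2-10-3}. Your fallback via $\#(\sigma_1\cap\sigma_3\cap(A_1,A_0))\le 1$ would only give injectivity, i.e.\ a homeomorphism rather than $f'\neq 0$, but it is not needed.
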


\begin{proof} Since Theorem \ref{thm-LW-16} says that $G_{\frac12}(z)=G(z-\frac12)$ has no nontrivial critical points and $G_0(z)=G(z)$ has no nontrivial critical points, we have $e_1, \infty\notin f(I)$.
The rest proof is the same as that of Lemma \ref{lemma-aup-1}.
\end{proof}

\begin{lemma}\label{7lemma-aup-2}
Let $\tau=\frac12+ib$ with $\frac12\leq b<b_1$. Recall Lemma \ref{lemma-aup} that for any $a\in I\cup II\cup III$, there is a unique $p\in I\cup II\cup III\cup \{0,\frac12\}$ such that  $\wp(p)=f(a)$ and $\pm a$ is a pair of nontrivial critical points of $G_p(z)$, where $f(a)$ is defined in \eqref{513-1-0}. 
Then $f: II=(\frac{1}{2},\frac{1}{4}+\frac{\tau}{2}]\to f(II)$ is a homeomorphism and
\begin{equation}\label{7dddd2-0}
f(II)=\Big[\wp\Big(\frac14\Big),\wp(p_{1,2})\Big).
\end{equation}
\end{lemma}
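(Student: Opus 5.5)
We follow Steps 1--2 of Lemma \ref{lemma-aup-2}, now in the regime $\frac12\le b<b_1$. Here \eqref{7eqfc4-1} gives $0<e_1+\eta_1<\frac{2\pi}{b}$, and Theorem \ref{thm-LW-16} tells us $G(z)$ has no nontrivial critical points. Hence $f(a)\neq\infty$ for every $a\in II$. Write $a=r+(1-2r)\tau$ with $r\in[\frac14,\frac12)$, so that $a-\frac12=-2bi(r-\frac12)$. The local expansion from Step 1 of Lemma \ref{lemma-aup-2} does not use $b>b_1$; it only needs $e_1+\eta_1-\frac{2\pi}{b}\neq0$. It therefore gives, as $r\uparrow\frac12$,
\[
f(a)=e_1-\frac{3e_1^2-\frac{g_2}{4}}{e_1+\eta_1-\frac{2\pi}{b}}+O\Big(\Big(\tfrac12-r\Big)^2\Big)=e_1+\frac{3e_1^2-\frac{g_2}{4}}{\frac{2\pi}{b}-e_1-\eta_1}+o(1)=\wp(p_{1,2})+o(1),
\]
using the formula for $d_{1,2}$ in the proof of Lemma \ref{7lemma-s5-9}. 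At the other endpoint, \eqref{eqq-f5} gives $f(\frac14+\frac{\tau}{2})=\wp(\frac14)$ and $f(a)\neq\wp(\frac14)$ on $II^\circ$. This is quoted from \cite[Lemma 5.5-(1)]{CFL-II} and is independent of $b$.

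Next we pin down the image. Since $f$ is continuous and real-valued on $II^\circ$ (Lemma \ref{lemma-aup}) and never takes the value $\infty$, the set $f(II^\circ)$ is an interval. It has $\wp(\frac14)$ and $\wp(p_{1,2})$ as limit points and does not contain $\wp(\frac14)$. By Lemma \ref{7lemma-s5-9}, $\wp(\frac14)<d_{1,2}$, so $(\wp(\frac14),\wp(p_{1,2}))\subset f(II^\circ)\subset(\wp(\frac14),+\infty)$. In particular $p\in I$ for every $a\in II^\circ$.

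We must still exclude values $\ge\wp(p_{1,2})$. Suppose first that $\wp(p_{1,2})\in f(II^\circ)$. For $p=p_{1,2}$, $A_1$ is a cusp of $\sigma_3$ by Corollary \ref{7coro-s5-10}. The correspondence $A(a)\in(A_1,A_0)$ together with Lemma \ref{Lemma53-5} shows that $\sigma_1\cap\sigma_3\cap(A_1,A_0)$ has at most one point, and that point lies on $\sigma_{3,A_2A_3}$. So the critical point coming from $a$ near $\frac12$ (i.e. $p$ near $p_{1,2}$) would have to coexist with a second preimage, which we rule out by the counting below.

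The cleanest route is injectivity. For $a\in II^\circ$ we have $\wp(a)<e_1<\wp(p)$, so as in \eqref{eqq-Aj} the corresponding $A(a)$ lies in $(A_1,A_0)$. Lemma \ref{Lemma53-5} gives $\#(\sigma_1\cap\sigma_3\cap(A_1,A_0))\le1$ for $p\in I$, so by Theorem \ref{rmk2-8} each value $\wp(p)$ has at most one preimage in $II^\circ$, i.e. $f$ is injective on $II$. An injective continuous real function on the interval $II$ is strictly monotone. Its image is therefore an interval bounded by the endpoint limits $\wp(\frac14)$ (attained at $\frac14+\frac{\tau}{2}$) and $\wp(p_{1,2})$ (approached as $a\to\frac12$, not attained). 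This yields $f(II)=[\wp(\frac14),\wp(p_{1,2}))$. The inverse is continuous by monotonicity, so $f$ is a homeomorphism onto its image.

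The main obstacle is justifying the expansion near $a=\frac12$ in this regime, namely that the linear coefficient $e_1+\eta_1-\frac{2\pi}{b}$ is nonzero and has the sign stated in \eqref{7eqfc4-1}. That is what lets us identify the endpoint limit as $d_{1,2}$, which by Lemma \ref{7lemma-s5-9} lies above $\wp(\frac14)$. Once this is in place, monotonicity does the rest.
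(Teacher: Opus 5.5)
Your proposal is correct and follows the paper's proof essentially step for step. The ingredients match: the expansion \eqref{7eqq-61} as $a\to\frac12$, the value $f(\frac14+\frac{\tau}{2})=\wp(\frac14)$ from \eqref{eqq-f5}, the fact $\infty\notin f(II)$, connectedness to get $p\in I$, and injectivity on $II^\circ$ via $\#(\sigma_1\cap\sigma_3\cap(A_1,A_0))\le 1$ and Theorem \ref{rmk2-8}, exactly as in Lemma \ref{l6emma-aup-2}. The paragraph beginning ``Suppose first that $\wp(p_{1,2})\in f(II^\circ)$'' is incomplete as written but superfluous, because strict monotonicity together with the two endpoint limits already excludes all values $\ge\wp(p_{1,2})$. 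Monotonicity also gives continuity of the inverse, where the paper instead appeals to the critical-point characterization.
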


\begin{proof} The proof is similar to Steps 1-2 of Lemma \ref{lemma-aup-2}.
Let $a=r+s\tau\in II=(\frac{1}{2},\frac{1}{4}+\frac{\tau}{2}]$. 
Then $s=1-2r$, i.e., $a-\frac12=(1-2\tau)(r-\frac12)$ with $r\in [\frac14, \frac12)$. 
When $\frac12-r>0$ is sufficiently small, the same argument as Step 1 of Lemma \ref{lemma-aup-2} implies
\begin{align}\label{7eqq-61}\wp(p)&=f(a)=e_1-\frac{3e_1^2-\frac{g_2}{4}}{e_1+\eta_1-\frac{2\pi}{b}}+O\left(\Big(\frac12-r\Big)^2\right)\nonumber\\&=\wp(p_{1,2})+O\left(\Big(\frac12-r\Big)^2\right).\end{align}

Since $G_0(z)=G(z)$ has no nontrivial critical points, so $\infty\notin f(II)$.
Recall \eqref{eqq-f5} that $$f\Big(\frac14+\frac{\tau}{2}\Big)=\wp\Big(\frac14\Big),\qquad
f(a)\neq\wp\Big(\frac14\Big)\quad\text{for any }a\in II^\circ.$$
From here and $f: II^\circ\to f(II^{\circ})$ is continuous, i.e., $f(II^{\circ})\subset\mathbb R$ is path-connected, we obtain
$$\Big(\wp\Big(\frac14\Big),\wp(p_{1,2})\Big)
\subset f(II^\circ)\subset \Big(\wp\Big(\frac14\Big),+\infty\Big).$$
In particular, $p\in I=(0,\frac12)$ for $a\in II$.
Then by repeating the proof of Lemma \ref{l6emma-aup-2}, we can prove that the continuous map $f: II^\circ=(\frac12, \frac14+\frac{\tau}2)\to f(II^\circ)$ is bijective. 
Together with \eqref{7eqq-61}, we easily conclude that $f(II^\circ)=(\wp(\frac14), \wp(p_{1,2}))$,
namely $f: II^\circ=(\frac12, \frac14+\frac{\tau}2)\to (\wp(\frac14), \wp(p_{1,2}))$ is bijective and then
a homeomorphism. 
\end{proof}

\begin{lemma}\label{7lemma-aup-3}
Let $\tau=\frac12+ib$ with $\frac12\leq b<b_1$. Recall Lemma \ref{lemma-aup} that for any $a\in I\cup II\cup III$, there is a unique $p\in I\cup II\cup III\cup \{0,\frac12\}$ such that  $\wp(p)=f(a)$ and $\pm a$ is a pair of nontrivial critical points of $G_p(z)$, where $f(a)$ is defined in \eqref{513-1-0}. 
Then $f: III=[\frac14-\frac{\tau}{2}, 0)\to f(III)$ is a homeomorphism and
\begin{equation}\label{7dddd3-0}
f(III)=\Big(\wp(p_{0,2}),\wp\Big(\frac14\Big)\Big].
\end{equation}
\end{lemma}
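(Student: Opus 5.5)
Lemma \ref{7lemma-aup-3} should follow from Lemma \ref{7lemma-aup-2} by the reflection symmetry \eqref{eq: da11ta3}, as Lemma \ref{l6emma-aup-3} did. By \eqref{eq: da11ta3}, $a$ is a nontrivial critical point of $G_p(z)$ if and only if $\frac12-a$ is a nontrivial critical point of $G_{\frac12-p}(z)$. The map $a\mapsto \frac12-a$ sends $II=(\frac12,\frac14+\frac{\tau}{2}]$ bijectively onto $III=[\frac14-\frac{\tau}{2},0)$, since $\frac12-(\frac14+\frac{\tau}{2})=\frac14-\frac{\tau}{2}$ and $\frac12-a\to 0$ as $a\to\frac12$. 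By the uniqueness in Lemma \ref{lemma-aup} and \eqref{eq: data0}, this gives the identity
$$f\Big(\frac12-a\Big)=\wp\Big(p-\frac12\Big)\quad\text{whenever } f(a)=\wp(p),\ a\in II.$$
Here $\wp(p-\frac12)$ is determined by $\wp(p)$ through the addition formula used in \eqref{7eqfc-s5-31}:
$$\wp\Big(p-\frac12\Big)=h(\wp(p)),\qquad h(x):=e_1+\frac{3e_1^2-\frac{g_2}{4}}{x-e_1}.$$

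So $f|_{III}=h\circ f|_{II}\circ(\tfrac12-\cdot)^{-1}$. Lemma \ref{7lemma-aup-2} gives that $f|_{II}$ is a homeomorphism onto $[\wp(\frac14),\wp(p_{1,2}))\subset(e_1,+\infty)$. On $(e_1,+\infty)$ the Möbius map $h$ is a continuous, strictly decreasing bijection onto $(e_1,+\infty)$, because $3e_1^2-\frac{g_2}{4}=|e_1-e_2|^2>0$. Hence $f|_{III}$ is a homeomorphism onto
$$h\Big(\Big[\wp\Big(\tfrac14\Big),\wp(p_{1,2})\Big)\Big)=\Big(h(\wp(p_{1,2})),\,h\Big(\wp\Big(\tfrac14\Big)\Big)\Big].$$

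It remains to identify the two endpoints. By \eqref{7eqfc-s5-22}, $h(\wp(p_{1,2}))=\frac{2\pi}{b}-\eta_1=\wp(p_{0,2})$. Also $h(\wp(\frac14))=\wp(\frac14-\frac12)=\wp(\frac14)$, consistent with $f(\frac14-\frac{\tau}{2})=\wp(\frac14)$ from \eqref{eq4-11}. Together these give \eqref{7dddd3-0}.

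The only delicate step is the bookkeeping of which representative of $\pm(\frac12-p)$ is meant, and checking that $\frac12-a$ really lies in $III$ rather than $-III$ modulo $\Lambda_\tau$. This is harmless, since $f$ depends only on $\pm a$ and everything is expressed through $\wp(p)$. As an alternative, one could repeat the direct argument of Lemma \ref{7lemma-aup-2} using the expansion \eqref{eqq-61-2} near $a=0$, which gives $f(a)\to\wp(p_{0,2})$, together with injectivity from the bound $\#(\sigma_1\cap\sigma_3\cap(A_1,A_0))\le 1$ of Lemma \ref{Lemma53-5}. The symmetry route avoids this.
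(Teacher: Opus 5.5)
Your proposal is correct and follows the paper's own proof, which deduces the lemma directly from the reflection symmetry \eqref{eq: da11ta3}, the relation $p_{0,2}=\frac12-p_{1,2}$ and Lemma \ref{7lemma-aup-2}. You additionally make the transfer of the image explicit through the decreasing involution $h(x)=e_1+\frac{3e_1^2-g_2/4}{x-e_1}$ on $(e_1,+\infty)$, and the bookkeeping worry you raise does not arise, since $\frac12-II=III$ holds exactly as segments in $\mathbb{C}$.
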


\begin{proof}
Again this lemma follows directly from \eqref{eq: da11ta3}, $p_{0,2}=\frac12-p_{1,2}$ and Lemma \ref{7lemma-aup-2}.
\end{proof}

\begin{theorem}[=Theorem \ref{III-thm1}]\label{sec6tion-thm}
Let $\tau=\frac12+ib$ with $\frac12\leq b<b_1$. Then the following statements hold.
\begin{itemize}
\item[(1)] When $\wp(p)\in (-\infty, \wp(p_{1,1})]\cup [\wp(p_{0,1}), \wp(p_{0,2})]\cup [\wp(p_{1,2}),+\infty)$, $G_p(z)$ has no nontrivial critical points satisfying $\wp(a)\in\mathbb{R}$.
\item[(2)] When $\wp(p)\in (\wp(p_{1,1}), \wp(p_{0,1}))\cup(\wp(p_{0,2}),\wp(p_{1,2}))$, $G_p(z)$ has a unique pair of nontrivial critical points $\pm a$ satisfying $\wp(a)\in\mathbb{R}$.
\end{itemize}
\end{theorem}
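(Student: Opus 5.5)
The plan is to combine Lemmas \ref{7lemma-aup-1}, \ref{7lemma-aup-2} and \ref{7lemma-aup-3}, following the scheme used for Theorems \ref{thm4-7} and \ref{thm4-9}. By Lemma \ref{lemma-s5-13}, a pair of nontrivial critical points $\pm a$ of $G_p(z)$ with $\wp(a)\in\mathbb R$ is, up to sign, a point $a\in I\cup II\cup III$ by \eqref{eqfc-s5-3}. By Lemma \ref{lemma-aup}, such an $a$ is a nontrivial critical point of $G_p(z)$ exactly when $f(a)=\wp(p)$. The number of pairs of nontrivial critical points with $\wp(a)\in\mathbb R$ therefore equals $\#\{a\in I\cup II\cup III: f(a)=\wp(p)\}$.

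Each of $I$, $II$, $III$ is a disjoint set of representatives modulo $\pm$ and $\Lambda_\tau$. The three lemmas say that $f$ is injective on each piece, with images
\[
f(I)=(\wp(p_{1,1}),\wp(p_{0,1})),\quad f(II)=\Big[\wp\Big(\tfrac14\Big),\wp(p_{1,2})\Big),\quad f(III)=\Big(\wp(p_{0,2}),\wp\Big(\tfrac14\Big)\Big].
\]
The two endpoint values coincide: $f(\tfrac14+\tfrac\tau2)=f(\tfrac14-\tfrac\tau2)=\wp(\tfrac14)$. The points $\frac14+\frac\tau2$ and $\frac14-\frac\tau2$ are the same point of $E_\tau$, as noted after \eqref{eqfc-s5-3}, so they give the single pair $\pm(\tfrac14+\tfrac\tau2)$. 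This is consistent with Lemma 5.5 of Part II, which says $G_{1/4}$ has a unique pair of nontrivial critical points. Hence $f(II)\cup f(III)=(\wp(p_{0,2}),\wp(p_{1,2}))$, and each value in this interval is attained by exactly one pair.

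Next I would check that the images are disjoint. Lemma \ref{7lemma-s5-9} gives
\[
\wp(p_{1,1})<\wp(p_{0,1})=-\eta_1<e_1<\wp(p_{0,2})=\tfrac{2\pi}{b}-\eta_1<\wp(\tfrac14)<\wp(p_{1,2}),
\]
so $f(I)$ lies to the left of $e_1$ and $f(II)\cup f(III)$ lies to the right of it. Consequently, for $\wp(p)\in(\wp(p_{1,1}),\wp(p_{0,1}))\cup(\wp(p_{0,2}),\wp(p_{1,2}))$ there is exactly one pair, and for all other real values there is none. The excluded values are $e_1$ and $\infty$, which correspond to $p\in\{\tfrac12,0\}$ and are in any case excluded by $p\notin E_\tau[2]$. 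Setting $d_1=\wp(p_{1,1})$, $d_2=-\eta_1$, $d_3=\tfrac{2\pi}b-\eta_1$ and $d_4=\wp(p_{1,2})$ then yields Theorem \ref{III-thm1}, including $d_2<e_1<d_3$.

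Almost all of the difficulty sits in the cited lemmas, so the only point needing care here is the overlap at $\wp(\tfrac14)$ between the images of $II$ and $III$. One must confirm that the shared value comes from the same pair, so that it is not counted twice and uniqueness holds there. Beyond that, the argument is the bookkeeping of disjoint intervals described above.
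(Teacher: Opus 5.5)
Your proposal is correct and matches the paper's proof. The paper states that the theorem follows directly from Lemmas \ref{7lemma-aup-1}, \ref{7lemma-aup-2} and \ref{7lemma-aup-3}, with the ordering of Lemma \ref{7lemma-s5-9} keeping the images disjoint; your explicit check that the shared value $\wp(\frac14)$ of $f(II)$ and $f(III)$ comes from the single point $\frac14+\frac{\tau}{2}=\frac14-\frac{\tau}{2}$ in $E_{\tau}$ is the bookkeeping the paper leaves implicit.
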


\begin{proof} This result follows directly from Lemmas \ref{7lemma-aup-1}, \ref{7lemma-aup-2} and \ref{7lemma-aup-3}. Consequently, by defining
$$d_1:=\wp(p_{1,1})=e_1-\frac{3e_1^2-\frac{g_2}{4}}{e_1+\eta_1}, \quad d_2:=\wp(p_{0,1})=-\eta_1,$$
$$d_3:=\wp(p_{0,2})=\frac{2\pi}{b}-\eta_1,\quad d_4:=\wp(p_{1,2})=e_1+\frac{3e_1^2-\frac{g_2}{4}}{\frac{2\pi}{b}-e_1-\eta_1},$$
 we obtain Theorem \ref{III-thm1}.
\end{proof}

\section{Further results for $p\in (0, \frac12)$}
\label{sec-7}

In this section, we always consider $\tau=\frac12+ib$ with $b\geq \frac12$ and $p\in(0,\frac12)$.
Recall from Lemma \ref{Lemma53-6} that $\sigma_j$ has been completely determined for $p=\frac14$.
The main purpose of this section is to improve Lemma \ref{Lemma53-5} to a sharp version for general $p\in (0,\frac12)$. As applications, we can study the non-degeneracy of the nontrivial critical points and prove Corollary \ref{III-coro}.

\begin{lemma}\label{lem0-s5-13} Let $\tau=\frac12+ib$ with $b\geq \frac12$ and $p\in(0,\frac12)$.
Define
\begin{align}\label{deltap}
\Delta(p):=&16\wp(p)^4-128\eta_1 \wp(p)^3+8g_2\wp(p)^2\\
&+32(\eta_1g_2+g_3)\wp(p)+g_2^2+32\eta_1g_3.\nonumber
\end{align}
Then the following statements hold.
\begin{itemize}
		\item[(1)] If $\Delta(p)< 0$, then $\sigma_1=[A_1, A_0] \cup \sigma_{1, \infty A_2} \cup \overline{\sigma_{1, \infty A_2}}$, where $\sigma_{1, \infty A_2} \subset\{z: \operatorname{Im} z>0\}$ is a semi-unbounded simple curve with the endpoint $A_2$ and tending to $-\frac{2 p \eta_1-\zeta(2 p)}{2}+ i \infty$, while $\overline{\sigma_{1, \infty A_2}} \subset\{z: \operatorname{Im} z<0\}$ is the complex conjugate of $\sigma_{1, \infty A_2}$ such that $\sigma_{1, \infty A_2} \cup \overline{\sigma_{1, \infty A_2}}$ is symmetric with respect to the real axis.
		\item[(2)] If $\Delta(p)\geq 0$, then there exist $A_5, A_6\in (A_1, A_0)$ such that $\sigma_1=[A_1, A_0] \cup \sigma_{1, A_2 A_3} \cup \sigma_{1, \infty}$, where $\sigma_{1, \infty}$ is an unbounded simple curve tending to $-\frac{2 p \eta_1-\zeta(2 p)}{2} \pm i \infty$, which is symmetric with respect to the real axis with $\sigma_{1, \infty} \cap \mathbb{R}=\{A_5\}$, and $\sigma_{1, A_2 A_3}$ is a simple curve connecting $A_2$ and $A_3$, and is symmetric with respect to the real axis with $\sigma_{1, A_2 A_3} \cap \mathbb{R}=\{A_6\}$. In particular, both $A_5$ and $A_6$ are branch points of $\sigma_1$. Moreover, $A_5=A_6$ if and only if $\Delta(p)=0$.
	\end{itemize}
\end{lemma}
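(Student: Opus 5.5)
The plan is to decide between the two alternatives (2-1) and (2-2) of Lemma \ref{Lemma53-5} by counting the branch points of $\sigma_1$ on the open segment $(A_1,A_0)$. In case (2-1) the segment $(A_1,A_0)$ contains no point of $\sigma_1$ other than inner points of the single arc $[A_1,A_0]$, so it has no branch points. In case (2-2) the two symmetric curves $\sigma_{1,\infty}$ and $\sigma_{1,A_2A_3}$ must each cross $\mathbb{R}$ at a single point. We first show these crossing points lie in $(A_1,A_0)$. By (c) of the proof of Lemma \ref{Lemma53-5} and the fact that $(-\infty,A_1)\cup(A_0,+\infty)$ lies in unbounded complementary components, a crossing outside $[A_1,A_0]$ would enclose a bounded component together with $[A_1,A_0]$. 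The crossings are not $A_0,A_1$ either, since those are non-cusp endpoints for generic $p$; the cusp cases are handled by continuity. Each crossing is then a branch point. Hence the statement reduces to: the number of real branch points of $\sigma_1$ in $(A_1,A_0)$, counted appropriately, is $0$ exactly when $\Delta(p)<0$, is $2$ when $\Delta(p)>0$, and the two coincide when $\Delta(p)=0$.

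\textbf{Step 1: the parametrization.} For $A\in(A_1,A_0)$, write $A=A_0+\frac{\wp'(p)}{2(\wp(p)-\wp(a))}$ as in \eqref{Ak}. Since $\wp'(p)<0$ and $\wp(p)>e_1$ for $p\in I$, the map $x:=\wp(a)\mapsto A$ is a real increasing bijection from $(-\infty,e_1)$ onto $(A_1,A_0)$.

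\textbf{Step 2: the branch-point equation.} By Lemma \ref{lemma2-10-3}, $A$ is a branch point of $\sigma_1$ iff $\wp(a+p)+\wp(a-p)+2\eta_1=0$. Put $P=\wp(p)$. The addition theorem gives
\[
\wp(a+p)+\wp(a-p)=\frac{2(xP-\frac{g_2}{4})(x+P)-g_3}{(x-P)^2}.
\]
So the branch-point condition becomes the real quadratic
\[
q(x):=2(xP-\tfrac{g_2}{4})(x+P)-g_3+2\eta_1(x-P)^2=0 .
\]
A direct expansion should show that the discriminant of $q$ equals a positive multiple of $\Delta(p)$ in \eqref{deltap}. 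I will verify this by matching coefficients of $P^4,\dots,P^0$; the leading coefficient $16$ is consistent with $(2P^2-4\eta_1P-\frac{g_2}{2})^2$ times a constant.

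Consequently, if $\Delta(p)<0$ there are no real branch points on $(A_1,A_0)$, and case (2-1) must hold. If $\Delta(p)\ge0$, the real roots $x_5\le x_6$ of $q$ give candidate branch points $A_5,A_6$, with $A_5=A_6$ iff $\Delta(p)=0$.

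\textbf{Step 3 (the main obstacle).} We must show that, when $\Delta(p)\ge0$, both roots lie in $(-\infty,e_1)$, and that case (2-2) then actually occurs. The latter also requires that the roots are not spurious, since $\sigma_1\ni A$ also needs $\triangle_1(A)\in[-1,1]$, which holds automatically on $[A_1,A_0]$.

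For the location of the roots I would compare $q(e_1)$ and the vertex position with $e_1$, using $e_1+\eta_1>0$ (valid for $b\ge\frac12>b_0$) and $3e_1^2-\frac{g_2}{4}>0$. For instance, one should find $q(e_1)$ proportional to $(P-e_1)^2\bigl(\ldots\bigr)$, with a sign that forces both roots below $e_1$. One must also treat the degenerate possibility that the leading coefficient $2(P+\eta_1)$ vanishes, which places a root at $-\infty$, i.e. at $A_1$.

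For the equivalence with case (2-2), I would argue that two distinct branch points on $(A_1,A_0)$ cannot exist in configuration (2-1). At a branch point another arc crosses $\mathbb{R}$, and by the structure in (2-1) that arc would have to leave and return, contradicting Lemma \ref{lemma2-7}(2). When $\Delta(p)=0$, the single point $A_5=A_6$ is a branch point of order $4$ where $\sigma_{1,\infty}$ and $\sigma_{1,A_2A_3}$ touch.

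As a sanity check, at $p=\frac14$ Lemma \ref{Lemma53-6} gives configuration (2-1), so $\Delta(\frac14)<0$ should hold. This, together with continuity in $p$, can also serve to fix the signs in Step 3.
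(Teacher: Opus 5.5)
\textbf{There is a genuine gap, in your preliminary reduction.} Your Steps 1--2 agree with the paper. The branch-point condition of Lemma \ref{lemma2-10-3} becomes the quadratic $T(\wp(a))=0$, with your $q$ equal to $T/2$. Its discriminant is exactly $\Delta(p)$. Real roots below $e_1$ give branch points in $(A_1,A_0)$, and these force alternative (2-2).

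\textbf{Why the reduction fails.} You claim that a crossing of $\sigma_{1,\infty}$ or $\sigma_{1,A_2A_3}$ with $\mathbb{R}$ outside $[A_1,A_0]$ would enclose a bounded component. This is false. Each of these curves meets $\mathbb{R}$ exactly once. If, say, $\sigma_{1,A_2A_3}\cap\mathbb{R}=\{\hat A\}$ with $\hat A>A_0$, then $[A_1,A_0]$ and $\sigma_{1,A_2A_3}$ are disjoint arcs and no loop is formed. Every component of $\mathbb{C}\setminus\sigma_1$ stays unbounded, so Lemma \ref{lemma2-7}(2) is not violated.

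Nor can such a crossing be detected by counting branch points. For real $A\notin[A_1,A_0]$, the formula $A=A_0+\frac{\wp'(p)}{2(\wp(p)-\wp(a))}$ forces $\wp(a)>e_1$. So $a\in I$ can be taken real, and then $c(a)$ is real. By \eqref{61-37-1} this gives $s(A)\in i\mathbb{R}$, hence $\triangle_1(A)=\cosh(2\pi\operatorname{Im}s)\ge 1$. A crossing there is therefore a point where $\triangle_1$ touches $1$ from above. Since $T>0$ on $[e_1,\infty)$, it is an inner point of a single arc, not a branch point.

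\textbf{Consequences.} From ``$\Delta(p)<0$, so no branch points on $(A_1,A_0)$'' you cannot conclude (2-1): configuration (2-2) with both crossings outside $[A_1,A_0]$ is not excluded. For $\Delta(p)=0$ you likewise cannot conclude that both curves pass through the double root. Your ``touching'' assertion presupposes this. It could be rescued by a multiplicity count: a double root of $T$ makes $ds/dA$ vanish to second order, so three arcs pass through $A_5$.

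\textbf{The missing ingredient.} What you need is that $\sigma_1\cap\mathbb{R}=[A_1,A_0]$ for $p\in I$. The paper obtains this from its critical-point analysis.
\begin{itemize}
\item By Lemma \ref{Lemma53-5}(1), $(-\infty,A_1)\cup(A_0,+\infty)\subset\sigma_3$.
\item So an outside crossing would lie in $\sigma_1\cap\sigma_3\cap\mathbb{R}\setminus\{A_k\}_{k=0}^3$.
\item By Theorem \ref{rmk2-8} and Lemma \ref{lemma-s5-13}, it would then give a pair of nontrivial critical points with $\wp(a)\in\mathbb{R}$ and $A\notin(A_1,A_0)$.
\item This contradicts Sections \ref{sec-4}--\ref{sec-6} (cf.\ Lemma \ref{lem-s5-20}): for $p\in I$, all such critical points lie in $II\cup III$.
\end{itemize}

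A direct argument also works. By the computation above, an outside crossing means $h(a):=\zeta(a+p)+\zeta(a-p)-2a\eta_1=0$ for some $a\in I\setminus\{p\}$. But $h(0)=h(\frac12)=0$, and on $I$ the only pole of $h$ is at $a=p$. Moreover $h'(a)=-(\wp(a+p)+\eta_1)-(\wp(a-p)+\eta_1)\le -2(e_1+\eta_1)<0$ on $I$. Hence $h<0$ on $(0,p)$ and $h>0$ on $(p,\frac12)$, so $h$ has no zeros there. With this in hand, your per-$p$ argument goes through and avoids the paper's continuation in $p$ through the zeros of $\Delta$.

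\textbf{Root location.} You do get $T(e_1)=4(\wp(p)-e_1)\bigl[(e_1+\eta_1)(\wp(p)-e_1)+3e_1^2-\frac{g_2}{4}\bigr]>0$ from your two inputs. You still have to exclude two roots above $e_1$. The paper does this by showing $T>0$ on all of $[e_1,\infty)$, using $\eta_1>\frac{\pi^2}{3}$ and $e_1\ge 0$ for $b\ge\frac12$. Also, $\wp(a)\mapsto A$ is decreasing, not increasing; this does not affect the argument.
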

\begin{proof}
Recall the additional formula of elliptic functions:
\begin{equation}
\wp(u+v)=-\wp(u)-\wp(v)+\frac{(\wp'(u)-\wp'(v))^2}{4(\wp(u)-\wp(v))^2},
\end{equation}
from which we obtain
\begin{equation}
\wp(u-v)=-\wp(u)-\wp(v)+\frac{(\wp'(u)+\wp'(v))^2}{4(\wp(u)-\wp(v))^2},
\end{equation}
and so
\begin{align}\label{eqfc-add10}
\wp(u+v)&+\wp(u-v)=-2\wp(u)-2\wp(v)+\frac{\wp'(u)^2+\wp'(v)^2}{2(\wp(u)-\wp(v))^2}\nonumber\\
&=-2\wp(u)-2\wp(v)+\frac{4\wp(u)^3+4\wp(v)^3-g_2\wp(u)-g_2\wp(v)-2g_3}{2(\wp(u)-\wp(v))^2}.
\end{align}
Then by \eqref{eqfc-add10}, we see that
\begin{equation}\label{eqfc-s5-40}
\wp(a+p)+\wp(a-p)+2\eta_1=0
\end{equation}
is equivalent to
{\allowdisplaybreaks
\begin{align}\label{eqfc-s5-41}
0=&4(\eta_1+\wp(p))\wp(a)^2+(4\wp(p)^2-8\eta_1\wp(p)-g_2)\wp(a)\\
&+4\eta_1\wp(p)^2-g_2\wp(p)-2g_3\nonumber\\
=&4\eta_1(\wp(a)-\wp(p))^2+(4\wp(p)\wp(a)-g_2)(\wp(p)+\wp(a))-2g_3\nonumber\\
=&4\eta_1(\wp(a)-\wp(p))^2+4(\wp(p)\wp(a)-e_1^2+|e_2|^2)(\wp(p)+\wp(a))-8e_1|e_2|^2\nonumber\\
=&:T(\wp(a)),\nonumber
\end{align}
}%
where we have used \eqref{eqfc-g2g3}. 

Recalling \eqref{derivative-eta} that $\eta_1(\frac12+ib)$ is strictly decreasing for $b\geq \frac12$, so
$$\eta_1\Big(\frac12+ib\Big)> \lim_{b\to+\infty}\eta_1\Big(\frac12+ib\Big)=\frac{\pi^2}{3},\quad\forall b\geq \frac12.$$
Meanwhile, it is known (cf. \cite[Theorem 1.7]{LW}) that $e_1(\frac12+ib)$ is strictly increasing for $b>0$, so
$$ e_1\Big(\frac12+ib\Big)> e_1\Big(\frac12+i\frac12\Big)=0,\quad\forall b>\frac12.$$ 
Since $p\in(0,\frac12)$ and $b\geq \frac12$, we have $\wp(p)>e_1\geq 0$ and so
\begin{equation}\label{axsd}T(\wp(a))>8e_1(e_1^2-e_1^2+|e_2|^2)-8e_1|e_2|^2=0,\quad\forall \wp(a)\geq e_1.\end{equation}

Also note that for $p\in (0,\frac12)$ and $b\geq \frac12$, $$\eta_1+\wp(p)>\eta_1+e_1\geq \frac{\pi^2}{3}>0,$$ so \eqref{eqfc-s5-41} implies that $T(\wp(a))=0$
is a quadratic equation of $\wp(a)$, and a direct computation shows that the discriminant $\Delta(p)$ of $T(\wp(a))=0$ is given by \eqref{deltap}.
Clearly when $(0,\frac12)\ni p$ is sufficiently small,  $\wp(p)>e_1$ is sufficiently large, so $\Delta(p)>0$.
Define
\begin{equation}\label{eq-pc}p_c:=\sup\Big\{\tilde p\in \Big(0,\frac12\Big) \;:\; \Delta(p)>0\quad\text{for all }p\in (0,\tilde p)\Big\}.\end{equation}
Then $0<p_c\leq\frac12$, and for any $p\in(0,p_c)$, it follows from \eqref{axsd} that $T(\wp(a))=0$ has two real-valued roots $\wp(a_5)\neq\wp(a_6)<e_1<\wp(p)$.
Consequently,
\begin{align*}A_k:=&\frac{1}{2}\left[  \zeta(p+a_k)+\zeta(p-a_k)-\zeta(2p)\right]\\
=&A_0+\frac{\wp'(p)}{2(\wp(p)-\wp(a_k))}\in (A_1, A_0),\quad k=5,6.\end{align*}
From here and Lemma \ref{Lemma53-5}, we have $A_5\neq A_6\in (A_1, A_0)\subset\sigma_1$.
On the other hand, it follows from \eqref{eqfc-s5-40}-\eqref{eqfc-s5-41} that
\begin{equation}\label{eqfc-s5-43}
\wp(a_k+p)+\wp(a_k-p)+2\eta_1=0,\quad k=5,6.
\end{equation}
Then Lemma \ref{lemma2-10-3} implies that both $A_5$ and $A_6$ are branch points of $\sigma_1$. Together with Lemma  \ref{Lemma53-5} and by renaming $A_5, A_6$ if necessary, we conclude that Lemma \ref{Lemma53-5} (2-2) holds, i.e., $\sigma_1=[A_1, A_0]\cup\sigma_{1,A_2A_3}\cup\sigma_{1,\infty}$ with 
\begin{equation}\label{eqfc-s5-42}\sigma_{1,\infty}\cap\mathbb{R}=\{A_{5}\},\quad \sigma_{1,A_2A_3}\cap \mathbb R=\{A_6\}.\end{equation}
This proves the assertion (2) for $p\in (0, p_c)$.

Since Lemma \ref{Lemma53-6} says that $\sigma_1$ has no branch points for $p=\frac14$, so 
\begin{equation}\label{eq-pc1}p_c\in \Big(0, \frac14\Big)\quad\text{ and }\quad\Delta(p_c)=0.\end{equation} 
Consequently, for $p=p_c$, $T(\wp(a))=0$ has a double real-valued root $\wp(a_5)=\wp(a_6)<e_1$, and the corresponding $A_5=A_6\in (A_1, A_0)$. By the continuous deformation of $\sigma_1$ with respect to $p\uparrow p_c$, we have that for $p=p_c$, $\sigma_1=[A_1, A_0]\cup\sigma_{1,A_2A_3}\cup\sigma_{1,\infty}$ with 
\begin{equation}\label{eqfc-0s5-42}\sigma_{1,A_2A_3}\cap \mathbb R=\{A_5\}=\sigma_{1,\infty}\cap\mathbb{R}.\end{equation}
This proves the assertion (2) for $p=p_c$.

Let $p_c<p_1<\cdots<p_m<\frac12$ such that $\Delta(p_j)=\Delta(p_c)=0$. Note $m\leq 3$ by \eqref{deltap}.

{\bf Case 1.} $\Delta(p)>0$ for any $p\in (p_c, p_1)$. 

Then the same argument as above implies that for any $p\in (p_c, p_1]$, $\sigma_1=[A_1, A_0]\cup\sigma_{1,A_2A_3}\cup\sigma_{1,\infty}$ with \eqref{eqfc-s5-42} for $p\in (p_c, p_1)$ and \eqref{eqfc-0s5-42} for $p=p_1$. In particular, the assertion (2) holds for $p\in (p_c, p_1]$.

{\bf Case 2.} $\Delta(p)<0$ for any $p\in (p_c, p_1)$. 

Then for $p\in (p_c, p_1)$,  $T(\wp(a))=0$ has no real-valued roots, so any $A\in\mathbb R$ can not be 
branch points of $\sigma_1$. Recall Corollary \ref{coro-s5-10} that $\sigma_1$ has no cusps. 
Assume by contradiction that suppose Lemma \ref{Lemma53-5} (2-2) holds for some  $p\in (p_c, p_1)$, i.e., $\sigma_1=[A_1, A_0]\cup\sigma_{1,A_2A_3}\cup\sigma_{1,\infty}$ with 
$$\sigma_{1,\infty}\cap\mathbb{R}=\{\tilde A_{5}\},\quad \sigma_{1,A_2A_3}\cap \mathbb R=\{\tilde A_6\}.$$
Then $\tilde A_j\in (-\infty, A_1)\cup (A_0,+\infty)$ (otherwise, $\tilde A_j\in [A_1, A_0]$ and then it is a cusp or a branch point of $\sigma_1$, a contradiction). Thus, Lemma \ref{Lemma53-5} implies that $\tilde A_j\in \sigma_1\cap\sigma_3\cap\mathbb{R}\setminus\{A_k\}_{k=0}^3$, then $G_p(z)$ has a pair of nontrivial critical points $\pm a_j$ satisfying $\wp(a_j)\in \mathbb R$ and 
$$\tilde A_j=\frac{1}{2}\left[  \zeta(p+a_j)+\zeta(p-a_j)-\zeta(2p)\right]\in (-\infty, A_1)\cup (A_0,+\infty).$$
But this is a contradiction with Lemma \ref{lem-s5-20}-(1) below.
Therefore, for any $p\in (p_c, p_1)$, Lemma \ref{Lemma53-5} (2-1) holds. This proves the assertion (1) for  $p\in (p_c, p_1)$.

For $p=p_1$, $T(\wp(a))=0$ has a double real-valued root $\wp(a_5)=\wp(a_6)<e_1$, and the corresponding $A_5=A_6\in (A_1, A_0)$. Then \eqref{eqfc-s5-43} holds, so Lemma \ref{lemma2-10-3} implies that $A_5$ is a branch point of $\sigma_1$.
By the continuous deformation of $\sigma_1$ with respect to $p\uparrow p_1$, we have that for $p=p_1$, $\sigma_1=[A_1, A_0]\cup\sigma_{1,A_2A_3}\cup\sigma_{1,\infty}$ with \eqref{eqfc-0s5-42} holds.
Thus, the assertion (2) holds for $p=p_1$.

We can repeat the same argument to treat $p\in (p_1, p_2]$. In conclusion, by repeating this argument at most three times, we finally obtain the assertions (1)-(2) for $p\in (0, \frac12)$. The proof is complete.
\end{proof}

Now we can improve Lemma \ref{Lemma53-5}-(2) to a sharp version by using  Lemma \ref{lem0-s5-13} and \eqref{eq7-16}.

\begin{lemma}\label{lem2-s5-13}
Let $\tau=\frac12+ib$ with $b\geq \frac12$, and recall $p_c\in (0,\frac14)$ defined in \eqref{eq-pc} and \eqref{eq-pc1}. 
\begin{itemize}
		\item[(1)] If $p\in(p_c,\frac{1}{2}-p_c)$, then $\sigma_1=[A_1, A_0] \cup \sigma_{1, \infty A_2} \cup \overline{\sigma_{1, \infty A_2}}$, where $\sigma_{1, \infty A_2} \subset\{z: \operatorname{Im} z>0\}$ is a semi-unbounded simple curve with the endpoint $A_2$ and tending to $-\frac{2 p \eta_1-\zeta(2 p)}{2}+ i \infty$, while $\overline{\sigma_{1, \infty A_2}} \subset\{z: \operatorname{Im} z<0\}$ is the complex conjugate of $\sigma_{1, \infty A_2}$ such that $\sigma_{1, \infty A_2} \cup \overline{\sigma_{1, \infty A_2}}$ is symmetric with respect to the real axis.
		\item[(2)] If $p\in(0,p_c]\cup[\frac12-p_c,\frac12)$, then there exist $A_5, A_6\in (A_1, A_0)$ such that $\sigma_1=[A_1, A_0] \cup \sigma_{1, A_2 A_3} \cup \sigma_{1, \infty}$, where $\sigma_{1, \infty}$ is an unbounded simple curve tending to $-\frac{2 p \eta_1-\zeta(2 p)}{2} \pm i \infty$, which is symmetric with respect to the real axis with $\sigma_{1, \infty} \cap \mathbb{R}=\{A_5\}$, and $\sigma_{1, A_2 A_3}$ is a simple curve connecting $A_2$ and $A_3$, and is symmetric with respect to the real axis with $\sigma_{1, A_2 A_3} \cap \mathbb{R}=\{A_6\}$. In particular, both $A_5$ and $A_6$ are branch points of $\sigma_1$. Moreover, $A_5=A_6$ if and only if $p\in \{p_c, \frac12-p_c\}$.
	\end{itemize}
\end{lemma}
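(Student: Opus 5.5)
The plan is to combine Lemma \ref{lem0-s5-13} with the reflection symmetry \eqref{eq7-16} of Lemma \ref{lem4-s5-13}. Lemma \ref{lem0-s5-13} shows that the shape of $\sigma_1$ is decided by the sign of $\Delta(p)$. So it suffices to prove that $\Delta(p)>0$ on $(0,p_c)\cup(\frac12-p_c,\frac12)$, that $\Delta(p)<0$ on $(p_c,\frac12-p_c)$, and that $\Delta$ vanishes on $(0,\frac12)$ exactly at $p_c$ and $\frac12-p_c$.

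\textbf{Step 1 (symmetry).} For $p\in(0,\frac12)$, $\frac12-p\in(0,\frac12)$ as well. By \eqref{eq7-16}, $\sigma_1(\frac12-p)=-\sigma_1(p)$, and $A\mapsto -A$ preserves $\mathbb R$ and reverses it. The structural types (2-1) and (2-2) of Lemma \ref{Lemma53-5} are invariant under $A\mapsto-A$:
\begin{itemize}
\item the endpoints $A_1<A_0$ become $-A_0<-A_1$, which are the new $A_1(\frac12-p), A_0(\frac12-p)$ by Lemma \ref{lemma53-3};
\item the direction $\pm i\infty$ is preserved;
\item branch points on $\mathbb R$ map to branch points on $\mathbb R$.
\end{itemize}
Hence the type of $\sigma_1$ at $p$ equals the type at $\frac12-p$. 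The property ``$A_5=A_6$'' is also preserved. Combined with Lemma \ref{lem0-s5-13}, this gives that $\Delta(p)\ge0$ if and only if $\Delta(\frac12-p)\ge0$, and $\Delta(p)=0$ if and only if $\Delta(\frac12-p)=0$.

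\textbf{Step 2 (sign on the middle interval).} By definition $\Delta>0$ on $(0,p_c)$ and $\Delta(p_c)=0$. By Step 1, $\Delta>0$ on $(\frac12-p_c,\frac12)$ and $\Delta(\frac12-p_c)=0$. The remaining claim is $\Delta<0$ on $(p_c,\frac12-p_c)$.
\begin{itemize}
\item At $p=\frac14$, Lemma \ref{Lemma53-6} gives $\sigma_1=[A_1,A_0]\cup(-i\infty,A_3]\cup[A_2,+i\infty)$. This has no real branch points, so $\Delta(\frac14)<0$ by Lemma \ref{lem0-s5-13}(2).
\item Along $p\in(0,\frac12)$, $x=\wp(p)$ ranges monotonically over $(e_1,+\infty)$, and $\Delta$ is a quartic polynomial in $x$ with leading coefficient $16>0$.
\item $\Delta$ is positive near $x=+\infty$, i.e. $p\to0$; it vanishes at $x_c=\wp(p_c)$, is negative at $\wp(\frac14)$, and vanishes at $x_c'=\wp(\frac12-p_c)$.
\item If $\Delta(x_c')=0$ is a simple root, then $\Delta$ is positive just below $x_c'$, i.e. for $p$ slightly above $\frac12-p_c$, consistent with Step 1.
\end{itemize}

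\textbf{Main obstacle: excluding extra sign changes.} One must rule out $\Delta$ returning to $\ge0$ somewhere in $(p_c,\frac12-p_c)$. A quartic has at most four real roots, and roots inside the interval come in symmetric pairs $q,\frac12-q$ by Step 1. The root counting therefore forces either exactly the two roots $x_c,x_c'$ with sign changes, or possibly double roots.

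I would first try to exclude this directly using the proof structure of Lemma \ref{lem0-s5-13}. Suppose $\Delta(q)=0$ for some $q\in(p_c,\frac14)$ with $\Delta<0$ on $(p_c,q)$. Then at $q$ the graph is of type (2-2) with $A_5=A_6$, and continuity in $p$ forces the branch point to persist. The candidate contradiction is with Lemma \ref{lem-s5-20}-(1), cited in that proof, since type (2-2) with a crossing outside $[A_1,A_0]$ is forbidden there.

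Failing that, count roots: roots at $p_c<q\le q'<\frac12-p_c$, with $q'=\frac12-q$, plus $x_c$ and $x_c'$ are already four roots counted with multiplicity. The sign conditions (positive at $x_c^+$, negative at $\wp(\frac14)$, and so on) are then incompatible with a quartic of positive leading coefficient, unless $q=q'=\frac14$. That case contradicts $\Delta(\frac14)<0$. This completes both (1) and (2), including the equivalence of $A_5=A_6$ with $p\in\{p_c,\frac12-p_c\}$.
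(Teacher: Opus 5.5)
Your proposal is correct and follows essentially the paper's argument: the reflection $-\sigma_1(p)=\sigma_1(\frac12-p)$ together with Lemma \ref{lem0-s5-13} transfers the sign of $\Delta$ from $p$ to $\frac12-p$, Lemma \ref{Lemma53-6} gives $\Delta(\frac14)<0$, and a root count for the quartic in $x=\wp(p)$ rules out any further zeros. Your tentative first route via Lemma \ref{lem-s5-20}-(1) is unnecessary. The root count alone suffices: an extra zero $q\in(p_c,\frac12-p_c)$ would give four distinct simple roots, which forces the quartic to be positive at $\wp(\frac14)$, a contradiction. This also avoids having to match multiplicities under the symmetry, which the paper's wording implicitly does.
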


\begin{proof}
Recall \eqref{eq7-16}
that
\begin{equation}\label{eq07-16}
	-\sigma_j(p)=\sigma_j\Big(\frac12-p\Big),\qquad j=1,3,\quad p\in \Big(0, \frac12\Big).
\end{equation}
This actually implies that
\begin{equation}\label{eq7-22}
\Delta(p)>0\quad\Longleftrightarrow \quad\Delta\Big(\frac12-p\Big)>0,
\end{equation}
$$\Delta(p)=0\quad\Longleftrightarrow \quad\Delta\Big(\frac12-p\Big)=0,$$
\begin{equation}\label{eq7-22-2}\Delta(p)<0\quad\Longleftrightarrow \quad\Delta\Big(\frac12-p\Big)<0.\end{equation}
Let us take $\Delta(p)>0$ for example. Then Lemma \ref{lem0-s5-13} says that the statement of Lemma \ref{lem0-s5-13}-(2) holds for $\sigma_1(p)$ with the correponding $A_5\neq A_6$. 
Consequently, it follows from \eqref{eq07-16} that the statement of Lemma \ref{lem0-s5-13}-(2) holds for $\sigma_1(\frac12-p)$ with the correponding $A_5\neq A_6$, so we conclude from  Lemma \ref{lem0-s5-13} that $\Delta(\frac12-p)>0$.

Recalling \eqref{deltap}, we define
$$g(x):=16x^4-128\eta_1 x^3+8g_2x^2+32(\eta_1g_2+g_3)x+g_2^2+32\eta_1g_3.$$
It follows from Lemmas \ref{Lemma53-6} and \ref{lem0-s5-13} that 
$$g\Big(\wp\Big(\frac14\Big)\Big)=\Delta\Big(\frac14\Big)<0.$$
Recalling $p_c\in (0,\frac14)$ defined in \eqref{eq-pc} and \eqref{eq-pc1}, we have
$$g(\wp(p_c))=\Delta(p_c)=0,\quad \wp(p_c)>\wp\Big(\frac14\Big).$$ 
Together with $\lim_{x\to+\infty}g(x)=+\infty$, we see that the number of zeros of $g(x)$ in $(\wp(\frac14),+\infty)$ is an odd number $k\geq 1$ by counting multiplicity. Clearly \eqref{eq7-22}-\eqref{eq7-22-2} imply that the number of zeros of $g(x)$ in $(e_1, \wp(\frac14))$ is also the same $k$ by counting multiplicity. Thus $2k\leq 4$ and so $k=1$. 

Therefore, 
$$g(x)\begin{cases}>0\quad\text{for }x\in (\wp(p_c),+\infty)\cup (e_1, \wp(\frac12-p_c)),\\
=0\quad\text{for }x=\wp(p_c), \wp(\frac12-p_c),\\
<0\quad\text{for }x\in (\wp(\frac12-p_c), \wp(p_c)),\end{cases}$$
or equivalently,
$$
\Delta(p)\begin{cases}>0\quad\text{for }p\in (0, p_c)\cup (\frac12-p_c, \frac12),\\
=0\quad\text{for }p=p_c, \frac12-p_c,\\
<0\quad\text{for }p\in (p_c, \frac12-p_c).\end{cases}$$
The proof is complete by applying Lemma \ref{lem0-s5-13}.
\end{proof}

We can also improve Lemma \ref{Lemma53-5}-(1) to a sharp version.

\begin{lemma}\label{lem-s5-20} Let $\tau=\frac12+ib$ with $b\geq \frac12$ and $p\in(0,\frac12)$. Recall Lemma \ref{Lemma53-5} that $\sigma_3=(-\infty, A_1]\cup [A_0, +\infty)\cup\sigma_{3,A_2A_3}$, where $\sigma_{3,A_2A_3}$ is a simple curve connecting $A_2$ and $A_3$, and is symmetric with respect to the real axis.
\begin{itemize}
\item[(1)] Recalling $p_{1,2}\in (0,\frac14)$ and $p_{0,2}=\frac12-p_{1,2}$ in Lemma \ref{7lemma-s5-9} for $b\in [\frac12, b_1)$, if
\begin{equation}
\label{eq7-25} p\in J:=\begin{cases}
(0,\frac12)\quad\text{for }b\geq b_1,\\
(p_{1,2}, p_{0,2})=(p_{1,2}, \frac12-p_{1,2})\quad\text{for }\frac12\leq b< b_1.
\end{cases}
\end{equation}
then $G_p(z)$ has a unique pair of nontrivial critical points $\pm a_0=\pm a_0(p)$ satisfying $\wp(a_0)\in\mathbb R$. Furthermore, $a_0\in II\cup III$, so the corresponding 
\begin{align}\label{eqfc-s5-a4}A_4:=&\frac{1}{2}\left[  \zeta(p+a_0)+\zeta(p-a_0)-\zeta(2p)\right]\\
=&A_0+\frac{\wp'(p)}{2(\wp(p)-\wp(a_0))}\in (A_1, A_0),\nonumber\end{align}
and $\sigma_{3,A_2A_3}\cap\mathbb{R}=\{A_4\}$. In particular, $\sigma_3$ has neither cusps nor branch points.
 \item[(2)] If $b\in [\frac12, b_1)$ and $p\in (0, p_{1,2}]\cup [p_{0,2},\frac12)$, then $\{\tilde{A}_4\}:=\sigma_{3,A_2A_3}\cap\mathbb{R}$ satisfies 
 \begin{equation}\label{eq7-26} \tilde{A}_4\begin{cases}= A_0\quad\text{if }p=p_{0,2},\\
 \in (A_0, +\infty)\quad\text{if }p\in (0,p_{0,2}),\\
 =A_1\quad\text{if }p=p_{1,2},\\
\in (-\infty, A_1) \quad\text{if }p\in (p_{1,2},\frac12),
 \end{cases}\end{equation}
i.e., $\tilde A_4$ is either a cusp or a branch point of $\sigma_3$.
\end{itemize}
\end{lemma}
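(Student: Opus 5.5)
The plan is to get part (1) from the image computations for $f$ in Sections \ref{sec-4}--\ref{sec-6}, and part (2) from the cusp criteria together with a continuity and sign argument. Throughout, $p\in(0,\frac12)$, so $\wp(p)>e_1$.

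\emph{Part (1), existence and uniqueness of $\pm a_0$.} By Lemma \ref{lemma-aup}, a pair $\pm a$ with $\wp(a)\in\mathbb R$ is a pair of nontrivial critical points of $G_p$ exactly when $a\in I\cup II\cup III$ and $f(a)=\wp(p)$. We show this value $\wp(p)$ is attained exactly once on $I\cup II\cup III$.
\begin{itemize}
\item For $b>b_1$: by Lemma \ref{lemma-aup-1}, $f(I)\subset(-\infty,e_1)$. By Lemmas \ref{lemma-aup-2}--\ref{lemma-aup-3}, the values in $(e_1,+\infty)$ are taken only on $(z_0(\tau),\frac14+\frac\tau2]\cup[\frac14-\frac\tau2,\frac12-z_0(\tau))$. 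There $f$ is a homeomorphism onto $(\wp(\frac14),+\infty)$ and onto $(e_1,\wp(\frac14))$ respectively, with both endpoints mapping to $\wp(\frac14)$. The pieces $(\frac12,z_0(\tau))$ and $(\frac12-z_0(\tau),0)$ map into $(-\infty,e_1)$.
\item For $b=b_1$: use Lemmas \ref{l6emma-aup-1}--\ref{l6emma-aup-3} in the same way.
\item For $\frac12\le b<b_1$: by Lemmas \ref{7lemma-aup-2}--\ref{7lemma-aup-3}, $f(II)=[\wp(\frac14),\wp(p_{1,2}))$ and $f(III)=(\wp(p_{0,2}),\wp(\frac14)]$, both bijectively, and $f(I)\subset(-\infty,e_1)$. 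So exactly one preimage exists iff $\wp(p)\in(\wp(p_{0,2}),\wp(p_{1,2}))$, i.e.\ $p\in(p_{1,2},p_{0,2})=J$.
\end{itemize}
In every case $a_0\in II\cup III$, hence $\wp(a_0)<e_1<\wp(p)$. Then $\wp'(p)<0$ and the addition formula \eqref{eqfc-add} give $A_4\in(A_1,A_0)$, exactly as in \eqref{eqq-Aj}.

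\emph{Part (1), the intersection point and the absence of cusps and branch points.} By Theorem \ref{rmk2-8}, $A_4\in\sigma_1\cap\sigma_3$. Since $A_4$ lies strictly between $A_1$ and $A_0$, it is not on the rays $(-\infty,A_1]\cup[A_0,+\infty)$, so $A_4\in\sigma_{3,A_2A_3}$. As $\sigma_{3,A_2A_3}\cap\mathbb R$ is a single point (Lemma \ref{Lemma53-5}), that point is $A_4$.
\begin{itemize}
\item Cusps of $\sigma_3$ can only be $A_0$ or $A_1$, by Corollaries \ref{coro-s5-10}, \ref{c6oro-s5-10} and \ref{7coro-s5-10}. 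A cusp at $A_0$ or $A_1$ forces a third arc, necessarily $\sigma_{3,A_2A_3}$, to end there. That would put $A_0$ or $A_1$ in $\sigma_{3,A_2A_3}\cap\mathbb R=\{A_4\}$, which is impossible.
\item Branch points: the rays and $\sigma_{3,A_2A_3}$ are simple arcs, and $\sigma_{3,A_2A_3}$ meets $\mathbb R$ only at $A_4\notin$ rays. Hence no two arcs cross, and there are no branch points.
\end{itemize}

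\emph{Part (2), location of $\tilde A_4$.} Here $\frac12\le b<b_1$ and $p\in(0,p_{1,2}]\cup[p_{0,2},\frac12)$. By Corollary \ref{7coro-s5-10}:
\begin{itemize}
\item $A_0$ is a cusp of $\sigma_3$ iff $p=p_{0,2}$;
\item $A_1$ is a cusp of $\sigma_3$ iff $p=p_{1,2}$.
\end{itemize}
At such a cusp, three arcs of $\sigma_3$ meet, and the third one is $\sigma_{3,A_2A_3}$. This gives $\tilde A_4=A_0$ (resp.\ $A_1$), proving the two equality cases.

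For the remaining $p$ we first exclude $\tilde A_4\in[A_1,A_0]$:
\begin{itemize}
\item $\tilde A_4\notin\{A_0,A_1\}$, since those equalities occur only at the cusp values above.
\item Suppose $\tilde A_4\in(A_1,A_0)$. Since $(A_1,A_0)\subset\sigma_1$ (Lemma \ref{Lemma53-5}), we get $\tilde A_4\in\sigma_1\cap\sigma_3\setminus\{A_k\}$. By Theorem \ref{rmk2-8} and Lemma \ref{lemma-s5-13}, $G_p$ would then have a nontrivial critical point with $\wp(a)\in\mathbb R$. But $\wp(p)\in(\wp(p_{1,2}),+\infty)\cup(e_1,\wp(p_{0,2}))$, where Theorem \ref{sec6tion-thm} gives none.
\end{itemize}
Thus $\tilde A_4$ lies on one of the real rays, so it is a branch point of $\sigma_3$. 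Since $\tilde A_4$ moves continuously in $p$ and can never equal $A_0$ or $A_1$ inside either open interval, it stays on one fixed ray for $p\in(0,p_{1,2})$, and on one fixed ray for $p\in(p_{0,2},\frac12)$.

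\emph{Part (2), which ray.} We must show the ray is $(A_0,+\infty)$ on $(0,p_{1,2})$ and $(-\infty,A_1)$ on $(p_{0,2},\frac12)$.
\begin{itemize}
\item The symmetry \eqref{eq7-16}, $-\sigma_3(p)=\sigma_3(\frac12-p)$, together with $A_0(\frac12-p)=-A_1(p)$ (from \eqref{Ak}), exchanges the two intervals and the two rays. So it suffices to settle the interval $(0,p_{1,2})$.
\item There, I would follow $\tilde A_4$ as $p\uparrow p_{1,2}$, where $\tilde A_4\to A_1$. For $p\downarrow p_{1,2}$ we have $p\in J$, so $\tilde A_4=A_4\in(A_1,A_0)$, approaching $A_1$ from the right.
\item At the order-$3$ cusp at $A_1$, the one-sided local picture must flip. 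The arc $\sigma_{3,A_2A_3}$ passes through $A_1$ transversally to the real line and enters, on the side $p<p_{1,2}$, the sector through which it must next reach the real axis. The claim to be extracted is that this sector forces $\tilde A_4$ to meet $\mathbb R$ on $(A_0,+\infty)$, i.e.\ after passing over the segment $[A_1,A_0]$, rather than on $(-\infty,A_1)$.
\item To confirm this, I would compare with the limit $p\to0$. There $A_0\sim\frac{1}{2p}$ and $A_1\sim-\frac{1}{2p}$, while $A_2,A_3$ stay bounded, so the real crossing of the bounded curve $\sigma_{3,A_2A_3}$ should lie near the centre and be ordered relative to $A_0$ and $A_1$ accordingly.
\end{itemize}
The main obstacle I expect is this orientation step: deciding rigorously, either by a local expansion of $\triangle_3$ at the cusp $A_1$ or by the $p\to0$ asymptotics, that small $p$ puts $\tilde A_4$ in $(A_0,+\infty)$ and not in $(-\infty,A_1)$. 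The symmetry then transfers the answer to $(p_{0,2},\frac12)$.
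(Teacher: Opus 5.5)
Your part (1), and in part (2) the equality cases at the cusp parameters, the exclusion of $(A_1,A_0)$ via Theorem \ref{sec6tion-thm}, and the confinement of $\tilde A_4$ to one ray on each interval by continuity, all follow the paper's argument. You have also written out the bookkeeping from Sections \ref{sec-4}--\ref{sec-6} in more detail than the paper does.

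The gap is the final orientation step, and the problem is not only that it is unproven: the assignment you aim for is false. As printed, \eqref{eq7-26} is self-contradictory. The point $p=p_{1,2}$ lies in $(0,p_{0,2})$, and $p=p_{0,2}$ lies in $(p_{1,2},\frac12)$, so the printed cases assign two incompatible values at each endpoint. You resolved this misprint the wrong way.

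Your own continuity claim settles the orientation. As $p\uparrow p_{1,2}$, you have $\tilde A_4(p)\to A_1(p_{1,2})$. Meanwhile $A_0(p)-A_1(p)=-\frac{\wp'(p)}{2(\wp(p)-e_1)}$ stays bounded away from $0$. So $\tilde A_4(p)\notin(A_0(p),+\infty)$ for $p<p_{1,2}$ close to $p_{1,2}$. Hence $\tilde A_4\in(-\infty,A_1)$ on all of $(0,p_{1,2})$. By \eqref{eq7-16} together with $A_0(\frac12-p)=-A_1(p)$, it follows that $\tilde A_4\in(A_0,+\infty)$ on $(p_{0,2},\frac12)$.

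This is exactly what the paper's appeal to Corollary \ref{7coro-s5-10} and the continuity of $\sigma_3$ in $p$ yields, and it is the correct reading of \eqref{eq7-26}. No local cusp analysis or asymptotics are needed.

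The two heuristics you offer for the opposite orientation are also wrong.

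First, there is no flip across $[A_1,A_0]$ at the cusp. For $p\neq p_{1,2}$ near $p_{1,2}$, $A_1$ stays a simple zero of $\triangle_3-1$. Zeros of $\triangle_3^2-1$ off $\{A_k\}_{k=0}^3$ have even order. So locally
\[
\triangle_3(A)-1=(A-A_1)(A-B)^2h(A),
\]
with $B$ real and $h>0$ on the real axis near $A_1$. If $B>A_1$, the real crossing of $\sigma_{3,A_2A_3}$ is $B$ itself, which lies in $(A_1,A_0)$. If $B<A_1$, the crossing is the critical point of $\triangle_3$ in $(B,A_1)$, which is a branch point on $(-\infty,A_1)$ adjacent to $A_1$.

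Second, your $p\to0$ asymptotics are incorrect. In fact $A_0\sim\frac{3}{4p}$, whereas $A_1,A_2,A_3\sim-\frac{1}{4p}$ with $A_k-A_1=-(e_k-e_1)p+O(p^3)$. So $A_2$ and $A_3$ do not stay bounded; they collapse onto $A_1$, far from $A_0$. This again points to $(-\infty,A_1)$, not $(A_0,+\infty)$.
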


\begin{proof}
(1). Under the assumption \eqref{eq7-25},
 it follows from those results in Sections 4-6 that $G_p(z)$ has a unique pair of nontrivial critical points $\pm a_0=\pm a_0(p)$ satisfying $\wp(a_0)\in\mathbb R$, and $a_0\in II\cup III$.
In particular, $\wp(a_0)<e_1<\wp(p)$, so $A_4$ defined by \eqref{eqfc-s5-a4} satisfies $A_4\in (A_1, A_0)$.
Since $A_4\in \sigma_1\cap\sigma_3\cap\mathbb R$ and $\sigma_{3,A_2A_3}\cap\mathbb{R}$ consists of a single point,  we conclude that $\{A_4\}=\sigma_{3,A_2A_3}\cap\mathbb{R}$.

(2). Let $b\in [\frac12, b_1)$ and $p\in (0, p_{1,2}]\cup [p_{0,2},\frac12)$. Then it follows from Theorem \ref{sec6tion-thm} that $G_p(z)$ has no nontrivial critical points satisfying $\wp(a)\in\mathbb{R}$, so
$$\sigma_1\cap\sigma_3\cap\mathbb R\setminus\{A_k\}_{k=0}^3=\emptyset.$$
Together with $[A_1, A_0]\subset\sigma_1$, it follows that $\{\tilde{A}_4\}:=\sigma_{3,A_2A_3}\cap\mathbb{R}$ satisfies $\tilde{A}_4\in (-\infty, A_1]\cup [A_0, +\infty)$, so $\tilde A_4$ is either a cusp or a branch point of $\sigma_3$. Using Corollary \ref{7coro-s5-10} and the continuity of $\sigma_3$ as $p$ deforms, we easily obtain \eqref{eq7-26}.
\end{proof}


Now we turn to prove Corollary \ref{III-coro}.
The next result gives a criterion for the non-degeneracy of the nontrivial critical points $\pm a_0$.

\begin{lemma}\label{Lemma531-05} Let $\tau=\frac12+ib$ with $b\geq \frac12$ and 
$$ p\in J:=\begin{cases}
(0,\frac12)\quad\text{for }b\geq b_1,\\
(p_{1,2}, p_{0,2})=(p_{1,2}, \frac12-p_{1,2})\quad\text{for }\frac12\leq b< b_1.
\end{cases}
$$
 Recall the unique pair of nontrivial critical points $\pm a_0$ of $G_p(z)$ satisfying $\wp(a_0)\in\mathbb R$ in Lemma \ref{lem-s5-20}, and recall $p_c\in (0, \frac14)$ defined in \eqref{eq-pc} and \eqref{eq-pc1}.  Then $\pm a_0$ are non-degenerate critical points of $G_p(z)$ if one of the following holds.
\begin{itemize}
\item[(1)] Either $p\in J\cap (p_c, \frac12-p_c)$;
\item[(2)] or $p\in J\cap( (0, p_c]\cup [\frac12-p_c, \frac12))$ and $A_4\notin\{A_5, A_6\}$, where $A_5, A_6$ are given in
Lemma \ref{lem2-s5-13} (2).\end{itemize} 
\end{lemma}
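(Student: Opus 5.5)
The plan is to translate degeneracy of $\pm a_0$ into a condition on the conditional stability set $\sigma_1$ at the point $A_4$, and then check that this condition fails under (1) and under (2).

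\textbf{Step 1: the degeneracy criterion.} By \eqref{eqfc-add} and \eqref{61-37-22}, write $a_0=r+s\tau$ with $(r,s)\in\mathbb R^2\setminus\frac12\mathbb Z^2$. I will use the computation of Part I (\cite[(3.1)]{CFL}, as in the proof of Theorem \ref{thm-nondegenerate}), which gives $\det D^2G_p(a_0)=\frac{1}{4(\operatorname{Im}\tau)^2}J_\phi(a_0)$. For a harmonic map $\phi=h+\bar g$ one has $J_\phi=|h'|^2-|g'|^2$. Here $h'$ and $g'$ are built from $\wp(a_0+p)+\wp(a_0-p)$ and the constants $a=\frac{\pi}{\operatorname{Im}\tau}-\eta_1$, $b=-\frac{\pi}{\operatorname{Im}\tau}$. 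Explicitly,
\[
J_\phi(a_0)=c\Big(\big|\wp(a_0+p)+\wp(a_0-p)+2\eta_1-\tfrac{2\pi}{\operatorname{Im}\tau}\big|^2-\big(\tfrac{2\pi}{\operatorname{Im}\tau}\big)^2\Big)
\]
for some $c>0$. With $\operatorname{Im}\tau=b$ and $\frac{2\pi}{b}=\frac{4\pi i}{2\tau-1}$, degeneracy therefore means that $X:=\wp(a_0+p)+\wp(a_0-p)+2\eta_1$ satisfies $|X-\frac{2\pi}{b}|=\frac{2\pi}{b}$.

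Since $p\in(0,\frac12)$ and $\wp(a_0)\in\mathbb R$, formula \eqref{eqfc-add10} shows $X\in\mathbb R$. A real point of that circle is either $X=0$ or $X=\frac{4\pi}{b}$. By Lemma \ref{lemma2-10-3}, $X=0$ says exactly that $A_4$ is a branch point of $\sigma_1$. The value $X=\frac{4\pi}{b}$ corresponds to a branch point of $\sigma_3$, since $\frac{8\pi i}{2\tau-1}=\frac{4\pi}{b}$.

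\textbf{Step 2: excluding the two alternatives.} By Lemma \ref{lem-s5-20}(1), $\sigma_3$ has no branch points for $p\in J$, so the case $X=\frac{4\pi}{b}$ cannot occur. Hence $\pm a_0$ is degenerate if and only if $A_4\in(A_1,A_0)$ is a branch point of $\sigma_1$.

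In case (1), Lemma \ref{lem2-s5-13}(1) gives $\sigma_1=[A_1,A_0]\cup\sigma_{1,\infty A_2}\cup\overline{\sigma_{1,\infty A_2}}$, which meets $\mathbb R$ only in $[A_1,A_0]$. So $\sigma_1$ has no branch points there. Alternatively, Lemma \ref{lem0-s5-13} gives $\Delta(p)<0$, so $T(\wp(a))=0$ has no real root and $X\neq0$ directly.

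In case (2), the real branch points of $\sigma_1$ are exactly the points where $T(\wp(a))=0$ has a real root. By the proof of Lemma \ref{lem0-s5-13}, these are precisely $A_5$ and $A_6$. Hence $A_4\notin\{A_5,A_6\}$ gives $X\neq0$, and $\pm a_0$ is non-degenerate.

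\textbf{Main obstacle.} The delicate step is the exact form of the Jacobian identity: the sign conventions, and checking that the degenerate locus for real $X$ is exactly $\{0,\frac{4\pi}{b}\}$. I would verify this by computing $\partial_z\phi=1-\ldots$ and $\partial_{\bar z}\phi$ directly from \eqref{00a+2p}. If the explicit circle form proves awkward, there is a fallback: use \eqref{deri-r}-type formulas for $\partial f/\partial r$ and the analogous derivative along the $2\tau-1$ direction. Vanishing of either derivative corresponds to a branch point of $\sigma_1$, respectively $\sigma_3$, and that gives the same dichotomy.
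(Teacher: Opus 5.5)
Your proposal is correct and is essentially the paper's proof: with $\alpha=X/2\in\mathbb{R}$, non-degeneracy reduces to $\alpha\neq 0$ and $\alpha\neq\frac{2\pi}{b}$. The second is excluded because $\sigma_3$ has no branch points for $p\in J$ (Lemma \ref{lem-s5-20}(1) with Lemma \ref{lemma2-10-3}), and the first because $A_4\in(A_1,A_0)$ is not a branch point of $\sigma_1$ by Lemma \ref{lem2-s5-13}. The only difference is that you rederive the determinant formula \eqref{eqf0c-15} from the Jacobian of $\phi$ instead of quoting Part I; your constant $c$ is actually negative, since $J_\phi=1-\frac{b^2}{4\pi^2}\bigl|X-\frac{2\pi}{b}\bigr|^2$, but this does not change the degenerate locus.
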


\begin{proof} Since $\wp(p), \wp(a_0)\in\mathbb{R}$ implies $\bar p=\pm p$ and $\bar a_0=\pm a_0$ in $E_{\tau}$, we see that
$$\alpha:=\frac{\wp(a_0+p)+\wp(a_0-p)+2\eta_1}{2}\in\mathbb R.$$
Recalling Part I \cite{CFL} that
\begin{align}\label{eqf0c-15}\det D^2G_p(a_0)
=\frac1{4\pi^2}\left(\frac{\pi^2}{b^2}-\left|\alpha-\frac{\pi}{b}\right|^2\right),\end{align}
 we see that to prove $\det D^2G_p(a_0)\neq 0$ is equivalent to prove
\begin{align}\label{al0pha}
\alpha\neq 0\quad\text{and}\quad\alpha\neq \frac{2\pi}{b}.
\end{align}

Since $p\in J$,
it follows from Lemma \ref{lem-s5-20}-(1) that $A_4$ is not a branch point of $\sigma_3$, so Lemma \ref{lemma2-10-3} implies $\alpha\neq \frac{4\pi i}{2\tau-1}=\frac{2\pi}{b}$.
 
 If $p\in J\cap (p_c, \frac12-p_c)$, then it follows from Lemma \ref{lem2-s5-13} (1) that $\sigma_1$ has no branch points. Thus $A_4$ is not a branch point of $\sigma_1$, so Lemma \ref{lemma2-10-3} implies $\alpha\neq 0$.
 
 If $p\in J\cap ((0, p_c]\cup [\frac12-p_c, \frac12))$, then it follows from Lemma \ref{lem2-s5-13} (2) that $\sigma_1$ has branch points only at $A_5, A_6$. Since we assume $A_4\notin\{A_5, A_6\}$, again $A_4$ is not a branch point of $\sigma_1$, so $\alpha\neq 0$.
 This completes the proof.
\end{proof}

For $p\in J\cap ((0, p_c]\cup [\frac12-p_c, \frac12))$, we have $A_4, A_5, A_6\in (A_1, A_0)$. Note that if $A_4\in\{A_5, A_6\}$ for some such $p$, then it follows from \eqref{eqfc-s5-43} that $\alpha=0$, i.e., $a_0$ is a degenerate critical point of $G_p(z)$. 

\begin{lemma}\label{lem75} Let $\tau=\frac12+ib$ with $b> b_1$. Then
$$\mathcal{D}:=\Big\{p\in (0, p_c]\cup \Big[\frac12-p_c, \frac12\Big) \,\Big|\, \text{$a_0$ is a degenerate critical point of $G_p(z)$}\Big\}$$
is at most a finite set.
\end{lemma}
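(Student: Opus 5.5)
Since $b>b_1$, the set $J$ of Lemma \ref{Lemma531-05} is the whole interval $(0,\frac12)$. So for every $p\in(0,p_c]\cup[\frac12-p_c,\frac12)$ there is a unique pair $\pm a_0(p)$ with $\wp(a_0)\in\mathbb R$, and $a_0\in II\cup III$. From the proof of Lemma \ref{Lemma531-05}, $a_0$ is degenerate exactly when
\[
\alpha(p):=\tfrac12\big(\wp(a_0+p)+\wp(a_0-p)+2\eta_1\big)=0,
\]
because the alternative $\alpha=\frac{2\pi}{b}$ is already excluded. I will show that $\alpha(p)$ is a real-analytic function of $p$ on $(0,\frac12)$ that does not vanish identically. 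Then its zeros in the compact pieces away from the endpoints form a finite set, and I will treat the endpoints $0$ and $\frac12$ separately.

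\textbf{Step 1: analyticity of $p\mapsto a_0(p)$.} I will parametrize by $x=\wp(p)\in(e_1,+\infty)$. From Lemmas \ref{lemma-aup-1}--\ref{lemma-aup-3} (and Lemma \ref{lemma-aup-2} for $b>b_2$), for $p\in I$ the point $a_0$ lies in $(z_0(\tau),\frac14+\frac\tau2)\subset II$ if $\wp(p)>\wp(\frac14)$, equals $\frac14\pm\frac\tau2$ if $p=\frac14$, and lies in $(\frac14-\frac\tau2,\frac12-z_0(\tau))\subset III$ if $e_1<\wp(p)<\wp(\frac14)$. On these arcs $f$ is a homeomorphism onto its image, and $a_0=f^{-1}(\wp(p))$.

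The map $f$ restricted to a segment of $II$ (resp. $III$) is a real-analytic function of the real parameter $r$. Its derivative is given by \eqref{deri-r}:
\[
\frac{\partial f}{\partial r}=\wp'(p)\,\frac{2\alpha}{\wp(a-p)-\wp(a+p)}.
\]
So $f'\neq0$ exactly where $\alpha\neq0$. Hence $a_0(p)$ is real-analytic on the open set where $\alpha\neq0$, but it may fail to be smooth precisely at the degenerate points. To get around this, I will use the implicit equation instead. Write $F(p,a):=\wp(p)-f(a)$, with $a$ restricted to the real segment. The zero set of $F$ is a real-analytic curve in $(p,r)$-space, and by the uniqueness statements it is the graph of the continuous function $p\mapsto a_0(p)$. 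A real-analytic set that is the graph of a continuous function is a finite union of analytic arcs locally (this follows from Puiseux / curve selection). Consequently, on any compact subinterval, $\alpha(p)=\alpha(p,a_0(p))$ restricted to this graph is an analytic function of a local parameter along each of finitely many branches, and its zeros are isolated unless $\alpha$ vanishes identically on an arc.

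\textbf{Step 2: $\alpha\not\equiv0$.} I will exclude identical vanishing using the points where we already know non-degeneracy. At $p=\frac14$, Lemma \ref{lem2-s5-13}(1) gives $\alpha\neq0$ because $p_c<\frac14$. Since $p\mapsto a_0(p)$ is continuous on all of $(0,\frac12)$ and the analytic curve is connected there, identical vanishing on some arc would propagate by analytic continuation along the connected zero set to $p=\frac14$. This contradicts $\alpha(\frac14)\neq0$. Hence the zeros of $\alpha$ on $(0,\frac12)$ are isolated.

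\textbf{Step 3: no accumulation at $p\to0$ or $p\to\frac12$.} By the symmetry \eqref{eq: da11ta3}, it suffices to treat $p\to0$. Here I will use Theorem \ref{thm-5c}: for $b>b_1$, $G(z)$ has $5$ critical points, so for $0<|p|<\varepsilon$ the function $G_p$ has exactly one nontrivial pair, and it is non-degenerate. Thus $\mathcal D\cap\big((0,\varepsilon)\cup(\frac12-\varepsilon,\frac12)\big)=\emptyset$. Then $\mathcal D$ is contained in the compact set $[\varepsilon,p_c]\cup[\frac12-p_c,\frac12-\varepsilon]$, where its points are isolated, and so $\mathcal D$ is finite.

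The main obstacle is Step 1, namely justifying analyticity of $p\mapsto\alpha(p)$ across the possible degenerate points where $f'=0$. If the Puiseux argument is too heavy, my fallback is the following. Since $f$ is a homeomorphism on each arc, a zero of $f'$ at an interior point must be of odd order, and $f$ is real-analytic in $r$. So the zeros of $f'$ on each compact sub-arc of $II$ or $III$ are finite, because $f'\not\equiv0$ (as $f$ is injective). These zeros correspond bijectively, via $\wp(p)=f(a)$, to $\mathcal D$, and combining this with Step 3 gives finiteness directly.
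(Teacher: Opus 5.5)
Your main line (Steps 1--3) is correct and follows the paper's skeleton: $p\mapsto a_0(p)$ is analytic, the degeneracy condition does not vanish identically, Theorem \ref{thm-5c} keeps degenerate parameters away from $p=0$ and $p=\frac12$, and the symmetry $p\mapsto\frac12-p$ handles the other side.

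However, the claim that motivates your Step 1 detour is false. Formula \eqref{deri-r} is $\partial_r f$ at \emph{fixed} $s$. It is therefore the derivative along $I$, where $s=0$, but not along $II$ or $III$, where $s=1-2r$, resp.\ $s=-2r$. Along those segments the derivative is $\partial_r f-2\partial_s f$. From the definition one gets $\partial_s f-\tau\partial_r f=-\pi i\,\wp'(a)/Z^2$, where $Z=\zeta(a)-r\eta_1-s\eta_2=\frac{\wp'(a)}{2(\wp(p)-\wp(a))}$. Combining this with $\wp(a-p)-\wp(a+p)=\frac{\wp'(a)\wp'(p)}{(\wp(a)-\wp(p))^2}$ and $1-2\tau=-2ib$ gives
\[
\partial_r f-2\partial_s f=-4ib\,\frac{\wp'(p)}{\wp(a-p)-\wp(a+p)}\Big(\alpha-\frac{2\pi}{b}\Big).
\]
So $f$ is critical along $II\cup III$ exactly at the $\sigma_3$ branch-point condition of Lemma \ref{lemma2-10-3}(2), not at $\alpha=0$. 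This is consistent with Step 2 of the proof of Lemma \ref{lem04-9}, where the turning point $\tilde a_{1,1}$ of $f$ on $II$ gives a branch point of $\sigma_3$.

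For $p\in J=(0,\frac12)$ this condition is excluded by Lemma \ref{lem-s5-20}(1). Hence $f$ has nonzero derivative on the arcs carrying $a_0$, and $a_0(p)$ is genuinely real-analytic. This is what the paper uses when it notes that $\overline{a_0}=1-a_0$ on $\frac12+i\mathbb R$, so that $f$ agrees there with a meromorphic function of $a_0$. The Puiseux argument is therefore unnecessary. Even without this fact, you could parametrize the zero set by $r$, over which it is the graph of the analytic function $r\mapsto f(a(r))$.

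Your fallback does not work. The zeros of $f'$ along $II\cup III$ correspond to $\alpha=\frac{2\pi}{b}$, not to $\mathcal D$, and in fact there are none. So it gives no information about $\mathcal D$.

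Compared with the paper, the real difference is how degeneracy is encoded. The paper writes it as $a_0\in\{a_5,a_6\}$, the roots of the quadratic $T$ from the proof of Lemma \ref{lem0-s5-13}. It then uses analyticity of $a_0,a_5,a_6$ on $(0,p_c)$, with non-degeneracy for small $p$ (Theorem \ref{thm-5c}) as the witness against identical coincidence. You work instead with the single function $\alpha(p)$, which stays analytic across $p_c$, where $a_5$ and $a_6$ collide with square-root branching. As a result, accumulation of degenerate parameters at $p_c$ is excluded without further comment. Your witness for $\alpha\not\equiv0$ (any $p\in(p_c,\frac14)$, by Lemma \ref{Lemma531-05}(1)) works just as well as the paper's small-$p$ witness.
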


\begin{proof} Let $p\in (0, \frac14)$, and 
recall the unique pair of nontrivial critical points $\pm a_0$ of $G_p(z)$ satisfying $\wp(a_0)\in\mathbb R$ in Lemma \ref{lem-s5-20}. Since $b>b_1$, we see from Lemma \ref{lemma-aup-2} that $a_0\in (z_0(\tau), \frac14+\frac{\tau}{2})\subset III\subset \frac12+i\mathbb R$, so $\overline {a_0}=1-a_0$. Then we see from \eqref{513-1-0} that
$$\wp(p)=\wp (a_0)+\frac{\wp ^{\prime }(a_0)}{%
2(\zeta(a_0)+(\frac{2\pi}{b}-\eta_1)a_0-\frac{\pi}{b})}.$$
Since $\zeta(z)$ and $\wp(z)$ are meromorphic functions with
$$\zeta(z)=\frac1z+c_1z^3+c_2z^5+\cdots,$$
$$\wp(z)=-\zeta'(z)=\frac1{z^2}-3c_1z^2-5c_2z^4+\cdots,$$
it follows that $a_0$ is an analytic function of $p\in (0, \frac14)$.

Note that $p_c\in (0, \frac14)$. For $p\in (0, p_c)$, we have $\Delta(p)>0$ and it follows from the proof of Lemma \ref{lem0-s5-13} that $\wp(a_5)\neq \wp(a_6)<e_1$ are two real distinct roots of
\begin{align*}
0=&4(\eta_1+\wp(p))\wp(a)^2+(4\wp(p)^2-8\eta_1\wp(p)-g_2)\wp(a)\\
&+4\eta_1\wp(p)^2-g_2\wp(p)-2g_3,
\end{align*}
so $a_5\neq a_6$ are both analytic functions of $p\in (0, p_c)$. Note from $\Delta(p_c)=0$ that $a_5=a_6$ at $p=p_c$.

Now since $b>b_1$, it follows from Theorem \ref{thm-5c} that $a_0$ is a non-degenerate critical point of $G_p(z)$ for $p>0$ small, so $A_4\notin\{A_5, A_6\}$, or equivalently $a_0\neq a_5, a_6$ for $p>0$ small. 
It follows from the property of analytic functions that
\begin{align*}&\Big\{p\in (0, p_c]\,\Big|\, \text{$a_0$ is a degenerate critical point of $G_p(z)$}\Big\}
\\=&\Big\{p\in (0, p_c]\,\Big|\, \text{$a_0=a_5$ or $a_0=a_6$}\Big\}
\end{align*}
is at most finite. Together with \eqref{eq07-16}, we obtain that 
$$\Big\{p\in \Big[\frac12-p_c, \frac12\Big) \,\Big|\, \text{$a_0$ is a degenerate critical point of $G_p(z)$}\Big\}$$
is also at most finite, so
$\mathcal{D}$ is at most a finite set.
\end{proof}

\begin{proof}[Proof of Corollary \ref{III-coro}]
Corollary \ref{III-coro} follows directly from Lemmas \ref{Lemma531-05} and \ref{lem75}.
\end{proof}

Finally, when $p\in II\cup III$, it seems challenging to improve Lemma \ref{Lemma53-7} to a sharp version (the precise characterization of $\sigma_1$ is simple by applying those results in Sections \ref{sec-4}-\ref{sec-6}, but the precise characterization of $\sigma_3$ seems very difficult). We should study this problem elsewhere.

\subsection*{Acknowledgements} Z. Chen was supported by National Key R\&D Program of China (No. 2023YFA1010002) and NSFC (No. 12222109). E. Fu was supported by NSFC (No. 12401188) and BIMSA Start-up Research Fund.

\subsection*{AI assistance statement} The authors used AI to find Lewy's non-vanishing Theorem \ref{thm-Lewy}.


\begin{thebibliography}{99}


\bibitem {BKLY}D. Bartolucci, A. Jevnikar, Y. Lee and W. Yang; \textit{Uniqueness of bubbling solutions of mean field equations}. J. Math. Pure Appl. \textbf{123} (2019), 78-126.

\bibitem{BT} D. Bartolucci and G. Tarantello; \textit{Liouville type equations with singular data and their applications to periodic multivortices for the electroweak theory}. Comm. Math. Phys. \textbf{229} (2002), 3-47.

\bibitem{BYZ} D. Bartolucci, W. Yang and L. Zhang; \textit{Asymptotic analysis and uniqueness of blowup solutions of non-quantized singular mean field equations}.
Math. Ann. \textbf{395} (2026), Paper No. 103.

\bibitem{BE} W. Bergweiler and A. Eremenko; \textit{Green's function and anti-holomorphic dynamics on a torus}.
Proc. Amer. Math. Soc. \textbf{144} (2016), 2911–2922.

\bibitem{BE-2010}  W. Bergweiler and A. Eremenko; \textit{On the number of solutions of a transcendental equation arising in the theory of gravitational lensing}.
Comput. Methods Funct. Theory \textbf{10} (2010), 303–324.

\bibitem {CLW}{ C.L. Chai, C.S. Lin and C.L. Wang; \textit{Mean field equations, hyperelliptic curves, and modular forms: I}. Camb. J. Math. \textbf{3} (2015), 127-274.}



\bibitem {CL-2} C.C. Chen and C.S. Lin; \textit{Topological degree for a mean field equation on Riemann surfaces}. Comm. Pure Appl. Math. \textbf{56} (2003), 1667-1727. 



\bibitem{CW-TAMS} H. Chen and M. Weber; \textit{
 An orthorhombic deformation family of Schwarz' H surfaces}.
Trans. Amer. Math. Soc. \textbf{374} (2021), 2057–2078.

\bibitem{CT-SIAM} H. Chen and M. Traizet; \textit{
Stacking disorder in periodic minimal surfaces}.
SIAM J. Math. Anal. \textbf{53} (2021), 855–887.

\bibitem{CFL} Z. Chen, E. Fu and C.S. Lin; \textit{Green functions, Hitchin's formula and curvature equations on tori}. arXiv:2508.17604v2. Submitted.

\bibitem{CFL-II} Z. Chen, E. Fu and C.S. Lin; \textit{Green functions, Hitchin's formula and curvature equations on tori II: Rectangular torus}. Proc. Lond. Math. Soc. \textbf{133} (2026), e70181.

\bibitem {CKL-JMPA2016}Z. Chen, T.J. Kuo and C.S. Lin; \textit{Hamiltonian
system for the elliptic form of Painlev\'{e} VI equation}. J. Math. Pures
Appl. \textbf{106} (2016), 546-581.


\bibitem {CKL1} Z. Chen, T.J. Kuo and C.S. Lin; \textit{The geometry of generalized Lam\'{e} equation, I}. J. Math. Pures
Appl.  \textbf{127} (2019), 89-120.


\bibitem{CL-JDG19} Z. Chen and C.S. Lin; \textit{Critical points of the classical Eisenstein series of weight two.} J. Differ. Geom. \textbf{113} (2019), 189–226.



\bibitem{FSX} Y. Feng, J. Song and B. Xu;
\textit{Existence and non-uniqueness of cone spherical metrics with prescribed singularities on a compact Riemann surface with positive genus.}
J. Geom. Anal. {\bf 36} (2026), Paper No. 58, 38 pp.



\bibitem {GW}F. Gesztesy and R. Weikard; \textit{Picard potentials and Hill's
equation on a torus}. Acta Math. \textbf{176} (1996), 73-107.

\bibitem{GGLY} F. Gladiali, M. Grossi, P. Luo and S. Yan;
\textit{Qualitative analysis on the critical points of the Robin function}.
J. Eur. Math. Soc. \textbf{27} (2025), 4713–4763.


\bibitem {Hit1} N.J. Hitchin; \textit{Twistor spaces, Einstein metrics and
isomonodromic deformations}. J. Differ. Geom. \textbf{42} (1995),
30-112.


\bibitem{Lewy}
H. Lewy;
\textit{On the non-vanishing of the Jacobian in certain one-to-one mappings},
Bull. Amer. Math. Soc. \textbf{42} (1936), 689--692.




\bibitem {LW} C.S. Lin and C.L. Wang; \textit{Elliptic functions, Green
functions and the mean field equations on tori}. Ann. Math. \textbf{172}
(2010), no.2, 911-954. 


\bibitem {LW4}C.S. Lin and C.L. Wang; \textit{On the minimality of extra
critical points of Green functions on flat tori}, Int. Math. Res. Not. \textbf{2017} (2017), 5591-5608.

\bibitem {LY}C.S. Lin and S. Yan; \textit{Existence of bubbling solutions for Chern-Simons model on a torus.} Arch. Ration. Mech. Anal. \textbf{207} (2013), 353-392.

\bibitem{MR} A. Malchiodi and D. Ruiz; \textit{New improved Moser-Trudinger inequalities and singular Liouville equations on compact surfaces}.
Geom. Funct. Anal. \textbf{21} (2011), 1196-1217.

\bibitem{WZ} J. Wei and L. Zhang; 
\textit{Laplacian vanishing theorem for a quantized singular Liouville equation}.
J. Eur. Math. Soc. \textbf{28} (2026), 237–267.


\bibitem{WWX} Z. Wei, Y. Wu and B. Xu; 
\textit{Geometric structure and existence of reducible spherical conical metrics.}
Math. Ann. \textbf{395} (2026), Paper No. 2, 48 pp.



\end{thebibliography}
\end{document}